\theoremstyle{plain}
\newtheorem{thm}{Theorem}[section]
\newtheorem{lem}[thm]{Lemma}
\newtheorem{prop}[thm]{Proposition}
\def\@rst #1 #2other{#1}
\newcommand\MR[1]{\relax\ifhmode\unskip\spacefactor3000 \space\fi
  \MRhref{\expandafter\@rst #1 other}{#1}}
\newcommand{\MRhref}[2]{\href{http://www.ams.org/mathscinet-getitem?mr=#1}{MR#2}}
\theoremstyle{definition}
\newtheorem{defn}[thm]{Definition}
\newtheorem{remark}[thm]{Remark}
\numberwithin{equation}{section}
\newcommand{\dsb}{\begin{adjustwidth}{2.5em}{0pt}
\begin{footnotesize}}
\newcommand{\dse}{\end{footnotesize}
\end{adjustwidth}}
\newcommand{\ssb}{\begin{adjustwidth}{2.5em}{0pt}}
\newcommand{\sse}{\end{adjustwidth}}
\newcommand{\aryb}{\begin{eqnarray*}}
\newcommand{\arye}{\end{eqnarray*}}
\def\alb#1\ale{\begin{align*}#1\end{align*}}
\newcommand{\eqb}{\begin{equation}}
\newcommand{\eqe}{\end{equation}}
\newcommand{\eqbn}{\begin{equation*}}
\newcommand{\eqen}{\end{equation*}}
\newcommand{\BB}{\mathbbm}
\newcommand{\ol}{\overline}
\newcommand{\ul}{\underline}
\newcommand{\op}{\operatorname}
\newcommand{\la}{\langle}
\newcommand{\ra}{\rangle}
\newcommand{\frk}{\mathfrak}
\newcommand{\ep}{\epsilon}
\newcommand{\rta}{\rightarrow}
\newcommand{\wt}{\widetilde}
\newcommand{\wh}{\widehat} 
\newcommand{\mcl}{\mathcal}
\newcommand{\bdy}{\partial}
\title{Asymptotic behavior of the Eden model with positively homogeneous edge weights}
\date{  }
\author{S\'{e}bastien Bubeck\footnote{sebubeck@microsoft.com}\, and Ewain Gwynne\footnote{ewain@mit.edu}}
\affil{Microsoft Research and Massachusetts Institute of Technology}
\begin{document}

\maketitle

\begin{abstract}
Let $d\in\mathbb N$, $\alpha\in\mathbb R$, and let $f :\mathbb R^d\setminus \{0\} \rightarrow (0,\infty)$ be locally Lipschitz and positively homogeneous of degree $\alpha$ (e.g.\ $f$ could be the $\alpha$th power of a norm on $\mathbb R^d$). We study a generalization of the Eden model on $\mathbb Z^d$ wherein the next edge added to the cluster is chosen from the set of all edges incident to the current cluster with probability proportional to the value of $f$ at the midpoint of this edge, rather than uniformly. This model is equivalent to a variant of first passage percolation where the edge passage times are independent exponential random variables with parameters given by the value of $f$ at the midpoint of the edge.  

We prove that the $f$-weighted Eden model clusters have an a.s.\ deterministic limit shape if $\alpha< 1$, which is an explicit functional of $f$ and the limit shape of the standard Eden model, and estimate the rate of convergence to this limit shape. We also prove that if $\alpha>1$, then there is a norm $\nu$ on $\mathbb R^d$ (depending on $\alpha$) such that if we set $f(z) = \nu(z)^{ \alpha}$, then the $f$-weighted Eden model clusters are a.s.\ contained in a Euclidean cone with opening angle $<\pi$ for all time. We further show that there does \emph{not} exist a norm on $\mathbb R^d$ for which this latter statement holds for all $\alpha>1$; and that there is no choice of function $f$ for which the above statement holds with $\alpha=1$. 

Our basic approach is to compare the local behavior of the $f$-weighted first passage percolation to that of unweighted first passage percolation with iid exponential edge weights (which is equivalent to the unweighted Eden model). 

We include a list of open problems and several computer simulations.
\end{abstract}
 
\tableofcontents

\section{Introduction}
\label{sec-intro}

\subsection{Overview}
\label{sec-overview}

Let $d\in \BB N$ and equip $\BB Z^d$ with its standard cubic lattice structure. The \emph{Eden model} is a simple statistical physics model introduced in~\cite{eden}, defined as follows. Let $e_1$ be sampled uniformly from the set of edges of $\BB Z^d$ incident to 0, and set $\wt A_1 = \{e_1\}$. Inductively, if $n\in\BB N$, $n\geq 2$, and $\wt A_{n-1}$ has been defined, let $e_n$ be sampled uniformly from the set of edges of $\BB Z^d$ incident to $\wt A_{n-1}$ and set $\wt A_n := \wt A_{n-1} \cup \{e_n\}$. 

The Eden model is equivalent to first passage percolation with iid exponentially distributed edge passage times, which was first introduced in~\cite{hammersley-welsh-fpp} (this is a consequence of the ``memoryless" property of the exponential distribution). Under this representation, the Eden model has been studied extensively, but many aspects of this model are still poorly understood. For example, it is known that the clusters $\wt A_n$ have a deterministic limiting shape $\BB A$ in a rather strong sense (see~\cite{richardson-fpp,cox-durrett-fpp,kesten-fpp-speed} as well as Sections~\ref{sec-D-metric} and~\ref{sec-standard-fpp} below), but little is known about this limit shape besides that it is compact, convex, and satisfies the same symmetries as $\BB Z^d$. 
We refer the reader to the survey articles~\cite{kesten-fpp-aspects,kesten-fpp-survey,douglas-fpp-models,blair-stahn-fpp-survey,grimmett-kesten-fpp-survey,ahd-fpp-survey} and the references therein for more information on first passage percolation.

In this article, we will consider the following natural variant of the Eden model. Let $\op{wt}$ be a weight function from the edge set of $\BB Z^d$ to the positive real numbers. The \emph{weighted Eden model with edge weights $\op{wt}$} is the growing family of edge sets $\{\wt A_n\}_{n\in\BB N}$ which is defined in the same manner as the Eden clusters above, except that each edge $e_n$ is sampled from the set of edges incident to $\wt A_{n-1}$ with probability proportional to $\op{wt}(e)$ instead of uniformly. Like the standard Eden model, this model can also be expressed in terms of a variant of first passage percolation where the passage time of each edge $e$ is an independent exponential random variable with parameter $\op{wt}(e)$ (in fact, we will mostly focus our attention on this latter model, which seems to be easier to analyze), see Section~\ref{sec-fpp-setup}.

We will primarily be interested in the following special case of the above model. Fix $\alpha\in\BB R$. Let $f_0 : \bdy \BB D\rta (0,\infty)$ be a strictly positive Lipschitz function on the boundary of the Euclidean unit ball $\BB D$. Let
\eqb \label{eqn-f-decomp}
f(z) = |z|^\alpha f_0(z/|z|) ,\qquad \forall z\in \BB R^d\setminus \{0\}  
\eqe
so that $f$ is strictly positive, locally Lipschitz, and homogeneous of degree $\alpha$. We call such a function $f$ an \emph{$\alpha$-weight function}. A particular example of an $\alpha$-weight function is the $\alpha$th power of some norm $\nu$ on $\BB R^d$, which corresponds to $f_0(z) = \nu(z)^\alpha$ for $z\in \bdy \BB D$. 
The \emph{$f$-weighted Eden model} is the weighted Eden model where the weight of each edge $e$ of $\BB Z^d$ is given by
\eqb \label{eqn-f-weights}
\op{wt}(e) :=  f(m_e),
\eqe 
where $m_e$ is the midpoint of $e$. 
In the case where $d = 1$ and $f(z) = |z| $, the $f$-weighted Eden model is a slight variant of the P\'olya urn model, so the $f$-weighted Eden model can be viewed as higher-dimensional generalization of the P\'olya urn model. The $f$-weighted Eden model in the case where $f(z) = |z|^\alpha$ was first introduced as an open problem in~\cite{bubeck-polya-aggregate}. 

Weighted versions of the eden model have been studied elsewhere in the literature. Diffusion limited aggregation (DLA) on a $d$-ary tree is equivalent to a weighted variant of the Eden model on the tree with edge weights which are an exponential, rather than polynomial, function of the distance to the root vertex. This model is studied in~\cite{aldous-shields-trees,barlow-dla-on-tree}. In the computer science literature, the authors of~\cite{fko-rumor} propose a weighted version of the Eden model on a general graph, which they call ``adaptive diffusion", as a protocol for spreading a message in a network while obscuring its source.  

As we shall see, the asymptotic behavior of $f$-weighted FPP in general dimension and for general choice of $f$ depends crucially on the homogeneity degree of $f$. In particular, we will prove the following.
\begin{itemize}
\item If $\alpha \in (-\infty,  1)$, the $f$-weighted FPP clusters (for any choice of weight function $f$) have a deterministic compact limit shape which is an explicit functional of $f$ and the standard Eden model limit shape $\BB A$. We also provide a rate of convergence estimate in the spirit of~\cite{kesten-fpp-speed,alexander-fpp-speed}. 
\item If $\alpha > 1$, there exists a norm $\nu$ on $\BB R^d$ depending on $\alpha$ (which we can take to be an explicit functional of $\BB A$ and $\alpha$) such that with $f(z) = \nu(z)^{ \alpha}$, the $f$-weighted FPP clusters are a.s.\ contained in a certain Euclidean cone with opening angle $<\pi$ at all times. 
\item For any choice of the Lipschitz function $f_0 : \bdy\BB D \rta (0,\infty)$ in~\eqref{eqn-f-decomp}, there is a constant $c> 0$ (again, depending explicitly on $\BB A$ and $f_0$) such that if $\alpha \in [1, 1+c)$ then a.s.\ the $f$-weighted FPP clusters with weight function $f$ eventually hit all but finitely many edges in $\BB Z^d$. 
\end{itemize}
See Section~\ref{sec-results} below for precise statements. We also include several open problems related to the weighted Eden model, see Section~\ref{sec-open-problems}. 

The main idea of our proofs is to compare the local behavior of $f$-weighted FPP to the local behavior of standard FPP. This allows us to show that passage times in $f$-weighted FPP are well-approximated by a deterministic metric $D$, which is defined precisely in Section~\ref{sec-D-metric} and depends on $f$ and the standard FPP limit shape $\BB A$. 

\begin{remark}
In the open problem statement~\cite{bubeck-polya-aggregate}, it is conjectured that for $f(z) = |z|^\alpha$, the $f$-weighted FPP clusters a.s.\ have a deterministic limit shape if $\alpha <1$ and are a.s.\ contained in a Euclidean cone of opening angle $<\pi$ at all times if $\alpha >1$. Our results confirm this conjecture in the case $\alpha < 1$. In the case $\alpha> 1$, our results show that this conjecture is false for $\alpha$ sufficiently close to 1, but is true if we replace $|\cdot| $ with a norm which is allowed to depend on $\alpha$. It is still an open problem to determine whether it holds for large enough $\alpha$ that the $f$-weighted FPP clusters with $f(z) = |z|^\alpha$ are a.s.\ contained in a Euclidean cone of opening angle $<\pi$ for all times.
\end{remark}
 
\begin{remark} \label{remark-sim}
We include several simulations of $f$-weighted FPP clusters, which are scattered throughout Section~\ref{sec-intro}. All of these simulations are produced using Matlab and are run for $10^6$ iterations. Particles are color-coded based on the time at which they are added to the cluster. In order to reduce the file size of the images, we re-sampled a subset of the $10^6$ particles in the clusters. This re-sampling does not significantly change the images, except that some of the images include small white dots corresponding to points which are contained in the cluster, but which were removed during the re-sampling. 
\end{remark} 
 
\noindent{\bf Acknowledgments}
We thank Ronen Eldan, Shirshendu Ganguly, Christopher Hoffman, Yuval Peres, and David Wilson for helpful discussions. We thank two anonymous referees for helpful comments on an earlier version of this paper. This work was carried out while the second author was an intern with the Microsoft Research theory group in Redmond, WA.

\subsection{Basic notations}
\label{sec-basic}

Before stating our main results we record some (mostly standard) notations which we will use throughout this paper. 

\subsubsection{Intervals and asymptotics}
\label{sec-asymp-notation} 
 
\noindent
For $a < b \in \BB R$, we define the discrete intervals $[a,b]_{\BB Z} := [a, b]\cap \BB Z$ and $(a,b)_{\BB Z} := (a,b)\cap \BB Z$. 
\vspace{6pt}

\noindent
If $a$ and $b$ are two quantities, we write $a\preceq b$ (resp. $a \succeq b$) if there is a constant $C$ (independent of the parameters of interest) such that $a \leq C b$ (resp. $a \geq C b$). We write $a \asymp b$ if $a\preceq b$ and $a \succeq b$. 
\vspace{6pt}

\noindent
If $a$ and $b$ are two quantities which depend on a parameter $x$, we write $a = o_x(b)$ (resp. $a = O_x(b)$) if $a/b \rta 0$ (resp. $a/b$ remains bounded) as $x \rta 0$ (or as $x\rta\infty$, depending on context). We write $a = o_x^\infty(b)$ if $a = o_x(b^{-s})$ for each $s >0$. 
\vspace{6pt}

\noindent
Unless otherwise stated, all implicit constants in $\asymp, \preceq$, and $\succeq$ and $O_x(\cdot)$ and $o_x(\cdot)$ errors involved in the proof of a result are required to satisfy the same dependencies as described in the statement of said result.

\subsubsection{Graphs}
\label{sec-graph-notation}

\noindent
For a graph $G$, we write $\mcl V(G)$ for the set of vertices of $G$ and $\mcl E(G)$ for the set of edges of $G$.
\vspace{6pt}

\noindent
For a graph $G$ and a subset $E$ of $\mcl E(G)$ we write $\partial E$ for the set of edges of $G$ not contained in $E$ which are incident to an edge of $E$. For a subset $V$ of $\mcl V(G)$, we write $\partial V$ for the set of vertices $v\in\mcl V(G)$ which are incident to vertices of $G$ not contained in $V$. 
\vspace{6pt}

\noindent
Let $G$ be a graph and let $n\in \BB N \cup \{\infty\}$. A \emph{path} of length $n$ in $G$ is a sequence $\eta = \{\eta(i)\}_{i\in [1,n]_{\BB Z}} \subset \mcl E(G) $ such that the edges $\eta(i)$ can be oriented in such a way that the initial endpoint of $\eta(i)$ coincides with the terminal endpoint of $\eta(i-1)$ for each $i\in [2,n]_{\BB Z}$. We say that $\eta$ is \emph{simple} if $\eta$ does not visit any vertex of $G$ more than once.
We write $|\eta| = n$ for the length of $\eta$. 

\subsubsection{Metrics}
\label{sec-metric-notation}

We will have occasion to consider several different metrics on $\BB R^d$ and $\BB Z^d$. We use the following notation to distinguish these metrics.
\vspace{6pt}

\noindent
Let $D$ be a metric on $\BB R^d$. For $r > 0$ and $z\in \BB R^d$, we write $B_r^D(z)$ for the closed ball of radius $r$ centered at $z$ in the metric $D$. For a set $A\subset \BB R^d$, we write $\op{diam}^D(A)$ for the $D$-diameter of $A$. If $\nu$ is a norm on $\BB R^d$, we write $\op{dist}^\nu(z,w) = \nu(z-w)$ for the metric induced by $\nu$. We often abbreviate $B^{\op{dist}^\nu}_r(z) = B^\nu(z)$. 
\vspace{6pt}

\noindent
We write $|\cdot|$ for the Euclidean norm on $\BB R^d$ and $\BB D := B_1^{|\cdot|}(0)$ for its unit ball.

\subsection{Weighted first passage percolation model}
\label{sec-fpp-setup}

In most of this paper we will consider the following weighted variant of first passage percolation instead of the weighted Eden model described above. The two models are shown to be equivalent in Lemma~\ref{prop-fpp-equiv} below. We first define the model in the greatest possible generality, then describe the special case which is our primary interest.

\begin{defn} \label{def-fpp}
Let $G$ be a connected, countable graph in which all vertices have finite degree. Let $v_0$ be a marked vertex of $G$. Let $\op{wt} : \mcl E(G) \rta (0,\infty)$ be a deterministic function which assigns a positive weight to each $e\in\mcl E(G)$. The \emph{first passage percolation (FPP)} clusters on $G$ started from $v_0$ with weights $\op{wt}$ is the random increasing sequence of subgraphs $\{A_t\}_{t\geq 0}$ of $G$ defined as follows.
\begin{itemize}
\item For each edge $e \in \mcl E(G)$, let $X_e$ be an exponential random variable with parameter $\op{wt}(e)$. We take the $X_e$'s to be independent. 
\item For a path $\eta$ in $G$, let $T(\eta) := \sum_{e\in \eta} X_e$. For vertices $u,v \in  \mcl V(G)$, we write 
\eqbn
T(u, v) := \inf\left\{T(\eta) \,:\, \text{$\eta$ is a path in $G$ from $u$ to $v$}\right\} .
\eqen 
\item For $t\in [0,\infty)$, let $  A_t \subset G$ be the graph defined as follows. The set of vertices $\mcl V(A_t)$ is the set of $v \in\mcl V(G)$ with $T(v_0 , v) \leq t$. The set of edges $\mcl E(A_t)$ is the set of $e\in \mcl E(G)$ such that $e\in \eta$ for some path $\eta$ in $G$ with $\eta(1)$ incident to $v_0$ and $T(\eta) \leq t$.
\end{itemize}
For $t\geq 0$ we write $\mcl F_t$ for the $\sigma$-algebra generated by $\{A_s\}_{s\in [0,t]}$ and $X_e$ for $e\in \mcl E(A_t)$. We also let 
\eqbn
\tau_\infty := \inf\{t  > 0 \,:\, \# A_t = \infty\} = T(v_0 , \infty)
\eqen
be the first (possibly infinite) time at which the cluster is infinite.
\end{defn}

Note that ordinary first passage percolation with exponential passage times corresponds to the special case when $\op{wt}(e) = 1$ for each $e\in \mcl E(G)$ in Definition~\ref{def-fpp}. 

We are primarily interested in the following special case of the model of Definition~\ref{def-fpp}, which is a continuous-time parametrization of the $f$-weighted FPP model described in Section~\ref{sec-overview} (see Lemma~\ref{prop-fpp-equiv} below). 
Fix $\alpha\in\BB R$. Let $G = \BB Z^d$ for $d \in \BB N$ (with the standard cubic lattice structure) and let $v_0 = 0$. 
Let $f_0 : \bdy \BB D\rta (0,\infty)$ be a Lipschitz function and let $f(z) = |z|^\alpha f_0(z/|z|)$ be as in~\eqref{eqn-f-decomp} and $\op{wt}(e) = f(m_e)$ as in~\eqref{eqn-f-weights}.  
Let $\{A_t\}_{t\geq 0}$, $T(\cdot)$, and $\tau_\infty$ be as in Definition~\ref{def-fpp} with this choice of parameters. 
We call the above model \emph{$f$-weighted FPP}. 
We also introduce the notation
\eqb \label{eqn-f-bound}
\ol\kappa := \sup_{z\in \bdy\BB D} f_0(z) \quad \op{and}\quad \ul\kappa :=  \inf_{z\in \bdy\BB D} f_0(z) .
\eqe

We note that it is easy to see (by considering a path from 0 to $\infty$ along a coordinate axis) that for our model $\tau_\infty < \infty$ a.s.\ whenever $\alpha > 1$. It will follow from Theorem~\ref{thm-alpha<1} (resp.\ the proof of Theorem~\ref{thm-alpha-near-1}) below that a.s.\ $\tau_\infty = \infty$ whenever $\alpha < 1$ (resp.\ $\alpha =1$). 
 
\subsection{Standard FPP limiting shape and weighted metric}
\label{sec-D-metric}

Our main method for studying the model described in Section~\ref{sec-fpp-setup} is to compare it to standard FPP, i.e.\ the case where $f\equiv 1$, which is equivalent to the unweighted Eden model. In this case, it is shown in~\cite{richardson-fpp,cox-durrett-fpp} that there exists a compact convex set $\BB A    \subset \BB R^d$ which is symmetric about 0 such that the random sets $t^{-1} A_t $ converge a.s.\ as $t\rta \infty$ to $\BB A $ in the following sense. For $t > 0$, let 
\eqb \label{eqn-fatten-hull}
A_t^F := \left\{v + z \,:\, v \in \mcl V(A_t)  ,\, z \in [-1/2 , 1/2]^d \right\} 
\eqe 
be the ``fattening" of $A_t$, so that $A_t^F$ contains no isolated points and $A_t^F \cap \BB Z^d = \mcl V(A_t)$. Then for each $\ep > 0$, 
\eqb \label{eqn-cluster-conv}
\lim_{t\rta \infty} \BB P\left( (1-\ep) \BB A \subset s^{-1} A_s^F \subset (1+\ep) \BB A ,\quad \forall s \geq t \right) = 1. 
\eqe
Not much is known rigorously about the limit shape $\BB A$ besides that it is compact, convex, and has the same symmetries as $\BB Z^d$. 
It is expected that $\BB A$ is not the Euclidean unit ball, but even this is not known except in dimension $d\geq 35$~\cite{ceg-short-path}. 
See, e.g.,~\cite{fss-eden-sim,bh-eden-sim,alm-deijfen-fpp-sim} for numerical studies of Eden clusters. 
 
Let $\mu$ be the norm whose closed unit ball is $\BB A$, i.e.\
\eqb \label{eqn-convex-norm}
\mu(z) := \inf\left\{r > 0 \,:\, z \in r \BB A \right\} ,\qquad \forall z\in \BB R^d .
\eqe  
We will have occasion to compare $\BB A$ to the Euclidean unit ball. For this purpose we use the following notation.

\begin{defn} \label{def-angle-constant}
Let
\eqb \label{eqn-optimal-norm}
\ol \rho := \sup_{z \in \BB R^d\setminus \{0\} } \frac{|z|}{\mu(z)} \quad \op{and} \quad \ul \rho  := \inf_{z  \in \BB R^d\setminus \{0\} } \frac{|z|}{\mu(z)}  .
\eqe  
Also let
\eqb \label{eqn-optimal-set}
\BB X := \left\{x\in\bdy\BB A \,:\, \rho_x = \ol\rho \right\}  
\eqe  
be the set of points on $\bdy \BB A$ furthest from 0.
\end{defn}

In the remainder of this subsection, we will define a metric $D = ``f^{-1} \cdot \mu "$ on $\BB R^d\setminus \{0\}$ which will turn out to be a good approximation for passage times in our weighted FPP model.
 
\begin{defn} \label{def-pl-path}
A \emph{piecewise linear path} in $\BB R^d$ is a continuous map $\gamma : [0,T] \rta \BB R^d$ for some $T> 0$ for which there exists a subdivision $0  = t_0 < \dots < t_n = T$ of $[0,T]$ such that $\gamma|_{[t_{k-1} , t_k]}$ is affine for each $k\in [1,n]_{\BB Z}$. We say that $\gamma$ is \emph{parametrized by $\mu$-length} if the following is true. For $t\in [0,T]$, let $K_t$ be the largest $k\in [1,n]_{\BB Z}$ with $t_k \leq t$. Then
\[
t = \mu(\gamma(t) - \gamma(t_{K_t})) +  \sum_{k=1}^{K_t} \mu(\gamma(t_k) - \gamma(t_{k-1})),
\]
i.e. $t$ is the sum of the $\mu$-lengths of the linear segments of $\gamma$ traced up to time $t$. In this case we write $\op{len}^\mu(\gamma) = T$. 
\end{defn}

If $\gamma : [0,T] \rta \BB R^d $ is a piecewise linear path parametrized by $\mu$-length, we define the \emph{$D$-length} of $\gamma$ by
\eqb \label{eqn-len-bar-def}
\op{len}^D(\gamma) := \int_0^T f(\gamma(t))^{-1} \, dt ,
\eqe  
with $f$ the $\alpha$-weight function from~\eqref{eqn-f-decomp}. If $\gamma$ is not necessarily parametrized by $\mu$-length, we define the $D$-length of $\gamma$ to be the $D$-length of the path obtained by parametrizing $\gamma$ by $\mu$-length. 
We define a metric on $\BB R^d$ by
\eqb \label{eqn-weighted-metric}
D(z,w) := \inf_\gamma \op{len}^D(\gamma) \qquad \forall  z,w\in\BB R^d   
\eqe 
where the infimum is over all piecewise linear paths $\gamma$ connecting $z$ and $w$. 

As we shall see in Section~\ref{sec-fpp-estimates} below, $D(z,w)$ is a good approximation for the passage time $T(z,w)$ in the $f$-weighted FPP process $\{A_t\}_{t\geq 0}$. 
The following lemma is immediate from the $\alpha$-homogeneity of $f$ and the definition~\eqref{eqn-weighted-metric} of $D$. 

\begin{lem} \label{prop-metric-scale}
Let $z,w\in\BB R^d$ and $r > 0$. Then 
\eqb \label{eqn-metric-scale}
D(rz,rw) = r^{1-\alpha} D(z,w) .
\eqe 
\end{lem}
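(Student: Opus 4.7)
The plan is to construct an explicit bijection between piecewise linear paths from $z$ to $w$ and those from $rz$ to $rw$ that multiplies $D$-length by $r^{1-\alpha}$, so that taking infima on both sides yields the claim. Given a piecewise linear path $\gamma : [0,T] \to \BB R^d$ from $z$ to $w$ parametrized by $\mu$-length, I would define $\wt \gamma : [0, rT] \to \BB R^d$ by $\wt\gamma(s) := r \gamma(s/r)$. This is piecewise linear and connects $rz$ to $rw$, and the map $\gamma \mapsto \wt\gamma$ has an obvious inverse, so it is a bijection onto piecewise linear paths from $rz$ to $rw$.

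The first thing to check is that $\wt\gamma$ is itself parametrized by $\mu$-length. Since $\mu$ is a norm (hence $1$-homogeneous), each linear segment of $\wt\gamma$ has $\mu$-length exactly $r$ times the corresponding segment of $\gamma$, so the total $\mu$-length of $\wt\gamma$ is $rT$; combined with the explicit formula $\wt\gamma(s) = r\gamma(s/r)$, this verifies the parametrization condition in Definition~\ref{def-pl-path}. The second step is the $D$-length computation: using $\alpha$-homogeneity of $f$ and the substitution $t = s/r$,
\[
\op{len}^D(\wt\gamma) = \int_0^{rT} f(r\gamma(s/r))^{-1}\, ds = \int_0^T r^{-\alpha} f(\gamma(t))^{-1} \cdot r\, dt = r^{1-\alpha} \op{len}^D(\gamma).
\]
The factor $r^{1-\alpha}$ arises from the $r^{-\alpha}$ contributed by $f$ under scaling together with the Jacobian $r$ from the change of variables.

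Taking infima over all piecewise linear paths on both sides then gives $D(rz, rw) = r^{1-\alpha} D(z,w)$. There is no real obstacle here; as the authors indicate, the lemma is immediate once the two scalings (of $\mu$-length and of $f^{-1}$) are tracked through the definition of $\op{len}^D$. The only potential subtlety is making sure the $\mu$-length reparametrization is respected under scaling, but this follows directly from $\mu$ being homogeneous of degree one.
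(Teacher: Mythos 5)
Your argument is correct and matches the paper's approach: the paper simply declares the lemma "immediate from the $\alpha$-homogeneity of $f$ and the definition~\eqref{eqn-weighted-metric} of $D$," and your proof fills in exactly that scaling bookkeeping — the path bijection $\gamma\mapsto r\gamma(\cdot/r)$, the fact that $\mu$-length scales by $r$, the factor $r^{-\alpha}$ from $f$, and the Jacobian $r$.
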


\subsection{Main results}
\label{sec-results}

Throughout this section, we assume that we are in the special case of Definition~\ref{def-fpp} described in Section~\ref{sec-fpp-setup}, so in particular $\alpha \in \BB R$, $f$ is an $\alpha$-weight function as in~\eqref{eqn-f-decomp}, and $\{A_t\}_{t\geq 0}$ are the $f$-weighted FPP clusters. 

Let $D$ be the metric from Section~\ref{sec-D-metric}. If $\alpha < 1$, then it is easy to see by integration that $\lim_{w\rta 0} D(w,z)$ is finite for each $z\in \BB R^d\setminus \{0\}$ and that $D$ extends to a metric on all of $\BB R^d$. In particular, the $D$-balls $B^D_r(0)$ for $r > 0$ are well-defined. Let $\BB B = \BB B_f := B_1^D(0)$. We note that Lemma~\ref{prop-metric-scale} implies that
\eqb \label{eqn-metric-ball-scale}
B_r^D(0) = r^{\frac{1}{1-\alpha}} \BB B ,\quad \forall r > 0 .
\eqe 
The set $\BB B$ is the limiting shape of the $f$-weighted FPP clusters for $\alpha <1$, in the following sense.
 
\begin{thm} \label{thm-alpha<1}
Let $\alpha  \in (-\infty, 1)$ and
\eqb \label{eqn-rate-exponent}
\chi   \in \left( 0,  \frac{1}{3(1-\alpha)} \right) .
\eqe  
For $t>0$, let $A_t^F$ be as in~\eqref{eqn-fatten-hull} (for a general choice of $f_0$). Then for $t_0 > 0$,  
\eqbn
\BB P\left( \text{$\left(1 - t^{-\chi}\right)t^{\frac{1}{1-\alpha}} \BB B \subset A_t^F \subset \left(1 + t^{-\chi }\right)t^{\frac{1}{1-\alpha}}  \BB B$ for all $t\geq t_0$} \right)  = 1-o_{t_0}^\infty(t_0) ,
\eqen
where here $o_{t_0}^\infty(t_0)$ denotes a quantity which decays faster than any negative power of $t_0$ as $t_0\rta\infty$ (recall Section~\ref{sec-asymp-notation}). 
\end{thm}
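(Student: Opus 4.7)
The plan is to exploit the shape theorem and rate-of-convergence estimates for standard (unweighted) exponential FPP on $\BB Z^d$ at a carefully chosen mesoscopic scale on which the weight function $f$ is essentially constant, so that on each mesoscopic cell the $f$-weighted passage time is well approximated by $f(\text{center})^{-1}$ times a standard-FPP passage time. Because $f$ is $\alpha$-homogeneous and locally Lipschitz, on a cell of diameter $\ell$ centered at a point $z$ with $|z| \asymp R := t^{1/(1-\alpha)}$, the relative oscillation of $f$ is $O(\ell/R)$; this is the deterministic ingredient. The stochastic ingredient is the Kesten/Alexander-type bound $|T_{\mathrm{std}}(u,v) - \mu(v-u)| \preceq |v-u|^{1/2+o(1)}$ with stretched-exponential tails, applied to passage times in the standard FPP on each mesoscopic cell.

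For the \emph{upper bound} on $A_t^F$, I would fix a target point $z$ on $\bdy\bigl((1+t^{-\chi})t^{1/(1-\alpha)} \BB B\bigr)$ and show $T(0,z) > t$ with probability $1-o_t^\infty(t)$. Every lattice path from $0$ to $z$ must cross the annulus between $R\BB B$ and $(1+t^{-\chi})R\BB B$, and the $f$-weighted passage time along any such path is bounded below by a Riemann sum approximating the $D$-length, which is $\geq (1+t^{-\chi})^{1-\alpha} t \geq t(1+(1-\alpha)t^{-\chi}/2)$. The lower bound for each cell crossing comes from the standard FPP lower tail combined with the deterministic comparison between weighted and unweighted passage times; summing the stochastic errors (which compound as a random walk) across the $O(R/\ell)$ cells on a path yields a total fluctuation $R^{1/2+o(1)}/R^\alpha$ that must be dominated by the error budget $t^{1-\chi}$. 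For the \emph{lower bound}, I would take $z \in (1-t^{-\chi})R\BB B$, fix a piecewise linear near-geodesic $\gamma$ for $D$ from $0$ to $z$ with $D$-length at most $(1-t^{-\chi}/2)t$, discretize it into straight-line segments of length $\ell$, and on each segment construct an explicit lattice path whose weighted passage time is bounded above by $f(\text{midpoint})^{-1}$ times the corresponding standard-FPP passage time. Summing and applying the standard-FPP upper tail segment by segment gives $T(0,z) \leq t$ with the desired probability.

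The budget calculation with $R = t^{1/(1-\alpha)}$ and $\ell = R^{1 - \chi(1-\alpha)}$ shows that the deterministic Lipschitz error $O(\ell/R^\alpha)$ per path and the stochastic error $O(R^{1/2+o(1)-\alpha})$ summed over $R/\ell$ cells are both $\ll t^{1-\chi}$ precisely in the regime $\chi < 1/(3(1-\alpha))$; the factor of $3$ reflects the three-way balance between the Lipschitz error in $f$, the Alexander square-root fluctuation on each cell, and the reduction in the boundary distance from a $t^{-\chi}$ shift. A union bound over a polynomial-size net on $\bdy((1\pm t^{-\chi})R\BB B)$ (controlled using the uniform interior/exterior regularity of $\BB B$ coming from the fact that $D$ extends to a metric on all of $\BB R^d$ when $\alpha<1$) promotes these pointwise statements to the event appearing inside the probability in the theorem. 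I would handle the one-sided care near $0$ by using that $\alpha < 1$ implies the $D$-distance from $0$ to $\bdy \BB B$ is bounded, so the contribution from any fixed small ball around $0$ is negligible and can be absorbed into the error.

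The main obstacle will be the combined error bookkeeping: one must simultaneously control (i) the Lipschitz approximation of $f$ on cells, (ii) the sum of dependent deviations of standard FPP on cells that are not quite independent because a nearly $D$-geodesic path is chosen adaptively, and (iii) the union bound over a discrete net. Step (ii) is the trickiest: in the lower-bound argument the concatenated segments are independent because $\gamma$ is deterministic, but in the upper-bound argument one is ranging over all possible $f$-FPP paths and must use the standard Kesten-type ``simultaneously over all paths'' estimate rather than a single-path bound. Finally, to upgrade the pointwise-in-$t$ estimate to the uniform-in-$t \geq t_0$ statement, I would apply it along a polynomially spaced sequence $t_k = k^{1/\chi'}$ for some $\chi' < \chi$ and use Borel-Cantelli together with monotonicity of $A_t^F$ and continuity of the target sets in $t$ to fill in the gaps.
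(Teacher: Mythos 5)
Your proposal gets the right exponent and shares the paper's essential ingredients (mesoscopic comparison of weighted to standard FPP, the Kesten/Alexander rate of convergence, and the deterministic metric $D$), but the overall proof architecture is genuinely different. The paper does \emph{not} argue directly about $T(0,z)$ for $z$ near the target boundary followed by a net and Borel--Cantelli. Instead it runs a multi-scale induction over the exit times $\tau_n = \inf\{t : A_t \not\subset B_n^D(0)\}$, tracking the event $G_{n,m} = \{B_m^D(0)\cap\BB Z^d \subset \mcl V(A_{\tau_n})\}$ that the cluster fills a slightly smaller $D$-ball by the time it exits the $n$-ball. Lemmas~\ref{prop-alpha<1-time} and~\ref{prop-alpha<1-induct} propagate $G_{n,m}$ and bound $\tau_{\wt n}-\tau_n$ from one dyadic scale to the next using the strong Markov property (Lemma~\ref{prop-fpp-law}) to restart at each $\tau_n$; Lemma~\ref{prop-alpha<1-small-ball} anchors the induction. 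The induction structure is what lets the paper apply its conditional passage-time estimates (Lemmas~\ref{prop-hull-time-metric},~\ref{prop-hull-expand-metric}), which bound $T(0,v)-\tau$ in terms of $\wt D_m(v,A_\tau)$ and hence require knowing that $A_\tau$ already fills a substantial $D$-ball. Your subadditivity argument (summing $T(v_{k-1},v_k)$ along a deterministic near-$D$-geodesic) avoids the need to know the cluster state and is a legitimate alternative for the lower containment; both approaches yield the same $\chi < \tfrac{1}{3(1-\alpha)}$ balance.

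Two places where your sketch is lighter than what is actually required. First, for the upper containment $A_t^F \subset (1+t^{-\chi})t^{1/(1-\alpha)}\BB B$ you write that ``every lattice path must cross the annulus'' and that the weighted passage time along ``any such path'' is bounded below by a Riemann sum for the $D$-length; but the number of paths is super-exponential, so this cannot be run as a union bound over paths. You flag the need for a ``simultaneously over all paths'' Kesten-type bound, but the actual mechanism has to be what the paper's Lemma~\ref{prop-hull-expand-metric} and Lemma~\ref{prop-hull-increment-expand} do: track the (random) FPP geodesic $\eta_v$, chop it by $\mu$-distance increments of a mesoscopic size, and at each increment apply Kesten's cluster-containment estimate~\eqref{eqn-hull-upper} to the re-normalized standard-FPP clusters $\ol A^\tau_{v,s}$ of Definition~\ref{def-normalized-exp-rv}. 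This is a statement about a single adaptively chosen geodesic, not a union bound over paths, and spelling it out is the bulk of the work. Second, your dismissal of the origin (``the contribution from any fixed small ball around $0$ is negligible'') elides a real step: for $\alpha>0$ the edge weights vanish near $0$ so the per-edge passage times blow up, and showing that the cluster fills a proportional $D$-ball in proportional time requires an explicit construction (a coordinate-axis path) plus concentration for sums of heterogeneous exponentials; this is exactly Lemma~\ref{prop-alpha<1-small-ball}. Neither issue is a fatal flaw in your plan, but both are where most of the rigor lives, and the paper's induction-over-exit-times structure is in part a device for organizing precisely these two pieces.
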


Theorem~\ref{thm-alpha<1} gives in some sense a complete qualitative characterization of the asymptotic behavior of the $f$-weighted FPP clusters when $\alpha <1$. However, we expect that the exponent $\chi$ in~\eqref{eqn-rate-exponent} is not optimal (in fact, we expect the theorem to be true at least for any $\chi \in \left(0 , \frac{2}{5(1-\alpha)} \right)$; c.f.\ Remark~\ref{remark-kesten-optimal} below). Moreover, we cannot give a more explicit description of the limit shape $\BB B$ than the one above. Indeed, we cannot even characterize the functions $f$ for which the set $\BB B$ is convex. See Figures~\ref{fig-sim1} and~\ref{fig-sim2} for simulations of $f$-weighted FPP clusters with $\alpha<1$, some of which appear to have a non-convex limit shape. 

\begin{figure}[ht!]
\centering
\begin{subfigure}{.5\textwidth}
  \centering
  \includegraphics[width=1\linewidth]{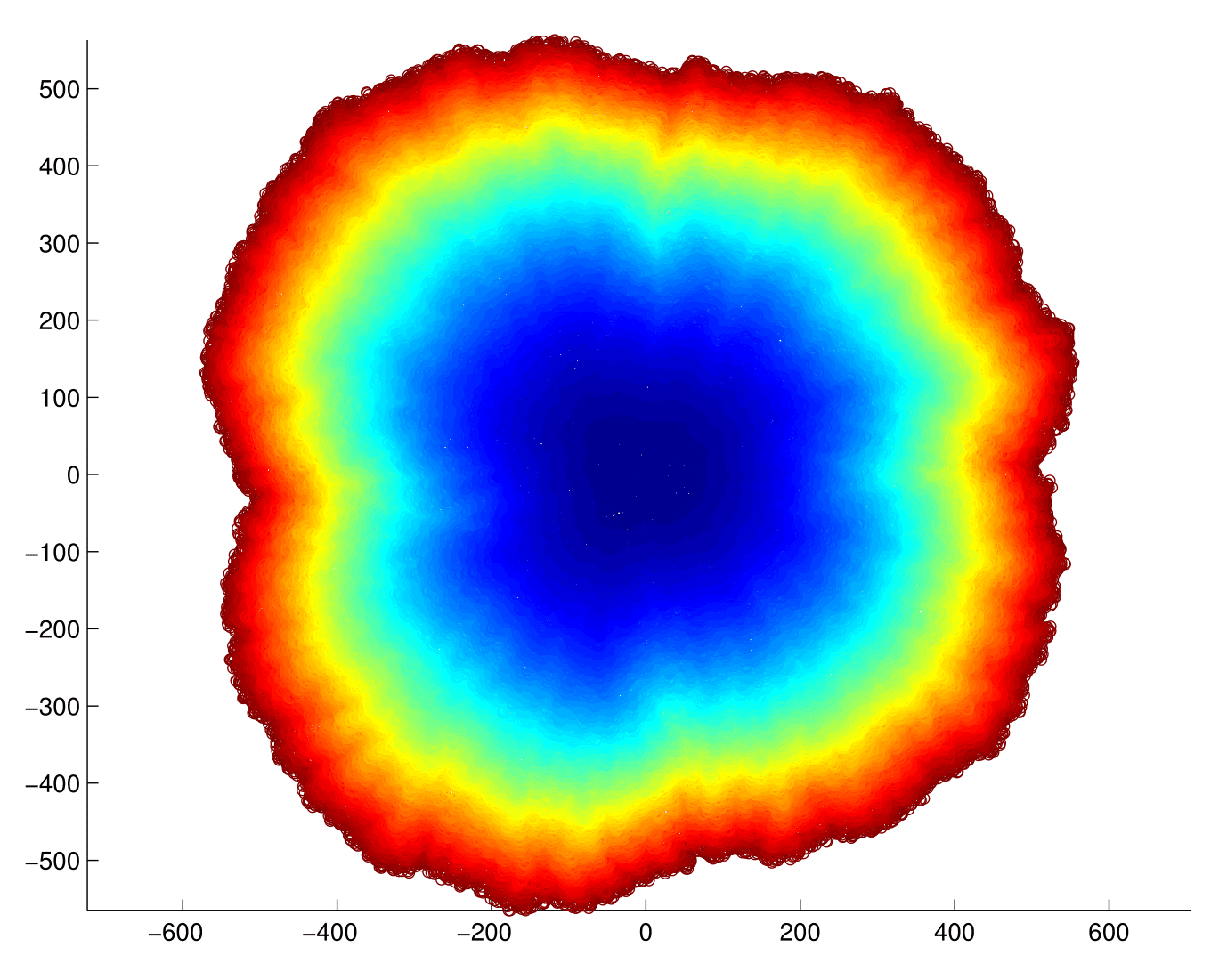}  
\end{subfigure}%
\begin{subfigure}{.5\textwidth}
  \centering
  \includegraphics[width=1\linewidth]{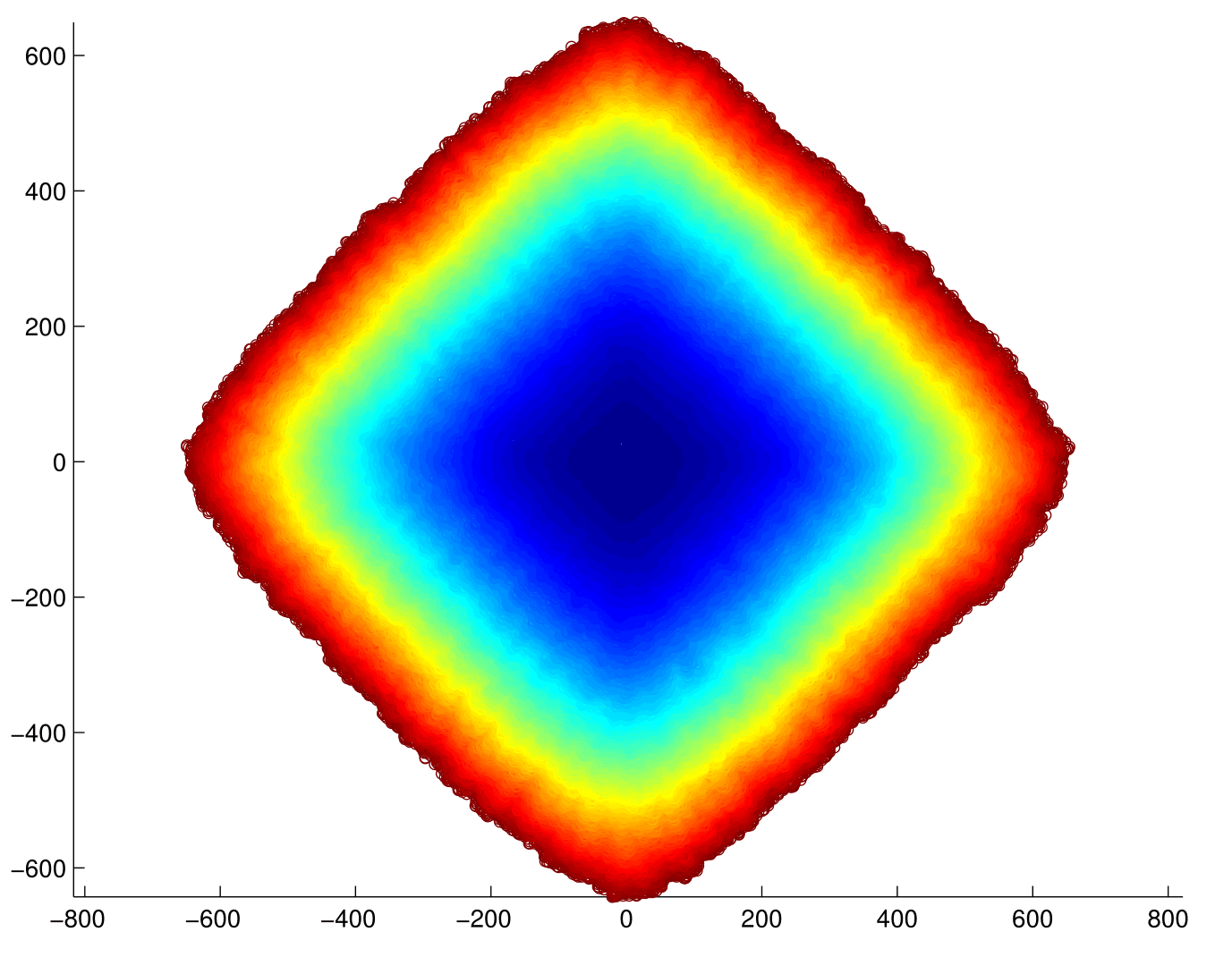}
\end{subfigure} 
\caption{\textbf{Left panel:} A simulation of an $f$-weighted FPP cluster with $f(z) = \|z\|_1^{1/2}$, where here $\|\cdot\|_1$ is the $L^1$ norm (which restricts to the graph distance on $\BB Z^2$). The clusters appear to be converging to a deterministic limit shape (which we know is a.s.\ the case by Theorem~\ref{thm-alpha<1}), but it is not clear from the simulation whether this limit shape is convex.
\textbf{Right panel:} A simulation of an $f$-weighted FPP cluster with $f(z) = \|z\|_1^{-2}$, where here $\|\cdot\|_1$ is the $L^1$ norm. The clusters appear to be converging to a deterministic limit shape which is a slight rounding of the $L^1$-unit ball.
}\label{fig-sim1} 
\end{figure}

\begin{figure}[ht!]
\centering
\begin{subfigure}{.5\textwidth}
  \centering
  \includegraphics[width=1\linewidth]{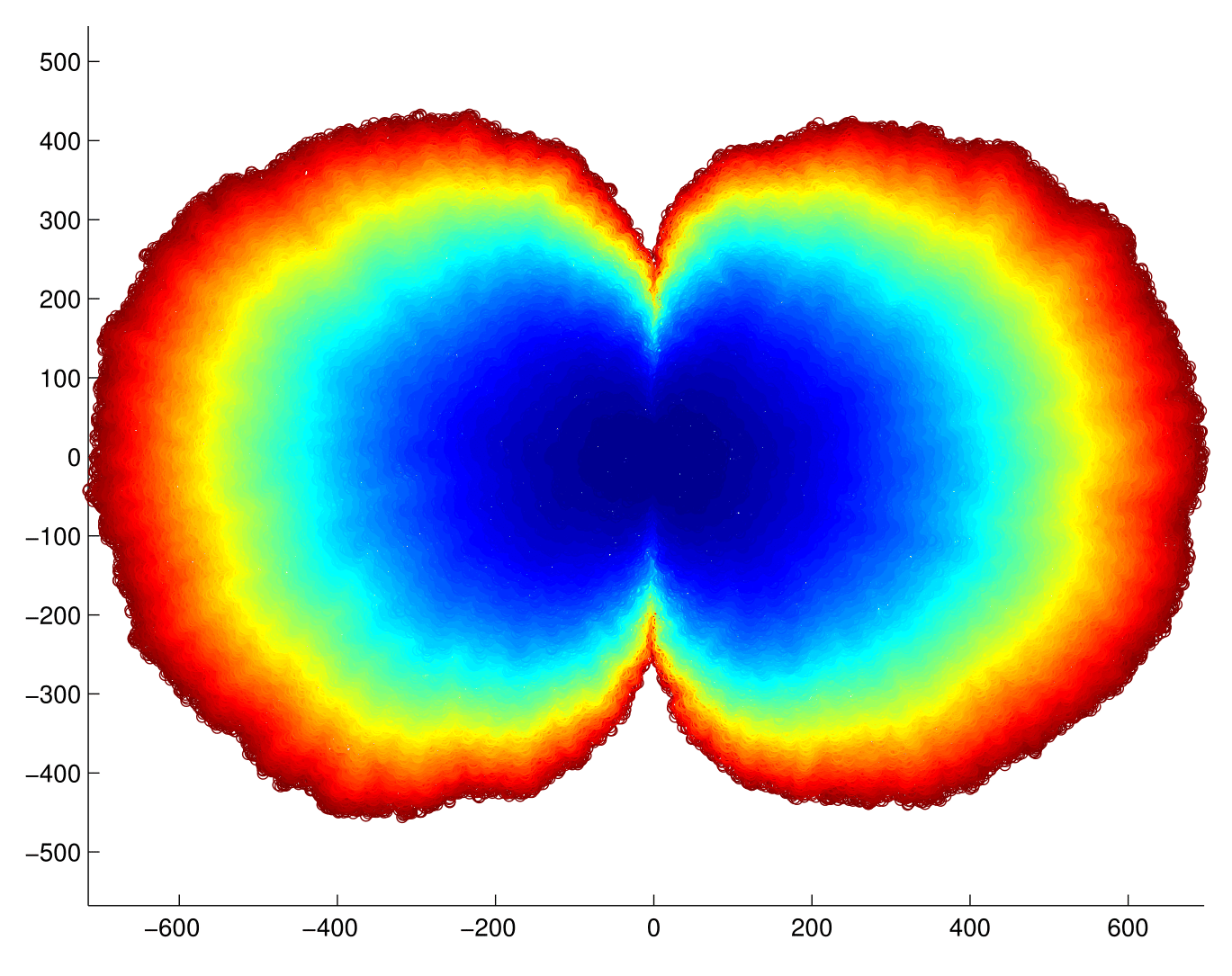} 
\end{subfigure}%
\begin{subfigure}{.5\textwidth}
  \centering
  \includegraphics[width=1\linewidth]{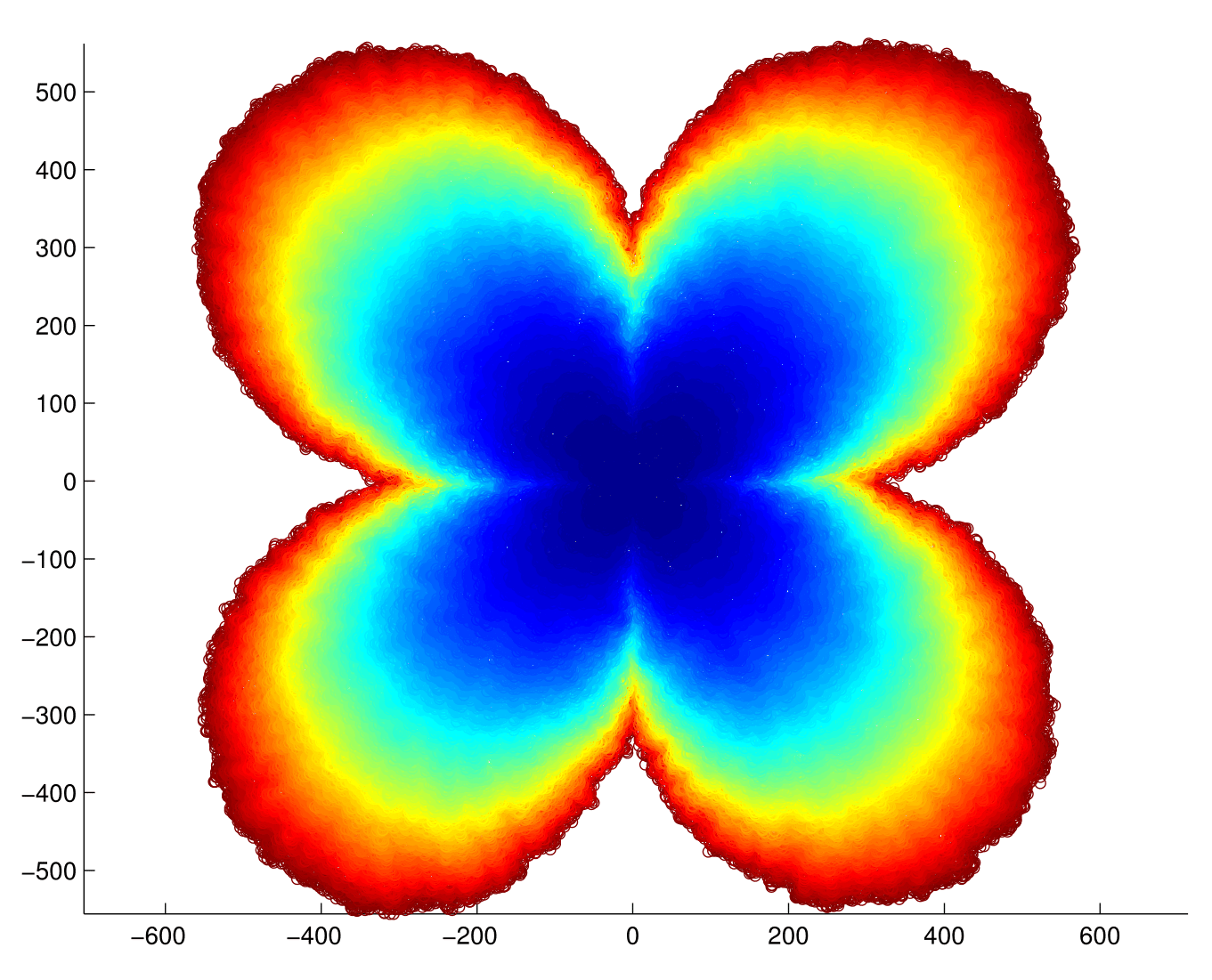} 
\end{subfigure} 
\caption{\textbf{Left panel:} A simulation of an $f$-weighted FPP cluster with $f(z) = \nu (z)^{1/2}$, where here $\nu$ is the norm whose closed unit ball is the rectangle $[-1,1]\times [-100 , 100]$.
\textbf{Right panel:} A simulation of an $f$-weighted FPP cluster with the weight function $f$ given by the third power of the ratio of the $L^1$ norm to the Euclidean norm (so $\alpha = 0$). We note that in both figures, the limit shape appears to be non-convex.
}\label{fig-sim2} 
\end{figure}

In the case $\alpha\geq 1$, matters are more complicated. The qualitative asymptotic behavior of the $f$-weighted FPP clusters depends crucially on the function $f$, rather than just the value of $\alpha$. 
In the case when $\alpha > 1$, simulations like the ones in Figure~\ref{fig-sim3} suggest that the $f$-weighted FPP clusters for many choices of $f$ tend to grow in a single direction, rather than being ball-like like in the case when $\alpha < 1$. 
We recall that $\tau_\infty = \inf\{t\geq 0 : \# \mcl E(A_t) = \infty\}$. 
Our next theorem tells us that for each $\alpha>1$, there exists a norm on $\BB R^d$ (depending on $\alpha$) such that if $f$ is the $\alpha$th power of this norm, then $A_{\tau_\infty}$ is a.s.\ contained in a cone of opening angle $<\pi$. 

\begin{thm} \label{thm-alpha>1}
For each $\alpha >1$ and each $\BB x \in \BB X$ (Definition~\ref{def-angle-constant}), there exists a norm $\nu = \nu(\alpha,\BB x)$ on $\BB R^d$ and a
$\theta \in (0,\pi)$ such that the following is true. Let
\eqbn
\mcl C= \left\{ z\in \BB R^d \,:\, \left\la \frac{z}{|z|} , \frac{\BB x}{|\BB x|} \right\ra > \cos \theta \right\}
\eqen
be the Euclidean cone based at 0 with opening angle $\theta$ centered at the ray from 0 through $\BB x$. Also let $f(z) := \nu(z)^{ \alpha}$ and let $\{A_t\}_{t\geq 0}$ the $f$-weighted FPP process. Then a.s.\ either
\eqb \label{eqn-alpha>1}
 \# \left(\mcl V(A_{\tau_\infty}) \setminus \mcl C\right) < \infty \quad \op{or} \quad  \# \left(\mcl V(A_{\tau_\infty}) \setminus (-\mcl C)\right) < \infty .
\eqe 
\end{thm}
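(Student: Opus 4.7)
The plan is to construct the norm $\nu$ so that the weight $f(z) = \nu(z)^\alpha$ creates a strict directional preference for growth along $\pm \BB x$, and then use the passage-time--to--$D$-metric comparison developed earlier in the paper to show that at the explosion time $\tau_\infty$, the cluster escapes through a Euclidean cone around $\BB x$ or $-\BB x$. Let $\hat x := \BB x/|\BB x|$ and define
\[
\nu(z) := |\la z, \hat x\ra| + c\,|z|
\]
for a small constant $c = c(\alpha) > 0$. This is a norm, and on the Euclidean unit sphere $\nu(\hat u) = |\la \hat u, \hat x\ra| + c$ is strictly maximized at $\pm \hat x$ with value $1+c$. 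Combined with the fact that $1/\mu(\hat u) = \rho_{\hat u} \leq \ol\rho$ with equality only on $\BB X$, this yields $\nu(\hat u)/\mu(\hat u) < (1+c)\ol\rho = \nu(\hat x)/\mu(\hat x)$ for every unit $\hat u \neq \pm\hat x$. By compactness and continuity, there exist $\theta \in (0, \pi/2)$ and $\delta > 0$ such that $\mu(\hat u)/\nu(\hat u) \geq (1+\delta)\,\mu(\hat x)/\nu(\hat x)$ whenever $|\la \hat u, \hat x\ra| \leq \cos\theta$. Consequently, the $D$-length of the straight ray from $\mu$-distance $\ep$ to $\infty$ in direction $\hat u$, which equals $\frac{\ep^{1-\alpha}}{\alpha-1}(\mu(\hat u)/\nu(\hat u))^\alpha$, is strictly larger by a factor $(1+\delta)^\alpha$ outside the $\pm\hat x$-cone than along the $\pm\hat x$ rays.

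Next, I would translate the ray-length hierarchy into a statement about passage times. The metric $D$ is singular at the origin when $\alpha > 1$, so I would first reduce to passage times starting from a small Euclidean shell $\partial B_\ep(0) \cap \BB Z^d$, whose contribution to the total passage time is a bounded (direction-independent) random quantity. From there, passage times $T(z_0, v)$ for $|z_0|=\ep$ and $|v|$ large are approximated by $D(z_0, v)$, which (since $\alpha > 1$) remains uniformly bounded as $|v| \to \infty$ and is governed, up to lower-order corrections, by the ray-tail integral in the direction $v/|v|$. The explosion time is $\tau_\infty = \liminf_{v \to \infty} T(0, v)$; the strict $(1+\delta)^\alpha$-gap between inside-cone and outside-cone ray tails, together with the concentration estimates for $T$ around $D$, implies that a.s.\ $\tau_\infty$ is attained only along sequences with $v_n/|v_n| \to \pm\hat x$. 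In particular, for every $v$ outside $\mcl C \cup (-\mcl C)$ with $|v|$ large enough, $T(0,v) > \tau_\infty$, so $\mcl V(A_{\tau_\infty})$ contains only finitely many vertices outside the two cones.

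To complete the either-or dichotomy, I would argue that the cluster explodes through exactly one of the two cones. Let $T^*(\pm \mcl C) := \liminf_{v \to \infty,\, v \in \pm \mcl C} T(0, v)$. These random variables depend asymptotically on essentially disjoint sets of edges (those with midpoints in $\mcl C$ and in $-\mcl C$, apart from a finite collection near the origin), and as measurable functionals of independent exponentials they have atomless joint distribution, so a.s.\ $T^*(\mcl C) \neq T^*(-\mcl C)$. Since $\tau_\infty = \min(T^*(\mcl C), T^*(-\mcl C))$, the cluster a.s.\ accumulates infinitely many vertices in exactly one of the two cones by time $\tau_\infty$ and only finitely many in the other, giving the stated dichotomy. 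The main technical obstacle, in my view, will be obtaining the passage-time--to--$D$-metric comparison uniformly over directions $\hat u$, with concentration sharp enough to preserve the $(1+\delta)^\alpha$-gap despite random fluctuations from the initial escape from a neighborhood of the origin; one must rule out "wrong-direction" large deviations that would allow a vertex far from the $\pm\hat x$-cone to be reached before $\tau_\infty$.
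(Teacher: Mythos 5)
Your high-level plan -- replace passage times by the deterministic metric $D$, argue that off-axis vertices are too $D$-far to be reached by time $\tau_\infty$, then pin down one of the two antipodal escape directions -- matches the paper's strategy in spirit, but two of the three steps have genuine gaps, and the paper's two central devices do not appear in your proposal. The first gap is that the \emph{ray}-length comparison does not control the quantity you need. What you must bound from below is $D(z,w)$ over \emph{all} piecewise linear paths from (a neighborhood of) the cluster to an off-axis vertex $w$, and with your norm the cheapest such path is not the ray in direction $w/|w|$: it runs out along the $\pm\hat x$-axis (where $f$ is largest, hence $f^{-1}$ smallest) and cuts back perpendicularly near $|w|$, where $f^{-1}$ is already tiny. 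Computing this route shows $D(0,w) - D(0,\infty) = O(|w|^{1-\alpha})$ \emph{in every direction}, with only the constant depending on the angle; a strict factor gap between ray tails does not by itself give the strict excess $D(0,w) > D(0,\infty)$ you want to exploit, and certainly not a quantitative one. The paper's choice of the cylinder norm $\nu_s$, with unit ball $\mcl Q_s = \{sz + t\BB x : z\in Q, t\in[-1,1]\}$, is made precisely so that the required all-path lower bound is computable (Lemma~\ref{prop-D-tube-dist}, assertion~\ref{item-D-tube-dist-lower}) and so that the cone $\mcl K$ is generated by the \emph{flat faces} $\bdy\mcl Q_s\cap P_{\pm\BB x}$, a whole disk of equally optimal exit positions; condition~\eqref{eqn-pos-prob-cond} on $s$ is exactly what makes that lower bound exceed $D(\bdy\mcl Q_s,\infty) = \tfrac{1}{\ol\kappa_s(\alpha-1)}$ by a margin $\succeq q^{1-\alpha}$ at every scale $q$.

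The second, and more serious, gap is the one you flagged yourself and then left open: you need concentration of $T$ around $D$ sharp enough to beat a shrinking gap, and you propose no mechanism. Because the gap at Euclidean scale $n$ is only $\Theta(n^{1-\alpha})$ while fluctuations are random, a one-shot comparison of $T(0,w)$ to $\tau_\infty$ does not close. The paper instead runs a scale-invariant induction (Lemma~\ref{prop-alpha>1-hit}), tracking the exit position $u_n$ of $\{A_t\}$ from $n\mcl Q_s$: conditioned on $u_{n_0}$ landing on a flat face, with high probability $u_n$ lands on a flat face for \emph{all} $n\geq n_0$ and $\tau_\infty - \tau_n = \tfrac{n^{1-\alpha}}{\ol\kappa_s(\alpha-1)} + O(n^{1-\alpha-\beta})$; the Kesten-type exponent $\beta \in (0,1/3)$ makes the fluctuation $n^{1-\alpha-\beta}$ subordinate to the geometric gap $n^{1-\alpha}$. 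The flatness of the faces is essential to this induction: it keeps the exit point from being pinned to a single direction where random wandering would accumulate across scales, which is precisely what one would have to contend with for a smooth norm like yours. Finally, your dichotomy argument is too loose as stated: $T^*(\mcl C)$ and $T^*(-\mcl C)$ are \emph{not} independent (they share the initial growth), and "atomless joint law" does not preclude a.s.\ equality (consider $X=Y$ with $X$ atomless). The paper's Proposition~\ref{prop-one-end-weak} supplies the rigorous version: it uses the strong Markov property to manufacture two conditionally independent stopping times, then applies Lemma~\ref{prop-equal-atom} and the non-atomicity of $\tau_\infty-\tau$ (Lemma~\ref{prop-infty-non-atomic}). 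Your intuition about why the dichotomy should hold is right, but the argument requires that machinery or something equivalent to it.
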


We will actually prove a more quantitative version of Theorem~\ref{thm-alpha>1} (see Theorem~\ref{thm-cone-contain} below). This result says that the statement of Theorem~\ref{thm-alpha>1} holds for all $\alpha$-weight functions $f$ satisfying certain conditions, which are satisfied for the $\alpha$-th powers of a certain class of norms on $\BB R^d$. The unit ball of a typical norm in this class is a ``cylinder" of the form $\{s z + t\BB x : z\in Q ,\, t\in [-1,1]\}$ where $Q$ is a compact convex subset of the hyperplane through the origin perpendicular to $\BB x$ and $s$ is a large fixed parameter which tends to $\infty$ as $\alpha\rta 1^+$. See Figure~\ref{fig-big-alpha-norm} for an illustration.

\begin{figure}[ht!]
\centering
\begin{subfigure}{.5\textwidth}
  \centering
  \includegraphics[width=1\linewidth]{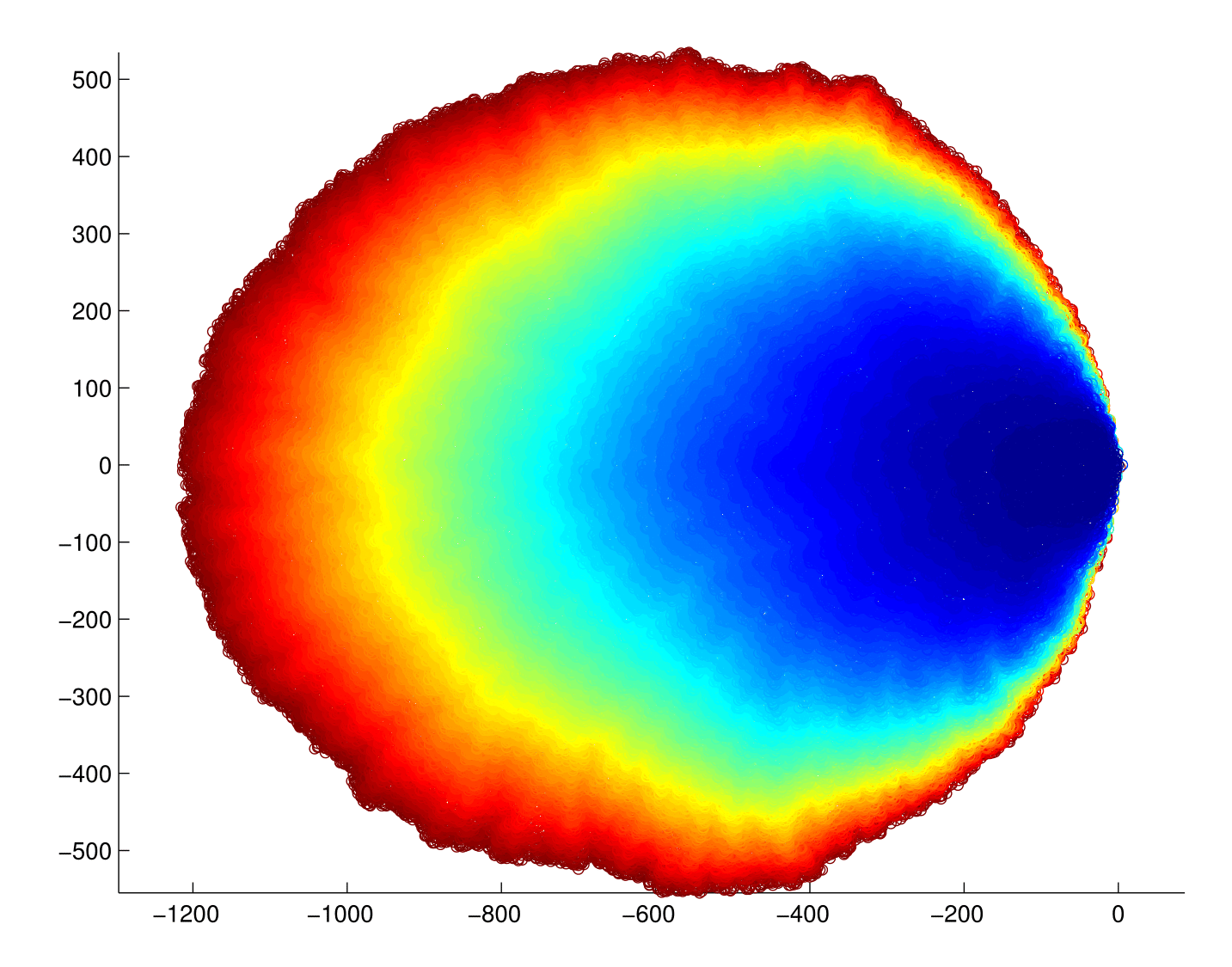} 
\end{subfigure}%
\begin{subfigure}{.5\textwidth}
  \centering
  \includegraphics[width=1\linewidth]{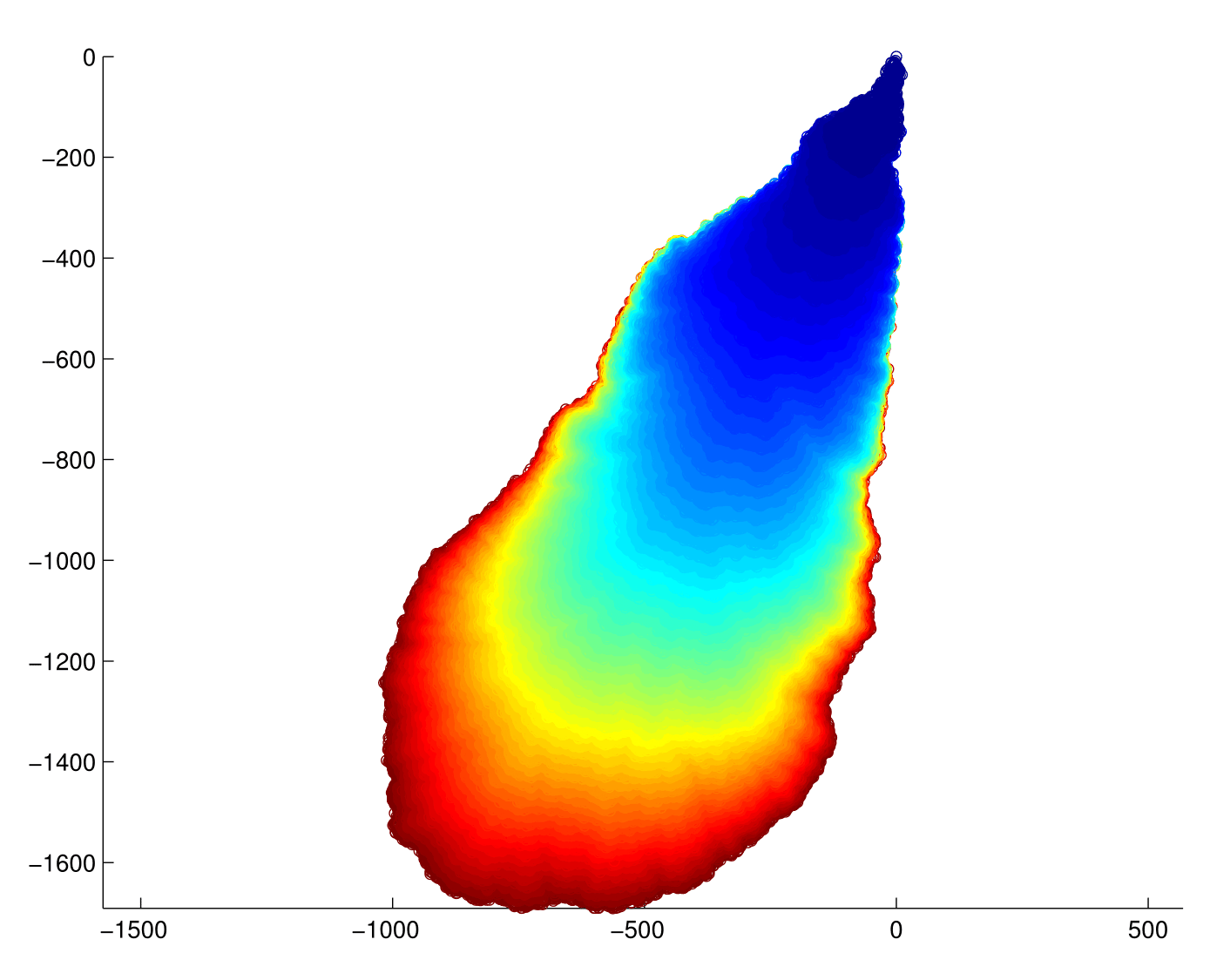} 
\end{subfigure} 
\caption{\textbf{Left panel:} A simulation of an $f$-weighted FPP cluster with $f(z) = \nu(z)^{1.1} $, where here $\nu$ is the norm whose closed unit ball is the rectangle $[-1,1]\times [-100 , 100]$. This norm $\nu$ is similar to the norm appearing in Theorem~\ref{thm-alpha>1}, although in Theorem~\ref{thm-alpha>1} the rectangle may be rotated by some (non-explicit) angle which depends on the standard FPP limit shape $\BB A$.
\textbf{Right panel:} A simulation of an $f$-weighted FPP cluster with $f(z) = |z|^5$. The figure suggests that the clusters will be contained in a Euclidean cone with opening angle $<\pi$ for all times $t \leq \tau_\infty$, but we do not prove that this is the case for this particular choice of $f$.
}\label{fig-sim3} 
\end{figure}

\begin{figure}[ht!]
 \begin{center}
\includegraphics[scale=.7]{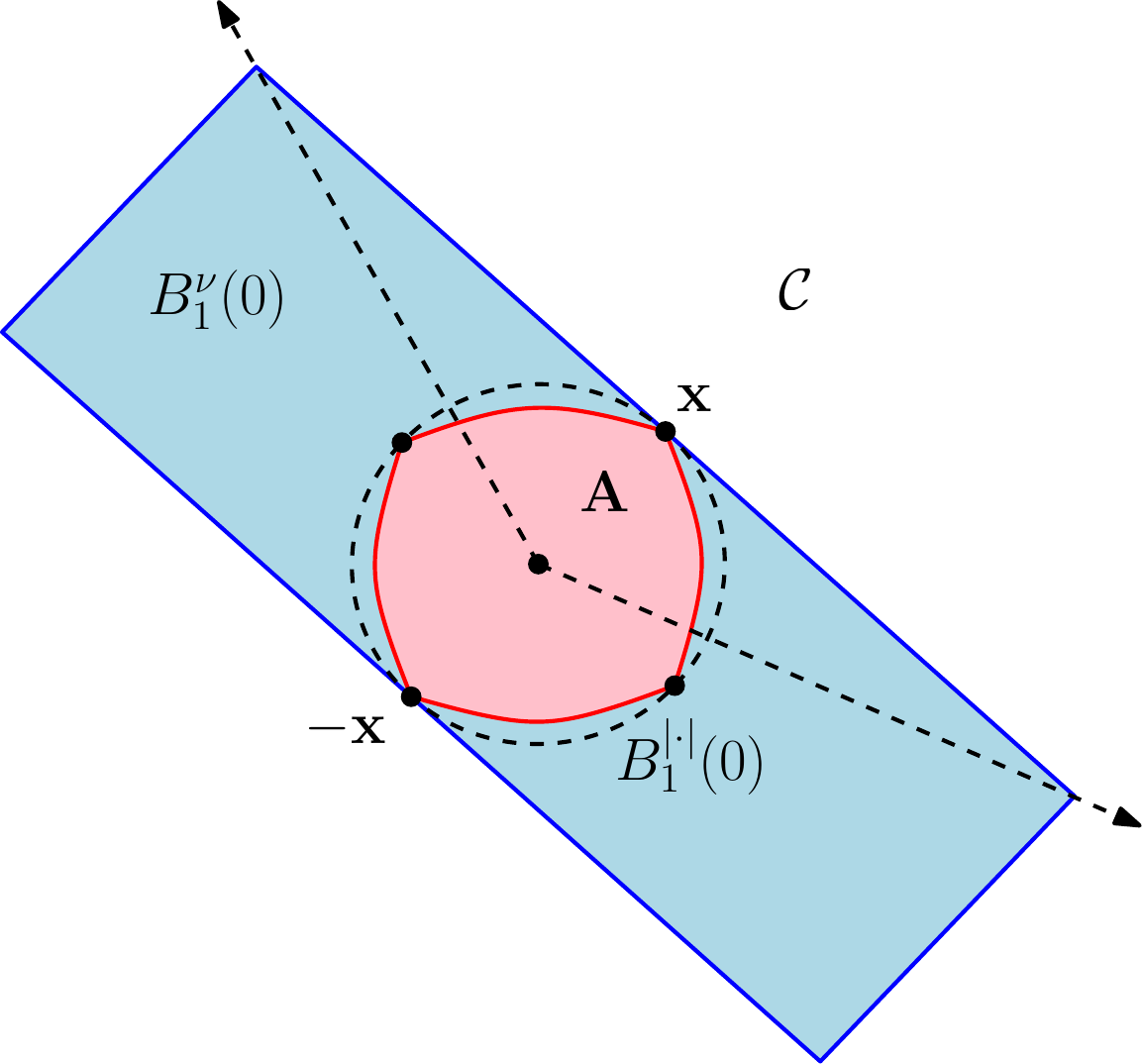} 
\caption{An illustration of a the unit ball $B_1^\nu(0)$ (light blue) of a typical norm $\nu$ satisfying the conclusion of Theorem~\ref{thm-alpha>1} when $\alpha$ slightly bigger than 1 and $d=2$. Also shown is the Eden model limit shape $\BB A$ (pink) and the smallest Euclidean ball which contains it, namely $B_{\ol\rho}^{|\cdot|}(0)$ (dashed boundary). The boundary of the cone $\mcl C$ is shown as a pair of dashed lines. As $\alpha$ approaches 1, the opening angle of this cone approaches $\pi$. However, we do not prove that the opening angle of $\mcl C$ approaches 0 as $\alpha \rta \infty$.}\label{fig-big-alpha-norm}
\end{center}
\end{figure}

It is an open problem to give for each $\alpha>1$ a reasonably (though perhaps not fully) general characterization of the choices of $f$ for which the conclusion of Theorem~\ref{thm-alpha>1} holds. We expect that a rigorous proof of such a characterization may require additional knowledge about the standard FPP limit shape $\BB A$. 
 
Theorem~\ref{thm-alpha>1} focuses on the behavior of the FPP clusters up to time $\tau_\infty$, which is a.s.\ finite for $\alpha >1$. It is natural to ask about the behavior of the clusters $A_t$ for $t  >\tau_\infty$. Straightforward tail estimates for sums of exponential random variables (see, e.g.~\cite[Theorem 5.1, item (i)]{janson-tail}) show that if $\alpha >1$, then it is a.s.\ the case that for each $\ep  > 0$, the set $\mcl V(A_{\tau_\infty +\ep})$ contains all but finitely many vertices of $\BB Z^d$. Hence there is no interesting macroscopic behavior after time $\tau_\infty$.

One may wonder to what extent the norm $\nu$ and the cone $\mcl C$ in Theorem~\ref{thm-alpha>1} can taken to be uniform in $\alpha$. It turns out that the condition on $\nu$ needed for~\eqref{eqn-alpha>1} to hold a.s.\ differs from the condition needed for this result to hold with positive probability. In particular, our more quantitative statement Theorem~\ref{thm-cone-contain} implies the following. 
\begin{itemize}
\item For any $\alpha_2 > \alpha_1 > 1$, we can choose $\nu$ and $\mcl C$ such that whenever $\alpha \in [\alpha_1,\alpha_2]$ and $f(z) = \nu(z)^\alpha$, the condition~\eqref{eqn-alpha>1} holds a.s.
\item For any $\alpha_1 > 1$, we can choose $\nu$ and $\mcl C$ such that whenever $\alpha \geq \alpha_1$, we have that~\eqref{eqn-alpha>1} holds with positive probability.
\end{itemize} 
We note that Theorem~\ref{thm-alpha-near-1} below tells us that $\nu$ cannot be chosen uniformly for all $\alpha> 1$.

Our next theorem tells us that there is no choice of the function $f_0$ of~\eqref{eqn-f-decomp} for which the conclusion of Theorem~\ref{thm-alpha>1} holds for every choice of $\alpha > 1$.
In fact, we will show that if $\alpha \geq 1$ is sufficiently close to 1 (depending on $f_0$), then $\mcl V(A_{\tau_\infty})$ a.s.\ contains all but finitely many vertices of $\BB Z^d$. To quantify how close to 1 we need $\alpha$ to be, we introduce some notation.  
For $\delta > 0$ and $z,w\in\bdy\BB D$, let $\Gamma_\delta(z,w)$ be the set of piecewise linear paths (Definition~\ref{def-pl-path}) connecting $z$ and $w$ which can be decomposed into linear segments whose endpoints are all contained in $\bdy\BB D$ and which each have Euclidean length at most $\delta$. Let
\eqb \label{eqn-ball-norm-diam}
\lambda  := \limsup_{\delta\rta 0} \sup_{z,w\in\bdy\BB D} \inf_{\gamma \in \Gamma_\delta(z,w)}  \op{len}^D(\gamma) 
\eqe 
be half the $D$-circumference of $\bdy\BB D$.
Since $f \equiv f_0$ on $\bdy \BB D$, it is easy to see that $\lambda$ depends only on $f_0$, not on $\alpha$, and that $0 < \lambda < \infty$ for any choice of $f_0$. Furthermore, if we take $f (z) = \nu(z)^\alpha$ for some norm $\nu$ on $\BB R^d$, then $\lambda$ depends on $\alpha$ but is uniformly positive for $\alpha$ in any bounded subset of $\BB R$. 
 
\begin{thm} \label{thm-alpha-near-1}
Let $f_0 : \bdy\BB D\rta (0,\infty)$ be the Lipschitz function in~\eqref{eqn-f-decomp}. 
Let $\ol\rho$ be as in~\eqref{eqn-optimal-norm}, $\ol\kappa$ as in~\eqref{eqn-f-bound}, and $\lambda$ as in~\eqref{eqn-ball-norm-diam}. Suppose
\eqbn
1\leq \alpha < 1+(\ol\rho \ol\kappa \lambda)^{-1} .
\eqen
For $r > 0$, let
\eqb \label{eqn-nu-ball-hit-time}
\sigma_r := \inf\left\{t \geq 0 \,:\, A_t\not\subset B_r^{|\cdot|}(0)\right\} .
\eqe 
There is a constant $R > 1$, depending only on $\mu$ and $f$, such that
\eqb \label{eqn-nu-annulus-swallow}
\BB P\left( \BB Z^d \cap \left(  B_{n}^{|\cdot|}(0) \setminus B_{n-1}^{|\cdot|}(0) \right) \subset \mcl V\left( A_{\sigma_{R n}}  \right) \right) = 1-o_n^\infty(n) .
\eqe  
In particular, a.s.\ $\BB Z^d\setminus \mcl V(A_{\tau_\infty})$ is a finite set.
\end{thm}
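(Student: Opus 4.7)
I propose a racing argument: I aim to show that the cluster $A_t$ covers the entire annulus $B_n^{|\cdot|}(0) \setminus B_{n-1}^{|\cdot|}(0)$ before it can first exit $B_{Rn}^{|\cdot|}(0)$. The guiding intuition is that the hypothesis $\alpha < 1 + (\ol\rho\,\ol\kappa\,\lambda)^{-1}$ is precisely the sharp condition under which I can choose a finite $R > 1$ so that the tangential $D$-distance around $\partial B_n^{|\cdot|}(0)$ is strictly less than the radial $D$-distance from $\partial B_n^{|\cdot|}(0)$ to $\partial B_{Rn}^{|\cdot|}(0)$. By the scaling Lemma~\ref{prop-metric-scale}, both of these distances scale as $n^{1-\alpha}$, so the ratio (hence the choice of $R$) is independent of $n$.

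\textbf{Geometric step (choosing $R$).} First I use the definition~\eqref{eqn-ball-norm-diam} of $\lambda$ together with Lemma~\ref{prop-metric-scale} to see that any two points of $\partial B_n^{|\cdot|}(0)$ can be joined by a tangential piecewise linear path of $D$-length at most $(\lambda + o_n(1))\, n^{1-\alpha}$. Next, parametrising a radial segment from a point $z \in \partial B_n^{|\cdot|}(0)$ to the radially corresponding point of $\partial B_{Rn}^{|\cdot|}(0)$ by $\mu$-length and using $\mu(z) \geq |z|/\ol\rho$ and $f_0 \leq \ol\kappa$, I compute directly from~\eqref{eqn-len-bar-def} that its $D$-length is at least $\frac{1 - R^{1-\alpha}}{(\alpha - 1)\, \ol\rho\, \ol\kappa}\, n^{1-\alpha}$ when $\alpha > 1$, and at least $\frac{\log R}{\ol\rho\, \ol\kappa}$ when $\alpha = 1$. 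The strict inequality $\lambda < \frac{1 - R^{1-\alpha}}{(\alpha - 1)\, \ol\rho\, \ol\kappa}$ can be arranged for $R$ sufficiently large exactly because $(\alpha - 1)\, \ol\rho\, \ol\kappa\, \lambda < 1$, which is the hypothesis; the $\alpha = 1$ case is similar with $R > e^{\ol\rho\, \ol\kappa\, \lambda}$.

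\textbf{Probabilistic step (the race).} I next invoke the FPP-to-$D$ comparison developed in Section~\ref{sec-fpp-estimates}: restricted to the annular region $B_{2Rn}^{|\cdot|}(0) \setminus B_{n/2}^{|\cdot|}(0)$, and with probability $1 - o_n^\infty(n)$, the passage times satisfy $T(u,v) = D(u,v)(1 + o_n(1))$ uniformly over all pairs of vertices $u,v$ in the region; confining to an annular region sidesteps the singularity of $D$ at the origin which occurs for $\alpha \geq 1$. On this high-probability event, fix any $w \in \BB Z^d \cap (B_n^{|\cdot|}(0)\setminus B_{n-1}^{|\cdot|}(0))$. For each lattice vertex $u$ with $|u| > Rn$, fix an FPP geodesic $\eta_u$ from $0$ to $u$ and let $v(u)$ be its last vertex in $B_n^{|\cdot|}(0)$; then $T(0,u) \geq T(0,v(u)) + T(v(u),u)$. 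The geometric step together with the $D$-approximation gives $T(v(u),u) \geq \frac{1 - R^{1-\alpha}}{(\alpha-1)\,\ol\rho\,\ol\kappa}\, n^{1-\alpha}(1 - o_n(1))$, while $T(v(u),w) \leq (\lambda + o_n(1))\, n^{1-\alpha}$ using a tangential path through $\partial B_n^{|\cdot|}(0)$ followed by a short radial step of $D$-length $O(n^{-\alpha}) = o_n(n^{1-\alpha})$. Hence
\[
T(0,w) \leq T(0,v(u)) + T(v(u),w) < T(0,v(u)) + T(v(u),u) \leq T(0,u)
\]
for all large $n$, uniformly in $u$. Minimising over $u$ with $|u| > Rn$ yields $T(0,w) < \sigma_{Rn}$, so $w \in A_{\sigma_{Rn}}$. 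A union bound over the $\preceq n^{d-1}$ lattice vertices $w$ in the annulus preserves the $o_n^\infty(n)$ error and gives~\eqref{eqn-nu-annulus-swallow}. Summing in $n$ and applying Borel-Cantelli yields that a.s.\ only finitely many annuli fail to be covered, hence $\BB Z^d \setminus \mcl V(A_{\tau_\infty})$ is finite.

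\textbf{Main obstacle.} The main technical difficulty lies in the probabilistic step: obtaining a multiplicative FPP-to-$D$ approximation $T(u,v) = D(u,v)(1+o_n(1))$ that is \emph{uniform} over all vertex pairs in an annular region and has error decaying faster than $n^{-d}$, all in the regime $\alpha \geq 1$ where the global shape theorem of Theorem~\ref{thm-alpha<1} breaks down because $D$ blows up at the origin. This requires an annular, local analogue of the speed-of-convergence estimate in the spirit of Kesten~\cite{kesten-fpp-speed}, presumably available from the local comparison framework advertised in the overview as the basic tool of the paper. A secondary technical point is that $v(u)$ is a random vertex determined by the FPP, so the $D$-approximation really needs to hold simultaneously for all candidate basepoints $v \in \BB Z^d \cap \partial B_n^{|\cdot|}(0)$; this is handled by the uniformity over pairs in the annular region.
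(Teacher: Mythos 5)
Your geometric step captures precisely the right mechanism: the hypothesis $\alpha - 1 < (\ol\rho\,\ol\kappa\,\lambda)^{-1}$ is exactly what allows one to choose $R$ large enough that the tangential $D$-diameter $\lambda n^{1-\alpha}$ of the sphere $\partial B_n^{|\cdot|}(0)$ is strictly less than the radial $D$-distance $\frac{1-R^{1-\alpha}}{\ol\rho\,\ol\kappa\,(\alpha-1)}\,n^{1-\alpha}$ from $\partial B_n^{|\cdot|}(0)$ to $\partial B_{Rn}^{|\cdot|}(0)$, and both quantities indeed scale as $n^{1-\alpha}$ by Lemma~\ref{prop-metric-scale}. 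These are the same two estimates, \eqref{eqn-ball-D-diam} and~\eqref{eqn-ball-dist-lower}, that drive the paper's proof. One small point of hygiene: you phrase the lower bound as a computation along "a radial segment," but a lower bound on the $D$-length of one specific path does not lower-bound the infimum over all paths. The saving grace is that the argument—the one-sided estimate $|\gamma(t)|\leq |z|+\ol\rho t$ combined with $f_0\leq\ol\kappa$—applies to \emph{every} piecewise linear path parametrized by $\mu$-length, which is precisely Lemma~\ref{prop-metric-compare}; you should invoke that lemma explicitly rather than a radial path.

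The probabilistic step is where your route diverges from the paper's, and where a genuine gap remains. You invoke a uniform two-point approximation $T(u,v) = D(u,v)(1+o_n(1))$ over all vertex pairs in an annulus with superpolynomially small failure probability, and build a "racing argument" around a random point $v(u)$ on the FPP geodesic to the exit vertex. The paper does not provide such a two-point estimate. What Lemmas~\ref{prop-hull-time-metric} and~\ref{prop-hull-expand-metric} actually give are bounds on the \emph{conditioned increment} $T(0,v)-\tau$ relative to a stopping time $\tau$ for the filtration $\{\mcl F_t\}$, in terms of the $D$-distance from $v$ to $A_\tau$; their proofs hinge on the strong Markov property (Lemma~\ref{prop-fpp-law}) and the comparison to standard FPP started from the frozen cluster boundary (Definition~\ref{def-normalized-exp-rv}). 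Upgrading that to a uniform, symmetric two-point estimate is not a formality—the law of $T(u,v)$ is not exchangeable under $u\leftrightarrow 0$, and you would in particular need to handle the fact that the cluster at time $T(0,u)$ is much larger than $\{u\}$. The paper sidesteps this entirely: it conditions on $\mcl F_{\sigma_n}$ (where $\sigma_n$ is the exit time from $B_n^{|\cdot|}(0)$) and races $\sigma_{Rn}-\sigma_n$ (lower-bounded via Lemma~\ref{prop-hull-expand-metric} and~\eqref{eqn-ball-dist-lower}, after a monotonicity reduction via Lemma~\ref{prop-hitting-time-law} to the worst-case realization of $A_{\sigma_n}$) against $T(0,w)-\sigma_n$ for $w$ in the annulus (upper-bounded via Lemma~\ref{prop-hull-time-metric} and~\eqref{eqn-ball-D-diam}). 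This is the same racing idea you describe, but anchored at the stopping time $\sigma_n$ rather than at a random geodesic point $v(u)$, which is exactly what makes the existing machinery applicable. If you replace your two-point comparison by this conditioning, the rest of your argument—the union bound over the $O(n^{d-1})$ annulus vertices and the Borel--Cantelli conclusion—goes through as you wrote it.
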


\begin{remark}
In the case when $\alpha = 1$, it will be clear from the proof of Theorem~\ref{thm-alpha-near-1} that a.s.\ $\tau_\infty = \infty$, so $\mcl V(A_{\tau_\infty}) = \BB Z^d$. 
\end{remark}

See Figure~\ref{fig-sim4} for simulations of $f$-weighted FPP clusters in the setting of Theorem~\ref{thm-alpha-near-1}.

\begin{figure}[ht!]
\centering
\begin{subfigure}{.5\textwidth}
  \centering
  \includegraphics[width=1\linewidth]{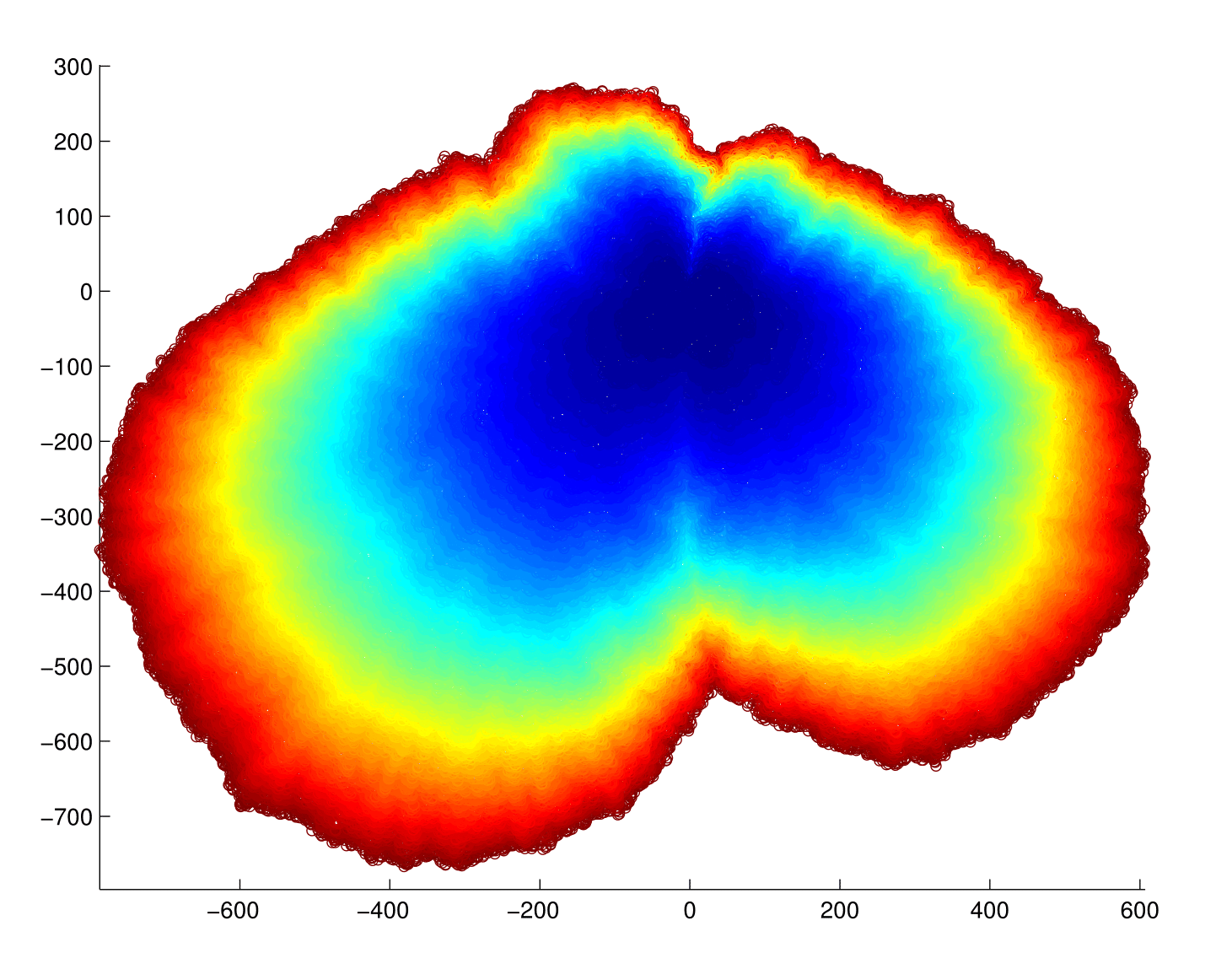} 
\end{subfigure}%
\begin{subfigure}{.5\textwidth}
  \centering
  \includegraphics[width=1\linewidth]{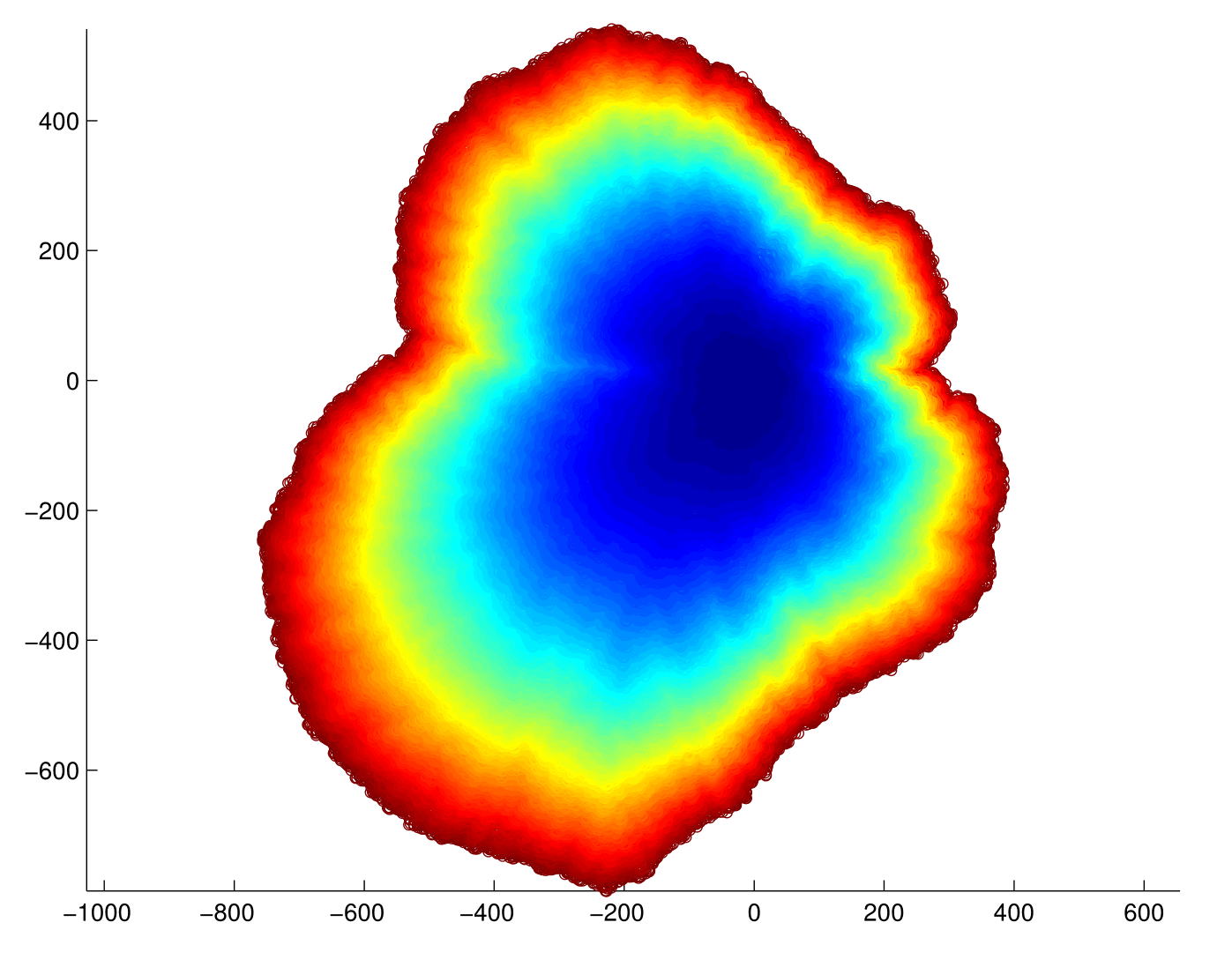} 
\end{subfigure} 
\caption{\textbf{Left panel:} A simulation of an $f$-weighted FPP cluster with $f(z) = \|z\|_1^{1.2}$, where here $\|\cdot\|_1$ is the $L^1$ norm. The figure illustrates the conclusion of Theorem~\ref{thm-alpha-near-1}, namely that the clusters will a.s.\ cover all but finitely many points of $\BB Z^2$ before reaching $\infty$. However, these clusters need not grow in a symmetric manner
\textbf{Right panel:} A simulation of an $f$-weighted FPP cluster with $f(z) = \|z\|_1 $ (so $\alpha=1$). The clusters do not appear to be converging toward a deterministic limit shape, but it is conceivable that they converge toward a random limit shape or that they converge toward a deterministic limit shape at a very slow rate.
}\label{fig-sim4} 
\end{figure}

\subsection{Outline}

The remainder of this paper is structured as follows. In Section~\ref{sec-prelim}, we prove some basic properties of the weighted FPP model of Definition~\ref{def-fpp} at a greater level of generality than what we will consider in the remainder of the paper. In Section~\ref{sec-fpp-estimates}, we prove several lemmas which allow us to approximate $f$-weighted FPP passage times via the deterministic metric $D$ of~\eqref{eqn-weighted-metric}. In Section~\ref{sec-small-alpha}, we use these estimates to prove Theorems~\ref{thm-alpha<1} and Theorem~\ref{thm-alpha-near-1}. In Section~\ref{sec-cone-contain}, we prove Theorem~\ref{thm-alpha>1}. In Section~\ref{sec-open-problems}, we list some open problems related to the model studied in this paper.

\section{General results for weighted FPP}
\label{sec-prelim}

Throughout this section we assume we are in the setting of Definition~\ref{def-fpp} for a general choice of graph $G$, starting vertex $v_0$, and weights $\op{wt}$. We recall in particular the FPP clusters $\{A_t\}_{t\geq 0}$ and the FPP filtration $\{\mcl F_t\}_{t\geq 0}$. 

In this section we will point out some basic properties of the model of Definition~\ref{def-fpp}. In later sections we will only need the case where $G = \BB Z^d$, $v_0 = 0$, and $\op{wt}$ is as in~\eqref{eqn-f-weights}, but it is no more difficult to treat the general case. 
In Section~\ref{sec-fpp}, we state the strong Markov property of our model (which follows from the fact that the passage times have an exponential distribution) and deduce some basic consequences. In Section~\ref{sec-comparison}, we will prove a lemma which allows us to compare weighted FPP to standard FPP (equivalently, the unweighted Eden model). In Section~\ref{sec-one-end}, we will prove a weak form of one-endedness for weighted FPP clusters in the case where the graph $G$ is infinite and the passage time to $\infty$, $\tau_\infty$, is a.s.\ finite.

\subsection{Markov property and applications}
\label{sec-fpp}

The following lemma gives a Markov property for weighted FPP clusters, and is the reason why we consider exponential passage times.

\begin{lem}[Strong Markov property] \label{prop-fpp-law}
Let $\tau$ be a stopping time for the FPP filtration $\{\mcl F_t\}_{t\geq 0}$. The conditional law of the passage times of the explored edges, $\{X_e \,:\, e\in \mcl E(G\setminus A_\tau)\}$ given $\mcl F_\tau$ is described as follows.
\begin{itemize}
\item For $e\in \mcl E(G)\setminus \left( \mcl E(A_\tau) \cup \partial \mcl E( A_\tau) \right)$, the conditional law of $X_e$ is the same as its marginal law. 
\item For $e\in \partial \mcl E(A_\tau)$, the conditional law of $e$ is that of an exponential random variable of parameter $\op{wt}(e)$ plus $\tau - \wt T^\tau(e)$, where $\wt T^\tau(e)$ is the minimum of $T(\eta)$ over all paths $\eta$ in $A_\tau$ joining $v_0$ to an endpoint of $e$. 
\item The random variables $\{X_e \,:\, e\in \mcl E(G) \setminus \mcl E(A_\tau)\}$ are conditionally independent given $\mcl F_\tau$.
\end{itemize}
\end{lem}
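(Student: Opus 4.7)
The plan is to reduce the claim to the memoryless property of the exponential distribution. I would first establish the Markov property at deterministic times $\tau = t$, then extend to general stopping times via the fact that $\{A_t\}_{t \geq 0}$ is piecewise constant in $t$, jumping at a discrete increasing sequence of random times $0 < T_1 < T_2 < \cdots$.

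For a deterministic time $t$, the $\sigma$-algebra $\mcl F_t$ is generated by events of the form $\{A_t = H\} \cap \{X_e = x_e,\ e \in \mcl E(H)\}$, where $H$ ranges over finite connected subgraphs of $G$ containing $v_0$ and $(x_e)_{e\in \mcl E(H)}$ over vectors of positive reals. The key observation is that, once $H$ and these values are fixed, the quantity $\wt T^t(e')$ is determined for every $e' \in \bdy \mcl E(H)$, and one has the set-theoretic identity
\[
\{A_t = H\} \cap \{X_e = x_e,\, e \in \mcl E(H)\} \;=\; \{X_e = x_e,\, e \in \mcl E(H)\} \cap \bigcap_{e' \in \bdy \mcl E(H)} \{X_{e'} > t - \wt T^t(e')\},
\]
with no constraint at all imposed on $X_e$ for $e \notin \mcl E(H) \cup \bdy \mcl E(H)$. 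Indeed, an edge $e' \in \bdy \mcl E(H)$ fails to have been added to $A_t$ precisely when its passage time, added to the best time to reach it through $H$, exceeds $t$; edges further from $H$ in $G$ cannot affect the identity of $A_t = H$. Since the original $X_e$ are independent and the conditioning event factorizes as a product over edges, conditional independence of the $X_e$ for $e \notin \mcl E(A_t)$ given $\mcl F_t$ follows directly. The memoryless property of the exponential distribution then gives that, for $e' \in \bdy \mcl E(A_t)$, the conditional law of $X_{e'} - (t - \wt T^t(e'))$ is exponential with parameter $\op{wt}(e')$, while for $e \notin \mcl E(A_t) \cup \bdy \mcl E(A_t)$ the conditional and marginal laws of $X_e$ coincide.

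To extend to a general stopping time $\tau$, I would approximate $\tau$ from above by the countably-valued stopping times $\tau_n := 2^{-n}\lceil 2^n \tau\rceil$, condition on each value of $\tau_n$ and apply the deterministic-time result (using that $\mcl F_{\tau_n} \cap \{\tau_n = t_k\} \subset \mcl F_{t_k}$), and then pass to the limit. Since $\{A_t\}$ is piecewise constant with only finitely many jumps in any bounded interval a.s., on the event $\{\tau < T_k\} \cap \{\tau \geq T_{k-1}\}$ we have $A_{\tau_n} = A_{T_{k-1}} = A_\tau$ and $\wt T^{\tau_n}(e) = \wt T^\tau(e)$ for all sufficiently large $n$, which lets the distributional identities pass to the limit.

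The main technical subtlety is measurability bookkeeping — verifying that $\wt T^\tau(e)$ is $\mcl F_\tau$-measurable (immediate once one notes it depends only on the passage times of edges in $A_\tau$) and that the limit argument in the last step is compatible with the fact that $\mcl F_{\tau_n} \supsetneq \mcl F_\tau$. An arguably cleaner route avoiding this issue is to observe that $\{A_t\}$ is a continuous-time Markov chain on the (countable) state space of finite connected subgraphs of $G$ containing $v_0$, with transition rate $\op{wt}(e)$ from state $H$ to $H \cup \{e\}$ for each $e \in \bdy \mcl E(H)$; the strong Markov property then follows from standard Markov-chain theory, and the explicit description of the conditional laws is read off from the jump mechanism together with memorylessness.
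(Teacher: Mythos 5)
Your proposal matches the paper's proof exactly in structure: establish the result for deterministic $\tau$ via the memoryless property of the exponential distribution, then pass to countably-valued stopping times, then to general $\tau$ by approximating from above with countably-valued stopping times and using that $A_{\tau_n}=A_\tau$ eventually. The alternative continuous-time Markov chain framing is a nice observation but would need a little extra care, since the lemma describes the conditional law of the actual passage times $X_e$ (which are part of $\mcl F_\tau$), not merely the strong Markov property of the cluster process $A_t$.
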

\begin{proof}
The case where $\tau$ is deterministic follows from the memoryless property of exponential random variables. From this, we immediately obtain the case where $\tau$ takes on only countably many possible values. The case of a general stopping time $\tau$ is proven by approximating $\tau$ by a sequence of stopping times which take on only countably many possible values.
\end{proof}

Lemma~\ref{prop-fpp-law} motivates the following definition. 

\begin{defn} \label{def-modified-exp-rv}
For $t\in \BB R$ and an edge $e \in \mcl E(G)\setminus \mcl E(A_t)$, let
\eqb \label{eqn-modified-exp-rv}
\wh X_e^t :=
\begin{cases}
X_e  & e\notin \partial \mcl E(A_t) \\
X_e  -  t + \wt T^{t}(e)   &e \in \partial \mcl E(A_t)
\end{cases}
\eqe 
where $\wt T^{t}(e)$ is as in Lemma~\ref{prop-fpp-law}. For a path $ \eta$ in $ G \setminus A_\tau $, let
\eqb \label{eqn-modified-exp-sum}
\wh T^t( \eta) := \sum_{i=1}^{| \eta|} \wh X_{ \eta(i)}^t .
\eqe 
\end{defn}

By Lemma~\ref{prop-fpp-law}, if $\tau$ is a stopping time for the filtration $\{\mcl F_t\}_{t\geq 0}$, then the conditional law given $\mcl F_{\tau}$ of $\{\wh X_{e}^\tau\}_{e \in \mcl E(G)}$ is that of a collection of independent exponential random variables where each $\wh X_{e}^t$ has parameter $\op{wt}(e)$. Furthermore, if $\eta$ is a path in $\mcl E(G) \setminus A_t$ with only one edge lying in $\partial \mcl E(A_t)$, then 
\eqb \label{eqn-modified-time-compare}
\wh T^t(\eta) = T(\eta) -t  + \wt T^t(\eta(1)) .
\eqe 

Lemma~\ref{prop-fpp-law} easily implies the following, which gives the equivalence of the model of Definition~\ref{def-fpp} and the weighted Eden model described in Section~\ref{sec-overview}.

\begin{lem} \label{prop-fpp-equiv}
Assume we are in the setting of Definition~\ref{def-fpp} with $\op{wt}(e) > 0$ for each $e\in \mcl E(G)$. Let $t_0 = 0$ and for $n\in\BB N$, let $t_n$ be the smallest $t\geq 0$ for which $\# A_t \geq n+1$. Let $\wt A_n := \mcl E( A_{t_n})$. Then the law of the sequence of random sets $\{ \wt A_n \}_{n\in\BB N}$ is described as follows. Let $\wt A_0 := \{\emptyset\}$. Let $ e_1$ be chosen uniformly from the set of edges of $G$ incident to $v_0$ and let $\wt A_1 := \{e_1\}$. 
Inductively, if $n\in\BB N$ and $\wt A_{n-1}$ has been defined, let $e_n$ be sampled from the uniform measure on the set of edges adjacent to $\wt A_{n-1}$ weighted by $\op{wt}$. Let $\wt A_n := \wt A_{n-1} \cup \{e_n\}$.  
\end{lem}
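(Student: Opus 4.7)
The plan is to apply the strong Markov property (Lemma~\ref{prop-fpp-law}) at each stopping time $t_n$ and invoke the standard ``competing exponentials'' fact: if $Y_1,\dots,Y_k$ are independent exponentials with rates $\lambda_1,\dots,\lambda_k$, then $\arg\min_i Y_i = j$ with probability $\lambda_j/\sum_i \lambda_i$, and this argmin is independent of $\min_i Y_i$.

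First I would verify that each $t_n$ is a stopping time for $\{\mcl F_t\}_{t\geq 0}$ and that almost surely no two edges are added simultaneously, so that $\wt A_{n+1}\setminus \wt A_n$ consists of a single edge $e_{n+1}$; the latter is immediate from the absolute continuity of the passage times. Next I would identify $e_{n+1}$ explicitly. Because $A_s = A_{t_n}$ for $s\in[t_n,t_{n+1})$, the edge added at time $t_{n+1}$ must lie in $\bdy\mcl E(A_{t_n})$, and the path from $v_0$ through $e_{n+1}$ realizing the passage time $t_{n+1}$ must consist of a path inside $A_{t_n}$ concatenated with $e_{n+1}$. Hence $t_{n+1} = \wt T^{t_n}(e_{n+1}) + X_{e_{n+1}}$, i.e.\ $\wh X^{t_n}_{e_{n+1}} = t_{n+1}-t_n$, where $\wh X^{t_n}_e$ is the modified passage time of~\eqref{eqn-modified-exp-rv}. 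Comparing with every other $e\in\bdy\mcl E(A_{t_n})$, which satisfies $\wt T^{t_n}(e)+X_e \geq t_{n+1}$ since it has not yet been added, gives
\[
e_{n+1} \;=\; \arg\min_{e\in\bdy \mcl E(A_{t_n})} \wh X^{t_n}_e .
\]

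Lemma~\ref{prop-fpp-law} applied at $\tau = t_n$ says that conditional on $\mcl F_{t_n}$, the family $\{\wh X^{t_n}_e : e\in\bdy\mcl E(A_{t_n})\}$ consists of independent exponentials with rates $\op{wt}(e)$. The competing-exponentials fact then yields
\[
\BB P\bigl(e_{n+1} = e^* \bigm| \mcl F_{t_n}\bigr) \;=\; \frac{\op{wt}(e^*)}{\sum_{e\in\bdy \mcl E(A_{t_n})}\op{wt}(e)}
\]
for each $e^*\in\bdy \mcl E(A_{t_n})$, independently of $t_{n+1}-t_n$; this is exactly the transition kernel of the weighted Eden model. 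The base case $n=0$ is handled identically: the $X_e$ for $e\ni v_0$ are independent exponentials of rate $\op{wt}(e)$, so $e_1 = \arg\min_{e\ni v_0} X_e$ is selected with probability proportional to $\op{wt}(e)$. Iterating and using the tower property identifies the full law of $\{\wt A_n\}_{n\in\BB N}$.

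The main technical point is the bookkeeping in the middle paragraph: one has to confirm that the minimum-time path from $v_0$ through $e_{n+1}$ cannot sneak through some not-yet-added edge, so that the activation time of $e_{n+1}$ is really $\wt T^{t_n}(e_{n+1}) + X_{e_{n+1}}$ and no subtler minimization over longer paths interferes. This is forced by the definition of $t_{n+1}$ together with the fact that $\mcl E(A_s) = \mcl E(A_{t_n})$ for all $s<t_{n+1}$. Everything else is a routine application of the strong Markov property already established in Lemma~\ref{prop-fpp-law}.
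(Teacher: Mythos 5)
Your argument matches the paper's own proof: identify the edge added at time $t_{n+1}$ as $\arg\min_{e\in\bdy\mcl E(A_{t_n})} \wh X^{t_n}_e$, invoke the strong Markov property (Lemma~\ref{prop-fpp-law}) to see these are conditionally independent exponentials with rates $\op{wt}(e)$ given $\mcl F_{t_n}$, and apply the competing-exponentials fact to read off the transition probabilities. The paper carries out the competing-exponentials computation explicitly via an integral where you cite it as a known fact, and it adds one short remark you leave implicit — that the conditional law depends on $\mcl F_{t_n}$ only through $\wt A_n$, so conditioning on the past $\{\wt A_k\}_{k\le n}$ gives the same kernel — but otherwise the two proofs coincide.
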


We next record another application of the random variables of Definition~\ref{def-modified-exp-rv}, namely a monotonicity statement for realizations of the cluster $A_\tau$ when $\tau$ is a stopping time for $\{\mcl F_t\}_{t\geq 0}$. 

\begin{lem} \label{prop-hitting-time-law}
Let $\tau$ be a stopping time for $\{\mcl F_t\}_{t\geq 0}$. Let $V$ be a subset of $\mcl V(G)  $ chosen in a manner which is measurable with respect to $A_\tau$. Let $\sigma$ be the smallest $t \geq 0 $ for which $\mcl V(A_t) \cap V \not=\emptyset$. 
\begin{enumerate}
\item $\sigma-\tau$ is conditionally independent from $\mcl F_\tau$ given $A_\tau$. \label{item-hitting-time-ind}
\item Let $\frk A$ and $\frk A'$ be two possible realizations of $A_\tau$ such that $\frk A \subset \frk A'$ and the realizations of $V$ corresponding to $\frk A$ and $\frk A'$ are the same. Then the conditional law of $\sigma-\tau$ given $\{A_\tau = \frk A \}$ stochastically dominates the conditional law of $\sigma-\tau$ given $\{A_\tau = \frk A'\}$. \label{item-hitting-time-dom}
\end{enumerate}
\end{lem}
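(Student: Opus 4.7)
The plan is to express $\sigma - \tau$ as a deterministic functional of $A_\tau$ and the modified passage-time variables $\wh X_e^\tau$ of Definition~\ref{def-modified-exp-rv}, and then read off both conclusions from this representation together with the strong Markov property (Lemma~\ref{prop-fpp-law}). First I would reduce to the event $\{V \cap \mcl V(A_\tau) = \emptyset\}$, which is $A_\tau$-measurable; on the complementary event $\sigma \leq \tau$ is $\mcl F_\tau$-measurable, and both conclusions are either vacuous or direct.

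Next I would establish the path-decomposition identity
\begin{equation*}
\sigma - \tau = \inf_{v \in V}\;\inf_{\eta_2}\; \wh T^\tau(\eta_2),
\end{equation*}
where, for each $v \in V$, the inner infimum is over paths $\eta_2$ in $\mcl E(G) \setminus \mcl E(A_\tau)$ from a vertex of $\mcl V(A_\tau)$ to $v$ whose first edge lies in $\partial \mcl E(A_\tau)$ and whose subsequent edges have neither endpoint in $\mcl V(A_\tau)$. To prove it, given any path $\eta$ from $v_0$ to $v \notin \mcl V(A_\tau)$, let $k$ be the largest index for which the $k$th vertex of $\eta$ lies in $\mcl V(A_\tau)$, and split $\eta$ at that vertex into a prefix $\eta_1 \subset \mcl E(A_\tau)$ and a suffix $\eta_2$ of the described form. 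Substituting $X_{e^*} = \wh X_{e^*}^\tau + \tau - \wt T^\tau(e^*)$ for the starting edge $e^* = \eta_2(1) \in \partial \mcl E(A_\tau)$, $X_e = \wh X_e^\tau$ for the remaining edges of $\eta_2$, and the bound $T(\eta_1) \geq \wt T^\tau(e^*)$, I obtain $T(\eta) \geq \tau + \wh T^\tau(\eta_2)$. The reverse inequality follows by concatenating any admissible $\eta_2$ with the $T$-minimizing path in $A_\tau$ from $v_0$ to the appropriate endpoint of $e^*$.

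Given the identity, part~\ref{item-hitting-time-ind} is immediate: the right-hand side is a measurable function of $A_\tau$ (which determines the admissible paths and $V$) and of the family $\{\wh X_e^\tau\}_{e \in \mcl E(G) \setminus \mcl E(A_\tau)}$; by Lemma~\ref{prop-fpp-law} the conditional law of this family given $\mcl F_\tau$ is that of independent exponentials with parameters $\op{wt}(e)$, a distribution determined by $A_\tau$ alone. For part~\ref{item-hitting-time-dom} I would couple the two conditional laws by introducing a single independent family $\{Y_e\}_{e \in \mcl E(G)}$ of exponentials with parameters $\op{wt}(e)$ and applying the identity with $Y_e$ replacing $\wh X_e^\tau$. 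Given an admissible path from $\frk A$ to some $v \in V$, splitting it at the largest index whose vertex lies in $\mcl V(\frk A')$ produces an admissible path from $\frk A'$ to $v$ of no greater $Y$-length (since $\mcl V(\frk A) \subset \mcl V(\frk A')$ and the $Y_e$ are non-negative). Taking infima gives a pointwise inequality in the coupling, which is precisely the required stochastic domination.

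The main technical step is the path-decomposition identity; the two conclusions follow essentially immediately once it is in hand. The one subtle point in establishing the identity is verifying that the suffix $\eta_2$ really has the claimed form — its first edge in $\partial \mcl E(A_\tau)$ and its other edges having no endpoint in $\mcl V(A_\tau)$ — but both properties are built into the choice of $k$ as the last index whose vertex is in $\mcl V(A_\tau)$.
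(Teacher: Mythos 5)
Your proof is correct and takes essentially the same route as the paper. The paper packages the path decomposition by restricting attention to a family $H$ of simple paths satisfying three technical conditions (exactly one edge in $\bdy\mcl E(A_\tau)$, and a minimality condition on the prefix inside $A_\tau$) so that one gets an exact identity $T(\eta)-\tau = \wh T^\tau(\wh\eta)$ for each $\eta\in H$; you instead prove the inequality $T(\eta)\geq\tau+\wh T^\tau(\eta_2)$ for an \emph{arbitrary} path by using $T(\eta_1)\geq\wt T^\tau(e^*)$, and obtain the reverse direction by explicit concatenation with a $T$-minimizing prefix in $A_\tau$. These are two bookkeeping styles for the same identity, and yours is arguably slightly cleaner since it avoids verifying that the optimizing path lands in the set $H$. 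For part~\ref{item-hitting-time-dom}, your coupling via a single iid exponential family and truncation of an $\frk A$-admissible suffix at the last vertex in $\mcl V(\frk A')$ is precisely the paper's surjection $\wh\eta\mapsto\wh\eta'$ from $\wh H$ to $\wh H'$, just phrased in terms of the coupled random variables directly.
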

\begin{proof}
First we prove assertion~\ref{item-hitting-time-ind}. 
Let $H$ be the set of simple paths $\eta$ for which the following is true.
\begin{enumerate}
\item $\eta$ connects $v_0$ to a vertex in $V$. \label{item-path-set-connect}
\item $\eta$ contains exactly one edge in $\bdy \mcl E(A_\tau)$. \label{item-path-set-bdy}
\item Let $i_\eta$ be the time $i$ for which $\eta(i_\eta) \in \bdy \mcl E(A_\tau)$. There is no path $\wt\eta$ in $A_\tau$ whose last edge shares an endpoint with $\eta(i_\eta)$ and which satisfies $T(\wt\eta) < T(\eta|_{[1,i_\eta-1]})$. \label{item-path-set-min}
\end{enumerate} 
For $\eta\in H$, we write 
\[
\wh\eta := \eta|_{[i_\eta , |\eta|]_{\BB Z}} .
\]
Then a.s.\ $\sigma = \min_{\eta \in H} T(\eta) $. 

Define the random variables $\wh X_e^{\tau }$ for $e\in \mcl E(G) \setminus \mcl E(A_{\tau} )$ and the passage times $\wt T^\tau(\cdot)$ and $\wh T^{\tau }(\cdot)$ as in Definition~\ref{def-modified-exp-rv}. Note that condition~\ref{item-path-set-min} in the definition of $H$ implies that $\wt T^\tau(\eta(i_\eta)) = T(\eta\setminus \wh\eta)$. Hence for $\eta \in H$, 
\alb
T(\eta) = T(\wh\eta) + T(\eta \setminus \wh\eta)   =    T(\wh\eta) + \wt T^\tau(\eta(i_\eta)) .
\ale
By~\eqref{eqn-modified-time-compare}, we obtain $T(\eta) -\tau  = \wh T^\tau(\wh\eta)$. Therefore,
\eqb \label{eqn-hitting-time-diff}
\sigma-\tau = \min_{\eta \in H} \wh T^\tau(\wh \eta) 
\eqe 
is a deterministic functional of the set $V$ and the random variables $\wh X_e^\tau$ for $e\in \mcl E(G)\setminus \mcl E(A_\tau)$. By Lemma~\ref{prop-fpp-law}, the conditional law of this latter collection of random variables given $\mcl F_\tau$ depends only on $A_\tau$, so this collection of random variables is conditionally independent from $\mcl F_\tau$ given $A_\tau$. We thus obtain assertion~\ref{item-hitting-time-ind}.

Now suppose we are in the setting of assertion~\ref{item-hitting-time-dom}. Let $\wh H$ be the set of simple paths $\wh\eta$ whose first edge belongs to $\bdy \mcl E( \frk A)$, none of whose other edges belong to $\bdy \mcl E(\frk A)$, and whose last edge is incident to a vertex in $V$. In the notation introduced at the beginning of the proof, $\wh H$ is the set of paths $\wh \eta$ for $\eta \in H$ on the event $\{A_\tau = \frk A\}$. Define $\wh H'$ similarly but with $\frk A'$ in place of $\frk A$. For $\wh\eta \in \wh H$, let $i'$ be the largest $i\in [1,|\wh\eta|]_{\BB Z}$ with $\wh\eta(i) \in \partial \frk A'$ and let $\wh\eta' := \wh\eta|_{[i' , |\wh\eta|]_{\BB Z}}$. Then $\wh\eta'\in \wh H'$. 

Let $\{\wh X_e \,:\, e\in \mcl E(G) \}$ be a collection of independent exponential random variables, each with parameter $\op{wt}(e)$. For a path $\eta$ in $G$, let $\wh T(\eta) := \sum_{e\in \eta} \wh X_e$. By~\eqref{eqn-hitting-time-diff}, the conditional law of $\sigma-\tau$ given $\{A_\tau = \frk A\}$ (resp. $\{A_\tau = \frk A\}$) is the same as the law of 
\eqbn
\min_{\wh\eta \in \wh H} \wh T(\wh \eta) \quad \left(\text{resp. } \min_{\wh\eta' \in \wh H'} \wh T(\wh \eta') \right) .
\eqen
Since $\wh\eta \mapsto \wh\eta'$ is a surjective map from $\wh H$ to $\wh H'$, we obtain the desired stochastic domination. 
\end{proof}

\subsection{Comparison to standard exponential FPP}
\label{sec-comparison}

In this subsection, we will record some observations which allow us to compare the model of Section~\ref{sec-comparison} to standard FPP on $G$ (i.e.\ with all of the edge weights $\op{wt}(e)$ equal to 1). For this purpose we first define a collection of iid exponential random variables which are related to the weighted FPP passage times $X_e$. 

\begin{defn} \label{def-normalized-exp-rv}
For $t  \geq 0$ and an edge $e \in \mcl E(G\setminus A_t)$, let $\ol X^t_e := \op{wt}(e) \wh X_e^t$, with $\wh X_e^t$ as in Definition~\ref{def-modified-exp-rv}. Also let $\{\ol X_e^t\}_{e\in \mcl E(A_t)}$ be a collection of random variables whose conditional law given $\mcl F_t$ is that of a family of iid exponential random variables with parameter 1, independent from the random variables $\wh X_e^t$ for $e\in \mcl E(G\setminus A_t)$. For a path $ \eta$ in $ G $, let
\eqb \label{eqn-normalized-exp-sum}
\ol T^t( \eta) := \sum_{i=1}^{| \eta|} \ol X_{ \eta(i)}^t .
\eqe 
For $v\in \mcl V(G)$, also let $\{\ol A_{v,s}^t \}_{s \geq 0}$ be the FPP clusters started from $v$ corresponding to the collection of random variables $\{\ol X_e^t\}_{e \in \mcl E(G)}$, i.e.\ $e \in \mcl E( \ol A_{v,s}^t)$ if and only if there is a path $\eta$ in $\mcl E(G)$ joining $v$ to $e$ with $\ol T^t(\eta) \leq s$ and $\mcl V(\ol A_{v,s}^t)$ is the set of endpoints of edges in $\mcl E( \ol A_{v,s}^t)$.
\end{defn} 

We also define an \emph{FPP geodesic} from $v \in \mcl V(G)$ to $e \in \mcl E(G) $ to be a path $\eta$ in $G$ such that $\eta(1)$ is incident to $v $, $\eta(|\eta|) = e$, and $T(\eta )$ is minimal among all such paths. If we do not specify the point $v$, we assume $v = v_0$ is the root vertex of $G$.  It is easy to see that there a.s.\ exists at most one FPP geodesic from $v$ to $e$. 

\begin{lem} \label{prop-normalized-compare}
Let $\tau$ be a stopping time for the FPP filtration $\{\mcl F_t\}_{t\geq 0}$ and define the random variables $\ol X^\tau_e$ for $e\in \mcl E(G)$ and the clusters $\ol A_{v,t}^\tau$ for $t\geq 0$ as in Definition~\ref{def-normalized-exp-rv}. Then the conditional law of $\{\ol X_e^\tau\}_{e \in \mcl E(G)}$ given $\mcl F_\tau$ is that of a collection of iid exponential random variables, each of which has parameter $1$. If $s > 0$ and $e\in \mcl E(G)\setminus \mcl E(A_\tau)$ then the following holds.
\begin{enumerate}
\item Suppose $e \in \mcl E(A_{\tau+s})$. Let $\eta_e : [1,|\eta_e|]_{\BB Z} \rta \mcl E(G)$ be the FPP geodesic from $v_0$ to $e$. Let $v $ be the last vertex in $\partial \mcl V(A_\tau)$ crossed by $\eta_e$ and let $\ol m := \max_{e'\in \eta_e \setminus \mcl E(A_\tau)} \op{wt}(e')$. Then $e\in \mcl E( \ol A_{v,\ol m  s}^\tau)$.  \label{item-weight-to-normal}
\item Suppose there exists a simple path $\eta$ in $G$ started from $v_0$ such that 
$e\in\eta$,
$\eta \cap \partial \mcl E(A_\tau) \not=\emptyset$, and 
with $\ol T^\tau(\eta)$ as in~\eqref{eqn-normalized-exp-sum},  
\eqbn \label{eqn-normal-to-weight-sum} 
\left( \min_{e' \in \eta} \op{wt}(e') \right)^{-1} \ol T^\tau(\eta) \leq s .
\eqen 
Then $e \in \mcl E(A_{\tau+s})$.   \label{item-normal-to-weight} 
\end{enumerate}
\end{lem}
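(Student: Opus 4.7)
Proof proposal. The iid $\mathrm{exp}(1)$ property of $\{\ol X_e^\tau\}_{e\in\mcl E(G)}$ conditional on $\mcl F_\tau$ is immediate from Lemma~\ref{prop-fpp-law}: the $\wh X_e^\tau$ for $e\notin\mcl E(A_\tau)$ are conditionally independent exponentials with parameter $\op{wt}(e)$, so multiplying by $\op{wt}(e)$ normalizes them to $\mathrm{exp}(1)$, while the $\ol X_e^\tau$ for $e\in\mcl E(A_\tau)$ are $\mathrm{exp}(1)$ and independent by construction. Both items (1) and (2) then rest on the following deterministic identity, a direct unpacking of Definition~\ref{def-modified-exp-rv}: if $\zeta$ is a simple path whose first edge $e_1$ lies in $\bdy\mcl E(A_\tau)$, whose initial vertex $v$ is the endpoint of $e_1$ lying in $\mcl V(A_\tau)$, and all of whose remaining edges lie in $\mcl E(G)\setminus(\mcl E(A_\tau)\cup\bdy\mcl E(A_\tau))$, then
\[
\wt T^\tau(v) + T(\zeta) \;=\; \tau + \wh T^\tau(\zeta),
\]
because only $\wh X^\tau_{e_1}$ incurs the $-\tau+\wt T^\tau(v)$ correction. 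The edgewise bound $\wh X^\tau_{e'} = \ol X^\tau_{e'}/\op{wt}(e')$ then yields $\wh T^\tau(\zeta) \leq (\min_{e'\in\zeta}\op{wt}(e'))^{-1}\ol T^\tau(\zeta)$.

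For item (1), take $\eta_e$ to be the a.s.\ unique FPP geodesic from $v_0$ to $e$ and let $v$ be the last vertex of $\eta_e$ in $\partial\mcl V(A_\tau)$. The tail $\eta_e^{\mathrm{out}}$ of $\eta_e$ from $v$ onward satisfies the identity's hypotheses: any re-entry into $\mcl V(A_\tau)$ after $v$ would have to cross $\partial\mcl V(A_\tau)$ a second time, contradicting the choice of $v$, so past $v$ the path exits $A_\tau$ immediately through an edge of $\bdy\mcl E(A_\tau)$ and thereafter only traverses edges with both endpoints outside $\mcl V(A_\tau)$. Meanwhile, the initial segment $\eta_e\setminus\eta_e^{\mathrm{out}}$ must itself be a geodesic in $A_\tau$ from $v_0$ to $v$ (else splicing in a shorter in-$A_\tau$ path would contradict minimality of $\eta_e$), so its $T$-length equals $\wt T^\tau(v)$. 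The identity gives $T(\eta_e)=\tau+\wh T^\tau(\eta_e^{\mathrm{out}})\leq\tau+s$, and rescaling yields $\ol T^\tau(\eta_e^{\mathrm{out}})\leq \ol m \wh T^\tau(\eta_e^{\mathrm{out}}) \leq \ol m s$, placing $e\in\mcl E(\ol A^\tau_{v,\ol m s})$ via $\eta_e^{\mathrm{out}}$.

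For item (2), first reduce to the case $e=\eta(|\eta|)$: any edge of $\eta$ strictly before the first boundary-crossing must itself lie in $\mcl E(A_\tau)$ (a simple induction along $\eta$ starting from $v_0\in\mcl V(A_\tau)$), incompatible with $e\notin\mcl E(A_\tau)$; and truncating $\eta$ at $e$ can only decrease $\ol T^\tau$ and increase $\min_{\eta}\op{wt}$, preserving the hypothesis. Let $j^\ast$ be the largest $i$ with $\eta(i)\in\bdy\mcl E(A_\tau)$ and let $\eta_2$ be the suffix of $\eta$ from position $j^\ast$, with initial vertex $v$ given by the $\mcl V(A_\tau)$-endpoint of $\eta(j^\ast)$; the maximality of $j^\ast$ forces $\eta(j^\ast)$ to exit rather than enter $A_\tau$ (otherwise subsequent edges would stay inside $A_\tau$, forcing $e\in\mcl E(A_\tau)$), so $\eta_2$ satisfies the identity's hypotheses. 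Concatenating a minimum-time path from $v_0$ to $v$ within $A_\tau$ with $\eta_2$ yields a path from $v_0$ through $e$ of total $T$-length $\wt T^\tau(v)+T(\eta_2)=\tau+\wh T^\tau(\eta_2)\leq\tau+(\min_{e'\in\eta}\op{wt}(e'))^{-1}\ol T^\tau(\eta)\leq\tau+s$, so $e\in\mcl E(A_{\tau+s})$. The principal delicacy in the whole argument lies in these structural claims — that the chosen tail subpath obeys the identity's hypotheses and that the initial segment of the FPP geodesic coincides with the in-$A_\tau$ geodesic to $v$; once the identity is in hand, the rest is arithmetic comparison of $\wh T^\tau$, $\ol T^\tau$, and the $\op{wt}$-factors.
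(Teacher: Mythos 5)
Your proof follows essentially the same route as the paper's: split the path at the (a.s.\ unique, for item (1)) boundary crossing, invoke the identity~\eqref{eqn-modified-time-compare} to relate $T$ to $\wh T^\tau$, and compare $\wh T^\tau$ to $\ol T^\tau$ edgewise via $\wh X^\tau_{e'} = \ol X^\tau_{e'}/\op{wt}(e')$. Your version is a bit more explicit about the structural claims (the tail subpath obeys the identity's hypotheses; the prefix of the geodesic realizes $\wt T^\tau$) that the paper leaves implicit, and you index the decomposition by the vertex $v$ rather than the boundary edge $\eta_e(i_e)$, writing $\wt T^\tau(v)$ for what the paper calls $\wt T^\tau(\eta_e(i_e))$ (an edge-indexed quantity); these coincide here because the boundary edge has a unique endpoint in $\mcl V(A_\tau)$, but strictly speaking you are using a vertex-indexed variant not defined in the paper. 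Likewise, your preliminary truncation of $\eta$ at $e$ in item (2) is a harmless simplification the paper doesn't bother with. None of these are substantive differences.
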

\begin{proof}
From the strong Markov property (Lemma~\ref{prop-fpp-law}), Definition~\ref{def-normalized-exp-rv}, and the scaling property of exponential random variables, it is clear that the conditional law of $\{\ol X_e^\tau\}_{e \in \mcl E(G)}$ given $\mcl F_\tau$ is as claimed. 

Now suppose the hypotheses of assertion~\ref{item-weight-to-normal} are satisfied. Let $i_e$ be the (a.s.\ unique) integer $i\in [1,|\eta_e|]_{\BB Z}$ for which $\eta_e(i) \in \bdy \mcl E(A_\tau)$. Then $\eta_e\setminus \mcl E(A_\tau) = \eta_e([i_e , |\eta_e|]_{\BB Z})$ is a simple path in $\mcl E(G)$ and only its first edge belongs to $\partial \mcl E(A_\tau)$. Since $e\in \mcl E(A_{\tau+s})$,~\eqref{eqn-modified-time-compare} implies that (with $\wt T^\tau(\cdot)$ as in Lemma~\ref{prop-fpp-law})
\alb
\tau+ s 
\geq T(\eta_e) 
 =  T(\eta_e \setminus \mcl E(A_\tau)) + T \left(\eta_e \cap \mcl E(A_\tau) \right)
= T(\eta_e \setminus \mcl E(A_\tau) ) + \wt T^\tau (\eta_e(i_e) )
=  \wh T^\tau(\eta_e \setminus \mcl E(A_\tau)) + \tau .
\ale 
Hence 
\eqbn
s \geq \wh T^\tau(\eta_e \setminus \mcl E(A_\tau) )
\geq \ol m^{-1} \ol T^\tau(\eta_e\setminus \mcl E(A_\tau)) .
\eqen
Therefore $e\in \mcl E(A_{v,\ol m  s}^\tau)$. 

Finally, we consider the setting of assertion~\ref{item-normal-to-weight}. Let $i_1$ be the largest $i \in [1,|\eta|]_{\BB Z}$ such that $\eta(i) \in \partial \mcl E(A_\tau)$ and let $\eta_1 := \eta|_{[i_1 , |\eta|]_{\BB Z}}$. Then $\eta_1$ is a simple path whose first edge belongs to $\partial \mcl E(A_\tau)$, none of whose other edges belong to $\partial \mcl E(A_\tau)$, and one of whose edges is $e$. We have
\eqbn
\wh T^\tau(\eta_1) \leq \wh T^\tau(\eta) 
\leq \left( \min_{e' \in \eta} \op{wt}(e') \right)^{-1} \ol T^\tau(\eta) \leq s .
\eqen
By~\eqref{eqn-modified-time-compare}, $T(\eta_1) \leq s + \tau - \wt T^\tau(\eta(i_1))$. 
By definition of $\wt T^\tau$ (see Lemma~\ref{prop-fpp-law}), there is a path $\eta_0$ in $ A_\tau $ joining 0 to an endpoint of $\eta_1(i_1)$ which satisfies $T(\eta_0)  =\wt T^\tau(\eta(i_1))$. If we let $\eta_*$ be the concatenation of $\eta_0$ and $\eta_1$, then $T(\eta_*)   \leq  \tau + s$, so $e\in \mcl E(A_{\tau+s})$. 
\end{proof}

\subsection{Weak one-endedness}
\label{sec-one-end}

The purpose of this subsection is to prove the following weak form of ``one-endedness" for the clusters $\{\mcl F_t\}_{t\geq 0}$, which will be used in the proof of Theorem~\ref{thm-alpha>1} to rule out the possibility that all but finitely points of $A_{\tau_\infty}$ are contained in the disjoint union of two cones of opening angle $<\pi$, rather than a single such cone.

\begin{prop} \label{prop-one-end-weak}
Suppose we are in the setting of Definition~\ref{def-fpp} with $G$ infinite and the weights $\op{wt}$ are chosen in such a way that $\tau_\infty<\infty$ a.s. Let $\tau$ be a stopping time for $\{\mcl F_t\}_{t\geq 0}$ with $\tau<\tau_\infty$ a.s. Let $\Gamma_1$ and $\Gamma_2$ be infinite subgraphs of $G$ which lie at graph distance at least 3 from one another, each of which shares a vertex with $A_\tau$, chosen in some $\mcl F_\tau$-measurable manner (i.e.\ $\Gamma_1$ and $\Gamma_2$ are $\mcl F_\tau$-measurable random variables). Then 
\eqb \label{eqn-one-end-weak}
\BB P\left( \#\mcl E \left( A_{\tau_\infty} \cap \Gamma_1 \right) =  \# \mcl E \left( A_{\tau_\infty}\cap \Gamma_2  \right) = \infty ,\,  \#\left(\mcl E(A_{\tau_\infty}) \setminus  \mcl E(\Gamma_1 \cap \Gamma_2)  \right) <\infty \,|\, \mcl F_\tau \right) = 0. 
\eqe 
\end{prop}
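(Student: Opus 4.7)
The plan is to partition the event $E$ on the left-hand side of~\eqref{eqn-one-end-weak} based on the number $N$ of edges of $A_{\tau_\infty}$ lying outside $\mcl E(\Gamma_1) \cup \mcl E(\Gamma_2)$, which is finite on $E$. For $n \geq 1$, let $\sigma^{(n)}$ denote the (possibly infinite) time at which the $n$-th such outside edge is added to the cluster, and set $\tilde\sigma^{(n)} := \sigma^{(n)} \vee \tau$, with the convention $\tilde\sigma^{(0)} := \tau$. Each $\tilde\sigma^{(n)}$ is an $\{\mcl F_t\}$-stopping time. Writing $E_n := E \cap \{N = n\}$, we have $E = \bigsqcup_{n \geq 0} E_n$; and on $E_n$, $\tilde\sigma^{(n)} \leq \tau_\infty < \infty$ with no outside edge added to the cluster during $(\tilde\sigma^{(n)}, \tau_\infty]$.

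For each $i \in \{1,2\}$ and $n \geq 0$, I would introduce a \emph{staggered restricted FPP cluster} $\tilde B^{i,*}_t \subset \Gamma_i$, defined by seeding each $v \in V_i^n := \mcl V(\Gamma_i) \cap \mcl V(A_{\tilde\sigma^{(n)}})$ at time $T(v_0,v)$ and propagating through $\mcl E(\Gamma_i)$ using the original passage times $X_e$; let $\tilde\tau_\infty^{i,*}$ denote its explosion time. Because $\Gamma_1$ and $\Gamma_2$ lie at graph distance $\geq 3$, they are vertex-disjoint and share no boundary edges. Hence on $E_n$, any $v_0$-to-$e$ FPP geodesic with $e \in \mcl E(\Gamma_i) \cap \mcl E(A_{\tau_\infty})$ must enter $\Gamma_i$ through some vertex of $V_i^n$ (since any edges of the geodesic lying outside $\mcl E(\Gamma_1) \cup \mcl E(\Gamma_2)$ already belong to $A_{\tilde\sigma^{(n)}}$), which forces $A_t \cap \mcl E(\Gamma_i) = \tilde B^{i,*}_t \cap \mcl E(\Gamma_i)$ for all $t \in [\tilde\sigma^{(n)}, \tau_\infty]$. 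The infiniteness of $\mcl E(A_{\tau_\infty}) \cap \mcl E(\Gamma_i)$ for both $i$ then forces $\tilde\tau_\infty^{1,*} = \tilde\tau_\infty^{2,*} = \tau_\infty < \infty$ on $E_n$.

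By the strong Markov property (Lemma~\ref{prop-fpp-law}) applied at $\tilde\sigma^{(n)}$, the residual passage times on unexplored edges are conditionally independent exponentials given $\mcl F_{\tilde\sigma^{(n)}}$; since $\mcl E(\Gamma_1) \cap \mcl E(\Gamma_2) = \emptyset$, the random variables $\tilde\tau_\infty^{1,*}$ and $\tilde\tau_\infty^{2,*}$ are conditionally independent given $\mcl F_{\tilde\sigma^{(n)}}$. Furthermore, each $\tilde\tau_\infty^{i,*}$ has an atomless conditional distribution on $(0,\infty)$: the first edge added by $\tilde B^{i,*}$ is added at a time equal to the minimum of finitely many independent shifted exponentials (one per edge of $\mcl E(\Gamma_i)$ incident to $V_i^n$), which is absolutely continuous; by the strong Markov property applied within $\Gamma_i$ at this first-addition time, it is independent of the subsequent dynamics, so the convolution yielding $\tilde\tau_\infty^{i,*}$ inherits atomlessness.

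Combining the previous two paragraphs gives $\BB P(\tilde\tau_\infty^{1,*} = \tilde\tau_\infty^{2,*} < \infty \mid \mcl F_{\tilde\sigma^{(n)}}) = 0$ on $\{\tilde\sigma^{(n)} < \infty\}$, so taking conditional expectation with respect to $\mcl F_\tau$ yields $\BB P(E_n \mid \mcl F_\tau) = 0$ for each $n \geq 0$, and summing over $n$ gives~\eqref{eqn-one-end-weak}. I expect the main technical obstacle to be verifying the identification $A_t \cap \mcl E(\Gamma_i) = \tilde B^{i,*}_t \cap \mcl E(\Gamma_i)$ on $E_n$ for $t \in [\tilde\sigma^{(n)}, \tau_\infty]$, which requires a careful geodesic analysis to confine every FPP geodesic arriving by time $\tau_\infty$ into $\mcl E(\Gamma_i) \cup \mcl E(A_{\tilde\sigma^{(n)}})$ after its last excursion outside $\Gamma_1 \cup \Gamma_2$.
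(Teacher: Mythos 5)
Your overall strategy mirrors the paper's: produce two random times tied to $\Gamma_1$ and $\Gamma_2$ that are conditionally independent given a suitable stopping-time $\sigma$-algebra and that both equal $\tau_\infty$ on the bad event, then rule this out via an atomlessness argument. The genuine gap is in your atomlessness step, and it is precisely the pitfall the paper names explicitly. You assert that, by the strong Markov property, the first edge-addition time $T_1$ of the restricted cluster $\wt B^{i,*}$ is ``independent of the subsequent dynamics,'' so that $\wt\tau_\infty^{i,*}$ is a convolution with an absolutely continuous factor and is therefore atomless. That independence claim is false. The strong Markov property gives the conditional law of the future given the state reached at time $T_1$, but the state at time $T_1$ --- in particular which edge was added --- is determined jointly with $T_1$. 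So conditionally on $\mcl F_{\wt\sigma^{(n)}}$, the pair $(T_1,\,\wt\tau_\infty^{i,*}-T_1)$ does not have independent coordinates, and $\wt\tau_\infty^{i,*}$ is not a convolution of the two. The paper flags exactly this issue in the paragraph preceding Lemma~\ref{prop-infty-non-atomic}: ``$\tau'-\tau$ and $\tau_\infty-\tau'$ are not quite conditionally independent since the law of $\tau_\infty-\tau'$ depends on the particular realization of $A_{\tau'}$, which in turn might depend on $\tau'-\tau$, so slightly more work is needed.''

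To close the gap you have two options. You could prove the atomlessness correctly, following the proof of Lemma~\ref{prop-infty-non-atomic}: assume there is an atom, move one jump forward, condition on the realization of the cluster at the first-addition time (at which point Lemma~\ref{prop-hitting-time-law} furnishes the needed conditional independence), apply the elementary Lemma~\ref{prop-equal-atom} to extract a \emph{deterministic} atom, and contradict the atomlessness of the minimum of finitely many exponentials. Cleaner, and closer to what the paper actually does in Lemma~\ref{prop-one-end-contained}: do not try to show $\wt\tau_\infty^{i,*}$ is atomless at all. Apply Lemma~\ref{prop-equal-atom} directly to the conditionally independent pair with $Y=\tau_\infty$: a positive conditional probability of $\wt\tau_\infty^{1,*}=\wt\tau_\infty^{2,*}=\tau_\infty$ given $\mcl F_{\wt\sigma^{(n)}}$ would produce an $\mcl F_{\wt\sigma^{(n)}}$-measurable atom for $\tau_\infty-\wt\sigma^{(n)}$, contradicting Lemma~\ref{prop-infty-non-atomic} applied at $\wt\sigma^{(n)}$. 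Beyond this, the paper's bookkeeping is a little leaner than yours: rather than building staggered restricted clusters and verifying the identification $A_t\cap\mcl E(\Gamma_i)=\wt B^{i,*}_t\cap\mcl E(\Gamma_i)$, it works with $\sigma_i$ (the first $t>\tau$ for which $A_t\cap\Gamma_i$ has infinitely many vertices) and the event $E_i$ (no path in $A_{\sigma_i}\setminus A_\tau$ joins $\Gamma_i$ to $\bdy\mcl E(\Gamma_i)$), observing that $\sigma_i\BB 1_{E_i}$ is a function of $\mcl F_\tau$ and the residual passage times on $\mcl E(\Gamma_i)\cup\bdy\mcl E(\Gamma_i)$; and it runs the resulting key lemma at the stopping times $\tau^n$ (first time $n$ new edges are added, which is always $<\tau_\infty$) rather than decomposing on the random number of outside edges.
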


We note that Proposition~\ref{prop-one-end-weak} is \emph{not} sufficient to conclude that $A_{\tau_\infty}$ is a.s.\ one ended (i.e., that for large enough $n$ the set of edges of $A_{\tau_\infty}$ which are not contained in the graph distance ball of radius $n$ centered at the starting vertex is connected). For this to be the case we would need~\eqref{eqn-one-end-weak} to hold simultaneously a.s.\ for every choice of $\Gamma_1$ and $\Gamma_2$. However, Proposition~\ref{prop-one-end-weak} is sufficient for the proof of Theorem~\ref{thm-alpha>1}. 

Proposition~\ref{prop-one-end-weak} is proven via a purely probabilistic argument. See Figure~\ref{fig-weak-one-end} for an outline of the proof. We first need the following elementary lemma.

\begin{figure}[ht!]
 \begin{center}
\includegraphics[scale=.8]{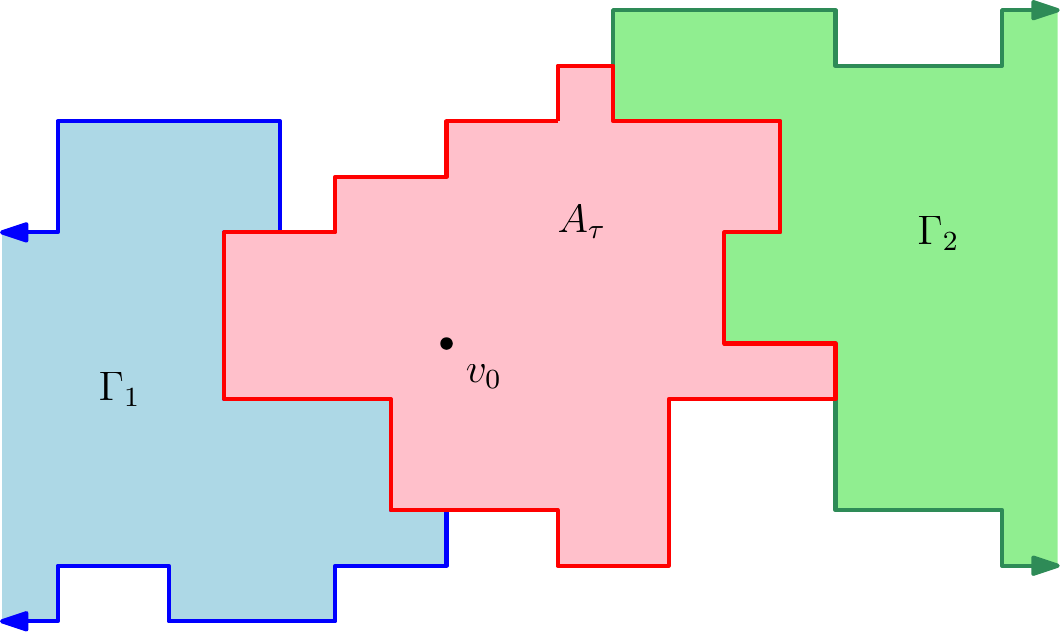} 
\caption{An illustration of a the setup for Proposition~\ref{prop-one-end-weak} with $G = \BB Z^d$. To prove the proposition, we first prove a version of~\eqref{eqn-one-end-weak} where we require that $\# \left(\mcl E(A_{\tau_\infty}) \setminus  \mcl E(A_\tau \cap (\Gamma_1\cup \Gamma_2))  \right)$ is empty, rather than finite. Let $\sigma_1$ (resp. $\sigma_2$) be the first time that $A_t$ covers infinitely many edges of $\Gamma_1$ (resp. $\Gamma_2$) and let $E_1$ (resp. $E_2$) be the event that there is no path in $A_{\sigma_1}$ (resp. $A_{\sigma_2}$) which crosses the boundary of $\Gamma_1$ (resp. $\Gamma_2$). Lemma~\ref{prop-fpp-law} implies that $\sigma_1 \BB 1_{E_1}$ and $\sigma_2 \BB 1_{E_2}$ are conditionally independent given $\mcl F_\tau$. Hence if $\sigma_1 \BB 1_{E_1} = \sigma_2 \BB 1_{E_2} = \tau_\infty$ with positive conditional probability given $\mcl F_\tau$, then the conditional law of $\tau_\infty$ given $\mcl F_\tau$ must have an atom with positive probability. This contradicts Lemma~\ref{prop-infty-non-atomic}, and we obtain the desired weaker version of~\eqref{eqn-one-end-weak}. The full version is obtained by applying the weaker version to countably many stopping times between $\tau$ and $\tau_\infty$ which increase to $\tau_\infty$. 
}\label{fig-weak-one-end}
\end{center}
\end{figure}

\begin{lem} \label{prop-equal-atom}
Let $X_1 , X_2$, and $Y$ be random variables taking values in a common state space $\mcl X$. Suppose that $X_1$ and $X_2$ are independent and that
\eqb \label{eqn-equal-atom}
\BB P\left(X_1 = X_2 = Y \right) > 0. 
\eqe 
Then there is a deterministic $x \in \mcl X$ such that
\eqb \label{eqn-equal-atom-conclusion}
\BB P\left(X_1 = X_2 = Y = x \right) > 0.
\eqe 
\end{lem}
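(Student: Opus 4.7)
My plan is to use the independence of $X_1$ and $X_2$ together with an atom-extraction argument, expressing the target probability as a countable sum. Let $\mu_1$ denote the law of $X_1$, and define the sub-probability measure $\rho$ on $\mcl X$ by $\rho(B) := \BB P(X_2 \in B,\, X_2 = Y)$. Assuming $\mcl X$ is a sufficiently nice measurable space (e.g.\ standard Borel, so that the diagonal is measurable), independence of $X_1$ from the pair $(X_2,Y)$ lets me write
\[
\BB P(X_1 = X_2 = Y) \;=\; \int_{\mcl X} \BB P(X_1 = y)\, d\rho(y) \;=\; \int_{\mcl X} \mu_1(\{y\})\, d\rho(y).
\]

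Next, I will observe that the integrand $y \mapsto \mu_1(\{y\})$ is supported on the at most countable set $\mcl A := \{y \in \mcl X : \mu_1(\{y\}) > 0\}$ of atoms of $\mu_1$. Hence the integral collapses to the countable sum
\[
\BB P(X_1 = X_2 = Y) \;=\; \sum_{y \in \mcl A} \mu_1(\{y\})\, \rho(\{y\}).
\]
The hypothesis~\eqref{eqn-equal-atom} then forces at least one summand to be strictly positive, so there exists $x \in \mcl A$ with $\mu_1(\{x\}) \rho(\{x\}) > 0$. Applying independence of $X_1$ from $(X_2,Y)$ one more time,
\[
\BB P(X_1 = X_2 = Y = x) \;=\; \BB P(X_1 = x)\,\BB P(X_2 = Y = x) \;=\; \mu_1(\{x\})\, \rho(\{x\}) \;>\; 0,
\]
which is the required conclusion~\eqref{eqn-equal-atom-conclusion}.

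The argument is essentially a one-step application of Fubini combined with the standard fact that a $\sigma$-finite measure on a standard Borel space has at most countably many atoms; there is no genuine difficulty once independence is correctly exploited. The only minor point to be careful about is the measurability of the diagonal $\{(y_2, y_3) : y_2 = y_3\}$ in $\mcl X \times \mcl X$, which is automatic in the standard Borel setting and can be taken for granted in the applications of this lemma within the paper. No topological or tightness hypothesis beyond this is required.
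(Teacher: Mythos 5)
Your proof is correct and takes essentially the same approach as the paper's: both exploit independence of $X_1$ from $(X_2,Y)$ to reduce the positive probability to a countable sum over atoms of the law of $X_1$. You phrase the key step as a Fubini disintegration where the paper uses a conditional-probability argument (and works with $\mcl A_1 \cap \mcl A_2$ rather than just the atoms of $X_1$), but this is a cosmetic difference.
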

\begin{proof}
For $i\in \{1,2\}$, let $\mcl A_i$ be the set of atoms of the law of $X_i$, i.e.\ the set of $x\in \mcl X$ such that $\BB P(X_i = x) > 0$. 
We first claim that it is a.s.\ the case that on the event $\{X_1 = X_2\}$, the common value of $X_1$ and $X_2$ belongs to $\mcl A_1\cap \mcl A_2$. To see this, we observe that by independence,
\eqbn
\BB P\left(X_1 = X_2 \,|\, X_2\right) \BB 1_{(X_2 \in \mcl X\setminus \mcl A_1)} = 0, 
\eqen
so
\eqbn
\BB P\left(X_1 = X_2  \in \mcl X\setminus \mcl A_1 \right) = 0.
\eqen
By symmetry, also $\BB P\left(X_1 = X_2  \in \mcl X\setminus \mcl A_2 \right) = 0$. Hence~\eqref{eqn-equal-atom} implies that
\eqbn
\BB P\left( X_1 = X_2 = Y \in \mcl A_1 \cap \mcl A_2 \right) > 0.
\eqen
The set $\mcl A_1\cap \mcl A_2$ is countable, so there must exist $x\in \mcl A_1\cap \mcl A_2$ for which~\eqref{eqn-equal-atom-conclusion} holds.
\end{proof}

\begin{lem} \label{prop-infty-non-atomic}
Suppose we are in the setting of Definition~\ref{def-fpp} with $G$ infinite and the weights $\op{wt}$ are such that a.s.\ $\tau_\infty < \infty$. a.s. 
Let $\tau$ be a stopping time for $\{\mcl F_t\}_{t\geq 0}$ with $\tau < \tau_\infty$ a.s. Almost surely, the conditional law given $\mcl F_\tau$ of the random variable $\tau_\infty-\tau$ is non-atomic, i.e.\ 
\eqbn
\BB P(\tau_\infty-\tau = t \,|\, \mcl F_\tau) = 0 ,\quad \forall t \geq 0 .
\eqen
\end{lem}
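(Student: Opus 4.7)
The approach is to decompose $\tau_\infty - \tau = (\tau_1 - \tau) + (\tau_\infty - \tau_1)$, where $\tau_1 := \inf\{t > \tau : \mcl E(A_t) \supsetneq \mcl E(A_\tau)\}$ is the first time an edge is added after $\tau$, and to show that the first summand is conditionally an exponential random variable that is independent of the second, so that the total has an absolutely continuous conditional law.

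On $\{\tau < \tau_\infty\}$ the cluster $A_\tau$ is finite, and hence so is $\partial \mcl E(A_\tau)$ since $G$ has finite degree. By the strong Markov property (Lemma~\ref{prop-fpp-law}), conditional on $\mcl F_\tau$ the random variables $\{\wh X^\tau_e\}_{e \in \partial \mcl E(A_\tau)}$ are independent exponentials with parameters $\op{wt}(e)$. Thus $\tau_1 - \tau = \min_e \wh X^\tau_e$ is, conditional on $\mcl F_\tau$, exponential with parameter $S := \sum_{e \in \partial \mcl E(A_\tau)} \op{wt}(e) > 0$, and the standard minimum-of-exponentials calculation shows that $\tau_1 - \tau$ is conditionally independent of the identity $e_1$ of the minimizing edge. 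Since $A_{\tau_1}$ has only one more edge than $A_\tau$, it is still finite, so $\tau_1 < \tau_\infty$ a.s.

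Applying the strong Markov property a second time at $\tau_1$, the residual variables $\{\wh X^{\tau_1}_e\}_{e \notin \mcl E(A_{\tau_1})}$ are, conditional on $\mcl F_{\tau_1}$, iid exponentials with parameters $\op{wt}(e)$; because this conditional law depends on no randomness beyond the (deterministic) weights, it is unchanged after further conditioning on the smaller $\sigma$-algebra generated by $\mcl F_\tau$ and $e_1$. A direct case check using the memoryless property shows that this collection is moreover independent of $\tau_1 - \tau$ given $\mcl F_\tau$ and $e_1$: for $e \in \partial \mcl E(A_\tau) \setminus \{e_1\}$ the surplus $\wh X^{\tau_1}_e = \wh X^\tau_e - (\tau_1 - \tau)$ is a fresh exponential independent of $\tau_1 - \tau$ by memorylessness, while for $e$ not incident to $A_\tau$ we have $\wh X^{\tau_1}_e = \wh X^\tau_e$ which is already independent of $\{\wh X^\tau_{e'}\}_{e' \in \partial \mcl E(A_\tau)}$ given $\mcl F_\tau$. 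Since $\tau_\infty - \tau_1$ is a deterministic function of $A_{\tau_1}$ and these residual variables, this yields conditional independence of $\tau_1 - \tau$ and $\tau_\infty - \tau_1$ given $\mcl F_\tau$ and $e_1$.

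Conditionally on $\mcl F_\tau$ and $e_1$, the law of $\tau_\infty - \tau$ is therefore the convolution of $\op{Exp}(S)$ with the conditional law of $\tau_\infty - \tau_1$; any convolution with an exponential admits a bounded continuous density with respect to Lebesgue measure and is in particular non-atomic. Averaging over the finitely many possible values of $e_1$, the conditional law of $\tau_\infty - \tau$ given $\mcl F_\tau$ is a finite mixture of non-atomic laws, and hence non-atomic. The one subtle point is verifying the conditional independence claim above, which is where the memoryless property does the real work.
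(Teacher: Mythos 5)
Your proof is correct, and it takes a genuinely different route from the paper's. The paper argues by contradiction: it assumes the conditional law has an atom at some $\mcl F_\tau$-measurable location $S$, uses Lemma~\ref{prop-hitting-time-law} (to get conditional independence of $\tau_\infty-\tau'$ from $\mcl F_\tau$ given $A_{\tau'}$) together with the auxiliary Lemma~\ref{prop-equal-atom} (which extracts a deterministic atom from two conditionally independent variables agreeing with a third) to push the atom down to a deterministic atom of $\tau'-\tau$, contradicting the fact that $\tau'-\tau$ is a minimum of exponentials. The paper explicitly flags the obstruction that motivates its contradiction argument: $\tau_\infty-\tau'$ depends on $A_{\tau'}$, which depends on which edge was added, so a blind convolution decomposition given $\mcl F_\tau$ alone does not obviously work. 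You dissolve exactly that obstruction by additionally conditioning on the identity $e_1$ of the first added edge and invoking the classical exponential-race structure: given $\mcl F_\tau$, the triple (argmin $e_1$, min $\tau_1-\tau$, residuals $\wh X^\tau_e-(\tau_1-\tau)$) is jointly independent, so given $(\mcl F_\tau,e_1)$ the increment $\tau_1-\tau$ is $\operatorname{Exp}(S)$ and independent of everything needed to determine $\tau_\infty-\tau_1$, making the conditional law of $\tau_\infty-\tau$ a convolution with an exponential and hence absolutely continuous. Your approach dispenses with Lemma~\ref{prop-equal-atom} entirely and gives the stronger conclusion that the conditional law actually has a bounded density; the paper's approach trades the explicit race computation for the shorter abstract atom-extraction lemma. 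Both rest on the same Markov property (Lemma~\ref{prop-fpp-law}) and memorylessness, but yours is the more direct proof.
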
 

Roughly speaking, the idea of the proof is to write $\tau_\infty -\tau = \tau_\infty - \tau' + \tau' - \tau$, where $\tau'$ is the smallest time $t > \tau'$ at which another edge is added to the cluster. The conditional law of $\tau'$ given $\mcl F_\tau$ is non-atomic since it is the minimum of finitely many exponential random variables, and $\tau_\infty - \tau'$ is ``almost" conditionally independent from $\tau' - \tau$ given $\mcl F_\tau$ due to the strong Markov property (Lemma~\ref{prop-fpp-law}). However, $\tau'-\tau$ and $\tau_\infty-\tau'$ are not quite conditionally independent since the law of $\tau_\infty - \tau'$ depends on the particular realization of $A_{\tau'}$, which in turn might depend on $\tau'-\tau$, so slightly more work is needed.

\begin{proof}[Proof of Lemma~\ref{prop-infty-non-atomic}]
Suppose by way of contradiction that the statement of the lemma is false, i.e.\ the conditional law of $\tau_\infty-\tau$ given $\mcl F_\tau$ has an atom with positive probability. Let $S$ be an $\mcl F_\tau$-measurable random variable chosen in such a way that  
\eqb \label{eqn-choose-pt-mass}
\BB P\left(\tau_\infty-\tau = S   \right) > 0 .
\eqe 
For example, $S$ could be the location of the largest atom of the conditional law of $\tau_\infty-\tau$ given $\mcl F_\tau$ if it exists (with ties broken in some arbitrary $\mcl F_\tau$-measurable manner) or $S \equiv 0$ if no such atom exists.
 
Let $\tau'$ be the smallest $t  > \tau$ for which $A_t\not=A_\tau$. 
If $\tau_\infty - \tau = S$, then $\tau_\infty - \tau' = S - (\tau'-\tau)$. The random variable 
\eqb \label{eqn-2nd-mass-def}
S' := S - (\tau'-\tau)
\eqe 
is $\mcl F_{\tau'}$-measurable and by~\eqref{eqn-choose-pt-mass}, 
$\BB P\left(\tau_\infty-\tau' = S'  \right)  > 0$.
There are only countably many possible realizations of $A_{\tau'}$, so we can find a positive-probability realization $\frk A'$ of $A_{\tau'}$ such that
\eqbn
\BB P\left(\tau_\infty-\tau' = S' \,|\, A_{\tau'} = \frk A' \right) > 0 .
\eqen
Since $S'$ is $\mcl F_{\tau'}$-measurable, Lemma~\ref{prop-hitting-time-law} implies that the random variables $\tau_\infty-\tau'$ and $S'$ are conditionally independent given $\{A_{\tau'} = \frk A'\}$. By Lemma~\ref{prop-equal-atom} (applied with $X_1 = Y = \tau_\infty-\tau'$ and $X_2 = S'$) there exists a deterministic $t' > 0$ such that 
\eqbn
\BB P\left(\tau_\infty-\tau'  =  S' = t' \,|\, A_{\tau'} = \frk A' \right) > 0 .
\eqen
In particular $\BB P\left(S' = t'  \right) > 0$, so (recall~\eqref{eqn-2nd-mass-def}) 
\eqbn
\BB P\left( \BB P\left(\tau' - \tau = S - t'  \,|\, \mcl F_\tau \right) > 0 \right) > 0 .
\eqen
The random variable $S - t'$ is $\mcl F_\tau$-measurable, so with positive probability the conditional law of $\tau'-\tau$ given $\mcl F_\tau$ has an atom at $S-t'$. By Lemma~\ref{prop-fpp-law}, the conditional law of $\tau'-\tau$ given $\mcl F_\tau$ is that of the minimum of finitely many independent exponential random variables, so a.s.\ this conditional law is non-atomic. This contradiction completes the proof.
\end{proof}

The following is the main input in the proof of Proposition~\ref{prop-one-end-weak}.

\begin{lem} \label{prop-one-end-contained}
Suppose we are in the setting of Proposition~\ref{prop-one-end-weak}. For $i\in \{1,2\}$, let $\sigma_i$ be the smallest $t > \tau$ for which $ \#\left(\mcl V(A_t\cap \Gamma_i) \right) = \infty$. Also let $E_i$ be the event that there is no path in $ A_{\sigma_i}  \setminus A_\tau $ which contains an edge in $ \Gamma_i $ and an edge in $\bdy \mcl E(\Gamma_i)$. Then 
\eqb \label{eqn-equal-hit-times}
\BB P\left(\sigma_1 = \sigma_2 = \tau_\infty  ,\, E_1\cap E_2 \,|\, \mcl F_\tau\right)  = 0.
\eqe 
\end{lem}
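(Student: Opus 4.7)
I will approach this by contradiction. Suppose that on some $\mcl F_\tau$-measurable event of positive probability, the conditional probability $\BB P(\sigma_1 = \sigma_2 = \tau_\infty,\, E_1 \cap E_2 \mid \mcl F_\tau)$ is strictly positive. The goal will be to deduce that the conditional law of $\tau_\infty$ given $\mcl F_\tau$ has an atom with positive probability, contradicting Lemma~\ref{prop-infty-non-atomic}.

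The core step is to construct auxiliary ``restricted'' hitting times $\tilde\sigma_i$ that agree with $\sigma_i$ on $E_i$ but depend only on the edge-weight data inside $\Gamma_i$. Concretely, I would define a modified FPP process $\{A_t^{(i)}\}_{t \geq 0}$ on $G$ using the same edge weights as the original process on $\mcl E(\Gamma_i) \cup \mcl E(A_\tau)$ but setting $X_e = \infty$ for every other edge. Since $A_\tau^{(i)} = A_\tau$ and no edge outside $\mcl E(\Gamma_i)$ can be explored after time $\tau$, the quantity $\tilde\sigma_i := \inf\{t > \tau : \# \mcl V(A_t^{(i)} \cap \Gamma_i) = \infty\}$ is measurable with respect to $\mcl F_\tau$ together with $\{\hat X_e^\tau : e \in \mcl E(\Gamma_i)\}$. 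The key claim is $\sigma_i = \tilde\sigma_i$ on $E_i$. To see this, consider any $e \in \mcl E(A_{\sigma_i}) \cap \mcl E(\Gamma_i)$ and an optimal FPP path $\tilde\eta$ from $v_0$ to $e$. This path decomposes into maximal runs alternately in $\mcl E(A_\tau)$ and in $\mcl E(A_{\sigma_i}) \setminus \mcl E(A_\tau)$, and its final ``new'' run terminates at $e \in \mcl E(\Gamma_i)$. By the definition of $E_i$, this final run contains no edge of $\bdy \mcl E(\Gamma_i)$, hence (since any non-$\mcl E(\Gamma_i)$ edge adjacent to a $\mcl E(\Gamma_i)$ edge lies in $\bdy \mcl E(\Gamma_i)$) it lies entirely in $\mcl E(\Gamma_i)$. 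Taking its initial vertex $v \in \mcl V(A_\tau) \cap \mcl V(\Gamma_i)$ as an entry point matches the dynamics of $A^{(i)}$, yielding $T(e) = T^{(i)}(e)$ for every such $e$, and therefore $\sigma_i = \tilde\sigma_i$ on $E_i$.

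Because $\Gamma_1$ and $\Gamma_2$ lie at graph distance at least $3$, the sets $\mcl E(\Gamma_1)$ and $\mcl E(\Gamma_2)$ are disjoint, so Lemma~\ref{prop-fpp-law} gives that the collections $\{\hat X_e^\tau : e \in \mcl E(\Gamma_1)\}$ and $\{\hat X_e^\tau : e \in \mcl E(\Gamma_2)\}$ are conditionally independent given $\mcl F_\tau$; consequently so are $\tilde\sigma_1$ and $\tilde\sigma_2$. On our positive-probability event we have $\tilde\sigma_1 = \tilde\sigma_2 = \tau_\infty$ with positive conditional probability. Applying a conditional version of Lemma~\ref{prop-equal-atom} with $X_1 = \tilde\sigma_1$, $X_2 = \tilde\sigma_2$, and $Y = \tau_\infty$ (the proof of that lemma goes through verbatim in the conditional setting) produces a positive-probability $\mcl F_\tau$-measurable value at which the conditional law of $\tau_\infty$ given $\mcl F_\tau$ has a positive atom, contradicting Lemma~\ref{prop-infty-non-atomic}.

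The principal obstacle I expect is a clean justification of $\sigma_i = \tilde\sigma_i$ on $E_i$. A priori the optimal path from $v_0$ to a vertex of $\Gamma_i$ could make several excursions in and out of $A_\tau$, with non-$\mcl E(\Gamma_i)$ edges used along the way; the formulation of $E_i$ as a condition on paths lying in $A_{\sigma_i} \setminus A_\tau$ is precisely what forces the terminal excursion to remain in $\mcl E(\Gamma_i)$, which is exactly the ingredient needed to reduce to the restricted process $A^{(i)}$.
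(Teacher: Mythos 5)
Your proof is correct and takes essentially the same route as the paper's. The paper shows that $\sigma_i\mathbb{1}_{E_i}$ is a measurable function of $\mathcal{F}_\tau$ and the collection $\{\widehat X^\tau_e : e \in (\mathcal{E}(\Gamma_i)\cup\partial\mathcal{E}(\Gamma_i))\setminus \mathcal{E}(A_\tau)\}$ by introducing two auxiliary local times $\widehat\tau_i$ (first time infinitely many $\Gamma_i$-vertices are reached via paths inside $\Gamma_i$) and $\widehat\sigma_i$ (first boundary-crossing time), then proving $E_i \Leftrightarrow \widehat\tau_i < \widehat\sigma_i$, with $\sigma_i = \widehat\tau_i$ on $E_i$; it then invokes conditional independence of the two collections, Lemma~\ref{prop-equal-atom}, and Lemma~\ref{prop-infty-non-atomic}. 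You package the same idea as a restricted FPP process $A^{(i)}$, with $\widetilde\sigma_i$ playing the role of $\widehat\tau_i$, and you avoid the paper's step of characterizing $E_i$ locally: since $\widetilde\sigma_i$ is local by construction, you only need the \emph{inclusion} $\{\sigma_1=\sigma_2=\tau_\infty\}\cap E_1\cap E_2 \subset \{\widetilde\sigma_1=\widetilde\sigma_2=\tau_\infty\}$ rather than an equivalence. This is a minor simplification, but the key observations — the geodesic exits $A_\tau$ exactly once, $E_i$ forces the post-$\tau$ portion of a geodesic reaching a $\Gamma_i$-edge to stay in $\mathcal{E}(\Gamma_i)$, the disjointness $(\mathcal{E}(\Gamma_1)\cup\partial\mathcal{E}(\Gamma_1))\cap(\mathcal{E}(\Gamma_2)\cup\partial\mathcal{E}(\Gamma_2))=\emptyset$ from the graph-distance-$3$ hypothesis, and the final contradiction via Lemmas~\ref{prop-equal-atom} and~\ref{prop-infty-non-atomic} — are identical.
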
  
\begin{proof}
We first argue that the random variables $\sigma_1\BB 1_{E_1}$ and $\sigma_2 \BB 1_{E_2}$ are conditionally independent given $\mcl F_\tau$. To see this, define the random variables $\wh X_e^\tau$ for $e\in \mcl E(G)\setminus \mcl E(A_\tau)$ as in Definition~\ref{def-modified-exp-rv}, so that the conditional law of the $\wh X_e^\tau$'s given $\mcl F_\tau$ is that of a collection of iid exponential random variables with parameters $\op{wt}(e)$. Since $\Gamma_1$ and $\Gamma_2$ lie at graph distance at least 3 from one another, the sets $\mcl E(\Gamma_1) \cup \bdy \mcl E(\Gamma_1)$ and $\mcl E(\Gamma_2) \cup \bdy\mcl E(\Gamma_2)$ are disjoint. Therefore, the collections of random variables
\eqb \label{eqn-disjoint-set-rv}
\left\{\wh X_e^\tau \,:\, e \in \left( \mcl E(\Gamma_i) \cup \bdy \mcl E(\Gamma_i) \right) \setminus \mcl E(A_\tau)  \right\}  
\eqe
for $i\in \{1,2\}$ are conditionally independent given $\mcl F_\tau$.

For $i\in\{1,2\}$, let $\wh\tau_i$ be the smallest $t>\tau$ for which the following is true. For infinitely many $v\in \mcl V(\Gamma_i)$, there exists an infinite path $\eta $ in $G$ from $\bdy \mcl V(A_\tau)$ to $v$ which is contained in $ \Gamma_i $ and satisfies
\eqb \label{eqn-hat-path-sum}
\sum_{e\in\eta } \wh X_e^\tau \leq t - \tau .
\eqe 
Also let $\wh\sigma_i$ be the smallest $t > \tau$ for which there exists a finite path $\eta $ in $\mcl E(G)\setminus \mcl E(A_\tau)$ which contains an edge of $ \Gamma_i $ and an edge in $\bdy\mcl E(\Gamma_i)$ and satisifes~\eqref{eqn-hat-path-sum}. 
Then $\wh \tau_i$ and $\wh \sigma_i$ are measurable functions of $\mcl F_\tau$ and the collection of random variables~\eqref{eqn-disjoint-set-rv}. Furthermore, the event $E_i$ occurs if and only if $\wh\tau_i < \wh\sigma_i$, in which case $\sigma_i = \wh\tau_i$. Hence $\sigma_i \BB 1_{E_i} $ is a measurable function of $\mcl F_\tau$ and the collection~\eqref{eqn-disjoint-set-rv}. Therefore $\sigma_1\BB 1_{E_1}$ and $\sigma_2 \BB 1_{E_2}$ are conditionally independent given $\mcl F_\tau$. 

Now suppose by way of contradiction that~\eqref{eqn-equal-hit-times} is false. Then  
\eqbn
\BB P\left(\sigma_1 \BB 1_{E_1} = \sigma_2 \BB 1_{E_2}  = \tau_\infty \,|\, \mcl F_\tau\right) > 0 .
\eqen
Since $\sigma_1\BB 1_{E_1}$ and $\sigma_2 \BB 1_{E_2}$ are conditionally independent given $\mcl F_\tau$, Lemma~\ref{prop-equal-atom} implies that we can find a $\mcl F_\tau$-measurable random variable $S$ such that with positive probability, 
\eqbn
\BB P\left(\sigma_1 = \sigma_2 = \tau_\infty = S \,|\, \mcl F_\tau\right) > 0.
\eqen
In particular, the conditional law of $\tau_\infty$ given $\mcl F_\tau$ has an atom with positive probability, which contradicts Lemma~\ref{prop-infty-non-atomic}.  
\end{proof}

\begin{proof}[Proof of Proposition~\ref{prop-one-end-weak}]
For $n\in\BB N$, let $\tau^n$ be the smallest $t \geq \tau$ for which $\mcl E(A_t) \setminus \mcl E(A_\tau)$ contains $n$ edges. Let $\sigma_1^n$, $\sigma_2^n$, $E_1^n$, and $E_2^n$ be as in Lemma~\ref{prop-one-end-contained} with $\tau^n$ in place of $\tau$. Then Lemma~\ref{prop-one-end-contained} implies that a.s.\ the event 
\eqbn
\left\{\sigma_1^n = \sigma_2^n = \tau_\infty   \right\} \cap E_1^n\cap E_2^n
\eqen
does not occur for any $n\in\BB N$. On the other hand, every $e\in\mcl E(A_{\tau_\infty})$ is contained in some $\mcl E(A_{\tau^n})$, so if the event in~\eqref{eqn-one-end-weak} occurs then there is a finite $n_0\in\BB N$ such that 
\eqb \label{eqn-one-end-exhaust}
\mcl E(A_{\tau_\infty} \setminus A_{\tau^{n_0}}) \subset \mcl E(\Gamma_1 \cup \Gamma_2) 
\eqe
and $\sigma_1^{n_0} = \sigma_2^{n_0} = \infty$. Since $ \Gamma_1 $ and $ \Gamma_2 $ lie at graph distance at least 3 from one another, the condition~\eqref{eqn-one-end-exhaust} implies that there is no path in $\mcl E(A_{\tau_\infty}) \setminus \mcl E(A_{\tau^{n_0}})$ which contains an edge of $\bdy \mcl E(\Gamma_1) \cup \bdy \mcl E(\Gamma_2)$. Therefore $E_1^{n_0} \cap E_2^{n_0}$ occurs. Hence the event in~\eqref{eqn-one-end-weak} must have probability zero.
\end{proof}

\section{Estimating passage times via a deterministic metric}
\label{sec-fpp-estimates}

In the remainder of this paper we will consider the $f$-weighted FPP process $\{A_t\}_{t\geq 0}$ on $\BB Z^d$ started from 0, as described in Section~\ref{sec-fpp-setup}, the associated filtration $\{\mcl F_t\}_{t\geq 0}$ from Definition~\ref{def-fpp}, as well as the metric $D$ from Section~\ref{sec-D-metric}. 

Throughout this section we allow a general choice of $\alpha\in \BB R$ and $\alpha$-weight function $f$. 
In this section, we will prove that the metric $D$ is a good approximation for passage times in our FPP model. We start in Section~\ref{sec-standard-fpp} by reviewing some known estimates for standard FPP. In Section~\ref{sec-D-metric-properties}, we prove some basic deterministic estimates for $D$. 
We then prove upper and lower bounds for $f$-weighted FPP passage times in terms of $D$ in Section~\ref{sec-growth-estimates}. These latter bounds will be the key inputs in the proofs of Theorems~\ref{thm-alpha<1},~\ref{thm-alpha>1}, and~\ref{thm-alpha-near-1} in the subsequent sections.

\subsection{Rate of convergence estimates for standard FPP}
\label{sec-standard-fpp} 

Recall the standard FPP limit shape $\BB A$ from Section~\ref{sec-D-metric} and the fattened standard FPP clusters $A_t^F$ for $t \geq 0$ from~\eqref{eqn-fatten-hull}.   
Estimates for the rate of convergence in~\eqref{eqn-cluster-conv} are obtained in~\cite{kesten-fpp-speed} and sharpened in~\cite{alexander-fpp-speed}. In particular,~\cite[Theorem 2]{kesten-fpp-speed} tells us that for each $\zeta \in (0,1/2)$ and each $t > 0$,  
\begin{equation} \label{eqn-hull-upper}
\BB P\left(   A_t^F \not\subset \left(1 + t^{-1/2+\zeta} \right) t \BB A    \right) = o_t^\infty(t)
\end{equation} 
at a rate depending only $\zeta$ and $d$ (here we recall the notation $o_t^\infty(t)$ from Section~\ref{sec-asymp-notation}). Furthermore, the proof of~\cite[Theorem 3.1]{alexander-fpp-speed} shows that for each $\zeta \in (0,1/2)$ and $t>0$  
\eqb \label{eqn-hull-lower}
\BB P\left( \left(1 - t^{-1/2+\zeta} \right) t\BB A  \not\subset A_t^F \right) = o_t^\infty(t), 
\eqe 
at a rate depending only $\zeta$ and $d$. 

\begin{remark} \label{remark-kesten-optimal}
It is expected that the error exponent $1/2$ in~\eqref{eqn-hull-upper} and~\eqref{eqn-hull-lower} is not optimal. Heuristic arguments and numerical simulations suggest that these estimates should hold with $2/3$ in place of $1/2$ in the case $d=2$; see~\cite{kpz-fluctuation} as well as the discussion immediately following~\cite[Theorem B]{kesten-fpp-speed} and the references therein.
\footnote{We remark that in any dimension, the error exponent in~\eqref{eqn-hull-upper} and~\eqref{eqn-hull-lower} is closely related to the so-called \emph{wandering exponent}, which measures the amount by which FPP geodesics deviate from straight lines. See~\cite{chatterjee-fpp-exponent} for a formula relating these exponents as well as~\cite{auf-dam-fpp-exponent} for a simplified proof of this formula.}
If we had such improved error estimates, then we would also obtain better error estimates in Lemmas~\ref{prop-hull-increment-time} and~\ref{prop-hull-increment-expand}, which would lead to better error estimates in Theorem~\ref{thm-alpha<1}. 
\end{remark}
 
We will sometimes have occasion to apply~\eqref{eqn-hull-upper} and~\eqref{eqn-hull-lower} with the scaling of $\BB A $, rather than the time $t$, specified. 
For this reason, we record the following estimates, which are immediate from~\eqref{eqn-hull-upper} and~\eqref{eqn-hull-lower}. For each $r > 0$, 
\eqb \label{eqn-hull-upper'}
\BB P\left(   A_{r - r^{1/2+\zeta}}^F  \not\subset r \BB A    \right) = o_r^\infty(r) ,  \quad \forall \zeta  > 0 
\eqe 
and
\eqb \label{eqn-hull-lower'}
\BB P\left( r \BB A  \not\subset A_{r  + r^{1/2 + \zeta}}^F \right) = o_r^\infty(r) ,\quad \forall \zeta > 0  
\eqe  
at a rate depending only on $\zeta$ and $d$.

\subsection{Estimates for the weighted metric}
\label{sec-D-metric-properties}

In this subsection we prove some basic estimates for the metric $D$ of~\eqref{eqn-weighted-metric} which will be used to compare $D$-distances to $f$-weighted FPP distances. We first have an upper bound for $D$-distances in terms of Euclidean distances.

\begin{lem} \label{prop-f-cont}
Let $f$ be as in~\eqref{eqn-f-decomp}. There is a constant $a > 0$, depending only on $f$, such that for each $z,w\in\BB R^d \setminus \{0\}$,  
\eqbn
|f(z) - f(w)| \leq a\left(|z|^{\alpha-1} \vee |w|^{\alpha-1} \right) |z-w| .
\eqen
\end{lem}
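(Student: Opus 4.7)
The plan is to decompose $f(z)-f(w)$ into a ``radial'' contribution and an ``angular'' contribution by writing
\[
f(z)-f(w)=\bigl(|z|^\alpha-|w|^\alpha\bigr)f_0(z/|z|)+|w|^\alpha\bigl(f_0(z/|z|)-f_0(w/|w|)\bigr),
\]
then bounding each term separately. Throughout I would assume without loss of generality that $|z|\geq|w|$.

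For the radial term, by the mean value theorem applied to $t\mapsto t^\alpha$, there is some $\xi$ between $|w|$ and $|z|$ with $\bigl||z|^\alpha-|w|^\alpha\bigr|=|\alpha|\,\xi^{\alpha-1}\,\bigl||z|-|w|\bigr|$. Monotonicity of $t\mapsto t^{\alpha-1}$ gives $\xi^{\alpha-1}\leq |z|^{\alpha-1}\vee|w|^{\alpha-1}$, and combining this with $\bigl||z|-|w|\bigr|\leq|z-w|$ and the uniform bound $f_0\leq\ol\kappa$ from~\eqref{eqn-f-bound} yields a bound on the radial term of the desired form, with constant $|\alpha|\ol\kappa$.

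For the angular term, I would use the Lipschitz continuity of $f_0$ on $\bdy\BB D$, say with Lipschitz constant $L$, together with the elementary estimate
\[
\left|\frac{z}{|z|}-\frac{w}{|w|}\right|\leq \frac{2|z-w|}{|z|},
\]
which follows from the identity $|w|z-|z|w=|w|(z-w)+(|w|-|z|)w$ and $\bigl||w|-|z|\bigr|\leq|z-w|$. This gives a bound of $2L|w|^\alpha|z-w|/|z|$ on the angular term. A short case analysis then shows $|w|^\alpha/|z|\leq|z|^{\alpha-1}\vee|w|^{\alpha-1}$ under the assumption $|z|\geq|w|$: when $\alpha\geq 1$ we have $|w|^\alpha\leq|z|^\alpha$, so $|w|^\alpha/|z|\leq|z|^{\alpha-1}$; when $\alpha\in[0,1)$ the same inequality gives $|w|^\alpha/|z|\leq|z|^{\alpha-1}\leq|w|^{\alpha-1}$; and when $\alpha<0$ the target inequality $|w|^\alpha/|z|\leq|w|^{\alpha-1}$ rearranges to $|w|\leq|z|$. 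Summing the two contributions yields the lemma with $a=|\alpha|\ol\kappa+2L$.

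Since this is an elementary deterministic computation, there is no real obstacle; the only mild subtlety is that for $\alpha<1$ the maximum on the right-hand side of the claimed bound is realized at the \emph{smaller} of $|z|,|w|$ rather than the larger (because $t\mapsto t^{\alpha-1}$ is then decreasing), so one must keep track of the sign of $\alpha-1$ when carrying out the case analysis in the angular estimate.
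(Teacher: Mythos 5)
Your proof is correct and uses the same underlying approach as the paper: split $f(z)-f(w)$ into a radial piece handled by the mean value theorem and an angular piece handled by the Lipschitz continuity of $f_0$. The paper implements the split via the intermediate point $w'=(|z|/|w|)w$, which makes the angular piece $|z|^\alpha\bigl|f_0(z/|z|)-f_0(w'/|z|)\bigr|\leq L\,|z|^{\alpha-1}|z-w'|\preceq|z|^{\alpha-1}|z-w|$ and thereby bypasses your three-way case analysis on $\alpha$; but your variant, with $|w|^\alpha$ in front of the angular term and the WLOG $|z|\geq|w|$, is equally valid and yields the same constant $a=|\alpha|\ol\kappa+2L$.
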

\begin{proof}
Let $w':= (|z|/|w|) w$. 
By Lipschitz continuity of $f_0$ and $\alpha$-homogeneity of $f$,   
\eqbn
|f(z) - f(w')| = |z|^\alpha  |f_0(z/|z|) - f_0(w'/|z|)| \preceq |z|^{\alpha-1} |z-w'| \preceq |z|^{\alpha-1} |z-w| .
\eqen 
Furthermore, by the mean value theorem  
\eqbn
|f(w) - f(w')| \preceq ||w|^\alpha -|z|^\alpha| \preceq \left(|z|^{\alpha-1} \vee |w|^{\alpha-1}\right) |z-w| . 
\eqen
Combining these inequalities proves the lemma.
\end{proof}

Our next lemma shows that $D$ is comparable to the metric induced by $\mu$ (and hence to that induced by any norm on $\BB R^d$) when we restrict attention to sets at positive distance from 0 and $\infty$.

\begin{lem} \label{prop-metric-compare} 
Let $z,w\in \BB R^d$. Then
\eqb \label{eqn-metric-compare}
\phi(z,w)  \leq   D(z,w)  \leq     \mu(z-w)   \int_0^1 f(t w + (1-t) z)^{-1}   \, dt        
\eqe 
where, with $\ol\kappa$ as in~\eqref{eqn-f-bound} and $\ol\rho$ as in~\eqref{eqn-optimal-norm},
\eqbn
\phi(z,w) = \begin{dcases}
\frac{ |z|^{1-\alpha}  - \left(|z| + \ol\rho \mu(w-z) \right)^{1-\alpha} }{\ol\rho \ol\kappa(\alpha-1)}  ,\quad &\alpha \in [0,\infty)\setminus \{1\} \\
\ol\rho^{-1} \ol\kappa^{-1} \log \left(\frac{  |z| + \ol\rho \mu(w-z)  }{ |z| }\right) ,\quad &\alpha =1 \\
\frac{ |z|^{1-\alpha}  - \left(|z| - \ol\rho \mu(w-z) \right)^{1-\alpha} }{\ol\rho \ol\kappa(\alpha-1)}  ,\quad &\alpha < 0 .
\end{dcases}
\eqen
\end{lem}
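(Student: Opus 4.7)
The plan is to prove both bounds in~\eqref{eqn-metric-compare} by direct computation from the definition of $D$, using two simple observations about paths parametrized by $\mu$-length.

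\textbf{Upper bound.} I would test the infimum defining $D$ against the straight line from $z$ to $w$. Concretely, set $\gamma(s) = z + (s/\mu(w-z))(w-z)$ for $s \in [0, \mu(w-z)]$, which is piecewise linear (with one piece) and parametrized by $\mu$-length by construction. Then $\op{len}^D(\gamma) = \int_0^{\mu(w-z)} f(\gamma(s))^{-1}\,ds$, and the change of variables $t = s/\mu(w-z)$ turns this directly into the right-hand side of~\eqref{eqn-metric-compare}, giving the upper bound.

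\textbf{Lower bound.} The strategy is to extract, for an arbitrary admissible path $\gamma : [0,T] \to \BB R^d$ from $z$ to $w$ parametrized by $\mu$-length, a uniform upper bound on $f(\gamma(s))$ depending only on $|z|$ and $s$, and then integrate. Two ingredients are needed. First, by the triangle inequality for $\mu$, $T = \op{len}^\mu(\gamma) \geq \mu(w-z)$. Second, on each linear piece of $\gamma$ the tangent vector $v$ satisfies $\mu(v)=1$ (unit $\mu$-speed), and by the definition~\eqref{eqn-optimal-norm} of $\ol\rho$ one has $|v| \leq \ol\rho$. Hence $\gamma$ is $\ol\rho$-Lipschitz in the Euclidean metric; in particular
$$\bigl| |z| - |\gamma(s)| \bigr| \leq \ol\rho\, s \quad \text{for all } s \in [0,T].$$

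Combining these two inputs with the $\alpha$-homogeneity of $f$ and the bound $f_0 \leq \ol\kappa$ on $\bdy\BB D$: for $\alpha \geq 0$, monotonicity of $r \mapsto r^\alpha$ yields $f(\gamma(s)) \leq \ol\kappa (|z|+\ol\rho s)^\alpha$; for $\alpha < 0$, the map $r\mapsto r^\alpha$ is decreasing, yielding $f(\gamma(s)) \leq \ol\kappa (|z|-\ol\rho s)^\alpha$ on $[0,\mu(w-z)]$ under the natural assumption $|z| > \ol\rho\,\mu(w-z)$ that keeps the argument positive. Taking reciprocals and integrating from $0$ to $\mu(w-z)$ (the integrand is positive and $T\geq\mu(w-z)$, so replacing the upper limit $T$ by $\mu(w-z)$ only decreases the integral), and then performing the substitution $u = |z| \pm \ol\rho s$, evaluates the result explicitly to $\phi(z,w)$; the $\alpha=1$ case produces the logarithmic form via the antiderivative of $r^{-1}$. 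Taking the infimum over $\gamma$ gives $D(z,w) \geq \phi(z,w)$. The main obstacle is really just keeping the case analysis and sign conventions straight across the three regimes in the definition of $\phi$, and observing that in the $\alpha<0$ regime the bound is vacuous (trivially implied by $D(z,w)\geq 0$) when $|z| \leq \ol\rho\,\mu(w-z)$.
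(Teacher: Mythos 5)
Your proposal is correct and takes essentially the same approach as the paper: test the infimum against the straight segment from $z$ to $w$ for the upper bound, and use the Euclidean $\ol\rho$-Lipschitz estimate $|z| - \ol\rho t \leq |\gamma(t)| \leq |z| + \ol\rho t$ for a $\mu$-length-parametrized path (valid because $\mu(v)=1$ forces $|v|\leq\ol\rho$) together with $f_0\leq\ol\kappa$ and $T\geq\mu(w-z)$ for the lower bound. Your remark that the $\alpha<0$ lower bound is only meaningful (or even well-defined) when $|z|>\ol\rho\,\mu(w-z)$ is a caveat the paper's own proof passes over in silence; it is a reasonable thing to flag, since that case is indeed vacuous.
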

\begin{proof}
To obtain the upper bound in~\eqref{eqn-metric-compare}, let 
\eqbn
\gamma(t) := \frac{t w + \left(\mu(w-z) - t\right) z}{\mu(w-z)} ,\quad \forall t\in [0, \mu(w-z)] .
\eqen
Then $\gamma$ is parametrized by $\mu$-length and by a change of variables,
\alb
\op{len}^D(\gamma) 
= \int_0^{\mu(w-z)} f(\gamma(t))^{-1} \, dt 
= \mu(w-z) \int_0^1 f(tw + (1-t) z)^{-1} \, dt .  
\ale

To obtain the lower bound, let $\gamma : [0,T] \rta \BB R^d$ be a piecewise linear path from $z$ to $w$ parametrized by $\mu$-length. Then $T \geq \mu(w-z)$ and for each $t\in [0,T]$, 
\eqbn
|z| -\ol\rho t \leq |\gamma(t)| \leq |z| + \ol\rho t  .
\eqen
Hence for each $t\in [0,T]$, 
\eqbn
f(\gamma(t))^{-1} \geq
\begin{dcases}
\ol\kappa^{-1} (|z| + \ol\rho t)^{-\alpha} ,\quad &\alpha \geq 0 \\
\ol\kappa^{-1} (|z| - \ol\rho t)^{-\alpha} ,\quad &\alpha < 0.
\end{dcases}
\eqen  
If $\alpha \in [0,\infty)\setminus \{1\}$, we thus have
\eqbn
\op{len}^D(\gamma) 
\geq \ol\kappa^{-1} \int_0^T  (|z| + \ol\rho t)^{-\alpha} \, dt  
\geq \frac{ |z|^{1-\alpha}  - \left(|z| + \ol\rho \mu(w-z) \right)^{1-\alpha} }{\ol\rho  \ol\kappa (\alpha-1)}    .
\eqen
This concludes the proof in the case where $\alpha \in [0,\infty)\setminus \{1\}$. Similar arguments apply in the case where $\alpha < 0$ or $\alpha=1$. 
\end{proof}

\subsection{Growth estimates for weighted FPP}
\label{sec-growth-estimates}

In this subsection, we will use the bounds for standard FPP described in Section~\ref{sec-standard-fpp} to prove results which show that the metric $D$ of Section~\ref{sec-D-metric} is a good approximation for passage times in weighted FPP. The intuition behind the estimates of this subsection is as follows. If $v\in\BB Z^d$, then near $v$ the $f$-weighted FPP metric $T$ locally looks like the ordinary ($f\equiv 1$) FPP metric, re-scaled by $f(v) $. This, in turn, is well-approximated by the metric induced by the norm $f(v)\cdot \mu$ due to the estimates of Section~\ref{sec-standard-fpp}. Hence $f$-weighted FPP distances are comparable to $\mu$-distances, weighted by $f$, i.e., $D$-distances.
 
We first state our main upper bound for $f$-weighted FPP passage times. 
Roughly speaking, the estimate says that if $\tau$ is a stopping time for the $f$-weighted FPP filtration $\{\mcl F_t\}_{t\geq0}$ and $v \in \BB Z^d\setminus A_\tau $, then it is very unlikely that the amount of time after $\tau$ before $v$ is absorbed by the FPP clusters is too much larger than $D(v,A_\tau)$. 
The reason for the conditions involving $B_{m^\xi}^{|\cdot|}(0) \setminus B_{m^{1/\xi}}^{|\cdot|}(0)$ in the statement of the lemma is to keep us away from 0 and $\infty$, so that we can apply the estimates for $D$ in Section~\ref{sec-D-metric-properties} and so that we only need to consider polynomially many points (which is important when we apply a union bound to events with probability $o_m^\infty(m)$).

\begin{lem}[Upper bound for passage times] \label{prop-hull-time-metric}  
Fix $\xi > 1$. 
Let $\tau$ be a stopping time for $\{\mcl F_t\}_{t\geq 0}$ and let $m\in \BB N$. For $z , w \in B^{|\cdot|}_{m^\xi}(0) \setminus B^{|\cdot|}_{m^{1/\xi}}(0)$, let
\eqb \label{eqn-hull-time-restricted-D}
\wt D_m(z,w) := \inf\left\{\op{len}^D(\gamma) \,:\, \text{$\gamma$ is a piecewise linear path from $z$ to $w$ in $B^{|\cdot|}_{m^\xi }(0) \setminus B^{|\cdot|}_{ m^{1/\xi}}(0)$}\right\}  .
\eqe   
Also let 
\[
V_\tau^m  := \left( \BB Z^d  \cap \left( B^{|\cdot|}_{m^\xi}(0) \setminus B^{|\cdot|}_{m^{1/\xi}}(0) \right) \right) \setminus \mcl V(A_\tau) .
\]
Then for each $\beta \in (0,1/(3\xi))$, 
\eqbn
\BB P\left(T(0,v) - \tau \leq \left(1 + m^{ -\beta } \right) \wt D_m(v,A_\tau) + \frac{ m^{1/\xi -\beta   }}{|v|^\alpha}  ,\, \forall v\in V_\tau^m \,|\, \mcl F_\tau\right) = 1-o_m^\infty(m)   
\eqen
at a deterministic rate depending only on $\beta $ and $\xi$. 
\end{lem}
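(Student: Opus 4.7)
The plan is to compare $f$-weighted FPP passage times to standard FPP passage times on small subsegments of a near-optimal path between $\partial A_\tau$ and $v$, using the strong Markov property together with the shape-theorem estimates \eqref{eqn-hull-upper'}--\eqref{eqn-hull-lower'}. Because $\#V_\tau^m \preceq m^{d\xi}$ is polynomial in $m$, any per-vertex failure probability of order $o_m^\infty(m)$ survives a union bound, so it suffices to fix $v \in V_\tau^m$ and to prove the stated inequality with failure probability $o_m^\infty(m)$. For such $v$, choose a near-optimal piecewise linear path $\gamma$ inside $B^{|\cdot|}_{m^\xi}(0) \setminus B^{|\cdot|}_{m^{1/\xi}}(0)$ from a point of $\partial A_\tau$ to $v$, with $\op{len}^D(\gamma) \leq \wt D_m(v,A_\tau) + m^{-2\beta}$; parametrize it by $\mu$-length and subdivide it into $K$ subsegments of $\mu$-length at most $\delta := m^{\beta_0}$, where $\beta_0 \in (2\beta,\ 1/\xi - \beta)$. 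Such a $\beta_0$ exists precisely because $\beta < 1/(3\xi)$. Write the breakpoints as $z_0 \in \partial A_\tau,\ z_1,\dots,z_K$, approximate each by the nearest lattice vertex $w_i$, arranging $w_0 \in \mcl V(A_\tau)$ and $w_K = v$, and define $\tau_0 := \tau$ and $\tau_i := \inf\{t \geq \tau_{i-1} : w_i \in \mcl V(A_t)\}$.

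The key inductive step is to show that, for each $i$, with failure probability $o_m^\infty(m)$,
\[
\tau_i - \tau_{i-1} \ \leq\ \frac{\mu(w_i - w_{i-1})(1 + O(m^{-\beta}))}{f(z_i)} \ +\ \frac{\mu(w_i-w_{i-1})^{1/2+\zeta}}{f(z_i)} .
\]
Apply the strong Markov property (Lemma~\ref{prop-fpp-law}) at $\tau_{i-1}$ and pass to the normalized variables $\ol X_e^{\tau_{i-1}} = \op{wt}(e)\wh X_e^{\tau_{i-1}}$ of Definition~\ref{def-normalized-exp-rv}, which are conditionally iid Exp$(1)$. Applying the standard-FPP lower bound \eqref{eqn-hull-lower'} to the cluster $\ol A^{\tau_{i-1}}_{w_{i-1},\,r+r^{1/2+\zeta}}$ with $r = \mu(w_i - w_{i-1}) \leq \delta$ produces, with failure probability $o_\delta^\infty(\delta) = o_m^\infty(m)$, a path $\eta_i$ from $w_{i-1}$ to $w_i$ with $\ol T^{\tau_{i-1}}(\eta_i) \leq r + r^{1/2+\zeta}$; by the companion upper bound \eqref{eqn-hull-upper'} we may further assume $\eta_i \subset B^\mu_{r+r^{1/2+\zeta}}(w_{i-1})$. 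On this small ball Lemma~\ref{prop-f-cont} gives $\op{wt}(e') \geq f(z_i)(1 - O(m^{-\beta}))$ for every edge $e' \in \eta_i$, using $|z_i| \geq m^{1/\xi}$ and $\delta \leq m^{1/\xi - \beta}$. Since $w_{i-1} \in \partial \mcl V(A_{\tau_{i-1}})$ (it was just absorbed at time $\tau_{i-1}$), we can prepend to $\eta_i$ any path from $v_0$ to $w_{i-1}$ through $A_{\tau_{i-1}}$ and feed the concatenation into Lemma~\ref{prop-normalized-compare}\ref{item-normal-to-weight}; the conclusion of that lemma depends only on the outer portion $\eta_i$, so the resulting bound $s_i = (r+r^{1/2+\zeta})/(f(z_i)(1 - O(m^{-\beta})))$ gives $w_i \in \mcl V(A_{\tau_{i-1}+s_i})$.

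Summing the increments over $i = 1,\dots,K$ and approximating the Riemann sum $\sum_i \mu(w_i - w_{i-1})/f(z_i)$ by $\op{len}^D(\gamma)(1 + O(m^{-\beta}))$ (the discretization error coming from Lemma~\ref{prop-f-cont} and the $\mu$-to-lattice rounding), one gets
\[
\tau_K - \tau \ \leq\ \op{len}^D(\gamma)(1 + m^{-\beta}) \ +\ \sum_i \frac{\mu(w_i-w_{i-1})^{1/2+\zeta}}{f(z_i)} \ +\ \frac{m^{1/\xi-\beta}}{|v|^\alpha},
\]
and $T(0,v) \leq \tau_K$. The final residual absorbs the one-step discretization at the endpoint $v$ (a single subsegment of $\mu$-length $\leq \delta = m^{\beta_0}$ divided by $f(v) \asymp |v|^\alpha$). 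The accumulated fluctuation term is bounded by $K\delta^{1/2+\zeta}/f_{\min}$, which is $\leq m^{-\beta} \op{len}^D(\gamma)$ exactly because $\delta^{-1/2+\zeta} \leq m^{-\beta}$, i.e.\ $\beta_0 > 2\beta$. Taking the infimum over $\gamma$ gives the claimed bound.

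The main obstacle is the two-sided constraint on $\delta$: one needs $\delta \leq m^{1/\xi - \beta}$ so that $f$ varies by less than a factor $1 + O(m^{-\beta})$ across each subsegment (via Lemma~\ref{prop-f-cont} and the lower bound on $|z_i|$ in the annulus), and simultaneously $\delta \geq m^{2\beta}$ so that the standard-FPP fluctuation $\delta^{1/2+\zeta}$ is negligible compared to the main term $\delta m^{-\beta}$. These constraints are compatible iff $3\beta < 1/\xi$, matching the hypothesis and explaining both the appearance of $1/(3\xi)$ and why sharper FPP fluctuation exponents (cf.\ Remark~\ref{remark-kesten-optimal}) would yield a better exponent here. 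A secondary technical issue is that the inner path prepended to $\eta_i$ crosses edges with potentially small $\op{wt}$; this is handled by exploiting that Lemma~\ref{prop-normalized-compare}\ref{item-normal-to-weight} only requires control of the portion of the path after the last crossing of $\partial \mcl E(A_{\tau_{i-1}})$, which is $\eta_i$ itself.
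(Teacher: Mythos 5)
Your proposal is correct and follows essentially the same route as the paper: subdivide a near-optimal $D$-path into segments of mesoscopic $\mu$-length $\asymp m^{2/(3\xi)}$, convert each increment to a standard-FPP estimate via Lemma~\ref{prop-normalized-compare} and the rate bounds~\eqref{eqn-hull-upper'}--\eqref{eqn-hull-lower'}, and sum the resulting Riemann approximation of $\op{len}^D(\gamma)$. The only structural difference is that the paper packages the single-increment step into Lemma~\ref{prop-hull-increment-time} (which controls the time to absorb an entire small $\mu$-ball), whereas you perform that estimate inline; your observation that assertion~\ref{item-normal-to-weight} of Lemma~\ref{prop-normalized-compare} only really constrains the weights on the portion of the path beyond the last crossing of $\bdy\mcl E(A_\tau)$ is correct and is exactly what the paper's proof of that assertion establishes.
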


We note that the metric $\wt D_m$ of~\eqref{eqn-hull-time-restricted-D} is defined in the same manner as $D$ but with a smaller set of allowed paths. Hence $\wt D_m \geq D$. However, if $z,w \in \BB R^d$ and there is a $D$-geodesic from $z$ to $w$ which does not enter $B^{|\cdot|}_{m^\xi }(0) \setminus B^{|\cdot|}_{ m^{1/\xi}}(0)$, the $\wt D_m(z,w) = D(z,w)$.

The following is our main lower bound for $f$-weighted FPP passage times, which says that (roughly speaking) the amount of time after a stopping time $\tau$ before a vertex $v\in\BB Z^d\setminus A_\tau$ is absorbed is very unlikely to be much larger then $D(v,A_\tau)$. 

\begin{lem}[Lower bound for passage times] \label{prop-hull-expand-metric} 
Fix $\xi >1$.
Let $\tau$ be a stopping time for $\{\mcl F_t\}_{t\geq 0}$ with $\tau < \infty$ a.s.\ and let $m\in \BB N$. Let $\wh V_\tau^m $ be the set of $v \in \BB Z^d \setminus \mcl V(A_\tau)$ such that the FPP geodesic $\eta_v$ connecting 0 to $v$ satisfies $\eta_v \setminus \mcl E( A_\tau) \subset B_{m^\xi}^{|\cdot|}(0) \setminus B_{m^{1/\xi}}^{|\cdot|}(0)$.
For $v \in \wh V_\tau^m$, let $u_v$ be the last vertex of $\mcl V(A_\tau)$ hit by the geodesic $\eta_v$. 
Let $D$ be the metric~\eqref{eqn-weighted-metric}. For each $\beta \in (0,1/(3\xi))$, 
\eqbn
\BB P\left(T(0,v) - \tau \geq \left(1-m^{-\beta } \right) D(u_v , v) - \frac{m^{1/\xi - \beta}}{|v|^\alpha} ,\,  \forall  v \in \wh V_\tau^m     \,|\, \mcl F_\tau\right) = 1 - o_m^\infty(m)
\eqen
at a deterministic rate depending only on $\beta $ and $\xi$. 
\end{lem}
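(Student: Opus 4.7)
The plan is to prove a lower-bound analogue of Lemma~\ref{prop-hull-time-metric}, using Lemma~\ref{prop-normalized-compare} to translate the $f$-weighted passage times to standard FPP with iid Exp(1) weights, and then comparing the latter to $D$ via a Riemann-sum approximation. The first step is to reduce the assertion to a lower bound on $\wh T^\tau(\eta_v^{\text{out}})$, where $\eta_v^{\text{out}}$ is the portion of the geodesic $\eta_v$ from $u_v$ to $v$. By construction its first edge lies in $\bdy\mcl E(A_\tau)$ and all subsequent edges lie in $\mcl E(G)\setminus(\mcl E(A_\tau)\cup\bdy\mcl E(A_\tau))$, so Definition~\ref{def-modified-exp-rv} gives $\wh T^\tau(\eta_v^{\text{out}}) = T(\eta_v^{\text{out}}) - \tau + \wt T^\tau(\eta_v^{\text{out}}(1))$. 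Since $T(0,u_v) \leq \tau$, each prefix of the FPP geodesic from $0$ to $u_v$ has passage time at most $\tau$, so the entire geodesic lies in $A_\tau$ and hence $\wt T^\tau(\eta_v^{\text{out}}(1)) \leq T(0,u_v)$. Combined with $T(0,u_v) + T(\eta_v^{\text{out}}) \leq T(\eta_v) = T(0,v)$ this yields $\wh T^\tau(\eta_v^{\text{out}}) \leq T(0,v) - \tau$, so it suffices to bound $\wh T^\tau(\eta_v^{\text{out}})$ from below by the claimed expression.

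Next I would fix a scale $r = r(m)$ of polynomial order slightly below $m^{1/\xi}$ (the precise exponent being dictated by the final error calculation) and take $\mcl W$ to be a maximal $r$-separated subset of $\BB Z^d \cap (B_{m^\xi}^{|\cdot|}(0)\setminus B_{m^{1/\xi}}^{|\cdot|}(0))$; its cardinality is polynomial in $m$. By Lemma~\ref{prop-normalized-compare}, conditional on $\mcl F_\tau$ the variables $\{\ol X_e^\tau\}$ are iid $\text{Exp}(1)$, so the rate-of-convergence bound~\eqref{eqn-hull-upper'} applied to standard FPP started at each $w_j \in \mcl W$, together with a union bound over the $O(|\mcl W|^2)$ pairs, produces an event $E$ of conditional probability $1 - o_m^\infty(m)$ on which $\ol T^\tau(w_j,w_k) \geq \mu(w_k - w_j) - Cr^{1/2+\zeta}$ simultaneously for every pair with $\mu(w_j - w_k) \leq 3r$. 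On $E$, for each $v \in \wh V_\tau^m$ I would partition $\eta_v^{\text{out}}$ into maximal consecutive sub-paths $\eta_i$, each contained in a ball of radius $Cr$ about some $w_{j(i)} \in \mcl W$ and with endpoints within $r$ of $w_{j(i)}$ and $w_{j(i+1)}$. Lemma~\ref{prop-f-cont} (with $r/|w_{j(i)}| \leq m^{-\beta}$) gives $f(m_e) \leq (1+Cm^{-\beta}) f(w_{j(i)})$ throughout such a sub-path, so combining with $E$ yields
\[
\wh T^\tau(\eta_i) \geq f(w_{j(i)})^{-1}(1-Cm^{-\beta})\bigl(\mu(w_{j(i+1)} - w_{j(i)}) - Cr^{1/2+\zeta}\bigr).
\]

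Summing over $i$, the main term $(1 - O(m^{-\beta})) \sum_i f(w_{j(i)})^{-1}\mu(w_{j(i+1)}-w_{j(i)})$ is a Riemann-sum approximation to the $D$-length of the piecewise linear path through the $w_{j(i)}$'s; a further application of Lemma~\ref{prop-f-cont} shows it is at least $(1-O(m^{-\beta}))D(u_v,v)$, using that any specific piecewise linear path has $\op{len}^D \geq D(u_v, v)$. The concentration-induced error $O(r^{1/2+\zeta})\sum_i f(w_{j(i)})^{-1}$ is estimated using $f(w_{j(i)})^{-1} \asymp |w_{j(i)}|^{-\alpha}$ together with the observation that the net points along $\eta_v^{\text{out}}$ accumulate toward $v$ (so that $\sum_i f(w_{j(i)})^{-1}$ is effectively controlled by $r^{-1} f(v)^{-1}$ up to logarithmic/polynomial factors); after a union bound over the at most $O(m^{\xi d})$ pairs $(v,u_v)$ with $v \in \wh V_\tau^m$, this error is shown to be $O(m^{1/\xi-\beta}/|v|^\alpha)$, which yields the assertion.

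The main obstacle is the bookkeeping in this last step. The scale $r$ must simultaneously be small enough that $f$ is nearly constant on balls of radius $Cr$ centered inside the annulus (which forces something like $r \lesssim m^{1/\xi-\beta}$), large enough that the standard-FPP concentration error $r^{1/2+\zeta}$ is small compared to $\mu(w_{j(i+1)} - w_{j(i)}) \asymp r$, and such that the accumulated error summed over the $\asymp r^{-1}\mu(v - u_v)$ sub-paths still fits within the allowed $m^{1/\xi-\beta}/|v|^\alpha$; the window $\beta < 1/(3\xi)$ in the hypothesis is precisely where all these constraints can be jointly satisfied. A secondary technical point is that the partitioning of $\eta_v^{\text{out}}$ into sub-paths must be done greedily along the geodesic so that each sub-path is actually contained in a single ball around a net point, even when the geodesic wanders; this is handled by choosing the successive $w_{j(i)}$'s as the closest net point to the first exit point of $\eta_v^{\text{out}}$ from the current ball.
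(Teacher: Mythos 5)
Your reduction to lower-bounding $\wh T^\tau(\eta_v^{\mathrm{out}})$ is correct: since all but the first edge of $\eta_v^{\mathrm{out}}$ lie outside $\bdy\mcl E(A_\tau)$ (both endpoints of any such edge are outside $\mcl V(A_\tau)$), \eqref{eqn-modified-time-compare} applies and $\wt T^\tau(\eta_v^{\mathrm{out}}(1)) \leq T(0,u_v) \leq \tau$ yields $\wh T^\tau(\eta_v^{\mathrm{out}}) \leq T(0,v)-\tau$. The core idea — decompose the geodesic into small-scale pieces, use the rate-of-convergence estimate~\eqref{eqn-hull-upper'} for the rescaled iid passage times $\ol X_e^\tau$, invoke Lemma~\ref{prop-f-cont} to freeze $f$ locally, and assemble a Riemann sum lower-bounding $D(u_v,v)$ — is the same idea the paper uses, though the paper packages the rate-of-convergence input into Lemma~\ref{prop-hull-increment-expand} and applies it at the intermediate stopping times $T(0,v_{k-1})\vee\tau$, whereas you work directly at time $\tau$ with $\ol T^\tau$; both are workable.

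The genuine gap is in the net-rounding step. Your event $E$ controls $\ol T^\tau(w_j,w_k)$ only for \emph{net points} $w_j,w_k \in \mcl W$, but the sub-paths $\eta_i$ have endpoints $a_i,a_{i+1}$ which are arbitrary lattice points on the geodesic, not net points. You have $\ol T^\tau(\eta_i) \geq \ol T^\tau(a_i,a_{i+1})$, but no control on $\ol T^\tau(a_i,a_{i+1})$ from $E$; and converting to $\ol T^\tau(w_{j(i)},w_{j(i+1)})$ via the triangle inequality introduces a two-sided error of order $\ol T^\tau(a_i,w_{j(i)}) + \ol T^\tau(a_{i+1},w_{j(i+1)}) \asymp r$ per segment. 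Since each segment has $\mu$-length $\asymp r$ as well (you need $\mu(w_{j(i)}-w_{j(i+1)}) \leq 3r$ so that $E$ is applicable), this rounding error is a \emph{constant fraction} of the main term for each segment, not a vanishing one — and it does not telescope. As written the claimed bound $\wh T^\tau(\eta_i) \geq f(w_{j(i)})^{-1}(1-Cm^{-\beta})(\mu(w_{j(i+1)}-w_{j(i)}) - Cr^{1/2+\zeta})$ therefore does not follow from $E$. The fix is to dispense with the net entirely and define the good event over \emph{all} pairs of lattice points in the annulus at $\mu$-distance $\preceq r$; there are still only polynomially many in $m$, so the union bound is unharmed, and then the actual endpoints $a_i,a_{i+1}$ are covered directly. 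This is effectively what the paper does by defining $E^m(v)$ for every vertex $v$ in the annulus and then partitioning the geodesic at its own vertices $v_k$. A secondary issue: the additive error $m^{1/\xi-\beta}/|v|^\alpha$ does not come from any ``accumulation'' of partition points toward $v$ (the partition is at uniform $\mu$-spacing $\asymp r$); it is a boundary effect from the final, incomplete sub-segment, whose contribution must simply be dropped from the lower bound and then estimated separately by Lemma~\ref{prop-metric-compare}.
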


We will first prove our upper bound for passage times, Lemma~\ref{prop-hull-time-metric}. The following lemma tells us how long it takes for the $f$-weighted FPP clusters to absorb a $\mu$-ball centered at a vertex in $\bdy \mcl V(A_\tau)$. It will be used to prove upper bounds for $f$-weighted FPP passage times in terms of the metric $D$ by, roughly speaking, considering a piecewise linear path covered by many small balls.

\begin{lem} \label{prop-hull-increment-time}
Let $ x \in \partial \BB D$ and $m\in\BB N$. Let $\tau$ be a stopping time for $\{\mcl F_t\}_{t\geq 0}$. Let $v_*$ be a vertex in $\partial\mcl V(A_\tau)$, chosen in some $\mcl F_\tau$-measurable manner. Let $\sigma$ be the smallest $t > 0$ for which each element of $\BB Z^d\cap B^\mu_m(v_*)$ belongs to $\mcl V(A_t)$. Also let $\theta > 1/2$. There is a constant $a > 0$, depending only on $f$, $\mu$, and $\theta$, such that the following is true. 
On the event $\{f(v_*) \geq 2 a |v_*|^{\alpha-1} m \}$, we have
\eqb \label{eqn-hull-increment-time}
\BB P\left(\sigma  - \tau \leq \frac{  m  + m^\theta   }{ f(v_*) - a |v_*|^{\alpha-1} m    } \,|\, \mcl F_\tau \right) = 1 - o_m^\infty(m)
\eqe 
at a deterministic rate (here we recall the notation $o_m^\infty(m)$ from Section~\ref{sec-asymp-notation}).  
\end{lem}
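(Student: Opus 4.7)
The guiding intuition is that near $v_*$ the weight function $f$ is approximately constant, equal to $f(v_*)$, so locally the $f$-weighted FPP clusters should look like standard rate-one FPP clusters time-scaled by $f(v_*)$. Since standard FPP covers the $\mu$-ball $B_m^\mu(v_*)$ in time roughly $m$ by the rate-of-convergence estimates~\eqref{eqn-hull-upper'} and~\eqref{eqn-hull-lower'}, the $f$-weighted FPP should cover it in time $(1+o(1)) m / f(v_*)$. My plan is to make this precise by combining the strong Markov property with a stochastic-domination argument on a slightly enlarged ball.

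First I would control the oscillation of $f$ on $B_R^\mu(v_*)$, where $R := m + 2m^\theta$. The hypothesis $f(v_*) \geq 2a|v_*|^{\alpha-1}m$ combined with the upper bound $f(v_*) \leq \ol\kappa |v_*|^\alpha$ forces $|v_*| \succeq m$, so $|z|$ is comparable to $|v_*|$ for every $z \in B_R^\mu(v_*)$. Lemma~\ref{prop-f-cont} then yields $|f(z)-f(v_*)| \leq a|v_*|^{\alpha-1}m$, after re-absorbing the Lipschitz constant and the factor $\ol\rho$ that converts $\mu$-distance to Euclidean distance into the constant $a$ of the statement. Consequently every edge $e$ whose midpoint lies in $B_R^\mu(v_*)$ satisfies $\op{wt}(e) = f(m_e) \geq w := f(v_*) - a|v_*|^{\alpha-1}m \geq f(v_*)/2 > 0$.

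By Lemma~\ref{prop-fpp-law} and the memoryless property, conditional on $\mcl F_\tau$ the passage times $\wh X_e^\tau$ for $e \notin \mcl E(A_\tau)$ are independent with $\wh X_e^\tau \sim \mathrm{Exp}(\op{wt}(e))$, so I can write $\wh X_e^\tau = E_e/\op{wt}(e)$ for i.i.d.\ rate-one exponentials $E_e$, and extend the $E_e$ to all of $\mcl E(\BB Z^d)$ by fresh independent exponentials on $\mcl E(A_\tau)$. For each $v \in \BB Z^d \cap B_m^\mu(v_*)$ the hitting time then obeys
\[
\sigma_v - \tau \;\leq\; \inf_\eta \sum_{e \in \eta} E_e/\op{wt}(e),
\]
where the infimum runs over paths from $v_*$ to $v$ using only edges with both endpoints in $B_R^\mu(v_*)$; pretending that edges of $A_\tau$ in the ball carry positive cost $E_e/\op{wt}(e)$ rather than the actual zero cost in the post-$\tau$ dynamics can only inflate passage times from $v_*$. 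Since $E_e/\op{wt}(e) \leq E_e/w$ for every edge in the ball, this upper bound is at most $1/w$ times the passage time from $v_*$ to $v$ in the standard rate-one FPP on $\BB Z^d$ restricted to $B_R^\mu(v_*)$. Choosing $\zeta \in (0,\theta - 1/2)$ small enough that $(m+m^\theta)^{1/2+\zeta} \leq m^\theta$ for large $m$, the estimates~\eqref{eqn-hull-lower'} and~\eqref{eqn-hull-upper'} translated to the starting point $v_*$ imply that with probability $1 - o_m^\infty(m)$ the unrestricted rate-one cluster at time $m+m^\theta$ both contains $B_m^\mu(v_*)$ and lies in $B_R^\mu(v_*)$; on this event the unrestricted and restricted rate-one clusters agree up to time $m+m^\theta$, and dividing by $w$ yields $\sigma_v - \tau \leq (m+m^\theta)/w$ uniformly in $v$, giving~\eqref{eqn-hull-increment-time}.

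The main obstacle is that $A_\tau$ may intrude deeply into $B_R^\mu(v_*)$ and disconnect $v_*$ from some $v \in B_m^\mu(v_*) \setminus A_\tau$ once one restricts to the complement of $A_\tau$; the trick for bypassing this is precisely to allow the comparison FPP to traverse edges of $A_\tau$ at their fresh (positive) cost, which still produces a valid upper bound on $\sigma_v-\tau$ and reduces the problem to an unconditional standard-FPP shape estimate. A secondary technicality is that we need $\theta > 1/2$, so that the standard-FPP error $(m+m^\theta)^{1/2+\zeta}$ fits inside the buffer between $R = m+2m^\theta$ and $m+m^\theta$; this is the source of the hypothesis $\theta > 1/2$ in the statement.
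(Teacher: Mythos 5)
Your proof is correct and follows essentially the same route as the paper's: both compare the post-$\tau$ dynamics to rate-one FPP started at $v_*$ via the normalized exponentials $\ol X_e^\tau = \op{wt}(e)\wh X_e^\tau$ (with fresh exponentials on $\mcl E(A_\tau)$), control the oscillation of $f$ on a $\mu$-ball of radius $\asymp m$ around $v_*$ using Lemma~\ref{prop-f-cont} and the hypothesis $f(v_*)\ge 2a|v_*|^{\alpha-1}m$, and then apply the shape estimates~\eqref{eqn-hull-upper'}--\eqref{eqn-hull-lower'} with a buffer of size $m^\theta$. The only cosmetic differences are that you bound $|v_*|\succeq m$ explicitly and avoid the paper's final union bound over $v$ by reading off the uniform bound from the single covering-time event.
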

\begin{proof} 
Define the normalize edge passage times $\ol X^\tau_e$ for $e\in \mcl E(\BB Z^d)$ and the corresponding clusters $\ol A_{v,s}^\tau$ for $s\geq 0$ and $v\in\BB Z^d$ as in Definition~\ref{def-normalized-exp-rv}.  

Let $t_*$ be the smallest $s  > 0$ for which $   \BB Z^d\cap B^\mu_m(v_*)  \subset \mcl V( \ol A_{v_*,s}^\tau )$. 
By definition of FPP, for each $v\in  \BB Z^d\cap B^\mu_m(v_*)  $, there is a simple path $\eta_v$ in $\ol A_{v_*, t_* }^\tau$ with $\ol T^\tau(\eta_v) \leq t_* $ which connects $v_*$ to $v$.
By assertion~\ref{item-weight-to-normal} of Lemma~\ref{prop-normalized-compare},  
if we let $M_v :=  \min_{e\in\eta_v} f(m_e)$, where $m_e$ is the midpoint of the edge $e \in \mcl E(\BB Z^d)$,
then 
\eqb \label{eqn-ol-A-contain}
v \in  \mcl V\left(   A_{\tau +  M_v^{-1} t_*     }  \right).
\eqe 
Hence it suffices to prove an upper bound for $t_*$ and a lower bound for $\min_{v\in \left(   \BB Z^d\cap B^\mu_m(v_*) \right) \setminus \mcl V(A_\tau)}  M_v $.
 
To this end, let $v\in  \left(   \BB Z^d\cap B^\mu_m(v_*) \right) \setminus A_\tau$ be chosen in a $\mcl F_\tau$-measurable manner. By Lemma~\ref{prop-normalized-compare}, the conditional law given $\mcl F_\tau$ of $\{\ol A_{v_*,s}^\tau\}_{s \geq 0}$ is that of a standard FPP process starting from $v_*$ (i.e.\ with weight 1 at each edge). If $s>0$ and $t_* > s$, then  
$B_m^\mu(v_*) \cap \BB Z^d \not\subset \mcl V( \ol A_{v_*,s}^\tau )$. 
By the rate of convergence bound~\eqref{eqn-hull-lower'},
\eqb \label{eqn-hull-time-time}
\BB P\left(t_* >  m   +  m^\theta     \,|\, \mcl F_\tau  \right) 
= o_m^\infty(m) . 
\eqe 
By~\eqref{eqn-hull-upper}, 
\eqb \label{eqn-hull-time-diam}
\BB P\left( \sup \left\{  |u - v_*| \,:\, u \in \mcl V\left( \ol A_{v_*,    m   + m^\theta }^\tau \right)    \right\} > \ol \rho  m +   2  m^\theta    \,|\, \mcl F_\tau \right)  = o_m^\infty(m) .
\eqe 
By Lemma~\ref{prop-f-cont}, we can find a constant $a > 0 $ as in the statement of the lemma such that whenever $|u - v_*| \leq \ol \rho  m +   2  m^\theta$ and $f(v_*) \geq 2a |v_*|^{\alpha-1} m$,  
\eqb \label{eqn-hull-time-f}
f(u) \geq f(v_*) - a |v_*|^{\alpha-1} m .
\eqe 

By combining~\eqref{eqn-hull-time-time} and~\eqref{eqn-hull-time-diam}, we obtain that if $f(v_*) \geq 2 a |v_*|^{\alpha-1} m$, then with conditional probability $1-o_m^\infty(m)$ given $\mcl F_\tau$, we have $t_* \leq  m   +  m^\theta$ and $M_v \geq f(v_*) - a |v_*|^{\alpha-1} m $. 
By combining this with~\eqref{eqn-ol-A-contain} and a union bound over all $v\in\BB Z^d\cap B^\mu_m(v_*)$ we conclude.
\end{proof}

\begin{proof}[Proof of Lemma~\ref{prop-hull-time-metric}]
Let $v_* \in V_\tau^m$ be chosen in an $\mcl F_\tau$-measurable manner.
By definition of $\wt D_m$, there is a piecewise linear path $\gamma$ contained in $  B^{|\cdot|}_{m^\xi}(0) \setminus B^{|\cdot|}_{m^{1/\xi}}(0)$ which connects some element of $\mcl V(A_\tau)$ to $v$ and satisfies
\eqbn
\op{len}^D(\gamma) \leq \left( 1 +  m^{-100\xi}    \right) \wt D_m(v , A_\tau) .
\eqen  
Choose some such path $\gamma$ in a $\mcl F_\tau$-measurable manner. We set $T = \op{len}^\mu(\gamma)$ and take $\gamma$ to be parametrized by $\mu$-length (Definition~\ref{def-pl-path}). It follows from Lemma~\ref{prop-metric-compare} that $T$ is at most a constant times some power of $m$ (the constant and the exponent depend only on $f$ and $\xi$). 
We will show using Lemma~\ref{prop-hull-increment-time} that (roughly speaking) the amount of time it takes the FPP clusters to traverse $\gamma$ is not too much longer than the $D$-length of $\gamma$. 

We first construct a modified version of $\gamma$, which we call $\wt\gamma$, whose $D$-length is not too much larger than that of $\gamma$ and for which FPP passage times between points of $\wt\gamma$ are easier to estimate. 
To this end, fix $\zeta \in (0,1/\xi)$, to be chosen later. Let $s_0$ be the last time $s  \in [0,1]$ for which $\gamma(s) \in A_\tau^F$ (as defined in~\eqref{eqn-fatten-hull}). Let $v_0$ be the element of $\mcl V(A_\tau)$ closest to $\gamma(s_0)$ in the Euclidean norm. Inductively, if $k\in\BB N$ and $s_{k-1}$ and $v_{k-1}$ have been defined, let $s_k$ be the first time $s$ after $s_{k-1}$ for which $\gamma(s) \in \partial B^\mu_{m^\zeta}(\gamma(s_{k-1}))$, or $s_k = 1$ if no such $s$ exists. Also let $v_k$ be the element of $\BB Z^d$ closest to $\gamma(s_k)$ in the Euclidean norm. 
Let $k_*$ be the smallest $k\in\BB N$ for which $s_k = 1$ (and hence $v_k = v_{k_*}$).
Let $\wt\gamma $ be the piecewise linear path which is the concatenation of the line segments $  [v_{k-1} , v_k] $ for $k\in [1,k_*]_{\BB Z}$. 

We will now estimate $\op{len}^D(\wt\gamma)$. For each $k\in [1,k_*]_{\BB Z}$,  
\eqb \label{eqn-len-compare-upper}
\mu\left(v_k - v_{k-1} \right) \leq \mu\left(\gamma(s_k) - \gamma(s_{k-1}) \right) + C \leq s_k -s_{k-1} + C , 
\eqe 
where here $C>0$ is a deterministic constant depending only on $\mu$. Note that in the second inequality we have used that straight lines are geodesics for the metric induced by the norm $\mu$ and that $\gamma$ is parametrized by $\mu$-length. 
The sets $\gamma([s_{k-1} , s_k])$ and $ [v_{k-1} , v_k]$ are each contained in the Euclidean ball $B^{|\cdot|}_{\ol\rho m^\zeta  +C}(v_{k-1} )$, where $\ol\rho$ is the constant from Definition~\ref{def-angle-constant}. By Lemma~\ref{prop-f-cont}, for each $y \in \gamma([s_{k-1} , s_k]) \cup   [v_{k-1} , v_k]$, we have $|f(v_{k-1}  ) - f(  y)| \preceq |v_{k-1}|^{\alpha-1} m^\zeta$
with the implicit constant depending only on $f$.
Note that here we have used that $|y| \geq m^{1/\xi} - \ol\rho m^\zeta$ and that $\zeta < 1/\xi$. 
Therefore,
\eqb \label{eqn-sup-compare-upper}
\frac{ \sup_{y\in  [v_{k-1} , v_k]}  f(y)^{-1}  }{ \inf_{y\in \gamma([s_{k-1} , s_k])}  f(y)^{-1} } 
\leq  \frac{ f(v_{k-1} )  +  |v_{k-1}|^{\alpha-1} m^\zeta      }{    f(v_{k-1})  - |v_{k-1}|^{\alpha-1} m^\zeta   }  
\leq 1  + O_m( m^{\zeta - 1/\xi}) ,
\eqe   
at deterministic rate depending only on $\mu$ and $f$. 
By~\eqref{eqn-len-compare-upper} and~\eqref{eqn-sup-compare-upper} we find that
\begin{align} \label{eqn-len-bar-compare}
\op{len}^D(\wt\gamma) 
&\leq    \left( 1  + O_m( m^{\zeta - 1/\xi}) + O_m(m^{-\zeta}) \right)   \op{len}^D( \gamma) +  O_m\left(m^{ \zeta}\right) |v_*|^{-\alpha} \notag\\
&\leq \left( 1  + O_m( m^{\zeta - 1/\xi}) + O_m(m^{-\zeta}) \right) \wt D_m(v_* , A_\tau)  +  O_m\left(m^{ \zeta}\right) |v_*|^{-\alpha}
\end{align} 
where here the last term $O_m\left(m^{ \zeta}\right) |v_*|^{-\alpha}$ comes from the final segment $[v_{k_*-1} , v_{k_*}]$.

It remains to estimate the amount of time it takes for the clusters $A_t$ to traverse the marked vertices on the path $\wt\gamma$. Let $t_0 = \tau$ and for $k\in [1,k_*]_{\BB Z}$, let $t_k$ be the smallest $t > 0$ for which $v_k \in \mcl V(A_t)$. 
By Lemma~\ref{prop-hull-increment-time}, for any $\theta \in (1/2,1)$, there is a constant $a > 0$ such that it holds except on an event of conditional probability $o_m^\infty(m)$ given $\mcl F_\tau$ that 
\eqb \label{eqn-upper-inc-bound}
t_k  -t_{k-1} \leq  \frac{ m^\zeta  + m^{\zeta\theta} }{ f(v_{k-1}) -     a |v_{k-1}|^{\alpha-1} m^\zeta     }  ,\quad \forall k\in [1,k_* ]_{\BB Z}  .
\eqe  
We have $ \mu(v_k - v_{k-1}) = m^\zeta + O_m(1)$ for $k\in [1,k_*-1]_{\BB Z}$ and 
\eqbn
\sup_{y\in  [v_{k-1} , v_k]}  f(y)^{-1} \geq \left( f(v_{k-1}) +  |v_{k-1}|^{\alpha-1} m^\zeta \right)^{-1}        .
\eqen
Hence~\eqref{eqn-upper-inc-bound} implies that  
\eqb \label{eqn-inc-time-ratio}
\frac{t_k - t_{k-1}}{  \mu(v_k - v_{k-1})     \sup_{y\in [v_{k-1} , v_k]}  f(y)^{-1}   }  \leq  1 + O_m\left(m^{-\zeta(1-\theta)}\right) + O_m\left( m^{\zeta - 1/\xi}\right) .
\eqe 
If we choose $\zeta = \xi^{-1}(2-\theta)^{-1}$, then $-\zeta(1-\theta) = \zeta -1/\xi$ and $-\zeta < \zeta - 1/\xi$. If we are given $\beta \in (0,1/(3\xi))$ and we choose $\theta$ sufficiently close to $1/2$, then we can arrange that the sum of the error terms on the right side of~\eqref{eqn-inc-time-ratio} is $O_m(m^{-\beta })$, the sum of the error terms being multiplied by $\wt D_m(v_* , A_\tau)$ on the right side of~\eqref{eqn-len-bar-compare} is at most $O_m(m^{-\beta})$, and $\zeta < 1/\xi -\beta$. 
By~\eqref{eqn-upper-inc-bound}, we also have
\eqbn
t_{k_*} -t_{k_*-1} = O_m\left(m^{\zeta}\right)|v_*|^{-\alpha} = O_m(m^{1/\xi-\beta}) |v_*|^{-\alpha}     .
\eqen
By summing over all $k\in [1,k_*]_{\BB Z}$, we find that except on an event of conditional probability $o_m^\infty(m)$ given $\mcl F_\tau$,  
\eqbn
T(0,v_*) - \tau \leq   \left(1 + O_m(m^{-\beta}) \right)    \op{len}^D(\wt\gamma) + O_m\left(m^{1/\xi-\beta}\right) |v_*|^{-\alpha} .
\eqen
We conclude by combining this with~\eqref{eqn-len-bar-compare}, applying the union bound, and slightly increasing $\beta$. 
\end{proof}

We next prove our lower bound for FPP passage times.
For the proof of Lemma~\ref{prop-hull-expand-metric}, we need the following lemma to help us translate the estimates of Section~\ref{sec-standard-fpp} to a lower bound for weighted FPP passage times in terms of $D$. Roughly speaking, the lemma tells us that if $\tau$ is a stopping time for the $f$-weighted FPP clusters, then it is very unlikely that $A_{\tau+s}$ contains an edge whose $\mu$-distance to $A_\tau$ is too large.

\begin{lem} \label{prop-hull-increment-expand}
Let $\tau$ be a stopping time for $\{\mcl F_t\}_{t\geq 0}$. Also let $v_* \in \partial \mcl V(A_\tau)$ be chosen in a $\mcl F_\tau$-measurable manner. Fix $\theta \in ( 1/2 , 1)$ and for $s > 0$ and $R> 0$, let $F^{s,R}_\tau(v_*) $ be the event that there is an edge $e_* \in \mcl E(\BB Z^d) \setminus \mcl E(A_\tau)$ such that the following is true.
\begin{enumerate}
\item $e_* \in \mcl E(A_{\tau+s})$. \label{item-expand-contain}
\item Let $\eta_{e_*}$ be the FPP geodesic from 0 to $e_*$. Then $f(z) \leq R$ for each $e\in \eta_{e_*}$ and $z\in e$. \label{item-expand-ball}
\item $v_*$ is the last vertex in $\partial \mcl V(A_\tau)$ crossed by $\eta_{e_*}$. 
\item $e_* \not\subset B_{Rs + (Rs)^\theta}^\mu( v_* )$.   \label{item-expand-diam}
\end{enumerate}
Then for each $p > 0$, 
\eqbn
\BB P\left( F_\tau^{s,R }(v_*)  \,|\, \mcl F_\tau \right)  \preceq (R s)^{-p} ,
\eqen
with the implicit constant depending only on $p$, $\mu$, and $f$. 
\end{lem}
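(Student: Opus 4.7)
\textbf{Proof plan for Lemma~\ref{prop-hull-increment-expand}.}
The plan is to transfer the event $F_\tau^{s,R}(v_*)$, which is about the $f$-weighted FPP process, into an event about standard (iid exponential) FPP started from $v_*$, via the comparison given by assertion~\ref{item-weight-to-normal} of Lemma~\ref{prop-normalized-compare}, and then to control the latter event using the rate of convergence estimate~\eqref{eqn-hull-upper} for the Eden-model limit shape.

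First, suppose $F_\tau^{s,R}(v_*)$ occurs, with witness edge $e_*$ and FPP geodesic $\eta_{e_*}$. By condition~\ref{item-expand-ball}, every edge $e \in \eta_{e_*}$ satisfies $\op{wt}(e) = f(m_e) \leq R$, in particular this holds on $\eta_{e_*} \setminus \mcl E(A_\tau)$, so the quantity $\ol m$ appearing in Lemma~\ref{prop-normalized-compare}\ref{item-weight-to-normal} is at most $R$. Since $v_*$ is the last vertex of $\partial \mcl V(A_\tau)$ crossed by $\eta_{e_*}$ and $e_* \in \mcl E(A_{\tau+s})$, that assertion yields $e_* \in \mcl E\bigl(\ol A_{v_*, R s}^\tau\bigr)$, and hence $e_* \subset \ol A_{v_*, R s}^{\tau, F}$ (the fattening of the standard FPP cluster, defined as in~\eqref{eqn-fatten-hull}). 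Combining with condition~\ref{item-expand-diam}, the event $F_\tau^{s,R}(v_*)$ is contained in the event that $\ol A_{v_*, R s}^{\tau, F}$ is not contained in $v_* + (Rs + (Rs)^\theta) \BB A$, since $B^\mu_{Rs+(Rs)^\theta}(v_*) = v_* + (Rs+(Rs)^\theta)\BB A$ by the definition of $\mu$.

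Next, by Lemma~\ref{prop-normalized-compare}, the conditional law of $\{\ol X_e^\tau\}_{e \in \mcl E(\BB Z^d)}$ given $\mcl F_\tau$ is that of a family of iid rate-$1$ exponentials, so conditionally on $\mcl F_\tau$ (on which $v_*$ is measurable) the process $\{\ol A_{v_*, t}^\tau\}_{t \geq 0}$ has the law of standard FPP started from $v_*$, which by translation invariance is the same as standard FPP from $0$. Fix any $\zeta \in (0, \theta - 1/2)$, so that $(Rs)^{1/2+\zeta} \leq (Rs)^\theta$ for $Rs$ larger than some constant depending on $\zeta$. Applying the Kesten estimate~\eqref{eqn-hull-upper} with $t = Rs$ then gives
\[
\BB P\bigl( \ol A_{v_*, R s}^{\tau, F} \not\subset v_* + (Rs + (Rs)^\theta) \BB A \,\big|\, \mcl F_\tau \bigr) \leq \BB P\bigl( \ol A_{v_*, R s}^{\tau, F} \not\subset v_* + Rs(1 + (Rs)^{-1/2+\zeta}) \BB A \,\big|\, \mcl F_\tau \bigr) = o_{Rs}^\infty(Rs),
\]
at a rate depending only on $\zeta$ and $d$ (hence only on $\mu$).

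Combining the two steps yields $\BB P(F_\tau^{s,R}(v_*) \mid \mcl F_\tau) = o_{Rs}^\infty(Rs)$ for $Rs$ larger than some constant, which is bounded by $(Rs)^{-p}$ for any $p>0$. For $Rs$ smaller than this constant, the bound $(Rs)^{-p}$ is bounded below by a positive constant depending only on $p$, $\mu$, and $f$, so the conclusion follows from the trivial bound $\BB P(F_\tau^{s,R}(v_*) \mid \mcl F_\tau) \leq 1$. The two regimes together give the claimed estimate, and since all implicit quantities depend only on $p$, $\mu$, and $f$, no genuine obstacle arises; the whole argument is an application of the comparison lemma to the off-the-shelf shape theorem bound.
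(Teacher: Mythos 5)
Your proof is correct and follows essentially the same approach as the paper: reduce via Lemma~\ref{prop-normalized-compare}\ref{item-weight-to-normal} to an event about the standard FPP cluster $\ol A_{v_*, Rs}^\tau$, then apply Kesten's bound~\eqref{eqn-hull-upper}. You add some useful low-level bookkeeping that the paper leaves implicit — passing to the fattened cluster so that~\eqref{eqn-hull-upper} applies literally, choosing $\zeta < \theta - 1/2$, and dispatching the regime where $Rs$ is bounded — but the argument is the same.
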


In the statement of Lemma~\ref{prop-hull-increment-expand}, one should think of $R$ as being large (if $\alpha < 0$), small (if $\alpha > 0$), or of constant order (if $\alpha=0$) and $s$ as being much larger than $1/R$, so that $Rs >> 1$. 

\begin{proof}[Proof of Lemma~\ref{prop-hull-increment-expand}]
Define the random variables $\ol X^\tau_e$ for $e\in \mcl E(\BB Z^d)$ and the clusters $\ol A_{v,s}^\tau$ for $s\geq 0$ and $v\in\BB Z^d$ as in Definition~\ref{def-normalized-exp-rv}. 

Suppose the event $F_\tau^{s,R }(v_*) $ occurs. Let $e_* \in \mcl E(\BB Z^d) \setminus \mcl E(A_\tau)$ and $\eta_{e_*}$ be as in the definition of $F_{\tau}^{s,R}(v_*)$. By conditions~\ref{item-expand-contain} and~\ref{item-expand-ball} in the definition of $F_\tau^{s,R }(v_*)$ together with assertion~\ref{item-weight-to-normal} of Lemma~\ref{prop-normalized-compare}, $e_* \in \mcl E(\ol A_{v_*  , R  s}^\tau)$. 
 By condition~\ref{item-expand-diam} in the definition of $F_\tau^{s,R } $, we therefore have
\eqbn
F_\tau^{s,R }(v_*)  \subset \left\{   \ol A_{v_*  , R s}^\tau \not\subset v_* + \left(R s + (R s)^\theta \right) \BB A \right\} .  
\eqen
Since the conditional law of $ \ol A_{v_*  , R s}^\tau$ given $\mcl F_\tau$ is that of a time-$R  s$ standard FPP cluster based at $v_*$ (Lemma~\ref{prop-normalized-compare}) we deduce the statement of the lemma from Kesten's upper bound~\eqref{eqn-hull-upper}.
\end{proof}

\begin{proof}[Proof of Lemma~\ref{prop-hull-expand-metric}]
The basic idea of the proof is to use an FPP geodesic to construct a piecewise linear path from $u_v$ to $v$ whose $D$-length is bounded above. 

We first define an event on which we have lower bounds for certain FPP passage times, building on the event of Lemma~\ref{prop-hull-increment-expand}. 
Fix $\wh\theta \in (0,1/2)$ and $\theta \in (1/2 , 1-\wh\theta)$. 
For $t >0$, $s > 0$, $R > 0$, and $v\in \mcl V(A_t)$, define the event $F^{s,R}_t(v)$ as in Lemma~\ref{prop-hull-increment-expand} with the above choice of $\theta$. Also fix $\zeta \in (0,1/\xi)$ 
and a constant $a > 0$ (to be chosen later, in a manner depending only on $\mu$ and $f$). For $v\in \BB Z^d$, let
\alb
E^m(v) :=   F_{T(0,v) \vee \tau }^{s , R}(v )^c \quad \op{for} \quad s = \frac{ \left(1- m^{-\wh\theta \zeta  } \right) m^\zeta}{  f(v) + a |v|^{\alpha-1} m^\zeta   } \quad \op{and} \quad R = f(v)+ a |v|^{\alpha-1} m^\zeta       .
\ale
Lemma~\ref{prop-hull-increment-expand} and the union bound imply that the event
\eqbn
E^m_*  := \left\{E^m (v) ,\, \forall v\in   \left( \partial \mcl V(A_\tau) \cup \left(  \BB Z^d \setminus \mcl V(A_\tau) \right) \right) \cap B_{2m^\xi}(0) \right\}
\eqen
has conditional probability $1-o_m^\infty(m)$ given $\mcl F_\tau$. 

Suppose now that $E^m_* $ occurs. Let $v_* \in \wh V_\tau^m $ be chosen in some $\mcl F_\tau$-measurable manner.
Let $\eta_{v_*}$ be the FPP geodesic from 0 to $v_*$, so that by definition of $\wh V_\tau^m $, we have $\eta_{v_*} \subset B_{m^\xi}^{|\cdot|}(0)$. 
Let $v_0 = u_{v_*}$ be as in the statement of the lemma.
Inductively, for $k\in \BB N$ let $v_k$ be the first vertex in $\mcl V(\BB Z^d)$ hit by $\eta_{v_*}$ after it hits $v_{k-1}$ which does not lie in $B_{m^\zeta}^\mu(v_{k-1})$; or $v_k = v_*$ if no such vertex exists. Let $k_*$ be the smallest $k\in\BB N$ for which $v_k =  v_*$. For $k\in[1,k_*]_{\BB Z}$, let $\eta_k$ be the segment of $\eta$ between $v_{k-1}$ and $v_k$. By definition, each $\eta_k$ is contained in $A_{T(0,v_k) } \cap B_{\ol \rho m^\zeta + C}^{|\cdot|}(v_{k-1})  $ for an appropriate $C>0$ depending only on $\mu$. By Lemma~\ref{prop-f-cont}, we can find a constant $a > 0$, depending only on $\mu$ and $f$, such that
\eqbn
 f(z) \leq  f(v_{k-1}) + a |v_{k-1}|^{\alpha-1} m^\zeta,\quad \forall e\in\eta_k \: \op{and} \: z\in e .
\eqen
We henceforth take this choice of $a$ in the definition of the events $E^m(v)$ above.

By definition of $E^m(v_{k-1}) $ (c.f.\ the definition of the event from Lemma~\ref{prop-hull-increment-expand}), we have for large enough $m$
\eqbn
T( 0 , v_k) - T(0 , v_{k-1}) \vee \tau
\geq  \frac{\left( 1 - m^{-\wh\theta\zeta}    \right) m^\zeta}{  f(v_{k-1}) + a |v_{k-1}|^{\alpha-1} m^\zeta    }  
,\quad \forall k \in [1,k_*-1]_{\BB Z} .
\eqen
Hence for large enough $m$,
\eqb \label{eqn-dist-split-lower}
T(0 , v_*) - \tau \geq  \sum_{k=1}^{k_*-1} \frac{\left( 1 - m^{-\wh\theta\zeta}    \right) m^\zeta}{  f(v_{k-1}) + a |v_{k-1}|^{\alpha-1} m^\zeta    }  .
\eqe  

Let $\gamma$ be the concatenation of the line segments $[v_{k-1} , v_k]$ for $k\in [1,k_*]_{\BB Z}$. Then $\gamma$ is a piecewise linear path from $u_{v_*}$ to $v_*$. Furthermore, each point of each segment $[v_{k-1} , v_k]$ lies within Euclidean distance $\ol\rho m^\zeta +C$ of $v_{k-1}$ (with $\ol\rho$ the constant from Definition~\ref{def-angle-constant}) so by Lemma~\ref{prop-f-cont},
\eqbn
 \sup_{y \in [v_{k-1} , v_k]} f(y)^{-1} \leq \left( f(v_{k-1}) -a' |v_{k-1}|^{\alpha-1} m^\zeta    \right)^{-1}   
\eqen
for appropriate deterministic $a' > 0$ depending only on $\mu$ and $f$. 
Hence the definition~\eqref{eqn-weighted-metric} of $D$ implies that
\eqb \label{eqn-sum-to-dist-lower}
\sum_{k=1}^{k_*} \frac{ \mu(v_k - v_{k-1}) }{   f(v_{k-1}) -a' |v_{k-1}|^{\alpha-1} m^\zeta     } \geq D(u_{v_*} , v_*) .
\eqe   
We have $\mu(v_k - v_{k-1}) \leq  m^\zeta + O_m(1)$ for $k\in [1,k_*]_{\BB Z}$ and 
\eqbn
\frac{ f(v_{k-1}) -a' |v_{k-1}|^{\alpha-1} m^\zeta     }{   f(v_{k-1})  + a |v_{k-1}|^{\alpha-1} m^\zeta    }  = 1 - O_m(m^{  \zeta - 1/\xi})  ,
\eqen
at a deterministic rate which does not depend on the particular choices of $v_*$, $k$, or the realization of our random variables. Note that here we use that each $v_k$ belongs to $B_{m^\xi}^{|\cdot|}(0) \setminus B_{m^{1/\xi}}^{|\cdot|}(0)$.  
Hence  
\eqbn
\frac{m^\zeta}{   f(v_{k-1})  + a |v_{k-1}|^{\alpha-1} m^\zeta } 
\geq \frac{\left(1 - O_m(m^{\zeta-1/\xi} ) \right) \mu(v_k - v_{k-1})}{  f(v_{k-1}) -a' |v_{k-1}|^{\alpha-1} m^\zeta        }  
,\quad \forall k\in [1,k_*-1]_{\BB Z} 
\eqen  
and
\eqbn
 \frac{   \mu(v_* - v_{k_*-1})}{ f(v_{k_*-1}) - a' |v_{k_*-1}|^{\alpha-1} m^\zeta }  = O_m(m^\zeta) |v_*|^{-\alpha} .
\eqen
By combining this with~\eqref{eqn-dist-split-lower} and~\eqref{eqn-sum-to-dist-lower}, we obtain that if $1/\xi - \zeta \geq \wh\theta \zeta$, then
\eqb \label{eqn-hull-expand-metric-end}
T(0 , v_*) - \tau \geq \left(1 - O_m(m^{-\wh\theta\zeta} ) \right) D(u_{v_*} , v_*)  -  O_m(m^\zeta) |v_*|^{-\alpha}  .
\eqe 
Now set $\zeta = \xi^{-1} (1+\wh\theta)^{-1}$, so that $1/\xi - \zeta = \wh\theta\zeta$. If we are given $\beta \in (0,1/(3\xi))$ and we choose $\wh\theta \in (0,1/2)$ sufficiently close to $1/2$, then we have $\wh\theta \zeta >  \beta $ and $\zeta < 1/\xi -  \beta $. 
Since our choice of $v_* \in \wh V_\tau^m$ was arbitrary, the desired estimate now follows from~\eqref{eqn-hull-expand-metric-end}.
\end{proof}

\begin{remark} \label{remark-truncated-expand}
The proof of Lemma~\ref{prop-hull-expand-metric} actually yields a slightly stronger but somewhat more complicated version of the statement of the lemma which we will need in Section~\ref{sec-cone-contain}. Suppose we are in the setting of Lemma~\ref{prop-hull-expand-metric}. Also let $\mcl U \subset \BB R^d$ be a deterministic open set and let $\mcl U_m$ be the set of $z \in \BB R^d$ which lie at Euclidean distance $<m^{1/\xi}$ from $\mcl U $. Define an internal version of the metric $D$ by
\eqbn
\wt D_{\mcl U_m}(z,w) :=  \inf\left\{\op{len}^D(\gamma) \,:\, \text{$\gamma$ is a piecewise linear path from $z$ to $w$ contained in $\mcl U_m$}\right\} ,\quad \forall z,w\in\mcl U_m .
\eqen
Let $\wh V_\tau^m(\mcl U )$ be the set of vertices $v\in \wh V_\tau^m$ such that the corresponding FPP geodesic $\eta_v$ satisfies $\eta_v\setminus \mcl E(A_\tau) \subset \mcl U$. Then 
\eqbn
\BB P\left(T(0,v) - \tau \geq \left(1-m^{- \beta } \right) \wt D_{\mcl U_m}(u_v , v) - \frac{m^{1/\xi - \beta}}{|v|^\alpha} ,\,  \forall  v \in \wh V_\tau^m(\mcl U)    \,|\, \mcl F_\tau\right) = 1 - o_m^\infty(m) .
\eqen
Indeed, this follows from the proof of Lemma~\ref{prop-hull-expand-metric} upon noting that, with $\gamma$ the piecewise linear path defined in the proof, for large enough $m$ (how large is deterministic and depends only on $f$, $\xi$, and $\beta$) we have $\gamma \subset \mcl U_m$ for each $v_* \in \wh V_\tau^m(\mcl U)$. Therefore, the estimate~\eqref{eqn-sum-to-dist-lower} holds with $\wt D_{\mcl U_m}$ in place of $D$. 
\end{remark}

\section{Proof of limit shape and covering results}
\label{sec-small-alpha}

\subsection{Proof of Theorem~\ref{thm-alpha<1}}
\label{sec-alpha<1}
 
In this subsection we will use the estimates of Section~\ref{sec-growth-estimates} to prove Theorem~\ref{thm-alpha<1}. For the proof, we use the setup of Theorem~\ref{thm-alpha<1}, so in particular we always assume $\alpha<1$ and we let $\{A_t\}_{t\geq 0}$ be the $f$-weighted FPP clusters and $\{\mcl F_t\}_{t\geq 0}$ be the associated filtration, as in Definition~\ref{def-fpp}.
 
 We also introduce the following additional notation. 
For $r \geq  0$, let
\eqb \label{eqn-alpha<1-stopping}
\tau_r := \inf\left\{t \geq 0 \,:\, A_t \not\subset B_r^D(0)\right\}  . 
\eqe 
For $0 \leq r' \leq r $, let
\eqb \label{eqn-alpha<1-induct-event}
G_{r,r'} := \left\{ B_{r'}^D(0) \cap \BB Z^d \subset \mcl V(A_{\tau_r}) \right\}    
\eqe 
be the event that the $f$-weighted FPP clusters fill in $B_{r'}^D(0)$ before time $\tau_r$. Then $\tau_r$ is a $\{\mcl F_t\}_{t\geq 0}$-stopping time and $G_{r,r'} \in \mcl F_{\tau_r}$.

The basic outline of the proof of Theorem~\ref{thm-alpha<1} is as follows. In Lemma~\ref{prop-alpha<1-time}, we will use the estimates of Section~\ref{sec-growth-estimates} to prove that if $\wt n \geq n$ with $\wt n \asymp n$, then with high conditional probability given $\mcl F_{\tau_n}$, it holds that $\tau_{\wt n} - \tau_n$ is not too far from $\wt n - n$. 
In Lemma~\ref{prop-alpha<1-induct}, we will use Lemma~\ref{prop-hull-expand-metric} to show that if $m\leq n \leq \wt n$ with $m \asymp n \asymp \wt n$, then on $G_{n,m}$ it holds with high conditional probability given $\mcl F_{\tau_n}$ that the event $G_{\wt n , \wt m}$ occurs for $\wt m $ slightly smaller than $m + \wt n - n $. Both of these two lemmas are proven using the estimates of Section~\ref{sec-growth-estimates}. Together with a straightforward induction argument, these lemmas imply that if $a>0$ is fixed and $G_{n , an}$ occurs for large enough $n$, then the event of Theorem~\ref{thm-alpha<1} occurs with high probability. To complete the proof of Theorem~\ref{thm-alpha<1}, we still need to show that for an appropriate choice of constant $a > 0$, we have $\BB P(G_{n , an}) =1-o_n^\infty(n)$. This is accomplished in Lemma~\ref{prop-alpha<1-small-ball}. 
 
We first record the following convenient fact, which is an immediate consequence of~\eqref{eqn-metric-ball-scale}.

\begin{lem} \label{prop-metric-ball-compare}
There is a constant $c > 1$, depending only on $\mu$ and $f$ such that
\eqb \label{eqn-metric-ball-compare}
 c^{-1} r^{\frac{1}{1-\alpha}} \BB D \subset r \BB B \subset  cr^{\frac{1}{1-\alpha}} \BB D ,\quad \forall r > 0 .
\eqe 
\end{lem}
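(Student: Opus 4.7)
The plan is to observe that by~\eqref{eqn-metric-ball-scale} the statement reduces, after Euclidean rescaling, to the single-scale claim that $\BB B = B^D_1(0)$ is sandwiched between two Euclidean balls of positive radius, i.e.\ that there is $c>1$ with $c^{-1}\BB D\subset\BB B\subset c\BB D$. Once this is established, the general case follows because Euclidean scaling preserves set inclusions and, by~\eqref{eqn-metric-ball-scale}, the shape of $r^{1/(1-\alpha)}\BB B$ is a fixed Euclidean rescaling of $\BB B$.

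To reduce further, I would use~\eqref{eqn-metric-scale} with $z=0$ to get $D(0,rw)=r^{1-\alpha}D(0,w)$ for all $r>0$ and $w\neq 0$. Hence, along the ray through a unit vector $w\in\bdy\BB D$, the point $rw$ lies in $\BB B$ if and only if $r\leq D(0,w)^{-1/(1-\alpha)}$. Therefore the desired sandwich of $\BB B$ is equivalent to uniform upper and lower bounds on the continuous function $w\mapsto D(0,w)$ over the compact set $\bdy\BB D$.

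For the upper bound, I would invoke the right-hand inequality of Lemma~\ref{prop-metric-compare} with $z=0$: for $w\in\bdy\BB D$,
\[
D(0,w)\leq \mu(w)\int_0^1 f(tw)^{-1}\,dt=\frac{\mu(w)}{(1-\alpha)f_0(w)},
\]
where I used $f(tw)=t^\alpha f_0(w)$ and integrability of $t^{-\alpha}$ over $(0,1)$, which is precisely where the hypothesis $\alpha<1$ enters. The bounds $\mu(w)\leq 1/\ul\rho$ and $f_0(w)\geq\ul\kappa$ then give a finite upper bound depending only on $\mu$ and $f$. For the lower bound, I would argue directly: any piecewise linear path $\gamma:[0,T]\rta\BB R^d$ from $0$ to $w$ parametrized by $\mu$-length satisfies $T\geq\mu(w)\gtrsim 1$ and $|\gamma(t)|\leq\ol\rho\,\mu(\gamma(t))\leq\ol\rho\,t$. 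For $\alpha\geq 0$ one then gets $f(\gamma(t))\leq\ol\kappa(\ol\rho t)^\alpha$, so integrating yields $\op{len}^D(\gamma)\geq (\ol\kappa\ol\rho^\alpha)^{-1}\int_0^T t^{-\alpha}\,dt$, which is a positive constant. For $\alpha<0$ one instead uses that $f$ is continuous and bounded above on a fixed Euclidean annulus near $\bdy\BB D$, through which every path from $0$ to $w$ must travel for $\mu$-length of order $1$, again giving a uniform positive lower bound.

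The whole argument is essentially routine; the only place where a little care is needed is separating the $\alpha\geq 0$ and $\alpha<0$ cases in the lower bound, since Lemma~\ref{prop-metric-compare} does not directly furnish a useful lower bound when one of the endpoints is the origin. Apart from this, the work has already been done by~\eqref{eqn-metric-ball-scale}, which explains the author's remark that the lemma is immediate from that identity.
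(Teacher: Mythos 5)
Your proof is correct and takes the route that the paper itself gestures at: the paper labels the lemma an ``immediate consequence of~\eqref{eqn-metric-ball-scale}'' without elaboration, and you have supplied exactly the details that are implicitly being invoked — use~\eqref{eqn-metric-ball-scale} to reduce to the single-scale statement $c^{-1}\BB D\subset\BB B\subset c\BB D$, then equivalently to uniform upper and lower bounds on $D(0,\cdot)$ over $\bdy\BB D$, which you establish via Lemma~\ref{prop-metric-compare} for the upper bound and a short direct argument (with a sign split on $\alpha$, correctly noting that the lower bound of Lemma~\ref{prop-metric-compare} is not directly useful at the origin) for the lower bound. So the substance agrees; your write-up is simply the natural unfolding of the paper's one-line justification.
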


We now use Lemma~\ref{prop-metric-ball-compare} and the estimates of Section~\ref{sec-growth-estimates} to prove some basic estimates for the $D$-ball exit times $\tau_n$. 
 
\begin{lem} \label{prop-alpha<1-time}
Fix $R \geq 2$, $\delta>0$, and $\chi $ as in~\eqref{eqn-rate-exponent}.
For $n\in\BB N$ and $\wt n\in [n  + R^{-1} n , R n ]_{\BB Z}$, we have (in the notation~\eqref{eqn-alpha<1-stopping})
\eqb \label{eqn-alpha<1-time-lower}
\BB P\left(     \tau_{\wt n } -\tau_n \geq \wt n-  n - \delta n^{1- \chi}      \,|\, \mcl F_{\tau_n}  \right) = 1 - o_n^\infty(n) .
\eqe 
Furthermore, for each $m \in [R^{-1} n , n]$, 
\eqb \label{eqn-alpha<1-time-upper}
\BB P\left(     \tau_{\wt n } -\tau_n \leq \wt n- m  + \delta n^{1- \chi}      \,|\, \mcl F_{\tau_n}  \right) \BB 1_{G_{n,m}} = \left( 1 - o_n^\infty(n) \right) \BB 1_{G_{n,m}}  .
\eqe 
The $o_n^\infty(n)$ errors above are deterministic and independent of the particular choices of $\wt n$ and $m$, but may depend on $\alpha$, $f$, and $R$. 
\end{lem}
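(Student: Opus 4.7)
\medskip

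\textbf{Plan.} My plan is to deduce both bounds directly from the passage-time estimates of Section~\ref{sec-growth-estimates} (Lemmas~\ref{prop-hull-time-metric} and~\ref{prop-hull-expand-metric}), applied at the stopping time $\tau = \tau_n$ with an auxiliary scale parameter $k=k(n)$ chosen as a small polynomial in $n$. Fix $\xi>1$ slightly above $1$ and $\beta \in (0, 1/(3\xi))$ slightly below $1/(3\xi)$; since the hypothesis $\chi < 1/(3(1-\alpha))$ is equivalent to $\chi(1-\alpha) < 1/3$, I can arrange $\chi(1-\alpha) < \beta\xi$. Set $k := \lceil C n^{\chi/\beta}\rceil$ for a large constant $C = C(\delta,\alpha,f,R)$. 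Lemma~\ref{prop-metric-ball-compare} says $B^D_{\wt n}(0)$ is Euclidean-comparable to a ball of radius $\asymp n^{1/(1-\alpha)}$; combined with $\chi(1-\alpha)<\beta\xi$, this yields $k^{\xi}\gg n^{1/(1-\alpha)} \gg k^{1/\xi}$, so every vertex $v$ with $D(0,v)\leq 2Rn$ lies comfortably inside the annulus $B_{k^\xi}^{|\cdot|}(0)\setminus B_{k^{1/\xi}}^{|\cdot|}(0)$ and its FPP geodesic is contained in $B_{k^\xi}^{|\cdot|}(0)$. Thus such $v$ belong to the sets $V_{\tau_n}^k$ and $\wh V_{\tau_n}^k$ required by the two passage-time lemmas.

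\medskip

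\textbf{Lower bound.} For~\eqref{eqn-alpha<1-time-lower}, at time $\tau_{\wt n}$ some vertex $v$ with $D(0,v)>\wt n$ is absorbed. Its FPP geodesic last exits $\mcl V(A_{\tau_n})\subset B_n^D(0)$ at some vertex $u_v$, and since lattice edges at Euclidean scale $n^{1/(1-\alpha)}$ have $D$-length at most $o(n^{1-\chi})$ (using $\chi<1/(1-\alpha)$), we get $D(0,u_v)\leq n + o(n^{1-\chi})$, so the triangle inequality yields $D(u_v,v)\geq \wt n-n - o(n^{1-\chi})$. Lemma~\ref{prop-hull-expand-metric} applied at $\tau_n$ with our choice of $k,\beta$ then gives, with conditional probability $1-o_n^\infty(n)$ given $\mcl F_{\tau_n}$,
\[
\tau_{\wt n}-\tau_n \;\geq\; (1-k^{-\beta})(\wt n -n) - k^{1/\xi-\beta}|v|^{-\alpha} - o(n^{1-\chi}).
\]
Both error terms are $\leq \tfrac{1}{2}\delta n^{1-\chi}$ for large $n$: the first since $k^{-\beta}\cdot n \preceq n^{1-\chi}$ by the choice of $k$; the second since $|v|\asymp n^{1/(1-\alpha)}$ and $k^{1/\xi-\beta}|v|^{-\alpha}\preceq n^{1-\chi}$ precisely when $\chi(1-\alpha)<\beta\xi$.

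\medskip

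\textbf{Upper bound.} For~\eqref{eqn-alpha<1-time-upper}, on $G_{n,m}$ we have $B^D_m(0)\cap \BB Z^d \subset \mcl V(A_{\tau_n})$. Pick any lattice vertex $v$ with $D(0,v)\in (\wt n, \wt n+o(n^{1-\chi})]$ (which exists by Lemma~\ref{prop-metric-ball-compare}). A $D$-geodesic from $0$ to $v$ crosses $\partial B^D_m(0)$ at some point $w$, which I approximate by a nearby lattice vertex $w'\in B^D_m(0)\cap \BB Z^d$. Concatenating a short segment from $w'$ to $w$ with the tail of the geodesic from $w$ onwards (perturbed slightly, if needed, to stay in the annulus, which is possible since $k^{1/\xi}\ll |v| \ll k^\xi$) produces a piecewise linear path from $\mcl V(A_{\tau_n})$ to $v$ of $D$-length at most $\wt n - m + o(n^{1-\chi})$; hence $\wt D_k(v,A_{\tau_n})\leq \wt n-m + o(n^{1-\chi})$. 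Lemma~\ref{prop-hull-time-metric} then yields, with conditional probability $1-o_n^\infty(n)$ on $G_{n,m}$,
\[
\tau_{\wt n}-\tau_n \;\leq\; T(0,v)-\tau_n \;\leq\; (1+k^{-\beta})\bigl(\wt n-m + o(n^{1-\chi})\bigr) + k^{1/\xi-\beta}|v|^{-\alpha},
\]
and the same parameter calibration absorbs the error into $\delta n^{1-\chi}$.

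\medskip

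\textbf{Main obstacle.} No new probabilistic input is needed beyond the two passage-time comparisons; the main challenge is a unified parameter bookkeeping: a single choice of $k=\lceil C n^{\chi/\beta}\rceil$ must simultaneously control (i) the relative $k^{-\beta}$-errors against $n^{-\chi}$, (ii) the additive $k^{1/\xi-\beta}|v|^{-\alpha}$-errors against $n^{1-\chi}$, and (iii) the annulus constraints $k^{1/\xi}\ll |v|\ll k^\xi$. All three collapse to the single inequality $\chi(1-\alpha)<\beta\xi$, which is exactly what the hypothesis $\chi<1/(3(1-\alpha))$ provides once $\beta$ is taken close to $1/(3\xi)$ and $\xi$ close to $1$.
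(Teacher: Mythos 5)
Two gaps stand out, both concentrated in the lower bound~\eqref{eqn-alpha<1-time-lower} (your upper-bound sketch follows the paper's strategy of constructing an explicit piecewise linear path, and is fine in outline).

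First, you apply Lemma~\ref{prop-hull-expand-metric} at $\tau=\tau_n$ and implicitly treat the exit vertex $u_{\wt n}$ as an element of $\wh V_{\tau_n}^k$. But membership in $\wh V_{\tau_n}^k$ requires the FPP geodesic $\eta_{u_{\wt n}}\setminus \mcl E(A_{\tau_n})$ to avoid the inner ball $B_{k^{1/\xi}}^{|\cdot|}(0)$, and this is not automatic: for a given realization of $\mcl F_{\tau_n}$, the cluster $A_{\tau_n}$ need not fill $B_n^D(0)$, so the post-$A_{\tau_n}$ geodesic could dip back toward the origin through uncovered vertices. The paper resolves this with a step you omit entirely: using the stochastic monotonicity of Lemma~\ref{prop-hitting-time-law}, it reduces the conditional law of $\tau_{\wt n}-\tau_n$ given $\mcl F_{\tau_n}$ to the law given $\{A_{\tau_n}=\frk A_u\}$, where $\frk A_u$ is the ``filled'' realization with vertex set $(\BB Z^d\cap B_n^D(0))\cup\{u\}$. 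Under $\{A_{\tau_n}=\frk A_u\}$ the geodesic after $A_{\tau_n}$ automatically stays outside $B_n^D(0)$, hence in the required annulus, and Lemma~\ref{prop-hull-expand-metric} applies. This domination step is the main probabilistic input beyond the two passage-time comparisons, and your proof does not supply a substitute.

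Second, the parameter calibration is wrong. You claim that $\chi(1-\alpha)<\beta\xi$ yields $k^\xi\gg n^{1/(1-\alpha)}\gg k^{1/\xi}$ for $k\asymp n^{\chi/\beta}$. Only the second containment follows; the first requires $\beta<\chi\xi(1-\alpha)$, and with $\beta$ near $1/(3\xi)$ this would force $\chi(1-\alpha)>1/(3\xi^2)$, which as $\xi\rta 1$ becomes the reverse of the hypothesis $\chi(1-\alpha)<1/3$. For generic $\chi$ in the allowed range one in fact has $k^\xi\asymp n^{\chi\xi/\beta}\preceq n^{3\chi\xi^2} \prec n^{1/(1-\alpha)}\asymp|v|$, so $v\notin V_{\tau_n}^k$ and $u_{\wt n}\notin\wh V_{\tau_n}^k$, and neither Lemma~\ref{prop-hull-time-metric} nor Lemma~\ref{prop-hull-expand-metric} applies. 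The natural fix, and what the paper does, is to take the auxiliary scale to be $n^{1/(1-\alpha)}$ (so $|v|$ sits in the middle of the annulus for every $\xi>1$) and then choose $\beta$ in the interval $\bigl(\chi(1-\alpha),\,1/(3\xi)\bigr)$, whose nonemptiness (for $\xi$ close enough to $1$) is precisely the content of the hypothesis~\eqref{eqn-rate-exponent}.
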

\begin{proof}
First we consider the lower bound~\eqref{eqn-alpha<1-time-lower}. We first reduce to the case when the realization of $A_{\tau_n}$ is as large as possible. Let $U_n$ be the set of $u \in\BB Z^d \setminus B_n^D(0)$ such that $u$ is incident to a vertex in $  B_n^D(0) \cap \BB Z^d$. For $u\in U_n$, let $\frk A_u$ be the subgraph of $\BB Z^d$ whose vertex set is $ \left(\BB Z^d \cap B_n^D(0)\right)\cup \{u\} $ and whose edge set is the set of all edges in $\mcl E(\BB Z^d)$ which join vertices in its vertex set.
Almost surely, the set $A_{\tau_n}$ contains exactly one element of $U_n$ and no other elements of $\BB Z^d\setminus B_n^D(0)$. By Lemma~\ref{prop-hitting-time-law}, the conditional law of $\tau_{\wt n } -\tau_n$ given $\mcl F_{\tau_n}$ a.s.\ stochastically dominates the conditional law of $\tau_{\wt n} - \tau_n$ given $\{A_{\tau_n} = \frk A_u\}$ for some $u\in U_n$. Hence to prove~\eqref{eqn-alpha<1-time-lower} it suffices to show that
\eqb \label{eqn-alpha<1-time-lower'}
\BB P\left(     \tau_{\wt n } -\tau_n  \geq \wt n-  n - \delta n^{1- \chi}  \,|\, A_{\tau_n}  = \frk A_u \right) = 1- o_n^\infty(n) ,
\eqe 
uniformly over all choices of $u\in U_n$. 

To this end, let $u_{\wt n}$ be the (a.s.\ unique) element of $\mcl V(A_{\tau_{\wt n}})\setminus B_{\wt n}^D(0)$. By Lemma~\ref{prop-metric-ball-compare}, 
\[
D\left(u_{\wt n} ,\frk A_u \right)\geq \wt n - n + O_n\left(n^{-\frac{\alpha}{1-\alpha}}\right) ,\quad\forall u\in U_n.
\]
Furthermore, by Lemma~\ref{prop-metric-ball-compare}, on the event $\{A_{\tau_n} =\frk A_u\}$ for $u\in U_n$, the FPP geodesic $ \eta_{u_{\wt n}}$ from 0 to $u_{\wt n}$ satisfies
\eqbn
\eta_{u_{\wt n}} \setminus \mcl E(A_{\tau_n}) \subset B_{2\wt n}^D(0) \setminus B_n^D(0) \subset B_{C  n^{\frac{1}{1-\alpha}}}^{|\cdot|} (0)  \setminus B_{C^{-1}  n^{\frac{1}{1-\alpha}}}^{|\cdot|} (0) 
\eqen
for an appropriate constant $C>0$, depending only on $R$, $\mu$, and $f$. 
Therefore, Lemma~\ref{prop-hull-expand-metric} (applied with $n^{\frac{1}{1-\alpha}}$ in place of $m$ and $\xi$ slightly larger than $1$) implies that for each $\beta \in (0 , 1/3)$, the following is true. For each $u\in U_n$, it holds except on an event of conditional probability $1-o_n^\infty(n)$ given $\{ A_{\tau_n}  = \frk A_u \}$ (at a rate independent from $u$) that
\eqb \label{eqn-alpha<1-time-lower''}
\tau_{\wt n} - \tau_n = T(0 , u_{\wt n}) - \tau_n \geq \left(  1 - n^{-\frac{\beta }{1-\alpha}}  \right) \left(  \wt n -  n  - O_n\left(n^{-\frac{\alpha}{1-\alpha}} \right)\right) - O_n\left( n^{\frac{1-\beta - \alpha}{1-\alpha}} \right) \geq \wt n-   n - O_n\left( n^{  \frac{1-\beta  -\alpha}{1-\alpha}}   \right) ,
\eqe
provided $\beta$ is chosen sufficiently close to $1/3$. If we choose $\beta$ sufficiently close to $1/3$, then for large enough $n$ the error term on the right side of~\eqref{eqn-alpha<1-time-lower''} is smaller than $\delta n^{1-\chi}$. This proves~\eqref{eqn-alpha<1-time-lower'}. 

Now we turn our attention to the upper bound~\eqref{eqn-alpha<1-time-upper}. To this end, suppose $G_{n,m}$ occurs. We can choose $v\in \BB Z^d\cap B_m^D(0)$ and $v' \in \BB Z^d\setminus B_{\wt n}^D(0)$ (in some $\mcl F_{\tau_n}$-measurable manner) in such a way that
\eqb \label{eqn-inner-outer-ball-dist}
D\left(v , v' \right) \leq \wt n - m + O_n\left(n^{-\frac{\alpha}{1-\alpha}}\right) .
\eqe 
Since $G_{n,m}$ occurs, $v \in \mcl V(A_{\tau_n})$. 
We remark that for an arbitrary choice of $v \in \bdy B_m^D(0)$, there need not exist $v'\in \BB Z^d \setminus \bdy B_{\wt n}^D(0)$ for which~\eqref{eqn-inner-outer-ball-dist} holds; this is why we need to assume that $G_{n,m}$ occurs in~\eqref{eqn-alpha<1-time-lower}.

By definition, $T(0,v') \leq \tau_{\wt n}$. For each $\ep > 0$, we can find a piecewise linear path $\gamma$ connecting $v$ to $v'$ with
\eqbn
\op{len}^D(\gamma) \leq   D(v , v') + \ep \leq  \wt n - m + O_n\left(n^{-\frac{\alpha}{1-\alpha}}\right) +\ep  .
\eqen
We observe that for small enough $\ep$ and large enough $n$, this path $\gamma$ cannot enter $B_{m/2}^D(0)$ or exit $B_{2\wt n}^D(0)$. Indeed, if this were the case then we would have
\eqbn
\op{len}^D(\gamma ) \geq \wt n - m + \frac{m}{2} \wedge \wt n .
\eqen 
It therefore follows from Lemma~\ref{prop-hull-time-metric} that for each $\beta \in \left(0 ,1/3\right)$, it holds except on an event of conditional probability $1-o_n^\infty(n)$ given $\mcl F_{\tau_n}$ that 
\eqb \label{eqn-alpha<1-expand}
   T(0, v' ) - \tau_n \leq \left(1 +  n^{-\frac{\beta}{1-\alpha} } \right) \left(\wt n - m + O_n(n^{-\frac{\alpha}{1-\alpha}} )\right) + O_n\left( n^{   \frac{1-\beta - \alpha}{1-\alpha}} \right) = \wt n  - m + O_n\left(n^{ \frac{1-\beta-\alpha}{1-\alpha}}\right) .
\eqe  
By choosing $\beta$ sufficiently close to $1/3$, we conclude. 
\end{proof}

Our next lemma (plus an induction argument) will eventually tells us that if for some $n\in\BB N$, $\mcl V_{A_{\tau_n}}$ contains $B_m^D(0)\cap \BB Z^d$ for $m$ at least a constant times $n$, then with high probability the same is in fact true for all sufficiently large $n\in\BB N$. 

\begin{lem} \label{prop-alpha<1-induct} 
Fix $R \geq 2$, $\delta>0$, and $\chi  $ as in~\eqref{eqn-rate-exponent}.
For $n\in\BB N$, $m\in [ R^{-1}  n ,  n]_{\BB Z}$, and $\wt n\in [n  + R^{-1} n , R n ]_{\BB Z}$, set $\wt m :=    m + \wt n - n  - \delta n^{1-\chi} $. Then in the notation~\eqref{eqn-alpha<1-induct-event},  
\eqbn
\BB P\left(  G_{\wt n , \wt m}   \,|\, \mcl F_{\tau_n}  \right) \BB 1_{G_{n,m}} \geq \left( 1 - o_n^\infty(n) \right) \BB 1_{G_{n,m}} ,
\eqen
at a deterministic rate independent of the particular choices of $m$ and $\wt n$ but which may depend on $\alpha$, $f$, and $R$. 
\end{lem}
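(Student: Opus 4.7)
The plan is to work on the event $G_{n,m}$ and establish $G_{\wt n,\wt m}$ by showing, with high conditional probability given $\mcl F_{\tau_n}$, that every lattice point $v \in B_{\wt m}^D(0) \cap \BB Z^d$ satisfies $T(0,v) \leq \tau_{\wt n}$. Points already in $B_m^D(0) \cap \BB Z^d$ lie in $\mcl V(A_{\tau_n})$ on $G_{n,m}$, hence are absorbed by $\tau_n \leq \tau_{\wt n}$; so the task reduces to $v \in (B_{\wt m}^D(0) \setminus B_m^D(0)) \cap \BB Z^d$, which on $G_{n,m}$ all lie in $\BB Z^d \setminus \mcl V(A_{\tau_n})$.

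Set $M := \lceil n^{1/(1-\alpha)} \rceil$, and fix $\xi > 1$ close to $1$ together with $\beta \in (\chi(1-\alpha),\, 1/(3\xi))$; this interval is nonempty by the hypothesis $\chi < 1/(3(1-\alpha))$. By Lemma~\ref{prop-metric-ball-compare}, the bounds $m \geq R^{-1}n$ and $\wt m \leq \wt n \leq Rn$ place each relevant $v$ in the Euclidean annulus $B_{M^\xi}^{|\cdot|}(0) \setminus B_{M^{1/\xi}}^{|\cdot|}(0)$ for $n$ large. Lemma~\ref{prop-hull-time-metric} applied at the stopping time $\tau_n$ then gives, with conditional probability $1 - o_n^\infty(n)$ given $\mcl F_{\tau_n}$, the simultaneous bound $T(0,v) - \tau_n \leq (1 + M^{-\beta}) \wt D_M(v,A_{\tau_n}) + M^{1/\xi - \beta}/|v|^\alpha$ for all such $v$.

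The heart of the proof is the deterministic estimate $\wt D_M(v, A_{\tau_n}) \leq \wt m - m + o(n^{1-\chi})$. To prove this, take a piecewise linear near-$D$-geodesic $\gamma_0 : [0,T] \to \BB R^d$ from $0$ to $v$ with $\op{len}^D(\gamma_0) \leq D(0,v) + \epsilon \leq \wt m + \epsilon$, let $s^*$ be the first parameter with $\op{len}^D(\gamma_0|_{[0,s^*]}) = m$, and set $u^* := \gamma_0(s^*) \in B_m^D(0)$. The tail $\gamma_0|_{[s^*,T]}$ then connects $u^*$ to $v$ with $D$-length at most $\wt m - m + \epsilon$. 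Finally, connect $u^*$ to a nearby lattice point $u \in B_m^D(0) \cap \BB Z^d \subset \mcl V(A_{\tau_n})$ by a linear segment whose $D$-length is $O(|u^*|^{-\alpha}) = O(n^{-\alpha/(1-\alpha)}) = o(n^{1-\chi})$ (via Lemma~\ref{prop-metric-compare}). The main obstacle is showing that the concatenated path stays in the Euclidean annulus: outer containment is immediate from $B_{\wt m}^D(0) \subset M^\xi \BB D$, while inner containment—most delicate for $\alpha < 0$ where passage near the origin is $D$-cheap—follows by contradiction. If $\gamma_0|_{[s^*,T]}$ visited some $z$ with $|z| \leq M^{1/\xi}$, then two applications of the triangle inequality together with the upper bound $D(0,z) \leq C|z|^{1-\alpha}$ from Lemma~\ref{prop-metric-compare} would force $\op{len}^D(\gamma_0|_{[s^*,T]}) \geq (m - C|z|^{1-\alpha}) + (D(0,v) - C|z|^{1-\alpha})$, giving $2m \leq 2C n^{1/\xi} + \epsilon$, which contradicts $m \geq R^{-1}n$ for $n$ large since $n^{1/\xi} = o(n)$.

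Combining the two estimates, $T(0,v) - \tau_n \leq (1 + M^{-\beta})(\wt m - m) + o(n^{1-\chi})$. The choice $\beta > \chi(1-\alpha)$ forces $M^{-\beta}(\wt m - m) \leq Rn \cdot n^{-\beta/(1-\alpha)} = o(n^{1-\chi})$, and the additive $M^{1/\xi - \beta}/|v|^\alpha$ is likewise $o(n^{1-\chi})$ using $|v|^\alpha \gtrsim n^{\alpha/(1-\alpha)}$. Thus $T(0,v) - \tau_n \leq \wt n - n - \delta n^{1-\chi} + o(n^{1-\chi})$, which is eventually less than the lower bound $\wt n - n - \delta_1 n^{1-\chi}$ on $\tau_{\wt n} - \tau_n$ supplied by Lemma~\ref{prop-alpha<1-time} for any $\delta_1 \in (0,\delta)$. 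Hence $T(0,v) \leq \tau_{\wt n}$ for every relevant $v$, so $G_{\wt n,\wt m}$ occurs; the implicit union bound over the $O(n^{d/(1-\alpha)})$ lattice points in $B_{\wt m}^D(0)$ is absorbed by the $o_n^\infty(n)$ decay.
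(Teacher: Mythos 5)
Your proposal is correct and follows essentially the same strategy as the paper's proof: reduce to lattice points in $B^D_{\wt m}(0)\setminus B^D_m(0)$ not yet in $\mcl V(A_{\tau_n})$, build a piecewise-linear path from $\mcl V(A_{\tau_n})$ to $v$ of $D$-length at most $\wt m - m + o(n^{1-\chi})$ that stays in an appropriate Euclidean annulus, feed this into Lemma~\ref{prop-hull-time-metric}, and compare with the lower bound on $\tau_{\wt n}-\tau_n$ from Lemma~\ref{prop-alpha<1-time}. The only real difference is in how the path is constructed and how one sees it avoids the small Euclidean ball: the paper takes a near-geodesic from the fattened cluster $A_{\tau_n}^F$ and uses that $B_m^D(0)\subset A_{\tau_n}^F$ on $G_{n,m}$ (so the path is automatically disjoint from $B_m^D(0)$), whereas you take a near-geodesic from $0$, cut it at $D$-length $m$, and rule out re-entry into $B^{|\cdot|}_{M^{1/\xi}}(0)$ by a triangle-inequality contradiction (the clean cancellation of $D(0,v)$ giving $2m\lesssim n^{1/\xi}+\epsilon$ is a nice way to do this); both are valid and of comparable length.
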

\begin{proof}
Assume $G_{n,m}$ occurs and consider a vertex $v\in (  B^D_{\wt m}(0) \cap \BB Z^d) \setminus \mcl V(A_{\tau_n})$, chosen in some $\mcl F_{\tau_n}$-measurable manner. Let $A_{\tau_n}^F$ be the fattening of $A_{\tau_n}$, as in~\eqref{eqn-fatten-hull}. Since $G_{n,m}$ occurs, $B_m^D(0) \subset A_{\tau_n}^F$ so
\eqb 
D\left(v , A_{\tau_n}^F \right) \leq \wt m - m  .
\eqe 
By definition of $D$, for each $\ep > 0$ there is a piecewise linear path $\wt\gamma$ connecting some point in $A_{\tau_n}^F$ to $v$ with $\op{len}^D(\wt \gamma) \leq   \wt m - m  +\ep $. By possibly replacing $\wt\gamma$ with its restriction to some interval of times, we can arrange that only the first point of $\wt \gamma$ belongs to $A_{\tau_n}^F$, so that (by definition of $G_{n,m}$) $\wt\gamma$ is disjoint from $B_{ m}^D(0)$. Furthermore, for small enough $\ep$, $\wt\gamma$ cannot exit $B_{2\wt m  }^D(0)$ for otherwise its $D$-length would be larger than $\wt m$. 
Let $u$ be an element of $\mcl V(A_{\tau_n})$ lying at minimal $D$-distance from the initial point of $\wt\gamma$, with ties broken in a $\mcl F_{\tau_n}$-measurable manner. By adding a line segment at the beginning of $\wt\gamma$, we obtain for each $\ep > 0$ a piecewise linear path $\gamma$ which connects some $u \in \mcl V(A_{\tau_n})$ to $v$, is contained in 
$B_{2\wt m  }^D(0)\setminus B_{m - O_n(n^{-\frac{\alpha}{1-\alpha}})}^D(0)$
 provided $G_{n,m}$ occurs, and satisfies 
\eqbn
\op{len}^D( \gamma) \leq \wt m - m   + O_n\left(n^{-\frac{\alpha}{1-\alpha}}\right)  + \ep   .
\eqen 
By Lemmas~\ref{prop-hull-time-metric} and~\ref{prop-metric-ball-compare}, 
for each $\beta \in \left(0 ,1/3\right)$, the following is true. Whenever $G_{n,m}$ occurs, it holds except on an event of conditional probability $o_n^\infty(n)$ given $\mcl F_{\tau_n}$ that
\eqb \label{eqn-alpha<1-expand'}
   T(0 , v ) - \tau_n \leq \left(1 +  n^{-\frac{\beta}{1-\alpha} } \right) \left(\wt m - m + O_n(n^{-\frac{\alpha}{1-\alpha}} )\right) + O_n\left( n^{   \frac{1-\beta - \alpha}{1-\alpha}} \right) = \wt m - m + O_n\left(n^{ \frac{1-\beta-\alpha}{1-\alpha}}\right) ,
\eqe 
for every possible choice of $v\in (  B^D_{\wt m}(0) \cap \BB Z^d) \setminus \mcl V(A_{\tau_n})$.
If we choose $\beta$ sufficiently close to $1/3$ then for large enough $n$ (how large is deterministic and independent of the particular choice of $\wt n$), the right side of~\eqref{eqn-alpha<1-expand'} is smaller than $\wt n - n - (\delta/2)n^{1-\chi}$. The statement of the lemma now follows from Lemma~\ref{prop-alpha<1-time}. 
\end{proof}

In order to deduce Theorem~\ref{thm-alpha<1} from Lemma~\ref{prop-alpha<1-induct}, we need to start with a large $n\in\BB N$, an integer $m\leq n$ with $m\asymp n$, and a realization of $\mcl F_{\tau_n}$ for which $G_{n,m}$ occurs and $\tau_n \asymp n$. Our next lemma will provide such a realization.

\begin{lem} \label{prop-alpha<1-small-ball}
There is a constant $a \in (0,1)$  (independent from $n$) such that for each $\delta > 0$ and each $\chi$ as in~\eqref{eqn-rate-exponent}, 
\eqbn
\BB P\left(  G_{n , an} \cap \left\{n - \delta n^{1- \chi} \leq  \tau_n    \leq a^{-1} n \right\}  \right) = 1-o_n^\infty(n) ,\quad \forall n\in\BB N .
\eqen
\end{lem}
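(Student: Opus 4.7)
I will verify the three sub-events $\{\tau_n \leq a^{-1} n\}$, $\{\tau_n \geq n - \delta n^{1-\chi}\}$, and $G_{n, an}$ (whose intersection gives the statement) simultaneously with probability $1-o_n^\infty(n)$ by comparing passage times to $D$-distances via Lemmas~\ref{prop-hull-time-metric} and~\ref{prop-hull-expand-metric}. Set $m := \lceil n^{1/(1-\alpha)} \rceil$ and fix $\xi > 1$ (close to $1$, to be chosen in terms of $\chi$ and $\alpha$); by Lemma~\ref{prop-metric-ball-compare}, $B_n^D(0)$ has Euclidean diameter of order $m$. Neither passage-time lemma is directly applicable at $\tau = 0$: since $A_0 = \{0\}$ lies inside the inner Euclidean ball $B_{m^{1/\xi}}^{|\cdot|}(0)$, the internal metric $\wt D_m(\cdot, A_0)$ is infinite and $\wh V_0^m = \emptyset$ (every geodesic from $0$ passes through the inner ball). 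So the preliminary step is to build a stopping time $\sigma$ at which $B_{m^{1/\xi}}^{|\cdot|}(0) \cap \BB Z^d \subset \mcl V(A_\sigma)$ and $\sigma \leq Cn^{1/\xi} = o(n)$ with probability $1 - o_n^\infty(n)$.

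\textbf{Cascade and application.} I build $\sigma$ by cascading through intermediate scales: set $m_0$ to be a fixed large constant, $m_{k+1} := m_k^\xi$, and let $K = O(\log\log n)$ be the first index with $m_K \geq m^{1/\xi}$; let $\sigma_k$ be the first time that $B_{m_k}^{|\cdot|}(0) \cap \BB Z^d \subset \mcl V(A_{\sigma_k})$. The base case $\sigma_0$ is handled by an exponential-tail estimate comparing to standard exponential FPP on a fixed finite subgraph (via Lemma~\ref{prop-normalized-compare}): $\sigma_0 \leq n^\epsilon$ except on an event of probability $o_n^\infty(n)$. For the inductive step, Lemma~\ref{prop-hull-time-metric} at $\tau = \sigma_{k-1}$ with parameter $m_k$ applies, because $A_{\sigma_{k-1}}$ already occupies the inner boundary $B_{m_{k-1}}^{|\cdot|}(0) = B_{m_k^{1/\xi}}^{|\cdot|}(0)$, so $\wt D_{m_k}(v, A_{\sigma_{k-1}}) \leq D(v, 0) \leq C m_k^{1-\alpha}$ for any lattice $v$ in the ball (using Lemmas~\ref{prop-metric-compare} and~\ref{prop-metric-ball-compare}); a union bound gives $\sigma_k - \sigma_{k-1} \leq C m_k^{1-\alpha}$ with probability $1-o_n^\infty(n)$, and summing the (essentially geometric) series yields $\sigma := \sigma_K \leq C m^{(1-\alpha)/\xi} = Cn^{1/\xi}$. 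With $\sigma$ in hand, Lemma~\ref{prop-hull-time-metric} at $\tau = \sigma$ applied to a vertex $v^* \in \BB Z^d \setminus B_n^D(0)$ with $D(0, v^*) = n + O(1)$ yields $T(0, v^*) \leq \sigma + (1+o(1))\,n$, so $\tau_n \leq T(0, v^*) \leq a^{-1} n$; the same lemma applied to each $v \in B_{an}^D(0) \cap \BB Z^d$ (either already in $A_\sigma$, or in the annulus where $\wt D_m(v, A_\sigma) \leq D(0, v) \leq an$) gives $T(0, v) \leq \sigma + (1+o(1))\,a n < \tau_n$ once $a < 1$ is fixed small enough, yielding $G_{n, an}$. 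For the lower bound on $\tau_n$, Lemma~\ref{prop-hull-expand-metric} at $\tau = \sigma$ applied to each $v \in \wh V_\sigma^m$ just outside $B_n^D(0)$ gives $D(u_v, v) \geq D(0, v) - D(0, u_v) \geq n - Cn^{1/\xi}$ via the triangle inequality, and hence $T(0, v) \geq n - \delta n^{1-\chi}$ for suitable parameters. Vertices $v$ whose geodesics wander beyond $B_{m^\xi}^{|\cdot|}(0)$ (so that $v \notin \wh V_\sigma^m$) contribute only $o_n^\infty(n)$ probability by a Kesten-style cluster-extent bound analogous to Lemma~\ref{prop-hull-increment-expand}.

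\textbf{Main obstacle.} The delicate point is balancing the three error contributions in the lower-bound step for $\tau_n$: the multiplicative error $n\cdot m^{-\beta}$, the triangle-inequality slack $D(0, u_v) = O(n^{1/\xi})$ arising because $A_\sigma$ has Euclidean radius $\asymp m^{1/\xi}$, and the additive error $m^{1/\xi-\beta}/|v|^\alpha$ from Lemma~\ref{prop-hull-expand-metric}. Absorbing all three into $\delta n^{1-\chi}$ forces $\beta \geq \chi(1-\alpha)$ and $\xi \geq 1/(1-\chi)$, subject to the lemma's constraint $\beta < 1/(3\xi)$; a single application covers only $\chi < 1/(4-3\alpha)$, which is strictly smaller than the theorem's range $\chi < 1/(3(1-\alpha))$. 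Closing the gap should be possible by iterating the cascade further and bounding $D(0, u_v)$ by the $D$-radius of $A_\sigma$ (which is of order $\sigma = o(n^{1-\chi})$ once $\xi$ is chosen compatibly) rather than by its crude Euclidean radius $m^{1/\xi}$; this is essentially routine bookkeeping within the framework already set up, but it is where the statement's sharp range is truly consumed.
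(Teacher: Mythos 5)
Your proof takes a substantially different route from the paper's, and as you yourself flag at the end, it does not reach the full claimed range of $\chi$; the proposed repair is not routine and in fact hides a genuine gap.

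For the upper bound $\tau_n \leq a^{-1}n$ and the event $G_{n,an}$, the paper's argument is direct and elementary: for each $v\in B_n^D(0)\cap\BB Z^d$ it fixes a shortest coordinate-wise path $\eta$ from $0$ to $v$, observes that the $X_{\eta(i)}$ are independent exponentials with parameters $\succeq i^\alpha$, and applies a standard tail bound for sums of exponentials to get $T(\eta)\leq a^{-1}n$ off an event of probability $o_n^\infty(n)$; a union bound over the polynomially many $v$ then gives $B_n^D(0)\cap\BB Z^d\subset A_{a^{-1}n}$, and combining with the lower bound $\tau_n\geq n-n^{1-\chi}$ yields $G_{n,a'n}$ for $a'$ slightly smaller than $a$. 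This bypasses your cascade of stopping times entirely. Your cascade, as written, also has an error-accounting problem: you initialize at a \emph{fixed} constant $m_0$ and take $m_{k+1}=m_k^\xi$, but Lemma~\ref{prop-hull-time-metric} applied at parameter $m_k$ only gives conditional probability $1-o_{m_k}^\infty(m_k)$; for small $k$ this is a constant (not $o_n^\infty(n)$), so a union bound over the $O(\log\log n)$ steps does not produce $1-o_n^\infty(n)$. To fix this you would have to start at a scale growing polynomially in $n$, and then the base case needs its own argument — at which point the paper's direct tail bound is the natural tool anyway.

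For the lower bound $\tau_n\geq n-\delta n^{1-\chi}$, the paper does not apply Lemma~\ref{prop-hull-expand-metric} once from a single $\sigma$. It invokes~\eqref{eqn-alpha<1-time-lower} of Lemma~\ref{prop-alpha<1-time} along a chain of $D$-ball exit times $\tau_{n_k}$ starting from $n_0\asymp n^{(1-\chi')/2}$, summing $O(\log n)$ errors of order $n_{k-1}^{1-\chi'}$, all of which stay below $n^{1-\chi}$. The crucial structural feature is that $A_{\tau_{n_k}}\subset B_{n_k+O(n_k^{-\alpha/(1-\alpha)})}^D(0)$ holds \emph{by definition} of the exit time, so no triangle-inequality slack $D(0,u_v)$ ever appears; the proof also relies on the stochastic domination Lemma~\ref{prop-hitting-time-law} to reduce to the maximal realization $\frk A_u$ of $A_{\tau_{n_k}}$. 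Your $\sigma$ is a Euclidean-ball fill time, and $A_\sigma$ is \emph{not} automatically contained in a controlled $D$-ball. Your proposed fix — replace the Euclidean-radius bound $D(0,u_v)=O(n^{1/\xi})$ by ``the $D$-radius of $A_\sigma$, which is of order $\sigma$'' — is exactly where the difficulty sits: showing $D(0,u_v)\leq\sigma+\text{error}$ requires a passage-time lower bound of the type $T(0,u_v)\gtrsim D(0,u_v)$, i.e.\ a statement of the same flavor as the one being proved, and you have not supplied an independent argument for it. Without it your parameter bookkeeping correctly yields only $\chi<1/(4-3\alpha)$, strictly below the required $1/(3(1-\alpha))$. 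Closing that gap effectively means rebuilding the paper's $D$-ball-exit-time cascade of Lemma~\ref{prop-alpha<1-time}, which the paper's proof of the present lemma simply cites.
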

\begin{proof}
Fix $R\geq 2$ and $\chi' \in (\chi , 1/3)$. 
Given $n\in\BB N$, let $n_0 = \lfloor n^{(1-\chi')/2}\rfloor$. We can select integers $m\in\BB N$ and $n_0 < n_1 < \dots < n_m = n$ with $n_k \in [(1+R^{-1}) n , R n]_{\BB Z}$ for each $k\in [1,m]_{\BB Z}$ and $m \preceq \log n$. 
By~\eqref{eqn-alpha<1-time-lower} of Lemma~\ref{prop-alpha<1-time} and the union bound, it holds except on an event of probability $1-o_{n}^\infty(n)$ that
\eqbn
 \tau_{n_k} - \tau_{n_{k-1}} \geq n_k - n_{k-1} -  n_{k-1}^{1-\chi'}, \: \forall k \in [1,m]_{\BB Z} .
\eqen
In this case, $\tau_n \geq n - n_0 - O_n( \log n) n^{1-\chi'}$,
which is at least $n - n^{1-\chi}$ for large enough $n$. Hence
\eqb \label{eqn-large-hit-time}
\BB P\left(\tau_n \geq n - n^{1-\chi}\right) = 1- o_n^\infty(n). 
\eqe

It remains to find an $a \in (0,1)$ as in the statement of the lemma such that with high probability $G_{n,an}$ occurs and $\tau_n    \leq a^{-1} n$. 
By Lemma~\ref{prop-metric-ball-compare}, we can find a $C>1$ depending only on $f$ such that for each $n\in\BB N$ and each $v\in B_n^D(0) \cap \BB Z^d $, we have that $v$ lies at graph distance at most $C n^{\frac{1}{1-\alpha}}$ and at least $C^{-1} n^{\frac{1}{1-\alpha}}$ from 0. Now fix such a $v$ and let $\eta$ be a simple path in $\BB Z^d$ from 0 to $v$ with $|\eta|$ minimal, so $|\eta| \asymp n^{\frac{1}{1-\alpha}}$.  
Since the function $f_0$ is bounded above and below by positive constants, there is a constant $c > 0$ depending only on $f$ such that the law of the random variables $X_{\eta(i)}$ for $i\in [1,|\eta|]_{\BB Z}$ is that of a collection of independent exponential random variables, each with parameter at least $c i^\alpha$. 
Therefore, the law of the passage time $ T(\eta)$ is stochastically dominated by the random variable 
\eqbn
\ol Y :=  \wt C \sum_{i=1}^{\lceil C n^{\frac{1}{1-\alpha}} \rceil} Y_j  
\eqen
where the $Y_j$'s are independent exponential random variables each with parameter $ i^\alpha$. We have 
$\BB E(\ol Y) \asymp \int_0^{ C n^{\frac{1}{1-\alpha}}}  t^{-\alpha} dt  \asymp n$
with the implicit constant depending only on $C, \wt C$, and $\alpha$. By elementary tail bounds for sums of exponential random variables (see~\cite[Theorem 5.1, item (i)]{janson-tail}) we can find a constant $a \in (0,1)$, depending only on $f$, such that
\eqbn
\BB P\left(\ol Y > a^{-1} n\right) \leq 
\begin{cases}
\exp\left(-n \right) ,\quad \alpha \in [0,1) \\
\exp\left( -n^{1 + \frac{\alpha}{1-\alpha}}\right) ,\quad \alpha < 0 . 
\end{cases}
\eqen
It follows that except on an event of probability $1-o_n^\infty(n)$, we have $T(\eta) \leq a^{-1} n$, so by a union bound except on an event of probability $1-o_n^\infty(n)$,
$ B_n^D(0) \cap\BB Z^d \subset  A_{a^{-1} n}  $.
In particular, except on an event of probability $o_n^\infty(n)$ we have $\tau_n \leq a^{-1} n$ and by~\eqref{eqn-large-hit-time}, $B_{a' n}^D(0) \cap \BB Z^d \subset A_{\tau_n}$ for $a'$ slightly smaller than $a$. This proves the statement of the lemma with $a'$ in place of $a$. 
\end{proof}

\begin{proof}[Proof of Theorem~\ref{thm-alpha<1}]
Let $a$ be the constant from Lemma~\ref{prop-alpha<1-small-ball}. Also fix $R > 3\vee a^{-1}$, let $\chi $ be as in~\eqref{eqn-rate-exponent}, and let $\delta >0$ to be chosen later, depending only on $R$ and $\chi$. 

For $n,m \in \BB N$ with $m\leq n$, let $\tau_n$ and $G_{n,m}$ be as in~\eqref{eqn-alpha<1-stopping} and~\eqref{eqn-alpha<1-induct-event}. For $n , n_0 \in \BB N$ with $n_0 \leq n$ and $k\in\BB N \cup \{0\}$, let
\eqb
\wh G_{n_0 , n}^k := G_{n,n-n^{1-\chi} - (1-a) n_0}  \cap \left\{ n-n^{1-\chi} \leq  \tau_n  \leq n + \delta^{-1} n^{1-\chi} + a^{-1} n_0 + k(1-a) n_0 \right\} .
\eqe
We first claim that for an appropriate choice of $\delta$, it holds for each $n_0 \in \BB N$, $n\geq n_0$, and $k\in\BB N$ that
\eqb \label{eqn-alpha<1-all-scale}
\BB P\left(\bigcap_{\wt n = \lceil n+ R^{-1} n \rceil}^{\lfloor R n \rfloor} \wh G_{n_0 , \wt n}^k \,|\, \mcl F_{\tau_n} \right) \BB 1_{\wh G_{n_0,n}^{k-1}} =\left(1-o_n^\infty(n) \right) \BB 1_{\wh G_{n_0,n}^{k-1}} ,
\eqe 
at a deterministic rate independent from $n_0$.
To see this, we first apply Lemmas~\ref{prop-alpha<1-time},~\ref{prop-alpha<1-induct}, and the union bound to find that if $n\geq n_0$ and $\wh G_{n_0,n}^{k-1}$ occurs, then except on an event of conditional probability $o_n^\infty(n)$ given $\mcl F_{\tau_n}$, it holds for each $\wt n\in [n  +R^{-1} n , R n]_{\BB Z}$ that 
\begin{align} \label{eqn-end-induct-event}
&\qquad\qquad\text{$G_{\wt n ,   \wt n - (1+\delta) n^{1-\chi} - (1-a) n_0  }$ occurs and} \notag \\
&\text{$\wt n  - (1+\delta)  n^{1-\chi}    \leq  \tau_{\wt n }   \leq \wt n + (\delta^{-1} + 2) n^{1-\chi} + a^{-1} n_0 + k(1-a) n_0   $}   .
\end{align} 
If $\delta > 0$ is chosen sufficiently small depending only on $R$ and $\chi$, then 
\alb
\wt n^{1-\chi} &\geq \left(1+R^{-1}\right)^{1-\chi} n^{1-\chi} \geq (1+\delta) n^{1-\chi}     \quad \op{and}   \\
\delta^{-1} \wt n^{1-\chi} &\geq   \delta^{-1} \left(1+R^{-1} \right)^{1-\chi} n^{1-\chi} \geq (\delta^{-1} + 2) n^{1-\chi}        .
\ale
Therefore, $\wh G_{n_0,\wt n}^k$ is contained in the event~\eqref{eqn-end-induct-event}. 
This proves~\eqref{eqn-alpha<1-all-scale}. 

By~\eqref{eqn-alpha<1-all-scale} and induction, we infer that for each $n_0 \in \BB N$,  
\eqb \label{eqn-alpha<1-large-n}
\BB P\left(\bigcap_{n = \lceil (1+R^{-1}) n_0 \rceil}^\infty \wh G_{n_0 , n}^{k_n} \,|\, \mcl F_{\tau_{n_0}}\right) \BB 1_{\wh G_{n_0,n_0}^0} = (1-o_{n_0}^\infty(n_0)) \BB 1_{\wh G_{n_0,n_0}^0} ,
\eqe 
where
\eqbn
k_n = \left\lfloor \frac{\log n}{\log (1+R^{-1})}  \right\rfloor .
\eqen
By Lemma~\ref{prop-alpha<1-small-ball}, 
\eqb  \label{eqn-alpha<1-some-n}  
\BB P\left( \wh G_{n_0 , n_0}^0 \right)  =  1-o_{n_0}^\infty(n_0)  .
\eqe  
By combining~\eqref{eqn-alpha<1-large-n} and~\eqref{eqn-alpha<1-some-n} (the latter applied with $\lfloor (1+R^{-1})^{-1} n_0\rfloor$ in place of $n_0$), we obtain
\eqb \label{eqn-alpha<1-all-n}
\BB P\left(\bigcap_{n = \lceil  n_0 \rceil}^\infty \wh G_{n_0 , n}^{k_n} \right) = 1-o_{n_0}^\infty(n_0) .
\eqe 
 
Now suppose that $t_0 > 0$. Set $n_0 = \lfloor  2^{-1} t_0^{(1-\chi)/2} \rfloor$ so that by~\eqref{eqn-alpha<1-large-n}, it holds except on an event of probability $ o_{t_0}^\infty(t_0)$ that the event $\wh G_{n_0 , n}^{k_n+1} $ occurs for each $n\geq n_0$. Let $t\geq t_0$ and let $n \in \BB N$ be chosen so that $t \in [\tau_n , \tau_{n+1}]$. For large enough values of $t_0$, we have $(\log n + 1) n_0 \leq n^{1- \chi}$. By definition of $\wh G_{n_0,n}^{k_n}$, 
\eqbn
B^D_{n- O_n(n^{1-\chi})}(0)\cap\BB Z^d \subset \mcl V(A_t) \subset B^D_{n+O_n(1) }(0)\cap\BB Z^d
\eqen
and
\eqbn
n- O_n(n^{1-\chi} ) \leq t \leq  n  + O_n(n^{1-\chi}) ,
\eqen
with the $O_n(\cdot)$ deterministic and depending only on $n$, $\delta$, and $\chi$. 

Therefore, for an appropriate constant $C' >0$, depending only on $\delta$ and $\chi$,  
\eqbn
B^D_{t - C' t^{1-\chi}}(0)\cap\BB Z^d \subset \mcl V(A_t) \subset B^D_{t + C' t^{1- \chi} }(0)\cap\BB Z^d .
\eqen
By~\eqref{eqn-metric-ball-scale}, for a possibly larger constant $C'$,
\eqbn
\left(1 - C' t^{-\chi}\right)t^{\frac{1}{1-\alpha}} \BB B \subset A_t^F \subset \left(1 + C' t^{-\chi}\right)t^{\frac{1}{1-\alpha}} \BB B .
\eqen
We conclude by slightly increasing $\chi$ and recalling~\eqref{eqn-alpha<1-all-n}.  
\end{proof}

\subsection{Proof of Theorem~\ref{thm-alpha-near-1}}
\label{sec-alpha-near-1}

Throughout this section, we assume that we are in the setting of Theorem~\ref{thm-alpha-near-1}. In particular, we let $\sigma_r$ for $r>0$ be the exit time from the Euclidean ball of radius $r$ centered at 0, as in~\eqref{eqn-nu-ball-hit-time}. 

We note that Lemma~\ref{prop-metric-compare} implies that if $\alpha \geq 1$, then for any $q > r > 1$, then
\eqb \label{eqn-ball-dist-lower}
D(q\bdy \BB D , r\bdy \BB D) \geq 
\begin{dcases}
\frac{  r^{1-\alpha} -  q^{1-\alpha} }{\ol\rho \ol\kappa ( \alpha-1)} ,\quad &\alpha > 1 \\
\ol\rho^{-1} \ol\kappa^{-1} \log \left(\frac{ q}{ r}\right) ,\quad &\alpha =1  .
\end{dcases}
\eqe 
Furthermore, by the definition~\eqref{eqn-ball-norm-diam} of $\lambda$ together with Lemma~\ref{prop-metric-scale}, 
\eqb \label{eqn-ball-D-diam}
\sup_{z,w\in \in \bdy \BB D} D(z,w) \leq \lambda r^{1-\alpha} ,\qquad \forall r > 0 .
\eqe 
Roughly speaking, the proof of Theorem~\ref{thm-alpha-near-1} proceeds as follows. Lemma~\ref{prop-hull-expand-metric} and~\eqref{eqn-ball-dist-lower} imply that if $R$ is sufficiently large, then for $n\in\BB N$ it is typically the case that $\sigma_{R n} - \sigma_n $ is not too much smaller than $(\alpha-1)^{-1} \ol\rho^{-1} \ol\kappa^{-1} n^{1-\alpha}$. If $(\alpha-1)^{-1} \ol\rho^{-1} \ol\kappa^{-1} < \lambda$, then Lemma~\ref{prop-hull-time-metric} and~\eqref{eqn-ball-D-diam} imply that with high probability, the clusters $A_t$ absorb every vertex of $\BB Z^d \cap \left(  B_n^{|\cdot|}(0) \setminus B_{n-1}^{|\cdot|}(0)\right)$ between times $\sigma_n$ and $\sigma_{R n}$. Sending $n\rta\infty$ concludes the proof. We now proceed with the details. 

\begin{lem} \label{prop-alpha-near-1-time}
Fix $R \geq 2$ and $\beta \in \left(0 , 1/3\right)$. Suppose $n\in\BB N$ and $\wt n\in \left[(1+R^{-1})n , R n\right]_{\BB Z}$. If $\alpha > 1$, then 
\eqb \label{eqn-alpha-near-1-time}
\BB P\left( \sigma_{\wt n} - \sigma_n  \geq \frac{n^{1-\alpha} - \wt n^{1-\alpha} - n^{1-\alpha-\beta} }{\ol\rho \ol\kappa (\alpha-1)} \,|\, \mcl F_{\sigma_n} \right) = 1-o_n^\infty(n) ,
\eqe 
at a deterministic rate independent of the particular choice of $\wt n$. If $\alpha  = 1$, we instead have
\eqb \label{eqn-alpha=1-time}
\BB P\left( \sigma_{\wt n} - \sigma_n  \geq ( \ol\rho \ol\kappa)^{-1} \log\left(\frac{\wt n}{n} \right)  + n^{-\beta} \,|\, \mcl F_{\sigma_n} \right) = 1-o_n^\infty(n) ,
\eqe 
at a deterministic rate independent of the particular choice of $\wt n$.
\end{lem}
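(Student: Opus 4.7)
The plan is to subdivide the spherical annulus $\{n\leq|z|\leq\wt n\}$ into many thin shells and apply Lemma~\ref{prop-hull-increment-expand} to each one. I would fix $\beta'\in(\beta,1/3)$, set $K:=\lceil n^{\beta'}\rceil$, and choose a uniform partition $n=n_0<n_1<\cdots<n_K=\wt n$ of spacing $\asymp n^{1-\beta'}$. Writing
\[
\sigma_{\wt n}-\sigma_n = \sum_{j=0}^{K-1}(\sigma_{n_{j+1}}-\sigma_{n_j}),
\]
it suffices to lower bound each increment.

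For each $j$, set $R_j := \ol\kappa(n_{j+1}+1)^\alpha$, which bounds $f$ throughout $\ol B_{n_{j+1}+1}^{|\cdot|}(0)$, and $s_j := (1-n^{-\beta'})(n_{j+1}-n_j-2)/(\ol\rho R_j)$. The central estimate to prove is
\[
\BB P(\sigma_{n_{j+1}}<\sigma_{n_j}+s_j \,|\, \mcl F_{\sigma_{n_j}}) = o_n^\infty(n), \quad\text{uniformly in } j.
\]
To establish this, I would assume the event holds and let $v_{n_{j+1}}$ be the first vertex of $A$ to exit $B_{n_{j+1}}^{|\cdot|}(0)$. Because it is the \emph{first} such vertex, its FPP geodesic lies in $\ol B_{n_{j+1}+1}^{|\cdot|}(0)$, and so $f\leq R_j$ along it. Let $v_*\in\partial\mcl V(A_{\sigma_{n_j}})$ be the last boundary vertex crossed by this geodesic and $e_*$ its final edge. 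Then $|v_*|\leq n_j+1$ and $|m_{e_*}|\geq|v_{n_{j+1}}|-1/2>n_{j+1}-1/2$, so by the definition of $\ol\rho$ we have $\mu(m_{e_*}-v_*)\geq(n_{j+1}-n_j-3/2)/\ol\rho$. A short calculation (using $\beta'<1/3$) shows that for $\theta\in(1/2,1)$ sufficiently close to $1/2$, the choice of $s_j$ gives $R_js_j+(R_js_j)^\theta<\mu(m_{e_*}-v_*)$ for $n$ large, so the event $F^{s_j,R_j}_{\sigma_{n_j}}(v_*)$ of Lemma~\ref{prop-hull-increment-expand} occurs. The estimate then follows by union-bounding over the $\preceq n^d$ vertices of $\partial\mcl V(A_{\sigma_{n_j}})$ and applying Lemma~\ref{prop-hull-increment-expand} with $p$ chosen large.

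A further union bound over $j$ yields $\sigma_{\wt n}-\sigma_n\geq\sum_j s_j$ with conditional probability $1-o_n^\infty(n)$. The sum $\sum_j s_j$ is a right-endpoint Riemann approximation to $\int_n^{\wt n}dr/(\ol\rho\ol\kappa r^\alpha)$, which equals $(n^{1-\alpha}-\wt n^{1-\alpha})/(\ol\rho\ol\kappa(\alpha-1))$ when $\alpha>1$ and $\log(\wt n/n)/(\ol\rho\ol\kappa)$ when $\alpha=1$. Since $r\mapsto r^{-\alpha}$ is decreasing, the sum underestimates the integral by at most (spacing)$\cdot$(range of integrand) $\asymp n^{1-\alpha-\beta'}$, and the $(1-n^{-\beta'})$ prefactor contributes a matching error. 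Because $\beta'>\beta$, both errors fit inside the slack allowed by \eqref{eqn-alpha-near-1-time} and \eqref{eqn-alpha=1-time} for $n$ large.

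The main obstacle will be producing the correct constant $(\ol\rho\ol\kappa(\alpha-1))^{-1}$ (resp.\ $(\ol\rho\ol\kappa)^{-1}$) in the lower bound. Lemma~\ref{prop-hull-increment-expand} only controls passage times in terms of the \emph{maximum} of $f$ along the geodesic, whereas \eqref{eqn-ball-dist-lower} is derived by \emph{integrating} $f^{-1}$. The shell subdivision reconciles the two: on each shell of width $\asymp n^{1-\beta'}\ll n$, $f$ varies by only a factor $1+O(n^{-\beta'})$, so the worst-case bound per shell is essentially tight, and summing recovers the integral defining $D(n\bdy\BB D,\wt n\bdy\BB D)$ up to admissible error.
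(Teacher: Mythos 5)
Your proof is correct and takes a genuinely different route from the paper. The paper's proof first uses a stochastic-domination argument (via Lemma~\ref{prop-hitting-time-law}) to reduce to the case where $A_{\sigma_n}$ equals a ``filled-in'' realization $\frk A_u$, then invokes Lemma~\ref{prop-hull-expand-metric} directly with $m=n$ and $\xi$ slightly larger than $1$, and finally applies the lower bound~\eqref{eqn-ball-dist-lower}; the filling-in step is needed there because Lemma~\ref{prop-hull-expand-metric} requires the geodesic $\eta_v\setminus\mcl E(A_\tau)$ to stay in the annulus $B_{m^\xi}^{|\cdot|}(0)\setminus B_{m^{1/\xi}}^{|\cdot|}(0)$, which could fail if $A_{\sigma_n}$ had holes. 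You instead go one level lower and use Lemma~\ref{prop-hull-increment-expand} on each of $\asymp n^{\beta'}$ thin shells, summing by a Riemann-sum argument --- this is essentially the proof of Lemma~\ref{prop-hull-expand-metric} re-done inline, but specialized to concentric spherical shells. A genuine advantage of your route is that it avoids the stochastic-domination reduction entirely: Lemma~\ref{prop-hull-increment-expand}'s hypotheses only ask for an upper bound on $f$ along the geodesic, and (since $\alpha\geq 1>0$) the bound $f\leq\ol\kappa(n_{j+1}+1)^\alpha$ holds throughout $\ol B_{n_{j+1}+1}^{|\cdot|}(0)$ regardless of whether the cluster has holes, whereas the paper needed the geodesic to stay away from the origin. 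The trade-off is that the paper's approach reuses the general-purpose Lemma~\ref{prop-hull-expand-metric} (which also serves Lemmas~\ref{prop-alpha<1-time} and~\ref{prop-alpha>1-expand}), while you reconstruct its machinery by hand. Your bookkeeping (spacing $\asymp n^{1-\beta'}$, the bound $\mu(m_{e_*}-v_*)\geq(n_{j+1}-n_j-3/2)/\ol\rho$ from $|v_*|\leq n_j+1$, choosing $\theta\in(1/2,(1-2\beta')/(1-\beta'))$ which is nonempty because $\beta'<1/3$, and the Riemann error $\asymp n^{1-\alpha-\beta'}$) all checks out. One small note: the paper's~\eqref{eqn-alpha=1-time} has a $+n^{-\beta}$ where the logic and the usage in Lemma~\ref{prop-alpha-near-1-swallow} both call for $-n^{-\beta}$ (paralleling the $-n^{1-\alpha-\beta}$ in the $\alpha>1$ case); your argument yields the $-n^{-\beta}$ version, which appears to be the intended statement.
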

\begin{proof}
We treat the case where $\alpha > 1$; the case where $\alpha=1$ is treated similarly. 
Let $U_n$ be the set of $u \in\BB Z^d \setminus B_n^{|\cdot|}(0)$ such that $u$ is incident to a vertex in $ B_n^{|\cdot|}(0) \cap \BB Z^d$. For $u\in U_n$, let $\frk A_u$ be the subgraph of $\BB Z^d$ whose vertex set is $ \left(\BB Z^d \cap B_n^{|\cdot|}(0)\right)\cup \{u\} $ and whose edge set is the set of all edges in $\mcl E(\BB Z^d)$ which join vertices in its vertex set.
Almost surely, the set $A_{\sigma_n}$ contains exactly one element of $U_n$ and no other elements of $\BB Z^d\setminus B_n^{|\cdot|}(0)$. By Lemma~\ref{prop-hitting-time-law}, the conditional law of $\sigma_{\wt n } -\sigma_n$ given $\mcl F_{\sigma_n}$ a.s.\ stochastically dominates the conditional law of $\sigma_{\wt n} - \sigma_n$ given $\{A_{\sigma_n} = \frk A_u\}$ for some $u\in U_n$. Hence to prove~\eqref{eqn-alpha-near-1-time}, it suffices to show that
\eqb \label{eqn-alpha-near-1-time'}
\BB P\left(   \sigma_{\wt n } -\sigma_n   \geq \frac{n^{1-\alpha} - \wt n^{1-\alpha} - n^{(1-\beta)(1-\alpha)} }{\ol\rho  \ol\kappa (\alpha-1)}      \,|\, A_{\sigma_n}  = \frk A_u \right) = 1- o_n^\infty(n) ,
\eqe 
uniformly over all choices of $u\in U_n$.  

To prove~\eqref{eqn-alpha-near-1-time'}, let $v_{\wt n}$ be the (a.s.\ unique) element of $\mcl V(A_{\sigma_{\wt n}})\setminus B_{\wt n}^{|\cdot|}(0)$. By~\eqref{eqn-ball-dist-lower},
\eqbn
D\left(v_{\wt n} , \frk A_u\right) \geq \frac{n^{1-\alpha} - \wt n^{1-\alpha} }{\ol\rho \ol\kappa (\alpha-1)} - O_n\left(n^{-\alpha}\right) ,\quad \forall u\in U_n .
\eqen
Furthermore, on the event $\{A_{\sigma_n} =\frk A_u\}$ for $u\in U_n$, the FPP geodesic $ \eta_{v_{\wt n}}$ from 0 to $v_{\wt n}$ satisfies
\eqbn
\eta_{v_{\wt n}} \setminus \mcl E(A_{\sigma_n}) \subset B_{(R+1)n}^D(0) \setminus B_n^{|\cdot|}(0) .
\eqen 
Therefore, Lemma~\ref{prop-hull-expand-metric} (applied with $n$ in place of $m$ and $\xi$ slightly larger than $1$) implies that for each $\beta \in (0 , 1/3)$, the following is true. For each $u\in U_n$, it holds except on an event of conditional probability $1-o_n^\infty(n)$ given $\{ A_{\sigma_n}  = \frk A_u \}$ (at a rate independent from $u$) that
\begin{align} 
\sigma_{\wt n} - \sigma_n 
= T(0 , v_{\wt n}) - \sigma_n 
&\geq \left(  1 - n^{-\beta}  \right) \left(  \frac{ \wh n^{1-\alpha} - n^{1-\alpha}}{\ol\rho \ol\kappa (\alpha-1)}  - O_n\left(n^{-\alpha} \right)\right) - O_n\left( n^{ 1-\beta-\alpha}  \right) \notag \\ 
&\geq\frac{ \wh n^{1-\alpha} - n^{1-\alpha}- O_n\left( n^{  1-\alpha-\beta}   \right)    }{\ol\rho \ol\kappa(\alpha-1)}.
\end{align}
We obtain~\eqref{eqn-alpha-near-1-time'} by slightly increasing $\beta$, which completes the proof of~\eqref{eqn-alpha-near-1-time}.  
\end{proof}

The following lemma tells us that vertices of $\BB Z^d$ sufficiently close to $A_{\sigma_n}$ are likely to be absorbed by the FPP clusters before time $\sigma_{\wt n}$, for $\wt n \geq n$ with $\wt n \asymp n$.

\begin{lem} \label{prop-alpha-near-1-swallow} 
Fix $R \geq 2$ and $\beta \in \left(0, 1/3\right)$. Suppose $n\in\BB N$ and $\wt n\in \left[(1+R^{-1})n , R n\right]_{\BB Z}$. For $\alpha > 1$, let
\eqb \label{eqn-alpha-near-1-set} 
V_{n,\wt n} := \left\{v\in \BB Z^d\cap \left( B_n^{|\cdot|}(0) \setminus B_{n/2}^{|\cdot|}(0) \right) \,:\,  D(v , A_{\sigma_n}) \leq \frac{n^{1-\alpha} - \wt n^{1-\alpha} - n^{1-\alpha-\beta} }{\ol\rho \ol\kappa (1-\alpha)} \right\} .
\eqe
For $\alpha=1$, instead let
\eqb \label{eqn-alpha=1-set} 
V_{n,\wt n} := \left\{v\in \BB Z^d\cap \left( B_n^{|\cdot|}(0) \setminus B_{R^{-1} n}^{|\cdot|}(0) \right) \,:\,  D(v , A_{\sigma_n}) \leq \ol\rho^{-1} \ol\kappa^{-1} \log \left(\frac{\wt n}{n}\right) - n^{-\beta} \right\} .
\eqe 
Then  
\eqb \label{eqn-alpha-near-1-swallow} 
\BB P\left(V_{n,\wt n} \subset \mcl V(A_{\sigma_{\wt n}}) \,|\, \mcl F_{\sigma_n}\right) = 1-o_n^\infty(n)  ,
\eqe 
at a deterministic rate independent of the particular choice of $\wt n$. 
\end{lem}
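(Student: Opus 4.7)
The plan is to show that for every $v \in V_{n,\wt n}$, one has $T(0, v) \leq \sigma_{\wt n}$ (hence $v \in \mcl V(A_{\sigma_{\wt n}})$), simultaneously with high probability. For this I will combine an upper bound for $T(0,v) - \sigma_n$ coming from Lemma~\ref{prop-hull-time-metric} applied at the stopping time $\tau = \sigma_n$ with the lower bound for $\sigma_{\wt n} - \sigma_n$ supplied by Lemma~\ref{prop-alpha-near-1-time}. Observe that the upper bound on $D(v, A_{\sigma_n})$ built into the definition of $V_{n,\wt n}$ has exactly the form of the lower bound on $\sigma_{\wt n} - \sigma_n$ from Lemma~\ref{prop-alpha-near-1-time}, but with a slightly weaker subtractive lower-order correction ($n^{1-\alpha-\beta}$ in the former vs.\ some $n^{1-\alpha-\beta''}$, $\beta'' > \beta$, in the latter). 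This gap is what allows us to absorb the lower-order errors introduced by Lemma~\ref{prop-hull-time-metric}.

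Concretely, fix $\xi > 1$ close to $1$ (to be determined), and choose $\beta' \in (\beta, 1/(3\xi))$ and $\beta'' \in (\beta, 1/3)$; both choices are possible since $\beta < 1/3$. For large enough $n$, the set $V_{n,\wt n}$ is contained in the Euclidean annulus $B^{|\cdot|}_{n^\xi}(0) \setminus B^{|\cdot|}_{n^{1/\xi}}(0)$. Lemma~\ref{prop-hull-time-metric} applied with $\tau = \sigma_n$, $m = n$ and exponent $\beta'$, together with Lemma~\ref{prop-alpha-near-1-time} applied with exponent $\beta''$, then yields with conditional probability $1 - o_n^\infty(n)$ given $\mcl F_{\sigma_n}$:
\eqbn
T(0, v) - \sigma_n \leq (1 + n^{-\beta'}) \wt D_n(v, A_{\sigma_n}) + \frac{n^{1/\xi - \beta'}}{|v|^\alpha}, \qquad \sigma_{\wt n} - \sigma_n \geq \frac{n^{1-\alpha} - \wt n^{1-\alpha} - n^{1-\alpha-\beta''}}{\ol\rho \ol\kappa (\alpha - 1)}
\eqen
for every $v \in V_{n,\wt n} \setminus \mcl V(A_{\sigma_n})$ (with the analogous logarithmic bound when $\alpha = 1$).

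The main technical step is to upgrade the first bound by replacing $\wt D_n$ with $D$; that is, to show $\wt D_n(v, A_{\sigma_n}) \leq D(v, A_{\sigma_n}) + \text{lower order}$ for $v \in V_{n,\wt n}$. For this I would fix $u^\ast \in A_{\sigma_n}$ approximately achieving $D(v, A_{\sigma_n})$ and argue that a sufficiently-near $D$-geodesic $\gamma$ from $v$ to $u^\ast$ is contained in the annulus $B^{|\cdot|}_{n^\xi}(0) \setminus B^{|\cdot|}_{n^{1/\xi}}(0)$. If $\gamma$ met $\partial B^{|\cdot|}_{n^\xi}(0)$ or $\partial B^{|\cdot|}_{n^{1/\xi}}(0)$ at some $z^\ast$, then the sub-paths $\gamma_{v \to z^\ast}$ and $\gamma_{z^\ast \to u^\ast}$ would give $\op{len}^D(\gamma) \geq \phi(v, z^\ast) + \phi(u^\ast, z^\ast)$ by Lemma~\ref{prop-metric-compare}. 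A direct computation using the explicit formulas for $\phi$, together with $|v|, |u^\ast| \asymp n$, $|z^\ast| \in \{n^{1/\xi}, n^\xi\}$, and $\wt n/n \in [1 + R^{-1}, R]$, shows this sum strictly exceeds the $V_{n,\wt n}$ bound on $D(v, A_{\sigma_n})$ provided $\xi$ is chosen close enough to $1$ depending on $\alpha, R,$ and $f$, giving the desired contradiction.

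Granted this containment, we have $T(0,v) - \sigma_n \leq (1 + n^{-\beta'})D(v, A_{\sigma_n}) + n^{1/\xi - \beta'}|v|^{-\alpha}$. Since $|v|^{-\alpha} \leq 2^\alpha n^{-\alpha}$, the additive error is $O_n(n^{1/\xi - \beta' - \alpha})$; since $\beta'' > \beta$, the gap between the $V_{n,\wt n}$ bound and the Lemma~\ref{prop-alpha-near-1-time} bound is of order $n^{1-\alpha-\beta}$; and since $\beta' > \beta$, both the multiplicative error $n^{-\beta'} D(v, A_{\sigma_n}) = O_n(n^{1-\alpha-\beta'})$ and the additive error are of smaller order than this gap. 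Hence $T(0, v) - \sigma_n \leq \sigma_{\wt n} - \sigma_n$ for all $v \in V_{n, \wt n}$ with the stated conditional probability, and a union bound with the two high-probability events from Lemmas~\ref{prop-hull-time-metric} and~\ref{prop-alpha-near-1-time} concludes. The containment of $\gamma$ in the annulus is the most delicate point, since the relevant geometric estimates become marginal as $\alpha \to 1^+$, but the freedom to take $\xi$ arbitrarily close to $1$ (with dependence on the other parameters) gives enough room to push it through.
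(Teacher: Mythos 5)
Your proof takes essentially the same route as the paper's: in both, one bounds $T(0,v) - \sigma_n$ from above by (approximately) $D(v, A_{\sigma_n})$ via Lemma~\ref{prop-hull-time-metric} applied at $\tau = \sigma_n$, shows by the $\phi$-lower-bound of Lemma~\ref{prop-metric-compare} that a near-$D$-geodesic from $v$ to $A_{\sigma_n}$ cannot escape a suitable Euclidean annulus around radius $n$ (so that $\wt D_m(v, A_{\sigma_n})$ and $D(v, A_{\sigma_n})$ essentially agree), and then compares against the lower bound on $\sigma_{\wt n} - \sigma_n$ from Lemma~\ref{prop-alpha-near-1-time}, finishing with a union bound. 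Your explicit introduction of a second exponent $\beta''$ strictly between $\beta$ and $1/3$ when invoking Lemma~\ref{prop-alpha-near-1-time} is actually slightly more careful than the paper's write-up, which compares against the same-$\beta$ lower bound and thereby elides the need for this gap; and your choice of annulus $B^{|\cdot|}_{n^\xi}(0) \setminus B^{|\cdot|}_{n^{1/\xi}}(0)$ with $\xi$ close to $1$ plays the same role as the paper's $B^{|\cdot|}_{2\wt n}(0) \setminus B^{|\cdot|}_{\delta n}(0)$, since for large $n$ the former contains the latter.
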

\begin{proof}
We treat the case where $\alpha > 1$; the case where $\alpha=1$ is treated similarly. 
Let $v\in V_{n,\wt n}$ be chosen in some $\mcl F_{\sigma_n}$-measurable manner. For each $\ep > 0$, we can find a piecewise linear path $\gamma$ connecting $v$ to some vertex in $\mcl V(A_{\sigma_n})$ with
\eqb \label{eqn-alpha-near-1-geodesic}
\op{len}^D(\gamma) \leq   \frac{n^{1-\alpha} - \wt n^{1-\alpha} - n^{1-\alpha-\beta} + O_n(n^{-\alpha}) }{\ol\rho \ol\kappa (1-\alpha)} + \ep.
\eqe 
Observe that for small enough $\ep$ and large enough $n$, $\gamma$ cannot exit $B_{2\wt n}^{|\cdot|}(0)$. Indeed, if it did, then by~\eqref{eqn-ball-dist-lower} we would have
\eqbn
\op{len}^D(\gamma ) \geq \frac{n^{1-\alpha} - (2\wt n)^{1-\alpha}}{\ol\rho \ol\kappa (1-\alpha)}, 
\eqen
which is larger than the right side of~\eqref{eqn-alpha-near-1-geodesic} for large enough $n$ and small enough $\ep$.
On the other hand, if $\delta \in (0,R^{-1})$ and $\gamma$ enters $B_{\delta n}^{|\cdot|}(0)$, then  
\eqbn
\op{len}^D(\gamma ) \geq \frac{(\delta n)^{1-\alpha} - (R^{-1} n)^{1-\alpha}}{\ol\rho \ol\kappa (1-\alpha)}, 
\eqen
which is larger than the right side of~\eqref{eqn-alpha-near-1-geodesic} for large enough $n$ and small enough $\ep$ provided $\delta$ is chosen sufficiently small, depending only $R$. 

It therefore follows from Lemma~\ref{prop-hull-time-metric} that for each $\beta' \in \left(\beta , 1/3\right)$, it holds except on an event of conditional probability $1-o_n^\infty(n)$ given $\mcl F_{\sigma_n}$ that 
\begin{align} \label{eqn-alpha-near-1-expand}
T(0, v  ) - \sigma_n 
&\leq \left(1 +  n^{-\beta'} \right)  \frac{n^{1-\alpha} - \wt n^{1-\alpha} - n^{1-\alpha-\beta} + O_n(n^{-\alpha}) }{\ol\rho \ol\kappa (1-\alpha)} + n^{1-\beta'-\alpha} \notag \\
&\leq \frac{n^{1-\alpha} - \wt n^{1-\alpha} - n^{1-\alpha-\beta}   }{\ol\rho  \ol\kappa(1-\alpha)} + O_n(n^{1-\alpha-\beta'}) .
\end{align} 
For large enough $n$ (how large is deterministic and independent from $v$), the right side of this last inequality is smaller than our lower bound for $\sigma_{\wt n} - \sigma_n$ from Lemma~\ref{prop-alpha-near-1-time}. Hence $v\in \mcl V(A_{\sigma_{\wt n}})$ except on an event of conditional probability $1-o_n^\infty(n)$ given $\mcl F_{\sigma_n}$. We conclude by means of the union bound.  
\end{proof}

\begin{proof}[Proof of Theorem~\ref{thm-alpha-near-1}]
The statement of the theorem is immediate from Theorem~\ref{thm-alpha<1} in the case where $\alpha<1$, so we can assume without loss of generality that $\alpha \in [1,1 + (\ol\rho \ol\kappa \lambda)^{-1}] $. 
Fix $R \geq 2$ and for $n , \wt n \in \BB N$, let $V_{n,\wt n}$ be as in Lemma~\ref{prop-alpha-near-1-swallow}. Also let $v_n$ be the (a.s.\ unique) point of $A_{\sigma_n} \setminus B_n^{|\cdot|}(0)$. By~\eqref{eqn-ball-D-diam}, if $v\in \BB Z^d$ and we let $n_v = \lceil |v| \rceil$, then 
\eqbn
D(v_{n_v} , v) \leq \lambda n_v^{1-\alpha} + O_{n_v}(n_v^{-\alpha} ).
\eqen
By our assumption on $\alpha$, we have $\lambda < \frac{1}{\ol\rho \ol\kappa(\alpha-1)}$. Therefore, we can find $n_* \in \BB N$ and $R \geq 2$ (depending only on $\mu$, $\nu$, and $\alpha$) such that if $n_v \geq n_*$ and $\wt n_v := \lfloor R n_v \rfloor$, then $v\in V_{n_v , \wt n_v}$. By Lemma~\ref{prop-alpha-near-1-swallow}, we obtain~\eqref{eqn-nu-annulus-swallow}. The second assertion follows from the first assertion and the Borel-Cantelli lemma.
\end{proof}

\section{Proof of cone containment result}
\label{sec-cone-contain}

In this section we will prove Theorem~\ref{thm-alpha>1}.
Throughout this section, we always assume $\alpha>1$. 

In Section~\ref{sec-alpha>1-setup}, we will define the class of $\alpha$-weight functions $f$ which we will consider (which in particular includes the $\alpha$-th powers of a certain family of norms) and state a more quantitative version of Theorem~\ref{thm-alpha>1} (namely Theorem~\ref{thm-cone-contain}). 
We will give an outline of the content of the rest of this section just after the statement of Theorem~\ref{thm-cone-contain}. 

We remark that the main difficulty in the proof of Theorem~\ref{thm-alpha>1} is geometric, rather than probabilistic. In particular, we do not have good estimates for the deterministic metric $D$ of~\eqref{eqn-weighted-metric} unless $f$ takes a rather specific form. The primary reason for this problem is that very little is known about the Eden model limit shape $\BB A$ and the corresponding metric $\mu$.

\subsection{Cylindrical convex sets and admissible weight functions}
\label{sec-alpha>1-setup}

In this section we will define the class of $\alpha$-weight functions for which we will prove our cone containment result. 
We start by defining the set of norms whose $\alpha$-th powers are contained in this class. See Figure~\ref{fig-big-alpha-norm} for an illustration of the unit ball of such a norm when $d =2$. 

Recall the definition of the constant $\ol\rho$ and the set $\BB X$ of maximal $\mu$-unit vectors from Definition~\ref{def-angle-constant}.
For $\BB x \in \BB X$, let $P_{\BB x}$ be the $d-1$-hyperplane containing $\BB x$ which is perpendicular to the line through 0 and $\BB x$. 
Note that $P_{\BB x}$ intersects $B_{\ol\rho}^{|\cdot|}(0)$ only at $\BB x$ and $\BB A\subset B_{\ol\rho}^{|\cdot|}(0)$, so $P_{\BB x} \cap \BB A = \{\BB x\}$. 
Let $P_{\BB x}^0 := P_{\BB x} - \BB x$ be the hyperplane through 0 perpendicular to the line through 0 and $\BB x$.  

Fix $\BB x\in\BB X$ and let $Q$ be a compact convex subset of $P_{\BB x}^0$ which contains $B_{\ol\rho}^{|\cdot|}(0) \cap P_{\BB x}^0$ and is symmetric about the origin. For $s  > 1$, let
\eqb \label{eqn-cylinder-def}
\mcl Q_s := \left\{s z + t \BB x \,:\, z \in Q ,\, t \in [-1,1]\right\} 
\eqe 
be the cylinder of Euclidean height $2\ol\rho$ over $s Q$. The set $\mcl Q_s$ is compact, convex, and symmetric about the origin 
so 
\eqb \label{eqn-cylinder-norm}
\nu_s(z) := \inf\left\{r > 0 \,:\, z \in r \mcl Q_s \right\} \quad \forall z\in\BB R^d
\eqe 
defines a norm on $\BB R^d$ whose unit ball is $\mcl Q_s$. 

We note that the set $\bdy \mcl Q_s = \bdy B_1^{\nu_s}(0)$ possesses two distinguished flat faces, namely $\bdy \mcl Q_s \cap P_{\BB x}$ and $\bdy \mcl Q_s \cap P_{-\BB x}$, which are reflections of each other through the origin. Due to our choice of $\BB x$ and since $s > 1$, the set $\BB A\cap \bdy \mcl Q_s$ consists of two points, one of which belongs to each of these two distinguished flat faces.  

Let $f$ be an $\alpha$-weight function and for $s > 1$ let $f_s := f|_{\bdy \mcl Q_s}$. Then 
\eqb \label{eqn-f-convex-decomp}
f(z) = \nu_s(z)^{ \alpha} f_s\left(\frac{z}{\nu_s(z)}\right) , \quad \forall z\in\BB R^d
\eqe 
so we can represent $f$ by means of the parameters $f_s$ and $\alpha$, rather than $f_0$ and $\alpha$ from~\eqref{eqn-f-decomp}. Note that $f_s $ is Lipschitz continuous if and only if $f_0$ is Lipschitz continuous.  

\begin{defn} \label{def-max-cond}
For $f_s$ as above, write
\eqb \label{eqn-f-max-convex}
\ol\kappa_s := \sup_{z\in\bdy \mcl Q_s} f_s(z) \quad \op{and} \quad \ul\kappa_s := \inf_{z\in\bdy \mcl Q_s} f_s(z) .
\eqe
We say that $f_s : \bdy \mcl Q_s \rta (0,\infty)$ is \emph{admissible} if $f_s$ is Lipschitz continuous and $f_s \equiv \ol\kappa_s$ on $\bdy \mcl Q_s \cap \left(P_{\BB x} \cup P_{-\BB x}\right)$. 
\end{defn} 

Henceforth fix $s>1$ and an admissible function $f_s$ and let $f$ as in~\eqref{eqn-f-convex-decomp}. 
Note that constant functions are admissible in the sense of Definition~\ref{def-max-cond}, so we can take $f$ to be the $\alpha$-th power of the norm $\nu_s$. 
Let $\{A_t\}_{t\geq 0}$ be the $f$-weighted FPP clusters, as in Section~\ref{sec-fpp-setup}. 
The main goal of this section is to prove the following theorem, which immediately implies Theorem~\ref{thm-alpha>1}. 
  
\begin{thm} \label{thm-cone-contain}
Suppose $\alpha >1$, $s>1$, $Q$, $\mcl Q_s$, and $f_s$ are as above.
Let 
\eqbn
\mcl K:= \bigcup_{r > 0} r \left(\bdy\mcl Q_s \cap P_{\BB x} \right)  
\eqen
and note that $\mcl K$ is contained in a Euclidean cone of opening angle $<\pi$. If
\eqb \label{eqn-pos-prob-cond}
s > 2^{ \frac{\alpha}{\alpha-1}}  -1     ,
\eqe
then 
\eqb \label{eqn-pos-prob}
\BB P\left(\# \left(\mcl V(A_{\tau_\infty}) \setminus \mcl K\right) < \infty \right)  > 0 \quad \op{and} \quad
\BB P\left(\# \left(\mcl V(A_{\tau_\infty}) \setminus (-\mcl K) \right) < \infty \right)  > 0  .
\eqe 
If, in addition,  
\eqb \label{eqn-as-cond}
s > 1 + \frac{\ol\kappa_s \alpha^\alpha }{\ul\kappa_s  (\alpha-1)^{\alpha-1}} 
\eqe
then a.s.\ either
\eqb \label{eqn-as}
 \# \left(\mcl V(A_{\tau_\infty}) \setminus \mcl K\right) < \infty \quad \op{or} \quad  \# \left(\mcl V(A_{\tau_\infty}) \setminus (-\mcl K) \right) < \infty .
\eqe  
\end{thm}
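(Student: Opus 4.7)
The plan is to reduce Theorem~\ref{thm-cone-contain} to a deterministic problem about the metric $D$ of~\eqref{eqn-weighted-metric} via the upper and lower passage-time bounds of Lemmas~\ref{prop-hull-time-metric} and~\ref{prop-hull-expand-metric} (using the refined version in Remark~\ref{remark-truncated-expand} to restrict to paths in a specified open set). The heart of the argument is then the geometry of $D$-geodesics in the cylindrical setup: because $f_s \equiv \ol\kappa_s$ on the two flat faces $\bdy \mcl Q_s \cap P_{\pm \BB x}$ while $f_s$ can be as small as $\ul\kappa_s$ on the curved part of $\bdy \mcl Q_s$, the cheapest $D$-paths from points near $\pm \BB x$ out to infinity prefer to stay inside $\mcl K \cup (-\mcl K)$.

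The key $D$-metric estimates I will establish are as follows. For $z \in \mcl K$ with $\nu_s(z) = r \geq 1$, parametrizing the outgoing ray from $z$ inside $\mcl K$ by $\mu$-length and using $f(u\BB x) = u^\alpha \ol\kappa_s$ gives
\[
D(z, \infty) \;\le\; \int_r^\infty \frac{du}{u^\alpha \ol\kappa_s} \;=\; \frac{r^{1-\alpha}}{\ol\kappa_s(\alpha-1)} ,
\]
and symmetrically for $-\mcl K$. Complementarily, using the $\alpha$-homogeneity of $f$ (Lemma~\ref{prop-metric-scale}) together with a Lemma~\ref{prop-metric-compare}-style parametrization, for any pair $z,w$ with $\nu_s(w) \ge \theta\, \nu_s(z)$ one obtains a lower bound of the shape $D(z,w) \gtrsim \nu_s(z)^{1-\alpha}\bigl(1 - \theta^{1-\alpha}\bigr)/(\ul\kappa_s(\alpha-1))$. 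The crucial geometric input is that the straight segment from a point $z$ near the $\BB x$-axis out to any $w \notin \mcl K$ at Euclidean distance comparable to $\nu_s(z)$ off the axis must traverse the curved part of $\bdy r \mcl Q_s$ for some $r \geq (s+1)\nu_s(z)$. Combining the two estimates with $\theta = s+1$ gives the threshold $(s+1)^{1-\alpha} < \tfrac12$, which rearranges to~\eqref{eqn-pos-prob-cond}; the sharper shell-by-shell comparison, accounting for the fact that the ``off-cone'' path must pay a factor $\ul\kappa_s$ on a segment of comparable length to the whole, yields~\eqref{eqn-as-cond}.

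For the positive-probability statement~\eqref{eqn-pos-prob} I fix a large $r_0$ and condition on the positive-probability event $E$ that a particular vertex $v^\star$ near $r_0 \BB x$ is absorbed before any vertex outside $\mcl K$ at Euclidean distance $\geq r_0$ is absorbed; this has positive probability by repeatedly applying the weighted Eden formulation of Lemma~\ref{prop-fpp-equiv}. The strong Markov property (Lemma~\ref{prop-fpp-law}) and the monotonicity of hitting times from Lemma~\ref{prop-hitting-time-law} allow me to work with the stopping time $\tau = T(0, v^\star)$. Lemma~\ref{prop-hull-time-metric}, applied along a long increasing sequence of axis vertices $\{k \BB x\}$, gives $\tau_\infty - \tau \le (1+o(1)) D(v^\star, \infty)$, while Remark~\ref{remark-truncated-expand} applied to reaching any vertex outside $\mcl K$ gives a strictly larger lower bound on the corresponding absorption times under~\eqref{eqn-pos-prob-cond}. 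This forces $\mcl V(A_{\tau_\infty}) \setminus \mcl K$ to be finite on~$E$. The almost-sure statement is then obtained by iterating this comparison on the $\nu_s$-shells $\{2^k \le \nu_s \le 2^{k+1}\}$: condition~\eqref{eqn-as-cond} is calibrated so that the ``cheaper to stay in the cone'' inequality persists with geometrically decaying slack as $k\to\infty$, and Borel--Cantelli then gives that a.s., for all but finitely many $k$, no edge outside $\mcl K \cup (-\mcl K)$ is added while the cluster traverses the $k$-th shell.

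To pass from ``all but finitely many vertices of $A_{\tau_\infty}$ lie in $\mcl K \cup (-\mcl K)$'' to the dichotomy~\eqref{eqn-as}, I invoke weak one-endedness: since $\mcl K \cap (-\mcl K) = \{0\}$, for $R$ large the subgraphs $\Gamma_1 := \mcl K \cap (\BB Z^d \setminus B_R^{|\cdot|}(0))$ and $\Gamma_2 := (-\mcl K) \cap (\BB Z^d \setminus B_R^{|\cdot|}(0))$ lie at graph distance at least $3$, and Proposition~\ref{prop-one-end-weak} excludes the scenario where both $\mcl E(A_{\tau_\infty} \cap \Gamma_1)$ and $\mcl E(A_{\tau_\infty} \cap \Gamma_2)$ are infinite. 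The main obstacle throughout is the deterministic $D$-metric comparison sketched above: since essentially nothing explicit is known about the Eden limit shape $\BB A$ or the norm $\mu$, all bounds must flow through the crude estimates of Lemma~\ref{prop-metric-compare} together with the cylinder-specific constants $\ol\rho, \ol\kappa_s, \ul\kappa_s$ — this is exactly why conditions~\eqref{eqn-pos-prob-cond} and~\eqref{eqn-as-cond} take their somewhat unwieldy form, and any sharpening would likely require genuinely new information about $\BB A$.
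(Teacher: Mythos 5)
Your high-level architecture is aligned with the paper's: reduce to deterministic $D$-metric estimates via Lemmas~\ref{prop-hull-time-metric} and~\ref{prop-hull-expand-metric} (plus Remark~\ref{remark-truncated-expand}), exploit the cylindrical geometry to show the cheapest outward route stays near $P_{\pm\BB x}$, and finish with Proposition~\ref{prop-one-end-weak} to collapse to a single cone. But there is a genuine gap in the part that actually produces the quantitative thresholds~\eqref{eqn-pos-prob-cond} and~\eqref{eqn-as-cond}, and it is not a cosmetic issue, because those thresholds are the content of the theorem.

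Your claimed lower bound for $D(z,w)$ with $w$ off the cone is derived as if the competing path from $z$ to $w$ stays within the shell $\{\nu_s \le q\}$ until it hits $w$. But the minimizing path has no such constraint: it may travel far beyond $q\mcl Q_s$ (where $f$ is large, so $D$-cost is cheap) and loop back to $w$. The correct unconstrained lower bound is the one in assertion~\ref{item-D-tube-dist-lower} of Lemma~\ref{prop-D-tube-dist}: one only knows $\nu_s(\gamma(t)) \le 1+t$ on the outbound leg and $\nu_s(\gamma(t)) \le q + T - t$ on the return leg (because $\nu_s(\gamma(T)) = q$), and optimizing over the split point gives the $2^\alpha$ factor in $(q+1+s(q-1))^{1-\alpha}$. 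Your inequality $(s+1)^{1-\alpha} < 1/2$ rearranges to $s > 2^{1/(\alpha-1)} - 1$, which is strictly weaker than the stated $s > 2^{\alpha/(\alpha-1)} - 1 = 2\cdot 2^{1/(\alpha-1)} - 1$; the discrepancy is exactly that missing $2^\alpha$. A related slip: in your lower bound for $D$, the denominator should carry $\ol\kappa_s$, not $\ul\kappa_s$, since lower bounding $\int f^{-1}$ requires the upper bound $f \leq \ol\kappa_s \cdot (\text{scale})$. The $\ul\kappa_s$ belongs on the other side, in the upper bound for the $D$-distance from an arbitrary boundary point to the flat face (assertion~\ref{item-D-tube-dist-upper}), which is what drives~\eqref{eqn-as-cond}.

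On the probabilistic side, the structure the paper actually uses is tighter than what you sketch. Rather than your ad hoc positive-probability event $E$, the paper introduces the exit-position events $G_n$ (the cluster first leaves $B_n^{\nu_s}(0)$ through the flat faces) and proves two things: a propagation lemma (Lemma~\ref{prop-alpha>1-hit}) showing that conditional on $G_{n_0}$, with probability $1-o_{n_0}^\infty(n_0)$ all later $G_n$ hold and no vertex at $D$-distance $\gtrsim n^{1-\alpha}$ from $A_{\tau_n}\cup B_n^{\nu_s}(0)$ is ever absorbed, and a base-case lemma (Lemma~\ref{prop-alpha>1-right-hit}) showing that under~\eqref{eqn-as-cond}, $\BB P(G_n)\to 1$. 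The propagation lemma is where the condition~\eqref{eqn-pos-prob-cond} enters, via the deterministic lower bound above compared with the upper bound $\tau_\infty - \tau_n \lesssim n^{1-\alpha}/(\ol\kappa_s(\alpha-1))$. Your "shells $\{2^k \le \nu_s \le 2^{k+1}\}$ with geometrically decaying slack + Borel--Cantelli" picture gestures at this but skips the exit-position bootstrapping, which is essential: without knowing the exit location at shell $n$, one cannot control where the cluster is when the next shell is analyzed, and the clean formula in Lemma~\ref{prop-convex-dist} only applies when starting from a flat face. Finally, $q_* = \alpha/(\alpha-1)$ is chosen explicitly to optimize the comparison in Lemma~\ref{prop-alpha>1-right-hit}, which is how the specific form of~\eqref{eqn-as-cond} arises; your sketch does not explain that choice.
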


In the rest of this section we will prove Theorem~\ref{thm-cone-contain}. We now give an outline of the proof.  

In Section~\ref{sec-alpha>1-D-estimates}, we will prove explicit bounds for various distances with respect to the metric $D$ of~\eqref{eqn-weighted-metric} defined with the above choice of $\alpha$-weight function $f$, via elementary geometric arguments. 
Due to the particular form of $f$, we will be able to obtain such estimates even without knowing the form of the standard FPP limit shape $\BB A$. 
In particular, we will obtain an explicit formula for the distance between the union of the distinguished flat faces $\bdy \mcl Q_s \cap (P_{\BB x} \cup P_{-\BB x})$ and its scaling $q(\bdy \mcl Q_s \cap (P_{\BB x} \cup P_{-\BB x}) )$ for $q>1$; and show that the minimum distance is attained along segments perpendicular to $\BB x$ (Lemma~\ref{prop-convex-dist}). We also prove upper and lower bounds for distances between arbitrary given points of $\mcl Q_s$ and $q\bdy\mcl Q_s$ (Lemma~\ref{prop-D-tube-dist}). 
 
In Section~\ref{sec-alpha>1-prob}, we will use the estimates of Sections~\ref{sec-growth-estimates} and~\ref{sec-alpha>1-D-estimates} to prove estimates for the $f$-weighted FPP clusters $A_t$. 
The most important estimate of Section~\ref{sec-alpha>1-prob} is Lemma~\ref{prop-alpha>1-hit}, which will tell us, roughly speaking, that the following holds. If for some large $n_0 \in\BB N$, the exit position of the clusters $\{A_t\}_{t\geq 0}$ from $B_{n_0}^{\nu_s}(0)$ lies in $n_0 (\bdy \mcl Q_s \cap (P_{\BB x} \cup P_{-\BB x}) )$ (up to rounding error), then it is likely that the following is true for each $n\in\BB N$ a little bit bigger than $n_0$. 
\begin{itemize}
\item The exit position of $\{A_t\}_{t\geq 0}$  from $B_{n }^{\nu_s}(0)$ lies in $n_0 (\bdy \mcl Q_s \cap (P_{\BB x} \cup P_{-\BB x}) )$ (up to rounding error).
\item $A_{\tau_\infty}$ does not contain any vertices of $\BB Z^d$ which lie at $D$-distance greater than a constant times $n^{1-\alpha}$ from $B_n^{\nu_s}(0)$.
\end{itemize}
We will also show that if~\eqref{eqn-as-cond} holds, then for large enough $n_0 \in\BB N$ it is likely that the clusters first exit $B_{n_0}^\nu(0)$ at a point near $n_0 (\bdy \mcl Q_s \cap (P_{\BB x} \cup P_{-\BB x}) )$ (see Lemma~\ref{prop-alpha>1-right-hit}), so the above two conditions are likely to hold for all large enough $n$. 
The proof of these estimates is inductive in nature, and relies crucially on the precise estimates for $D$ in Section~\ref{sec-alpha>1-D-estimates} to control the exit position of the clusters from $B_n^{\nu_s}(0)$. 

In Section~\ref{sec-cone-contain-proof}, we will use the estimates of Section~\ref{sec-alpha>1-prob} to conclude that if $s$ is chosen appropriately, then a.s.\ all but finitely many vertices of $A_{\tau_\infty}$ are contained in $\mcl K \cup (-\mcl K)$, in the notation of Theorem~\ref{thm-cone-contain}. We will then use Proposition~\ref{prop-one-end-weak} to rule out the possibility that there are infinitely many vertices of $A_{\tau_\infty}$ contained in each of $\mcl K$ and $-\mcl K$.

\subsection{Geometric estimates for cylindrical sets}
\label{sec-alpha>1-D-estimates}
 
In this subsection, we will prove some deterministic geometric properties of the metric $D$ associated with a general admissible function $f_s : \bdy \mcl Q_s \rta (0,\infty)$ (Definition~\ref{def-max-cond}). Throughout, we fix $\alpha>1$, $s>1$, and an admissible function $f_s$ and use the notation introduced in Section~\ref{sec-alpha>1-setup} and we let $D$ be as in~\eqref{eqn-weighted-metric} with $f = f_s$. Our main focus is on estimating distances in the metric $D$o that we can eventually apply the results of Section~\ref{sec-growth-estimates} to prove Theorem~\ref{thm-alpha>1}. See Figure~\ref{fig-convex-dist} for an illustration of the key idea of this subsection. 

\begin{figure}[ht!]
 \begin{center}
\includegraphics[scale=.8]{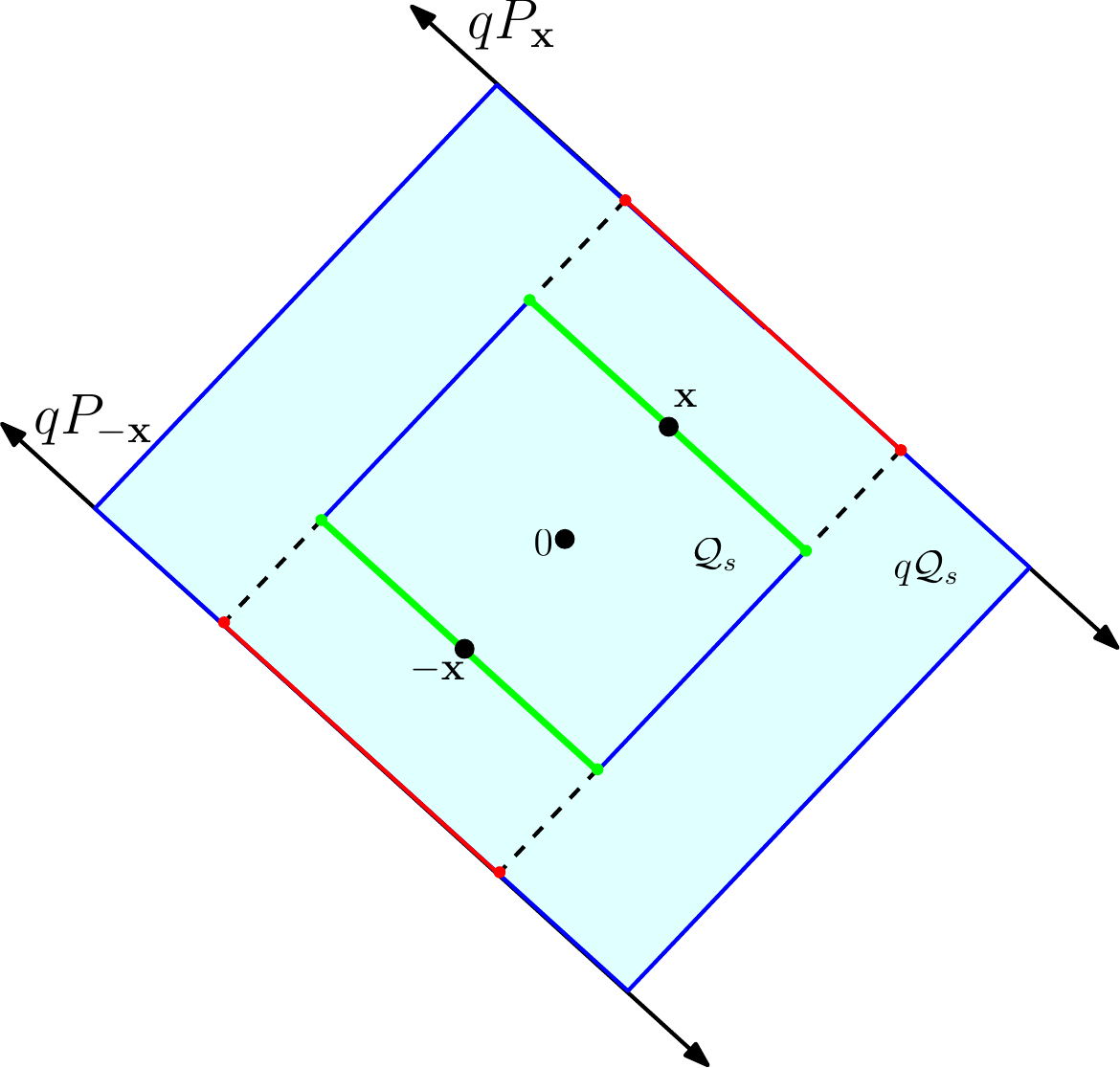} 
\caption{An illustration of the convex sets $\mcl Q_s$ and $q\mcl Q_s$ for $q > 1$. Lemma~\ref{prop-convex-dist} shows that every path of minimal $D$-length from $\bdy \mcl Q_s$ to $q\bdy \mcl Q_s$ lies between one of the two pairs of dotted lines shown in the figure. In particular, every point in $\bdy \mcl Q_s$ which lies at minimal $D$-distance from $q\bdy \mcl Q_s$ belongs to one of the green lines and every point in $q\bdy \mcl Q_s$ which lies at minimal $D$-distance from $\bdy \mcl Q_s$ belongs to one of the red lines. Hence, if we re-scale by $q^{-1}$ so that $q\bdy \mcl Q_s$ is mapped to $\bdy \mcl Q_s$, then the red lines are mapped to proper subsets of the green lines. This means that for each small $\delta>0$, each point of $q\bdy \mcl Q_s$ which lies within $D$-distance at most $D(\bdy \mcl Q_s , q\bdy \mcl Q_s)  +\delta$ from $\bdy \mcl Q_s$ lies at $D$-distance exactly $q^{1-\alpha} D(q \bdy \mcl Q_s , q^2 \bdy \mcl Q_s)$ from $q^2\bdy \mcl Q_s$. 
We also have the following facts, which come from Lemma~\ref{prop-D-tube-dist}. If $s$ is chosen so that~\eqref{eqn-pos-prob-cond} holds, then for large enough $q$ the $D$-distance from $ \mcl Q_s$ to any point in $q\left(\bdy \mcl Q_s \setminus (P_{\BB x} \cup P_{-\BB x}) \right)$ is greater than the $D$-distance from $ \mcl Q_s$ to $\infty$. Furthermore, if $s$ is chosen so that~\eqref{eqn-as-cond} holds, then for an appropriate choice of $q$ the $D$-distance from any point of $\bdy\mcl Q_s$ to $q\left(\bdy \mcl Q_s \setminus (P_{\BB x} \cup P_{-\BB x}) \right)$ is greater than its $D$-distance to $q\left(\bdy \mcl Q_s \cap (P_{\BB x} \cup P_{-\BB x}) \right)$. 
The above observations together with the estimates of Section~\ref{sec-growth-estimates} and an induction argument will be used to prove Theorem~\ref{thm-cone-contain}. 
}\label{fig-convex-dist}
\end{center}
\end{figure}

We start by collecting some basic properties of the set $\mcl Q_s$ and its associated norm $\nu_s$. For the statement, we recall the definition of the constant $\ol\rho$ from Definition~\ref{def-angle-constant}.

\begin{lem} \label{prop-cylinder-basic}
Suppose $\mcl Q_s$ is as in~\eqref{eqn-cylinder-def} and $\nu_s$ is as in~\eqref{eqn-cylinder-norm}. Then the following holds.
\begin{enumerate}
\item $B_{1}^{\mu}(0) \subset B_{ \ol\rho}^{|\cdot|}(0)\subset \mcl Q_s$, so $\mu(w-z) \geq \ol\rho^{-1} |w-z| \geq \nu_s(w-z)$ for each $z,w\in\BB R^d$. \label{item-cylinder-contain} 
\item For each $q>1$,  
\eqb \label{eqn-tube-dist}
\op{dist}^{|\cdot|}\left( \mcl Q_s ,\, q\left( \bdy \mcl Q_s \setminus (P_{\BB x} \cup P_{-\BB x}) \right)      \right) \geq   \ol\rho  s(q-1) .
\eqe
\label{item-tube-dist}
\item For each $q > 1$, we have $\op{dist}^{|\cdot|}(\bdy \mcl Q_s , q\bdy \mcl Q_s) = \ol\rho (q-1)$. \label{item-cylinder-dist} 
\item For each $q>1$, $z\in \bdy \mcl Q_s$, and $w \in q\bdy \mcl Q_s$ with $|w-z| = \ol\rho (q-1)$, we have $z \in P_{\BB x } \cup P_{-\BB x}$ and $w = z \pm (q-1) \BB x$.  \label{item-cylinder-min} 
\end{enumerate}
\end{lem}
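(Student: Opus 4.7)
The lemma collects deterministic geometric facts about the cylindrical body $\mcl Q_s$; each item reduces to a computation in the orthogonal decomposition $\BB R^d = P_{\BB x}^0 \oplus \BB R\BB x$. For item~\ref{item-cylinder-contain}, the inclusion $B_1^\mu(0) \subset B_{\ol\rho}^{|\cdot|}(0)$ is immediate from $\ol\rho = \sup |z|/\mu(z)$. For $B_{\ol\rho}^{|\cdot|}(0) \subset \mcl Q_s$, I will decompose $y \in B_{\ol\rho}^{|\cdot|}(0)$ as $y = y_\perp + t\BB x$ with $y_\perp \in P_{\BB x}^0$ and use $|y|^2 = |y_\perp|^2 + t^2 \ol\rho^2 \leq \ol\rho^2$ to get $|t| \leq 1$ and $|y_\perp| \leq \ol\rho$; the hypothesis $Q\supset B_{\ol\rho}^{|\cdot|}(0)\cap P_{\BB x}^0$ then places $y_\perp$ in $Q$, and convexity of $Q$ about $0$ together with $s>1$ gives $y_\perp/s \in Q$, so $y = s(y_\perp/s) + t\BB x \in \mcl Q_s$. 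The two norm inequalities are direct consequences of the two inclusions.

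Items~\ref{item-cylinder-dist} and~\ref{item-cylinder-min} I would prove together. The upper bound $\ol\rho(q-1)$ in~\ref{item-cylinder-dist} is attained by $z = \BB x$, $w = q\BB x$; the lower bound follows from item~\ref{item-cylinder-contain} via
\[
|w-z| \geq \ol\rho\,\nu_s(w-z) \geq \ol\rho\bigl(\nu_s(w) - \nu_s(z)\bigr) = \ol\rho(q-1).
\]
For~\ref{item-cylinder-min}, both inequalities must be tight. The first forces $y := w-z$ to satisfy $y/\nu_s(y) \in \bdy\mcl Q_s$ with Euclidean norm $\ol\rho$; parametrizing this point as $sU + r\BB x$ with $U \in Q$, $r \in [-1,1]$, the Pythagorean identity becomes $s^2|U|^2 + r^2\ol\rho^2 = \ol\rho^2$. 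The boundary condition requires either $|r|=1$ or $U \in \bdy Q$, and the latter combined with $|U| \geq \ol\rho$ on $\bdy Q$ (using $Q\supset B_{\ol\rho}^{|\cdot|}(0)\cap P_{\BB x}^0$) and $s>1$ gives a contradiction. Hence $U = 0$, $|r| = 1$, and $w - z = \pm(q-1)\BB x$. Assuming WLOG $w - z = (q-1)\BB x$, I would write $z = sU + t\BB x$ and $w/q = s\tilde U + \tilde t \BB x$ as boundary parametrizations and match components: $U = q\tilde U$ and $t = q\tilde t - (q-1)$. The condition $\tilde U \in Q$ and $q\tilde U \in Q$ with $q>1$, combined with convexity of $Q$ about the interior point $0$, forces $\tilde U \in \op{int}(Q)$, hence $|\tilde t| = 1$; the constraint $|t| \leq 1$ then selects $\tilde t = 1$, giving $t = 1$ and $z \in \bdy\mcl Q_s \cap P_{\BB x}$.

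For item~\ref{item-tube-dist}, the set $\bdy\mcl Q_s\setminus(P_{\BB x}\cup P_{-\BB x})$ consists precisely of points $sv + t\BB x$ with $v \in \bdy Q$ and $|t|<1$. For such $w = sqv + qt\BB x \in q\bigl(\bdy\mcl Q_s \setminus (P_{\BB x}\cup P_{-\BB x})\bigr)$ and $y = sU + r\BB x \in \mcl Q_s$, the orthogonal decomposition gives $|w-y|^2 \geq s^2|qv - U|^2$. Let $\nu_Q$ be the Minkowski functional of $Q$ in $P_{\BB x}^0$, which is a norm satisfying $\nu_Q(\cdot) \leq \ol\rho^{-1}|\cdot|$. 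Since $v \in \bdy Q$ gives $\nu_Q(v)=1$ and $U \in Q$ gives $\nu_Q(U) \leq 1$, the reverse triangle inequality yields
\[
|qv - U| \geq \ol\rho\,\nu_Q(qv - U) \geq \ol\rho\bigl(\nu_Q(qv) - \nu_Q(U)\bigr) \geq \ol\rho(q-1).
\]
The main obstacle is the case analysis in item~\ref{item-cylinder-min}: carefully ruling out all boundary configurations of $\mcl Q_s$ and $q\mcl Q_s$ other than the flat top and bottom faces by exploiting the strict inequality $s>1$ and the convexity of $Q$ about its interior point $0$.
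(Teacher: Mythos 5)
Your proposal is correct, and for items~\ref{item-cylinder-contain} and~\ref{item-tube-dist} it matches the paper's argument (orthogonal decomposition along $P_{\BB x}^0 \oplus \BB R\BB x$ and a Minkowski-functional comparison for the lateral distance). For items~\ref{item-cylinder-dist} and~\ref{item-cylinder-min}, however, you take a genuinely different and in some ways cleaner route. The paper obtains the lower bound $\op{dist}^{|\cdot|}(\bdy\mcl Q_s,q\bdy\mcl Q_s)\geq\ol\rho(q-1)$ by splitting into the lateral case (invoking item~\ref{item-tube-dist}, which gives $\geq \ol\rho s(q-1) > \ol\rho(q-1)$) and the top/bottom case (where the distance to the slab between $P_{\BB x}$ and $P_{-\BB x}$ is exactly $\ol\rho(q-1)$); it then proves item~\ref{item-cylinder-min} geometrically by observing that if $z\notin P_{\BB x}$ then the segment from $w\in qP_{\BB x}$ to $z$ must cross $P_{\BB x}$ and so has length strictly greater than $\ol\rho(q-1)$. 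You instead derive the lower bound in a single line from the chain $|w-z|\geq\ol\rho\,\nu_s(w-z)\geq\ol\rho(\nu_s(w)-\nu_s(z))=\ol\rho(q-1)$, and then prove item~\ref{item-cylinder-min} by tracing the equality cases of this chain: tightness of the first inequality forces $(w-z)/\nu_s(w-z)$ to be a point of $\bdy\mcl Q_s$ of maximal Euclidean norm $\ol\rho$, which the parametrization $sU+r\BB x$ and the Pythagorean identity (using $s>1$) force to be $\pm\BB x$; the final bookkeeping with the boundary parametrizations of $z$ and $w/q$ then pins down $z\in P_{\BB x}\cup P_{-\BB x}$. Your argument is more algebraic and self-contained — it does not route item~\ref{item-cylinder-dist} through item~\ref{item-tube-dist} — while the paper's is more visual (the ``must cross $P_{\BB x}$'' step). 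Both are valid; yours is arguably a tidier dependence structure since each item is proved from item~\ref{item-cylinder-contain} plus basic norm properties.
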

\begin{proof}
We first check assertion~\ref{item-cylinder-contain}. 
Suppose $w \in B_{\ol\rho}^{|\cdot|}(0)$ and let $w^\perp $ be its projection onto the plane $P_{\BB x}^0$ through 0 perpendicular to $\BB x$. Then $|w^\perp| \leq |w|$ so $w^\perp \in B_{ \ol\rho}^{|\cdot|}(0)\cap P_{\BB x}^0 \subset Q$. Furthermore, $w-w^\perp = t \BB x$ where $t\in\BB R$ with $|t| = \ol\rho^{-1} |w-w^\perp| \leq \ol\rho$. Since $B_{\ol\rho}^{|\cdot|}(0) \cap P_{\BB x}^0 \subset Q$, it therefore follows from~\eqref{eqn-cylinder-def} that $w = w^\perp + t\BB x \in \mcl Q_s$. By definition of $\ol\rho$ we have $B_1^{\mu}(0)  = \BB A \subset  B_{ \ol\rho}^{|\cdot|}(0)$, and the statement about norms is immediate from~\eqref{eqn-cylinder-norm}. 
  
We next observe that for $q > 1$, each point of $q\bdy Q$ lies at $\nu_s$-distance $q-1$ from $\bdy Q$. Since $B_{ \ol\rho}^{|\cdot|}(0) \cap P_{\BB x}^0 \subset  Q$, each such point lies at Euclidean distance at least $   \ol\rho  (q-1)$ from $ Q$. From~\eqref{eqn-cylinder-def}, we now obtain assertion~\ref{item-tube-dist}.  
Since $|\BB x| = \rho$, we have $\op{dist}^{|\cdot|}\left(P_{\BB x} \cup P_{-\BB x} , q( P_{\BB x} \cup P_{-\BB x})\right) = \ol\rho(q-1)$
and $|q\BB x - \BB x| = \ol\rho(q-1)$. By combining this with assertion~\ref{item-tube-dist}, we obtain assertion~\ref{item-cylinder-dist}. 
 
Now suppose $q > 1$, $z\in \bdy \mcl Q_s$, and $w\in q\bdy Q$ with $|w-z| = \ol\rho (q-1)$. By assertions~\ref{item-tube-dist} and~\ref{item-cylinder-dist} we have $w \in q \left(  P_{\BB x} \cup P_{-\BB x} \right)$. By symmetry we can assume without loss of generality that $w \in q P_{\BB x}$. Any path from $w$ to $\mcl Q_s \setminus P_{\BB x}$ must pass through $P_{\BB x}$, so must have Euclidean length $>\ol\rho(q-1)$. Therefore $z\in P_{\BB x}$. The unique closest point to $z$ in $q P_{\BB x}$ is $z + (q-1) \BB x$, so we must in fact have $w = z + (q-1)\BB x$. 
\end{proof}

Our next lemma generalizes some of the statements of Lemma~\ref{prop-cylinder-basic} to the metric $D$. 

\begin{lem} \label{prop-convex-dist}
For each $q>1$ and each $z \in P_{\BB x} \cup P_{-\BB x}$,  
\eqb \label{eqn-convex-dist}
D\left(\bdy \mcl Q_s , q\bdy \mcl Q_s\right) = D(z, z + (q-1) \BB x) =  \frac{1-q^{1-\alpha}}{\ol\kappa_s(\alpha-1)} .
\eqe 
Furthermore, if $q>1$, $z\in\bdy \mcl Q_s$, and $w\in q\bdy \mcl Q_s$ with $D(z,w) = D(\bdy \mcl Q_s , q\bdy \mcl Q_s)$ then $z\in  P_{\BB x} \cup P_{-\BB x}  $ and $w   = z \pm (q-1)\BB x$. 
\end{lem}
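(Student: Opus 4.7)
\textbf{Proof plan for Lemma~\ref{prop-convex-dist}.} The plan is to establish matching upper and lower bounds on $D(\bdy\mcl Q_s, q\bdy\mcl Q_s)$ and then trace the equality conditions.

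For the upper bound, fix $z \in \bdy\mcl Q_s \cap P_{\BB x}$, write $z = z^\perp + \BB x$ with $z^\perp \in sQ$, and consider the linear path $\gamma(t) := z + t\BB x$ for $t\in[0,q-1]$. Since $\BB x \in \BB X \subset \bdy\BB A$ we have $\mu(\BB x) = 1$, so $\gamma$ is parametrized by $\mu$-length. A direct computation from~\eqref{eqn-cylinder-def} shows $\gamma(t) = z^\perp + (1+t)\BB x \in (1+t)\bdy\mcl Q_s \cap (1+t)P_{\BB x}$, so $\nu_s(\gamma(t)) = 1+t$ and $\gamma(t)/\nu_s(\gamma(t)) \in \bdy\mcl Q_s \cap P_{\BB x}$. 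By admissibility (Definition~\ref{def-max-cond}), $f_s$ equals $\ol\kappa_s$ there, hence $f(\gamma(t)) = (1+t)^\alpha \ol\kappa_s$. Integrating,
\[
\op{len}^D(\gamma) = \int_0^{q-1} \frac{dt}{\ol\kappa_s (1+t)^\alpha} = \frac{1 - q^{1-\alpha}}{\ol\kappa_s(\alpha-1)}.
\]
The symmetric construction gives the same value with $z \in P_{-\BB x}$ and $w = z - (q-1)\BB x$.

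For the lower bound, let $\gamma : [0,T] \to \BB R^d$ be any piecewise linear path from some $z \in \bdy\mcl Q_s$ to some $w \in q\bdy\mcl Q_s$, parametrized by $\mu$-length. Since $f_s \leq \ol\kappa_s$ on $\bdy\mcl Q_s$, the decomposition~\eqref{eqn-f-convex-decomp} gives $f(\gamma(t))^{-1} \geq \ol\kappa_s^{-1} \nu_s(\gamma(t))^{-\alpha}$. Set $r(t) := \nu_s(\gamma(t))$. By the reverse triangle inequality and Lemma~\ref{prop-cylinder-basic}(\ref{item-cylinder-contain}), $|r(t) - r(s)| \leq \nu_s(\gamma(t) - \gamma(s)) \leq \mu(\gamma(t) - \gamma(s)) \leq |t-s|$, so $r$ is $1$-Lipschitz with $r(0) = 1$ and $r(T) = q$; in particular $T \geq q-1$ and $r(t) \leq 1 + t$ for all $t$. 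Since $\alpha > 1$ makes $r \mapsto r^{-\alpha}$ decreasing,
\[
\op{len}^D(\gamma) \geq \ol\kappa_s^{-1}\int_0^T (1+t)^{-\alpha}\, dt = \frac{1 - (1+T)^{1-\alpha}}{\ol\kappa_s(\alpha-1)} \geq \frac{1 - q^{1-\alpha}}{\ol\kappa_s(\alpha-1)}.
\]

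For the equality characterization, suppose $\gamma$ achieves the minimum. Tracing the lower bound, we need $r(t) = 1+t$, $T = q-1$, $f_s(\gamma(t)/r(t)) = \ol\kappa_s$ a.e., and all the triangle inequalities above must be equalities. In particular, for every $0 \leq s \leq t \leq T$,
\[
\mu(\gamma(t) - \gamma(s)) = \nu_s(\gamma(t) - \gamma(s)) = t-s.
\]
Now the key geometric input: the condition $\mu(v) = \nu_s(v)$ forces $v/\mu(v) \in \bdy\BB A \cap \bdy\mcl Q_s$, and by the assumption recalled in Section~\ref{sec-alpha>1-setup} this intersection is exactly $\{\pm\BB x\}$. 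Hence every increment $\gamma(t) - \gamma(s)$ is a scalar multiple of $\BB x$; choosing a sign (which must be constant along $\gamma$, since $r$ is strictly increasing), we conclude $\gamma(t) = z \pm t\BB x$. It remains to show $z \in P_{\BB x}\cup P_{-\BB x}$, which I will handle by the same case analysis as in the proof of Lemma~\ref{prop-cylinder-basic}(\ref{item-cylinder-min}): writing $z = z^\perp + t_z\BB x$ with $|t_z| \leq 1$, the condition $z + (q-1)\BB x \in q\bdy\mcl Q_s$ combined with $q > 1$ (which excludes both the opposite flat face and the lateral face, since $q\bdy Q \cap Q = \emptyset$) forces $t_z = 1$, and symmetrically $t_z = -1$ for the $-\BB x$ case. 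This yields the stated characterization.

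The main obstacle is the equality characterization: both the upper and lower bounds are standard, but the rigidity argument forcing the minimizing path to be a straight segment in the $\pm\BB x$ direction rests on the nontrivial geometric fact that $\BB A \cap \bdy\mcl Q_s$ consists of exactly two points, which is the reason for choosing $\BB x \in \BB X$ and $s > 1$ in the setup of Section~\ref{sec-alpha>1-setup}.
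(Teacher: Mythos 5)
Your upper and lower bounds coincide with the paper's. The equality characterization, however, follows a genuinely different route — and as written it contains a gap. You begin with ``suppose $\gamma$ achieves the minimum,'' but $D(z,w)$ is defined as an infimum over piecewise linear paths, and there is no a priori reason that infimum is attained by a piecewise linear path (in a smooth Finsler-type metric the shortest curves are typically not piecewise linear). The statement to prove is unconditional: \emph{if} $D(z,w)$ equals $D(\bdy\mcl Q_s, q\bdy\mcl Q_s)$ \emph{then} the endpoint conditions hold, with no hypothesis that a minimizer exists. So the argument as stated does not cover the case where the infimum is approached but not attained.

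The paper sidesteps this entirely by applying the Euclidean rigidity (Lemma~\ref{prop-cylinder-basic}, assertions~\ref{item-cylinder-dist}--\ref{item-cylinder-min}) directly to the chord $|w-z|$: for \emph{every} path $\gamma$ from $z$ to $w$ one has $T = \op{len}^\mu(\gamma) \geq \ol\rho^{-1}|w-z|$, so if the endpoint conditions fail then $|w-z| > \ol\rho(q-1)$ and the lower bound $\op{len}^D(\gamma) \geq \frac{1-(1+T)^{1-\alpha}}{\ol\kappa_s(\alpha-1)}$ is uniformly bounded away from $\frac{1-q^{1-\alpha}}{\ol\kappa_s(\alpha-1)}$ over all $\gamma$, giving strict inequality for $D(z,w)$ with no existence argument needed. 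Your path-increment rigidity (forcing $\mu(\gamma(t)-\gamma(s)) = \nu_s(\gamma(t)-\gamma(s))$ and invoking $\bdy\BB A\cap\bdy\mcl Q_s = \{\pm\BB x\}$) is a nice observation and does reach the right conclusion once a minimizer is granted, but to close the gap you should either add a compactness/Arzel\`a--Ascoli argument to extract a minimizer (and then deal with it possibly not being piecewise linear), or — more simply — note that your own inequality chain already gives $\op{len}^D(\gamma) \geq \frac{1-(1+\ol\rho^{-1}|w-z|)^{1-\alpha}}{\ol\kappa_s(\alpha-1)}$ uniformly in $\gamma$, reducing the problem to exactly the Euclidean chord rigidity the paper uses.
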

\begin{proof}
First suppose $q>1$ and $z\in P_{\BB x} \cap \bdy \mcl Q_s$. For $t\in [0,q-1]$ let $\ell(t) := z + t \BB x$. Then $\ell$ is a linear path parametrized by $\mu$-length and 
$\nu_s(\ell(t)) = 1 + t $ for each $t\in [0,q-1]$.
Furthermore, for each such $t$ we have $(1+t)^{-1}\ell(t) \in P_{\BB x} \cap\bdy \mcl Q_s$. 

Therefore, $f(\ell(t)) =(1+t)^\alpha \ol\kappa_s$ for each $t\in [0,q-1]$ and
\eqb \label{eqn-convex-dist-upper}
D(\bdy \mcl Q_s , q\bdy \mcl Q_s) \leq D(z, z + (q-1) \BB x) \leq \op{len}^D(\ell) =\int_0^{q-1} (1+t)^{-\alpha} \ol\kappa_s^{-1} \, dt =  \frac{1-q^{1-\alpha}}{\ol\kappa_s(\alpha-1)} .
\eqe 

Now suppose $q>1$, $z\in\bdy \mcl Q_s$, and $w\in q\bdy \mcl Q_s$. Let $\gamma$ be a piecewise linear path from $z$ to $w$, parametrized by $\mu$-length. Let $T := \op{len}^\mu(\gamma)$. 
By assertion~\ref{item-cylinder-contain} of Lemma~\ref{prop-cylinder-basic}, for each $t\in [0,T]$ we have $\op{len}^{\nu_s}(\gamma([0,t])) \leq t$, so $\nu_s(\gamma(t)) \leq 1 + t$. Therefore $f(\gamma(t)) \leq (1+t)^\alpha \ol\kappa_s$ for each $t\in [0,q-1]$, so
\eqb \label{eqn-convex-dist-lower}
\op{len}^D(\gamma) \geq \int_0^T (1+t)^{-\alpha} \ol\kappa_s^{-1} \, dt \geq  \frac{1-(1+T)^{1-\alpha}}{\ol\kappa_s(\alpha-1)}  .
\eqe 
By assertion~\ref{item-cylinder-dist} of Lemma~\ref{prop-cylinder-basic}, $\op{len}^{|\cdot|}(\gamma)  \geq \ol\rho(q-1)$, so $T \geq q-1$. Furthermore, we have strict inequality unless $z\in  P_{\BB x} \cup P_{-\BB x} $ and $w   = z \pm (q-1)\BB x$.
By combining~\eqref{eqn-convex-dist-upper} and~\eqref{eqn-convex-dist-lower}, we obtain the statement of the lemma.
\end{proof}

To complement Lemma~\ref{prop-convex-dist}, we also have a lower bound for the distance from $\mcl Q_s$ to points of $q\bdy \mcl Q_s$ which are not translates of elements of $\bdy \mcl Q_s\cap (P_{\BB x} \cup P_{-\BB x})$ in a direction perpendicular to $\BB x$.

\begin{lem} \label{prop-convex-dist-pos}
For $q >1$, let
\eqbn
K_q := \left\{z + (q-1)\BB x\,:\,  z \in  P_{\BB x} \cap \bdy \mcl Q_s \right\} \cup  \left\{z - (q-1)\BB x\,:\,  z \in  P_{-\BB x} \cap \bdy \mcl Q_s \right\} .
\eqen
For each $q_2 > q_1 > 1$ and $\ep > 0$, there exists $\delta = \delta(f , q_1 , q_2 , \ep) >0$ such that for each $q\in [q_1, q_2]$ and each $w\in q \bdy Q$ with $\op{dist}^{|\cdot|}(w , K_{1,q}(\BB x) ) \geq \ep$,  
\eqbn
D\left(w  , \bdy \mcl Q_s\right) \geq   \frac{1-q^{1-\alpha}}{\ol\kappa_s (\alpha-1) }  + \delta .
\eqen 
\end{lem}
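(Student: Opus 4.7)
I would prove this by compactness, reducing to the characterization of the equality case already established in Lemma~\ref{prop-convex-dist}. Suppose for contradiction that no such $\delta$ exists. Then there are sequences $q_n \in [q_1, q_2]$ and $w_n \in q_n\bdy\mcl Q_s$ with $\op{dist}^{|\cdot|}(w_n, K_{q_n}) \geq \ep$ for every $n$ and
\[
D(w_n, \bdy \mcl Q_s) - \frac{1-q_n^{1-\alpha}}{\ol\kappa_s(\alpha-1)} \longrightarrow 0 \quad \text{as } n\to\infty .
\]
Since $\bigcup_{q\in[q_1,q_2]} q\bdy\mcl Q_s$ is contained in the compact set $q_2\mcl Q_s$, after passing to a subsequence I may assume $q_n \to q_* \in [q_1, q_2]$ and $w_n \to w_*$, where the continuity of $\nu_s$ forces $w_* \in q_*\bdy\mcl Q_s$.

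The next step is to pass to the limit on both sides. Every $w_n$ and $w_*$ lies in the fixed compact annular region $\{z \in \BB R^d : q_1 \leq \nu_s(z) \leq q_2\}$, which is bounded away from the origin. On this region $f$ is bounded above and below by positive constants, so the upper bound in Lemma~\ref{prop-metric-compare} yields $D(z,z') \leq C|z-z'|$ for some constant $C>0$ whenever $z,z'$ both lie in this region. In particular $w\mapsto D(w, \bdy\mcl Q_s)$ is continuous at $w_*$ and we conclude
\[
D(w_*, \bdy \mcl Q_s) = \frac{1-q_*^{1-\alpha}}{\ol\kappa_s(\alpha-1)} .
\]
Meanwhile, since $K_q$ is just the translate $(\bdy\mcl Q_s \cap P_{\BB x}) + (q-1)\BB x$ together with $(\bdy\mcl Q_s \cap P_{-\BB x}) - (q-1)\BB x$, the map $q \mapsto K_q$ is Hausdorff-continuous, so the bound $\op{dist}^{|\cdot|}(w_n, K_{q_n}) \geq \ep$ passes to the limit and gives $\op{dist}^{|\cdot|}(w_*, K_{q_*}) \geq \ep$; in particular $w_* \notin K_{q_*}$.

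Finally, I would invoke the equality statement in Lemma~\ref{prop-convex-dist}: any $w\in q\bdy\mcl Q_s$ realizing $D(w,\bdy\mcl Q_s) = \frac{1-q^{1-\alpha}}{\ol\kappa_s(\alpha-1)}$ must satisfy $w = z \pm (q-1)\BB x$ for some $z \in (P_{\BB x} \cup P_{-\BB x}) \cap \bdy\mcl Q_s$, i.e.\ $w \in K_q$. Applied to $w_*$, this contradicts $w_* \notin K_{q_*}$, finishing the proof.

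The only non-routine step is the continuity of $D(\cdot, \bdy\mcl Q_s)$ at $w_*$, and the main obstacle is making sure that near-optimal paths realizing this distance stay in a compact region on which $f$ is well-behaved. This is where having both endpoints in the fixed annulus $\{q_1 \leq \nu_s \leq q_2\}$ is important: the upper bound from Lemma~\ref{prop-metric-compare} gives a uniform bound on $D$-length, and combined with the lower bound from Lemma~\ref{prop-convex-dist} any path of nearly minimal $D$-length cannot venture far from that annulus, so the local Lipschitz estimate for $D$ suffices.
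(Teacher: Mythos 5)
Your proof is correct and takes essentially the same approach as the paper: both arguments reduce to the fact that a continuous function which is strictly positive on a compact set (here, the set of pairs $(q,w)$ with $w\in q\,\bdy\mcl Q_s$ and $\op{dist}^{|\cdot|}(w, K_q)\geq\ep$) is bounded below by a positive constant, with the strict positivity supplied by the equality case of Lemma~\ref{prop-convex-dist} and the continuity by the Lipschitz upper bound of Lemma~\ref{prop-metric-compare}. The paper phrases this directly via a single continuous function $\psi(w) := D(w,\bdy\mcl Q_s) - \frac{1-\nu_s(w)^{1-\alpha}}{\ol\kappa_s(\alpha-1)}$ on the compact annulus $K := \bigcup_{q\in[q_1,q_2]} q\,\bdy\mcl Q_s$, while you use a sequential compactness / contradiction argument; these are equivalent. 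If anything, your handling of the constraint set is slightly cleaner: you use the Hausdorff continuity of $q\mapsto K_q$ to pass the condition $\op{dist}^{|\cdot|}(w_n,K_{q_n})\geq\ep$ to the limit directly, whereas the paper's final step comparing $\op{dist}^{|\cdot|}(w,K_q)$ with $\op{dist}^{|\cdot|}(w,K')$ for $K'=\bigcup_q K_q$ requires a slightly more careful justification than the one line given (one only gets $\op{dist}^{|\cdot|}(w,K')\gtrsim\ep$ rather than $\geq\ep$, since $K_q\subset K'$). One small caveat in your write-up: the Lipschitz bound $D(z,z')\leq C|z-z'|$ should be stated for $z,z'$ close to each other (so that the straight segment between them avoids a neighborhood of the origin), rather than for arbitrary pairs in the annulus $\{q_1\leq\nu_s\leq q_2\}$, but that is how you in fact use it, so the argument stands.
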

\begin{proof}
Let
\eqbn
K := \bigcup_{q = q_1}^{q_2} q \bdy \mcl Q_s \quad \op{and}\quad K' := \bigcup_{q = q_1}^{q_2} K_q  .
\eqen
Note that $K$ is compact and $K'\subset K$. 
Define $\psi : K \rta (0,\infty)$ by 
\[
\psi(w) =D\left(w  , \bdy \mcl Q_s\right)  -    \frac{1-\nu_s(w)^{1-\alpha}}{\ol\kappa_s (\alpha-1)}  .
\] 
By Lemma~\ref{prop-metric-compare} and since the norms $|\cdot|$ and $\nu_s$ are comparable, we infer that $\psi$ is Euclidean-continuous. 
By Lemma~\ref{prop-convex-dist}, $\psi(w) >0$ for each $w\in K\setminus K'$. By compactness, there is a $\delta>0$ such that $\psi(w) \geq\delta$ for each $w\in K$ with $\op{dist}^{|\cdot|}(w,K') \geq \ep$. If $\ep$ is chosen sufficiently small (depending only on $q_1 , q_2$, and $f$), then whenever $w\in q\bdy \mcl Q_s$ for $q\in [q_1,q_2]$ and $\op{dist}^{|\cdot|}(w,K_{q} ) \geq \ep$, we also have $\op{dist}^{|\cdot|}(w,K') \geq \ep$. The statement of the lemma follows. 
\end{proof}

Our next lemma is the source of the conditions~\eqref{eqn-pos-prob-cond} and~\eqref{eqn-as-cond} in Theorem~\ref{thm-cone-contain}. To state one of the estimates in the lemma, we will need the following notation.
For $r>0$, and $z,w \in B_r^{\nu_s}(0)$, we define a modified version of the metric $D$ by
\eqb \label{eqn-truncated-D}
D_r(z,w) := \inf\left\{\op{len}^D(\gamma) \,:\, \text{$\gamma$ is a piecewise linear path from $z$ to $w$ contained in $B_r^{\nu_s}(0)$}\right\} .
\eqe
We recall that similar modifications of $D$ appear in Lemma~\ref{prop-hull-time-metric} and Remark~\ref{remark-truncated-expand}.
 
\begin{lem} \label{prop-D-tube-dist}
Let $q>1$. 
\begin{enumerate}
\item For each $z\in\bdy \mcl Q_s$, there exists $c \in\BB R$ such that $z+c\BB x \in q (P_{\BB x} \cup P_{-\BB x})$ and
\eqb \label{eqn-D-tube-dist-upper}
D(z , z + c \BB x) \leq  \frac{1-q^{1-\alpha}}{\ol\kappa_s(\alpha-1)} + \frac{1}{\ul\kappa_s}  .
\eqe 
\label{item-D-tube-dist-upper}
\item For each $\zeta>0$, each $z\in  \mcl Q_s $, and each $w\in q\left( \bdy \mcl Q_s \setminus (P_{\BB x} \cup P_{-\BB x}) \right)$, we have (in the notation~\eqref{eqn-truncated-D})
\eqb \label{eqn-D-tube-dist-ball}
D_{q+\zeta}(z,w) \geq  \frac{1-(q+\zeta)^{1-\alpha}}{\ol\kappa_s(\alpha-1)}  +  \frac{(s-1)(q-1 ) - \zeta}{\ol\kappa_s (q+\zeta)^\alpha}  .
\eqe 
\label{item-D-tube-dist-ball}
\item For each $z\in  \mcl Q_s $ and each $w\in q\left( \bdy \mcl Q_s \setminus (P_{\BB x} \cup P_{-\BB x}) \right)$,  
\eqb \label{eqn-D-tube-dist-lower}
D(z,w) \geq \frac{1 + q^{1-\alpha} -   2^{\alpha } ( q+1+s(q-1)       )^{1-\alpha}     }{\ol\kappa_s(\alpha-1)} .
\eqe 
\label{item-D-tube-dist-lower}
\end{enumerate}
\end{lem}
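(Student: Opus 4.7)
The plan is to prove assertion~\ref{item-D-tube-dist-upper} by an explicit path construction and to prove assertions~\ref{item-D-tube-dist-ball} and~\ref{item-D-tube-dist-lower} by bounding the $D$-length of an arbitrary path using the cylindrical structure of $\mcl Q_s$ together with two elementary inputs: the norm comparison $\nu_s\leq\mu$ (coming from $\BB A\subset \mcl Q_s$) and the Euclidean gap of Lemma~\ref{prop-cylinder-basic}, assertion~\ref{item-tube-dist}.

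For assertion~\ref{item-D-tube-dist-upper}, I will write $z=sw_0+t\BB x$ with $w_0\in Q$ and $t\in[-1,1]$, noting $\max(\nu_Q(w_0),|t|)=1$ from the Minkowski-functional formula $\nu_s(s\eta+\xi\BB x)=\max(\nu_Q(\eta),|\xi|)$, where $\nu_Q$ is the norm on $P_{\BB x}^0$ with unit ball $Q$. By symmetry of $\mcl Q_s$ under $v\mapsto -v$ we may assume $t\ge 0$ and take the straight-line path $\ell(r)=z+r\BB x$ for $r\in[0,q-t]$, which terminates at a point with $\BB x$-coefficient $q$, hence in $qP_{\BB x}$, so $c:=q-t$ works. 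Split at $r=1-t$: on $[0,1-t]$ the segment either lies on the side face of $\bdy\mcl Q_s$ (if $\nu_Q(w_0)=1$) so $f=f_s\ge\ul\kappa_s$, or it degenerates (if $\nu_Q(w_0)<1$, which forces $|t|=1$); in either case this piece contributes at most $(1-t)/\ul\kappa_s\le 1/\ul\kappa_s$. On $[1-t,q-t]$ the formula yields $\nu_s(\ell(r))=t+r$ and the normalized point $\ell(r)/(t+r)=sw_0/(t+r)+\BB x$ lies in the distinguished face $\bdy\mcl Q_s\cap P_{\BB x}$ since $\nu_Q(w_0/(t+r))\le 1$; admissibility (Definition~\ref{def-max-cond}) then gives $f(\ell(r))=(t+r)^\alpha\ol\kappa_s$, and a change of variables yields the $(1-q^{1-\alpha})/(\ol\kappa_s(\alpha-1))$ term.

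For assertions~\ref{item-D-tube-dist-ball} and~\ref{item-D-tube-dist-lower}, let $\gamma:[0,T]\to\BB R^d$ be an arbitrary piecewise linear path from $z$ to $w$ parametrized by $\mu$-length. From $\BB A\subset \mcl Q_s$ (Lemma~\ref{prop-cylinder-basic}, assertion~\ref{item-cylinder-contain}) one gets $\nu_s\le\mu$; applying the triangle inequality from each endpoint and using that $\mu$-length from $0$ to $t$ equals $t$ yields
\eqbn
\nu_s(\gamma(t))\le\min(1+t,\,q+T-t).
\eqen
From Lemma~\ref{prop-cylinder-basic}, assertion~\ref{item-tube-dist}, $\op{len}^{|\cdot|}(\gamma)\ge\ol\rho s(q-1)$, hence $T\ge s(q-1)$. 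Since $f(\gamma(t))\le\ol\kappa_s\nu_s(\gamma(t))^\alpha$, the above $\nu_s$ bound translates into a lower bound on $\op{len}^D(\gamma)=\int_0^T f(\gamma(t))^{-1}dt$. For assertion~\ref{item-D-tube-dist-lower}, I will use the full two-sided bound and split the integral at the symmetric midpoint $t^*=(q+T-1)/2$ (which lies in $[0,T]$ since $T\ge s(q-1)\ge q-1$), where both branches coincide at the value $(q+T+1)/2$; both pieces integrate cleanly, and the identity $2\bigl((q+T+1)/2\bigr)^{1-\alpha}=2^\alpha(q+T+1)^{1-\alpha}$ produces the claimed $2^\alpha$ factor. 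Substituting $T\ge s(q-1)$ and using $1-\alpha<0$ (so the expression is increasing in $T$) gives the statement.

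For assertion~\ref{item-D-tube-dist-ball}, in the main regime $T\ge q+\zeta-1$ (which is automatic when $(s-1)(q-1)\ge\zeta$), I will use instead $\nu_s(\gamma(t))\le\min(1+t,\,q+\zeta)$ (the second term coming from $\gamma\subset B_{q+\zeta}^{\nu_s}(0)$) and split at $t=q+\zeta-1$: the first integral contributes $(1-(q+\zeta)^{1-\alpha})/(\ol\kappa_s(\alpha-1))$ and the second contributes $(T-(q+\zeta-1))/(\ol\kappa_s(q+\zeta)^\alpha)\ge((s-1)(q-1)-\zeta)/(\ol\kappa_s(q+\zeta)^\alpha)$ after plugging $T\ge s(q-1)$; the complementary regime $T<q+\zeta-1$ forces $(s-1)(q-1)-\zeta<0$ and is handled by a mean-value-theorem comparison between $(1-(1+T)^{1-\alpha})/(\ol\kappa_s(\alpha-1))$ and the (then negative) second term. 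The main subtlety lies in assertion~\ref{item-D-tube-dist-upper}: admissibility of $f_s$ on the distinguished faces is essential to replace $\ul\kappa_s$ by $\ol\kappa_s$ on the long outer segment, since the $(1-q^{1-\alpha})/(\ol\kappa_s(\alpha-1))$ contribution scales with $q$ whereas the $1/\ul\kappa_s$ cost of the initial interior segment is only $O(1)$; and for the lower bounds, the symmetric-midpoint split is what produces the sharp $2^\alpha$ factor appearing in assertion~\ref{item-D-tube-dist-lower}.
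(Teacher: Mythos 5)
Your overall approach matches the paper's for all three assertions: for assertion~\ref{item-D-tube-dist-upper} you move along $\BB x$ from $z$ to the nearer of the two distinguished faces and then apply (in effect) Lemma~\ref{prop-convex-dist}; for assertions~\ref{item-D-tube-dist-ball} and~\ref{item-D-tube-dist-lower} you use $\nu_s\le\mu$ to get $\nu_s(\gamma(t))\le\min(1+t,\,\cdot\,)$ and then integrate, with the symmetric midpoint split for the third part. Assertions~\ref{item-D-tube-dist-upper} and~\ref{item-D-tube-dist-lower} are handled correctly and essentially as in the paper.

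However, your treatment of the complementary regime $T<q+\zeta-1$ in assertion~\ref{item-D-tube-dist-ball} does not go through. Set $g(x):=(1-x^{1-\alpha})/(\ol\kappa_s(\alpha-1))$; in this regime the bound you obtain from the path is $\op{len}^D(\gamma)\ge g(1+T)$, and you need $g(1+T)\ge g(q+\zeta)+\frac{(s-1)(q-1)-\zeta}{\ol\kappa_s(q+\zeta)^\alpha}$. But $g$ is strictly concave on $(0,\infty)$ (since $g''(x)=-\alpha x^{-\alpha-1}/\ol\kappa_s<0$), so its graph lies \emph{below} the tangent line at $q+\zeta$, which gives
\[
g(1+T)<g(q+\zeta)+\frac{(1+T)-(q+\zeta)}{\ol\kappa_s(q+\zeta)^\alpha}\le g(q+\zeta)+\frac{(s-1)(q-1)-\zeta}{\ol\kappa_s(q+\zeta)^\alpha}
\]
whenever $1+T<q+\zeta$ and $T\ge s(q-1)$. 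So the mean-value-theorem comparison you invoke actually points in the opposite direction, and $g(1+T)$ alone cannot beat the right-hand side of~\eqref{eqn-D-tube-dist-ball}. (Concretely with $\alpha=2$, $\ol\kappa_s=1$, $q=2$, $s=3/2$, $\zeta=1$, $T=s(q-1)=3/2$: $g(1+T)=3/5<11/18$, which is the RHS.) To be fair, the paper's own proof simply writes ``Integrating\dots'' and silently assumes $T\ge q+\zeta-1$, which is automatic precisely when $(s-1)(q-1)\ge\zeta$; this is the only regime in which the lemma is subsequently applied (the choice of small $\zeta$ in the proof of Lemma~\ref{prop-alpha>1-right-hit}). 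So the gap you have is shared with the paper, but your claim that the complementary case ``is handled by a mean-value-theorem comparison'' is specifically incorrect and should be replaced either by a restriction to $\zeta\le(s-1)(q-1)$ or by a different argument.
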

\begin{proof}
First consider the setting of assertion~\ref{item-D-tube-dist-upper}. Assume without loss of generality that $z$ is closer to $ P_{\BB x}$ than $P_{-\BB x}$ in the Euclidean distance and let $\wt c \geq 0$ be chosen so that $z + \wt c \BB x \in P_{\BB x}$. By the definition~\eqref{eqn-cylinder-def} of $\mcl Q_s$, we have $\wt c \leq 1$. Furthermore, for each $t \in [0, \wt c]$ we have $z+t\BB x\in \bdy \mcl Q_s$, so $f(z+t\BB x) \geq \ul\kappa_s$. By integrating along the path $t\mapsto z + t\BB x$ (which is parametrized by $\mu$-length) we obtain $D(z , z+\wt c \BB x) \leq \ul\kappa_s^{-1}$. By Lemma~\ref{prop-convex-dist},
\eqbn
D(z + \wt c \BB x , z + c\BB x) \leq \frac{1-q^{1-\alpha}}{\ol\kappa_s(\alpha-1)} 
\eqen
for $c = \wt c + q-1$. The estimate~\eqref{eqn-D-tube-dist-upper} follows.

Now suppose we are in the setting of assertion~\ref{item-D-tube-dist-ball}. Let $\gamma$ be a piecewise linear path from $z$ to $w$ which is contained in $B_{q+\zeta}^{\nu_s}(0)$ and write $T = \op{len}^\mu(\gamma)$. By assertion~\ref{item-tube-dist} of Lemma~\ref{prop-cylinder-basic}, $T\geq   s(q-1)$. Furthermore, for each $t \in [0,T]$ we have $\nu_s(\gamma(t)) \leq (1+t) \wedge (q+\zeta)$, so 
\eqbn
f(\gamma(t)) \leq 
\begin{cases}
\ol\kappa_s (1+t)^\alpha,\quad &t \in [0,q-1+\zeta] \\
\ol\kappa_s (q+\zeta)^\alpha ,\quad &t \in [q-1+\zeta, T] .
\end{cases}
\eqen
Integrating, we get that $\op{len}^D(\gamma)$ is at least the right side of~\eqref{eqn-D-tube-dist-ball}. 

Finally, we consider the setting of assertion~\ref{item-D-tube-dist-lower}. Let $\gamma : [0,T]\rta\BB R^d$ be a piecewise linear path from $z$ to $w$ parametrized by $\mu$-length. As above, $T \geq  s(q-1)$ and $\nu_s(\gamma(t)) \leq 1+t$ for each $t\in [0,T]$. We can no longer say that $\nu_s(\gamma(t)) \leq q$, since $\gamma$ may not stay in $q\mcl Q_s$. However, $\gamma(T) = q$ so $\nu_s(\gamma(t)) \leq q + T-t$ for each $t\in [0,T]$. Set $r = (q-1+T)/2$. Then 
\alb
\op{len}^D(\gamma) &\geq  \frac{1}{\ol\kappa_s} \int_0^r (1+t)^{-\alpha}  \, dt + \frac{1}{\ol\kappa_s} \int_r^T (q+T-t)^{-\alpha} \, dt \\
&=  \frac{1 + q^{1-\alpha} -   2^{\alpha } (  q+T+1   )^{1-\alpha}     }{\ol\kappa_s(\alpha-1)} \\
&\geq \frac{1 + q^{1-\alpha} -   2^{\alpha } ( q+1+s(q-1)       )^{1-\alpha}     }{\ol\kappa_s(\alpha-1)} . \qedhere
\ale 
\end{proof}

\subsection{Probabilistic estimates}
\label{sec-alpha>1-prob}

Throughout this subsection, we fix $s>1$ and an admissible function $f_s : \bdy\mcl Q_s \rta (0,\infty)$. We continue to use the notation of Section~\ref{sec-alpha>1-setup}. Let $\{A_t\}_{t\geq 0}$ be the $f$-weighted FPP process with $f$ as in~\eqref{eqn-f-convex-decomp} and let $\{\mcl F_t\}_{t\geq 0}$ be the associated filtration as in Definition~\ref{def-fpp}. In what follows we will combine the estimates of Sections~\ref{sec-growth-estimates} and~\ref{sec-alpha>1-D-estimates} to prove some lemmas about the asymptotic behavior of the clusters $\{A_t\}_{t\geq 0}$. These lemmas will be used to prove Theorem~\ref{thm-cone-contain} in the next subsection.  
 
For the results in this subsection, we introduce the following additional notation.
For $r > 0$, let 
\eqbn
\tau_r := \inf\left\{t\geq 0 \,:\, A_t\not\subset B_r^{\nu_s}(0) \right\}  .
\eqen
Also let $u_r$ be the (a.s.\ unique) vertex in $\mcl V(A_{\tau_r}) \setminus r \mcl Q_s$ and let  
\begin{align} \label{eqn-good-hit-event}
G_r :=  \left\{\frac{u_r}{\nu_s(u_r)} \in  \bdy\mcl Q_s \cap  \left(P_{\BB x} \cup P_{-\BB x} \right) \right\}  
\end{align} 
be the event that the exit position of the clusters $A_{\tau_r}$ from $r\mcl Q_s = B_r^{\nu_s}(0)$ is on one of the flat faces of $r\bdy \mcl Q_s$ (i.e.\ the green faces in Figure~\ref{fig-convex-dist}), modulo rounding error. 

We start out with some basic exit time estimates which are similar to estimates from Section~\ref{sec-small-alpha}. 

\begin{lem} \label{prop-alpha>1-time}
Fix $R \geq 2$ and $ \beta \in \left(0 , 1/3\right)$. Suppose $n\in\BB N$ and $\wt n\in \left[(1+R^{-1})n , R n\right]_{\BB Z}$. Then
\eqb \label{eqn-alpha>1-time-lower}
\BB P\left( \tau_{\wt n} - \tau_n  \geq \frac{n^{1-\alpha} - \wt n^{1-\alpha} - n^{1-\alpha- \beta} }{\ol\kappa_s (\alpha-1) } \,|\, \mcl F_{\tau_n} \right) = 1-o_n^\infty(n)  ,
\eqe   
\eqb \label{eqn-alpha>1-time-upper}
\BB P\left( \tau_{\wt n} - \tau_n  \leq \frac{n^{1-\alpha} - \wt n^{1-\alpha} + n^{1-\alpha- \beta} }{\ol\kappa_s (\alpha-1)} \,|\, \mcl F_{\tau_n} \right) \BB 1_{G_n} = (1-o_n^\infty(n)) \BB 1_{G_n} ,
\eqe
and
\eqb \label{eqn-alpha>1-time-no-G}
\BB P\left( \tau_{\wt n} - \tau_n  \leq  \frac{n^{1-\alpha} - \wt n^{1-\alpha} + n^{1-\alpha- \beta} }{\ol\kappa_s (\alpha-1)} + \frac{n^{1-\alpha}}{\ul\kappa_s}    \,|\, \mcl F_{\tau_n} \right)   =  1-o_n^\infty(n)  ,
\eqe
all at a deterministic rate rate independent of the particular choice of $\wt n$.
\end{lem}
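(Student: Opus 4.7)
The plan is to mimic the proof of Lemma~\ref{prop-alpha-near-1-time}. First I would apply Lemma~\ref{prop-hitting-time-law} to reduce to the worst case: letting $U_n$ be the set of $u\in \BB Z^d \setminus n\mcl Q_s$ incident to $n\mcl Q_s \cap \BB Z^d$ and $\frk A_u$ the subgraph of $\BB Z^d$ with vertex set $(n\mcl Q_s \cap \BB Z^d)\cup\{u\}$, the conditional law of $\tau_{\wt n}-\tau_n$ given $\mcl F_{\tau_n}$ is stochastically dominated by its conditional law given $\{A_{\tau_n}=\frk A_u\}$ for some $u\in U_n$. Let $v_{\wt n}$ denote the a.s.\ unique vertex of $\mcl V(A_{\tau_{\wt n}})\setminus \wt n \mcl Q_s$, so that $\tau_{\wt n}-\tau_n = T(0,v_{\wt n})-\tau_n$. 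By Lemma~\ref{prop-convex-dist} together with Lemma~\ref{prop-metric-scale} (and the rounding bound $\nu_s(u)=n+O(1)$, $\nu_s(v_{\wt n})=\wt n+O(1)$),
\[
D\bigl(v_{\wt n}, \frk A_u\bigr) \geq \frac{n^{1-\alpha}-\wt n^{1-\alpha}}{\ol\kappa_s(\alpha-1)} + O_n(n^{-\alpha}),
\]
uniformly in $u\in U_n$. Now apply Lemma~\ref{prop-hull-expand-metric} with $m=n$ and $\xi>1$ taken slightly above $1$: the FPP geodesic from $0$ to $v_{\wt n}$ leaves $\frk A_u$ at a vertex on $\bdy(n\mcl Q_s)$, so the portion of it outside $A_\tau$ lies in $B_{m^\xi}^{|\cdot|}(0)\setminus B_{m^{1/\xi}}^{|\cdot|}(0)$. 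Choosing $\beta$ slightly larger than the target exponent yields~\eqref{eqn-alpha>1-time-lower}, since the additive error term $n^{1/\xi-\beta}/|v_{\wt n}|^\alpha$ is of smaller order than $n^{1-\alpha-\beta}$.

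\textbf{Upper bound on $G_n$.} On $G_n$, the exit vertex $u_n$ satisfies $u_n/\nu_s(u_n) \in \bdy\mcl Q_s\cap (P_{\BB x}\cup P_{-\BB x})$, so by Lemma~\ref{prop-cylinder-basic} and the symmetry between $\pm \BB x$, the linear segment from $u_n$ to $u_n \pm (\wt n - n)\BB x$ is parametrized by $\mu$-length and lies along a ray in which the ratio between $\nu_s$-length and the parameter is exactly $1$. Invoking Lemma~\ref{prop-convex-dist} with the rescaling of Lemma~\ref{prop-metric-scale}, this segment has $D$-length $(n^{1-\alpha}-\wt n^{1-\alpha})/(\ol\kappa_s(\alpha-1)) + O_n(n^{-\alpha})$. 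Concatenating with a short piecewise linear path of $D$-length $O_n(n^{-\alpha})$ landing at the nearest lattice vertex $v$ just outside $\wt n \mcl Q_s$, we obtain a candidate path witnessing
\[
\wt D_m(v, A_{\tau_n}) \leq \frac{n^{1-\alpha}-\wt n^{1-\alpha}}{\ol\kappa_s(\alpha-1)} + O_n(n^{-\alpha})
\]
in the notation of Lemma~\ref{prop-hull-time-metric}, with $m=n$ and $\xi$ slightly above $1$. Lemma~\ref{prop-hull-time-metric} then gives an upper bound on $T(0,v)-\tau_n$, and since $v\notin \wt n \mcl Q_s$ we have $\tau_{\wt n}\leq T(0,v)$, yielding~\eqref{eqn-alpha>1-time-upper} after slightly adjusting $\beta$.

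\textbf{Upper bound without $G_n$.} The same argument goes through, except that we no longer know $u_n$ lies on a flat face. Instead I would use Lemma~\ref{prop-D-tube-dist} assertion~\ref{item-D-tube-dist-upper} (applied to $z=u_n/\nu_s(u_n)\in\bdy\mcl Q_s$ and $q=\wt n/n$): this produces a translate $z+c\BB x\in q(P_{\BB x}\cup P_{-\BB x})$ with $D(z,z+c\BB x)\leq (1-q^{1-\alpha})/(\ol\kappa_s(\alpha-1)) + 1/\ul\kappa_s$. Scaling by $n$ via Lemma~\ref{prop-metric-scale} contributes the extra term $n^{1-\alpha}/\ul\kappa_s$ appearing in~\eqref{eqn-alpha>1-time-no-G}; the rest of the argument (choose a lattice vertex just outside $\wt n \mcl Q_s$, apply Lemma~\ref{prop-hull-time-metric}) is identical.

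\textbf{Main obstacle.} The real work is the bookkeeping for the rounding errors: $u_n$ sits on a lattice point at $\nu_s$-distance $n+O(1)$ from $0$, not exactly on $n\bdy\mcl Q_s$, and the endpoint $u_n\pm(\wt n-n)\BB x$ need not be a lattice point. One has to verify that each such discrepancy, after being multiplied by the local value of $f^{-1}\asymp n^{-\alpha}$, produces an error of smaller order than $n^{1-\alpha-\beta}$ so it can be absorbed. A secondary check is that the paths and FPP geodesics involved stay in the annulus $B_{n^\xi}^{|\cdot|}(0)\setminus B_{n^{1/\xi}}^{|\cdot|}(0)$ required by Lemmas~\ref{prop-hull-time-metric} and~\ref{prop-hull-expand-metric}; for the lower bound this is ensured by the stochastic-domination reduction to $\frk A_u$ (which forces the relevant piece of the geodesic to lie outside $n\mcl Q_s$), and for the upper bounds it is automatic since our explicit piecewise linear paths live in a bounded multiple of $n\mcl Q_s$ once $n$ is large and $\xi$ is close to $1$.
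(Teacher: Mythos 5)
Your proposal is correct and follows essentially the same route as the paper: stochastic-domination reduction to the worst-case configuration $\frk A_u$ followed by Lemma~\ref{prop-hull-expand-metric} and the lower bound of Lemma~\ref{prop-convex-dist} for~\eqref{eqn-alpha>1-time-lower}; an explicit piecewise-linear path traced along $\pm\BB x$ from $u_n$ (valid on $G_n$ by Lemma~\ref{prop-convex-dist}) fed into Lemma~\ref{prop-hull-time-metric} for~\eqref{eqn-alpha>1-time-upper}; and the same scheme with assertion~\ref{item-D-tube-dist-upper} of Lemma~\ref{prop-D-tube-dist} replacing Lemma~\ref{prop-convex-dist} for~\eqref{eqn-alpha>1-time-no-G}. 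You have also correctly written the endpoint of the linear segment as $u_n \pm (\wt n - n)\BB x$, whereas the paper's displayed segment $[u_n , u_n + (n - \wt n)\BB x]$ appears to have a sign typo.
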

\begin{proof} 
The estimate~\eqref{eqn-alpha>1-time-lower} is deduced from Lemma~\ref{prop-hull-expand-metric} in a similar manner to the analogous estimates in Lemmas~\ref{prop-alpha<1-time} and~\ref{prop-alpha-near-1-time}.

To obtain~\eqref{eqn-alpha>1-time-upper}, suppose $G_n$ occurs. The proof of Lemma~\ref{prop-convex-dist} shows that we can find a vertex $\wt u_n \in \BB Z^d\setminus \wt n Q$ and a piecewise linear path $\gamma$ from $u_n$ to $\wt u_n$ contained in $B_{2\wt n}^{\nu_s}(0)\setminus B_{n/2}^{\nu_s}(0)$ which satisfies
\eqb \label{eqn-alpha>1-time-dist}
\op{len}^D(\gamma) \leq   \frac{n^{1-\alpha} - \wt n^{1-\alpha}   }{\kappa_s (\alpha-1)} +    O_n(n^{-\alpha}) .
\eqe 
Indeed, we can take $\gamma$ to be a small perturbation of a path which traces the line segment $[u_n , u_n + (n-\wt n) \BB x]$. 
Therefore, the estimate~\eqref{eqn-alpha>1-time-upper} follows from Lemma~\ref{prop-hull-time-metric}. 
The estimate~\eqref{eqn-alpha>1-time-no-G} is proven in the same manner as~\eqref{eqn-alpha>1-time-upper} but with assertion~\ref{item-D-tube-dist-upper} of Lemma~\ref{prop-D-tube-dist} used in place of Lemma~\ref{prop-convex-dist}. 
\end{proof}

Our next lemma tells us that it is very unlikely that vertices are absorbed by the FPP clusters sooner than we would expect after time $\tau_n$.

\begin{lem} \label{prop-alpha>1-expand}
Fix $R \geq 2$, $ \beta \in \left(0 , 1/3\right)$, and $\zeta>0$. 
Also let $n\in\BB N$, $\wt n \in [(1+R^{-1})n , R n]_{\BB Z}$, and let $v_* \in  \BB Z^d\cap \left(B_{\wt n }^{\nu_s}(0) \setminus B_{\zeta n}^{\nu_s}(0) \right)$ be chosen in a $\mcl F_{\tau_n}$-measurable manner. Then with $D_{(1+\zeta) n}$ as in~\eqref{eqn-truncated-D},  
\eqbn
\BB P\left(  T(0,v_*) - \tau_n \geq  \left(1 - n^{-\beta}\right) D_{(1+\zeta)\wt n}\left(v , \mcl V( A_{\tau_n} ) \cup B_{\zeta n}^{\nu_s}(0)   \right)  - n^{1- \beta-\alpha} \,|\, \mcl F_{\tau_n} \right) =1-o_n^\infty(n)  
\eqen
at a deterministic rate rate depending only on $R$, $\beta$, and $\zeta$. 
\end{lem}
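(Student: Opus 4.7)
The plan is to apply Remark~\ref{remark-truncated-expand} with an open set $\mcl U$ excluding a neighborhood of $B_{\zeta n}^{\nu_s}(0)$, then handle by case analysis the possibility that the FPP geodesic $\eta_{v_*}$ from $0$ to $v_*$ passes through the inner ball. First I would fix $\xi > 1$ small enough that $\beta < 1/(3\xi)$ (possible since $\beta < 1/3$) and set $\mcl U := \op{int} B_{(1+\zeta)\wt n}^{\nu_s}(0) \setminus \ol B_{\zeta n/2}^{\nu_s}(0)$, so that for $n$ large the Euclidean $n^{1/\xi}$-fattening $\mcl U_n$ is contained in the annulus $B_{(1+\zeta)\wt n + O(n^{1/\xi})}^{\nu_s}(0) \setminus B_{\zeta n/3}^{\nu_s}(0)$. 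Remark~\ref{remark-truncated-expand} applied with $\tau = \tau_n$, $m = n$, and this $\mcl U$ yields, except on an event of conditional probability $o_n^\infty(n)$ given $\mcl F_{\tau_n}$, the lower bound
\eqbn
T(0,v) - \tau_n \geq (1-n^{-\beta})\wt D_{\mcl U_n}(u_v, v) - n^{1/\xi-\beta}/|v|^\alpha,\qquad \forall v\in \wh V_{\tau_n}^n(\mcl U).
\eqen
Since $|v_*| \succeq n$ by hypothesis and $1/\xi < 1$, the error term is bounded by a constant times $n^{1-\beta-\alpha}$.

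I would then perform a case analysis using the random vertex $w_*$ defined as the last vertex of $\eta_{v_*}$ in $\mcl V(A_{\tau_n}) \cup (B_{\zeta n}^{\nu_s}(0) \cap \BB Z^d)$. If $w_* \in \mcl V(A_{\tau_n})$, then no later vertex on $\eta_{v_*}$ lies in $\mcl V(A_{\tau_n})$ (otherwise it would lie in the union and contradict the choice of $w_*$), so $w_* = u_{v_*}$ and the sub-geodesic from $u_{v_*}$ to $v_*$ is contained in $\mcl U$; in particular $v_* \in \wh V_{\tau_n}^n(\mcl U)$. Any $\mcl U_n$-path from $u_{v_*} \in \mcl V(A_{\tau_n})$ to $v_*$ is a competitor for $D_{(1+\zeta)\wt n + O(n^{1/\xi})}(v_*, \mcl V(A_{\tau_n}) \cup B_{\zeta n}^{\nu_s}(0))$, and by the Lipschitz continuity and polynomial growth of $f$ the difference between $D_{(1+\zeta)\wt n + O(n^{1/\xi})}$ and $D_{(1+\zeta)\wt n}$ here is at most $O(n^{1/\xi - \alpha})$, again absorbed into $n^{1-\beta-\alpha}$. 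If instead $w_* \in B_{\zeta n}^{\nu_s}(0) \setminus \mcl V(A_{\tau_n})$, then $T(0, w_*) > \tau_n$ (since $w_*$ joins the cluster strictly after time $\tau_n$), so $T(0, v_*) - \tau_n \geq T(w_*, v_*)$, and the sub-geodesic from $w_*$ to $v_*$ again lies in $\mcl U$.

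In this second case I would bound $T(z, v_*)$ from below for each fixed $z \in B_{\zeta n}^{\nu_s}(0) \cap \BB Z^d$ by applying a direct analog of Remark~\ref{remark-truncated-expand} to FPP started from $z$ instead of $0$; this adaptation is routine since the proofs of Lemma~\ref{prop-hull-expand-metric} and the Remark depend only on the strong Markov property (Lemma~\ref{prop-fpp-law}) and the concentration estimate Lemma~\ref{prop-hull-increment-expand}, neither of which is tied to the root vertex. A union bound over the $O(n^d)$ choices of $z$ preserves the $o_n^\infty(n)$ error, and specializing to $z = w_*$ yields the desired lower bound on $T(w_*, v_*)$ in terms of $D_{(1+\zeta)\wt n}(v_*, B_{\zeta n}^{\nu_s}(0)) \geq D_{(1+\zeta)\wt n}(v_*, \mcl V(A_{\tau_n}) \cup B_{\zeta n}^{\nu_s}(0))$. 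The possibility that $\eta_{v_*}$ exits $B_{(1+\zeta)\wt n}^{\nu_s}(0)$ is ruled out on an event of probability $1 - o_n^\infty(n)$ using~\eqref{eqn-hull-upper'} together with Lemma~\ref{prop-normalized-compare}. The main obstacle is this adaptation of Remark~\ref{remark-truncated-expand} to an arbitrary starting vertex together with the accompanying union bound; the remaining geometric estimates comparing $D_{(1+\zeta)\wt n + O(n^{1/\xi})}$ to $D_{(1+\zeta)\wt n}$ are bookkeeping.
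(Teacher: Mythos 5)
Your proposal is correct in spirit but takes a genuinely different route from the paper. The paper's proof is shorter and slicker: it invokes the monotonicity statement of Lemma~\ref{prop-hitting-time-law}\,(2) to stochastically dominate $T(0,v_*)-\tau_n$ under $\{A_{\tau_n}=\frk A\}$ by its conditional law under $\{A_{\tau_n}=\frk A'\}$, where $\frk A'$ is obtained from $\frk A$ by adjoining every vertex of $B_{\zeta n}^{\nu_s}(0)\cap\BB Z^d$ and the induced edges. (Note that $\mcl V(\frk A')\cup B_{\zeta n}^{\nu_s}(0)=\mcl V(\frk A)\cup B_{\zeta n}^{\nu_s}(0)$, so the target quantity $D_{(1+\zeta)\wt n}(v_*,\mcl V(A_{\tau_n})\cup B_{\zeta n}^{\nu_s}(0))$ is unchanged.) Under $\{A_{\tau_n}=\frk A'\}$, the whole inner ball is already inside the cluster, so $\eta_{v_*}\setminus\mcl E(A_{\tau_n})$ automatically avoids $B_{\zeta n}^{\nu_s}(0)$ and Remark~\ref{remark-truncated-expand} applies in one stroke, with no case analysis and no union bound over starting points. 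Your argument instead handles the possibility that $\eta_{v_*}$ enters the inner ball by locating $w_*$, peeling off $T(w_*,v_*)$, and applying a root-shifted Remark~\ref{remark-truncated-expand} together with a union bound over $O(n^d)$ candidate points. This works, but it requires (i) re-deriving Lemma~\ref{prop-hull-increment-expand} and Lemma~\ref{prop-hull-expand-metric} for FPP started from an arbitrary vertex $z$ while keeping the Euclidean annulus conditions centered at $0$ (your claim that this is ``routine'' is essentially right but is a real rewrite, not a citation), and (ii) the extra bookkeeping comparing $D_{\mcl U_n}$, $D_{(1+\zeta)\wt n+O(n^{1/\xi})}$, and $D_{(1+\zeta)\wt n}$. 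The paper's domination argument buys you exactly the avoidance of both (i) and (ii). Your comment that $\eta_{v_*}$ may exit $B_{(1+\zeta)\wt n}^{\nu_s}(0)$ and that this must be ruled out on a $1-o_n^\infty(n)$ event is a real point that the paper leaves implicit, so you are not missing anything there.
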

\begin{proof}
Let $v_* \in  \BB Z^d\cap \left(B_{\wt n }^{\nu_s}(\cdot) \setminus B_{\zeta n}^{\nu_s}(0) \right)$ be chosen in a $\mcl F_{\tau_n}$-measurable manner. 
Let $\frk A$ be a possible realization of $A_{\tau_n}$ and let $\frk A'$ be the realization of $A_{\tau_n}$ obtained by adjoining to $\frk A$ each vertex of $\BB Z^d$ which is contained in $B_{\zeta n}^{\nu_s}(0)$ and each edge of $\BB Z^d$ which connects two such vertices.  
By Lemma~\ref{prop-hitting-time-law}, the conditional law of $T(0,v_*) -\tau_n$ given $\{A_{\tau_n} = \frk A\}$ stochastically dominates the conditional law of $T(0,v_*) - \tau_n$ given $\{A_{\tau_n} = \frk A'\}$. Hence to prove~\eqref{eqn-alpha-near-1-time}, it suffices to show that
\eqb \label{eqn-alpha>1-time}
\BB P\left( T(0,v_*) - \tau_n \geq  \left(1 - n^{-\beta}\right) D_{(1+\zeta)\wt n}\left(v , \mcl V( A_{\tau_n} ) \cup B_{\zeta n}^{\nu_s}(0)   \right)  - n^{1-\beta-\alpha}  \,|\, A_{\tau_n} = \frk A' \right) = 1- o_n^\infty(n) ,
\eqe 
uniformly over all possible realizations $\frk A'$.
To see this, suppose $A_{\tau_n} = \frk A'$. Then the FPP geodesic $\eta_{v_*}$ from 0 to $v_*$ satisfies $\eta_{v_*} \setminus \mcl E(\frk A') \subset     B_{2 \wt n}^{\nu_s}(\cdot) \setminus B_{\zeta  n}^{\nu_s}(0) $. The estimate~\eqref{eqn-alpha>1-time} therefore follows from Lemma~\ref{prop-hull-expand-metric} (c.f.\ Remark~\ref{remark-truncated-expand}).
\end{proof}

The following lemma is the key input in the proof of Theorem~\ref{thm-alpha>1}, and will eventually be used to show that the event~\eqref{eqn-as} of Theorem~\ref{thm-cone-contain} is very likely to occur provided $G_{n_0}$ occurs for some large $n_0 \in \BB N$. 

\begin{lem} \label{prop-alpha>1-hit}
Fix $R \geq 3$ and $  \beta \in \left(0 , 1/3\right)$. For $r\geq 0$, define the event $G_r$ as in~\eqref{eqn-good-hit-event}. For each $n_0 \in\BB N$, on the event $G_{n_0}$ it holds except on an event of conditional probability $o_{n_0}^\infty(n_0)$ (at a deterministic rate) given $\mcl F_{\tau_{n_0}}$ that the following is true.
\begin{enumerate}
\item The event $\bigcap_{  n=\lfloor (1+R^{-1} ) n_0 \rfloor}^{\infty} G_{ n} $ occurs. \label{item-alpha>1-hit-G}
\item For each $n \in \BB N$ with $ n \geq n_0$, 
\eqbn
\tau_\infty - \tau_{n} \leq \frac{n^{1-\alpha}}{\ol\kappa_s (\alpha -1)} + n^{1-\alpha- \beta} .
\eqen
 \label{item-alpha>1-time-to-infty}
\item For each $  n \in \BB N$ with $n \geq n_0$ and each $v\in B_{Rn}^{\nu_s}(0) \setminus B_{ n}^{\nu_s}(0) $ with  
\eqbn
D\left( v , A_{\tau_{  n}} \cup B_{ n}^{\nu_s}(0) \right) \geq \frac{ n^{1-\alpha}}{\ol\kappa_s (\alpha-1)} + 2 n^{1-\alpha- \beta}  ,
\eqen
we have $v\notin A_{\tau_\infty}$. \label{item-alpha>1-hit-no}
\end{enumerate}
\end{lem}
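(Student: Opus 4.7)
The plan is to bootstrap from $G_{n_0}$ to every larger scale via induction along a geometric sequence $n_k := \lfloor (1 + (2R)^{-1})^k n_0 \rfloor$ for $k \geq 0$, exploiting the precise $D$-distance estimates in Section~\ref{sec-alpha>1-D-estimates} together with the passage-time controls in Lemmas~\ref{prop-alpha>1-time} and~\ref{prop-alpha>1-expand}. At each step, assuming $G_{n_k}$ together with the cumulative time bound $\tau_{n_k} - \tau_{n_0} \leq \frac{n_0^{1-\alpha} - n_k^{1-\alpha}}{\ol\kappa_s (\alpha-1)} + O(n_0^{1-\alpha-\beta})$, I will prove that with conditional probability $1 - o_{n_k}^\infty(n_k)$ given $\mcl F_{\tau_{n_k}}$ the event $G_n$ holds \emph{and} the no-hit assertion~\ref{item-alpha>1-hit-no} holds for every integer $n \in [n_k, n_{k+1}]$. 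Since $\sum_k o_{n_k}^\infty(n_k)$ is a geometric-type summable tail, intersecting all these events gives the three assertions with total failure probability $o_{n_0}^\infty(n_0)$.

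The heart of the induction is the implication $G_{n_k} \Rightarrow G_n$ for every integer $n \in [n_k, n_{k+1}]$. From~\eqref{eqn-alpha>1-time-upper} applied with $\wt n = n$ and a union bound over the polynomially many integers $n$, we obtain, except on an event of conditional probability $o_{n_k}^\infty(n_k)$,
\[
\tau_n - \tau_{n_k} \leq \frac{n_k^{1-\alpha} - n^{1-\alpha}}{\ol\kappa_s (\alpha-1)} + n_k^{1-\alpha-\beta} \qquad \forall n \in [n_k, n_{k+1}]_{\BB Z}.
\]
On the other hand, since $A_{\tau_{n_k}} \subset n_k \mcl Q_s$, any piecewise linear path from $u_n \in n \bdy \mcl Q_s$ into $A_{\tau_{n_k}}$ must cross $n_k \bdy \mcl Q_s$, whence $D(u_n, A_{\tau_{n_k}}) \geq D(u_n, n_k \bdy \mcl Q_s)$. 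If $u_n / n$ were at Euclidean distance at least $\varepsilon$ from the flat faces $\bdy \mcl Q_s \cap (P_{\BB x} \cup P_{-\BB x})$, then Lemma~\ref{prop-convex-dist-pos} applied to the rescaled point $u_n / n_k \in (n/n_k) \bdy \mcl Q_s$, combined with the scaling identity $D(n_k z, n_k w) = n_k^{1-\alpha} D(z,w)$ from Lemma~\ref{prop-metric-scale}, would yield
\[
D(u_n, A_{\tau_{n_k}}) \geq \frac{n_k^{1-\alpha} - n^{1-\alpha}}{\ol\kappa_s (\alpha-1)} + \delta\, n_k^{1-\alpha}
\]
for a constant $\delta = \delta(\varepsilon) > 0$. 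Combining this with the lower bound $T(0, u_n) - \tau_{n_k} \geq (1 - n_k^{-\beta}) D(u_n, A_{\tau_{n_k}}) - n_k^{1-\beta - \alpha}$ from Lemma~\ref{prop-alpha>1-expand} (and again a union bound over $n$) contradicts the upper display for all sufficiently large $n_k$, since $n_k^{1-\alpha} \gg n_k^{1-\alpha-\beta}$. Hence $u_n / n$ must lie within Euclidean distance $\varepsilon$ of the flat faces, i.e., $G_n$ holds; taking $\varepsilon \to 0$ along a schedule adapted to $k$ closes the induction on assertion~\ref{item-alpha>1-hit-G}.

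Assertion~\ref{item-alpha>1-time-to-infty} then follows by summing the upper bound on $\tau_{n_{k+1}} - \tau_{n_k}$ over all $k \geq k_n := \min\{k : n_k \geq n\}$; the telescoping main terms give $n^{1-\alpha}/(\ol\kappa_s(\alpha-1))$ and the $n_k^{1-\alpha-\beta}$ errors form a geometric series of the same order as $n^{1-\alpha-\beta}$ (absorbed after a slight decrease of $\beta$). Assertion~\ref{item-alpha>1-hit-no} is then immediate from Lemma~\ref{prop-alpha>1-expand}: any vertex $v \in B_{Rn}^{\nu_s}(0) \setminus B_n^{\nu_s}(0)$ with $D(v, A_{\tau_n} \cup B_n^{\nu_s}(0)) \geq n^{1-\alpha}/(\ol\kappa_s(\alpha-1)) + 2 n^{1-\alpha-\beta}$ satisfies $T(0, v) - \tau_n$ strictly exceeding the bound on $\tau_\infty - \tau_n$ from assertion~\ref{item-alpha>1-time-to-infty}, so $v \notin \mcl V(A_{\tau_\infty})$; a union bound over the polynomially many such $v$ suffices. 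The main obstacle I anticipate is keeping the geometric gap $\delta n_k^{1-\alpha}$ supplied by Lemma~\ref{prop-convex-dist-pos} large enough to beat the error terms $n_k^{1-\alpha-\beta}$ uniformly across the (roughly logarithmically many) scales involved, which forces a careful coordination of how $\varepsilon$ shrinks with $k$.
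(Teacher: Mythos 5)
Your overall plan mirrors the paper's proof closely — iterate $G_{n}$ outward across geometric scales using the time bounds of Lemma~\ref{prop-alpha>1-time}, the passage-time lower bound of Lemma~\ref{prop-alpha>1-expand}, and the geometric gap from Lemma~\ref{prop-convex-dist-pos}, then telescope to get assertions~\ref{item-alpha>1-time-to-infty} and~\ref{item-alpha>1-hit-no}. But the step you flag at the end as "the main obstacle" is where the argument actually breaks down, and the fix is to realize there is no obstacle at all: the ``$\varepsilon\to 0$ along a schedule adapted to $k$'' step is both unnecessary and incorrect.

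The point you are missing is that the event $G_n$ is an \emph{exact} dichotomy, not an approximate one: the exit vertex $u_n$ is a lattice point, and either $u_n/\nu_s(u_n)$ lies in $\bdy\mcl Q_s\cap(P_{\BB x}\cup P_{-\BB x})$ exactly, or it lies on the cylindrical side $\bdy\mcl Q_s\setminus(P_{\BB x}\cup P_{-\BB x})$ exactly. If it lies on the side, then after rescaling by $1/n_k$ the point $u_n/n_k$ lies on the side of $q\bdy\mcl Q_s$ for $q=\nu_s(u_n)/n_k$. By assertion~\ref{item-tube-dist} of Lemma~\ref{prop-cylinder-basic}, the side of $q\bdy\mcl Q_s$ is at Euclidean distance $\geq\ol\rho s(q-1)$ from $\mcl Q_s$, and in particular from $K_q$; with $q$ bounded in a fixed interval $[q_1,q_2]$ with $q_1>1$, this gives a \emph{uniform} $\ep>0$ to feed into Lemma~\ref{prop-convex-dist-pos}, hence a uniform $\delta>0$. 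So $\delta n_k^{1-\alpha}$ dominates $n_k^{1-\alpha-\beta}$ without any shrinking of $\varepsilon$. Your reading of Lemma~\ref{prop-convex-dist-pos} as measuring distance ``from the flat faces'' instead of from $K_q$ is what creates the spurious tension: $K_q$ is strictly interior to the flat face of $q\bdy\mcl Q_s$, and the side is uniformly far from it.

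Closely related is a second issue: your induction interval $[n_k,n_{k+1}]$ with ratio $\approx 1+(2R)^{-1}$ does not fit the hypotheses of Lemma~\ref{prop-alpha>1-time}, which requires $\wt n/n\in[1+R^{-1},R]$, and it also destroys the uniform lower bound $q\geq q_1>1$ just discussed (for $n$ near $n_k$, $q$ can be arbitrarily close to $1$). The paper avoids both problems in one stroke by proving the single-step implication $G_{n_0}\Rightarrow\bigcap_{n\in[(1+R^{-1})n_0,Rn_0]_{\BB Z}}G_n$ over the \emph{full} window $[(1+R^{-1})n_0,Rn_0]$, which matches the domain of Lemma~\ref{prop-alpha>1-time} and keeps $q\geq 1+R^{-1}$; it then uses $R\geq 3$ to observe that $[(1+R^{-1})n_0,\infty)\subset\bigcup_k[(1+R^{-1})^kn_0,R^kn_0]_{\BB Z}$, so iterating covers every scale. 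Adopting that window structure (and dropping the $\varepsilon$-schedule) makes your argument go through; as written, the core geometric step does not close.
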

\begin{proof}
For $n_0 \in\BB N$ and $n \in \left[(1+R^{-1} ) n_0 , R n_0\right]_{\BB Z}$, write $V_{n_0,  n}$ for the set of $v\in B_{n+\ol\rho d}^{\nu_s}(0) \setminus B_{(1+R^{-1})n_0}^{\nu_s}(0)$ with $v/\nu_s(v) \notin  P_{\BB x} \cup P_{-\BB x} $.
By Lemma~\ref{prop-convex-dist-pos} and scale invariance, we can find $\delta  = \delta(f ,  R ) >0$ such that for each sufficiently large $n_0\in\BB N$, each $n \in \left[(1+R^{-1})n_0 , R n_0\right]_{\BB Z}$, and each $v\in V_{n_0 ,n}$,  
\eqbn
D\left(v , B_{n_0}^{\nu_s}(0)\right) \geq \frac{n_0^{1-\alpha} -n^{1-\alpha}}{\ol\kappa_s (\alpha-1) }  + \delta n_0^{1-\alpha} .
\eqen 
By Lemma~\ref{prop-alpha>1-expand}, it holds except on an event of conditional probability $o_{n_0}^\infty(n_0)$ given $\mcl F_{\tau_{n_0}}$ that
\eqb \label{eqn-bad-hit-pt-dist}
 T(0,v) - \tau_{n_0} \geq \frac{n_0^{1-\alpha} -  n^{1-\alpha}}{\ol\kappa_s (\alpha-1) }  + \delta n_0^{1-\alpha} -  n_0^{1-\alpha-\beta}  ,\quad \forall v\in V_{n_0, n} .
\eqe 
By~\eqref{eqn-alpha>1-time-upper} of Lemma~\ref{prop-alpha>1-time}, if $G_{n_0} $ occurs then except on an event of conditional probability $o_{n_0}^\infty(n_0)$ given $\mcl F_{\tau_{n_0}}$,
\eqbn
\tau_{ n} - \tau_{n_0} \leq  \frac{n_0^{1-\alpha} - n^{1-\alpha}}{\ol\kappa_s (\alpha-1) } + n_0^{1-\alpha- \beta}  ,
\eqen
which is smaller than the right side of~\eqref{eqn-bad-hit-pt-dist} for large enough $n_0$. Hence if $G_{n_0} $ occurs, then except on an event of conditional probability $o_{n_0}^\infty(n_0)$ given $\mcl F_{\tau_{n_0}}$, the unique element $u_{n} \in \mcl V(A_{\tau_{  n}}) \setminus  B_n^{\nu_s}(0)$ does not belong to $V_{n_0,  n}$, i.e.\ $G_{  n} $ occurs. 

By the union bound, we obtain
\eqb \label{eqn-alpha>1-hit-G-finite}
\BB P\left( \bigcap_{n =  \lceil (1+R^{-1}) n_0 \rceil }^{\lfloor R n_0\rfloor} G_{n_0}  \,|\, \mcl F_{\tau_{n_0}} \right) \BB 1_{G_{n_0} } = (1-o_{n_0}^\infty(n_0)) \BB 1_{G_{n_0} } .
\eqe 
Since $R \geq 3$, 
\eqbn
[(1+R^{-1}) n_0 , \infty) \subset \bigcup_{k=1}^\infty \left[(1+R^{-1})^k n_0 , R^k n_0 \right]_{\BB Z} .
\eqen
By~\eqref{eqn-alpha>1-hit-G-finite} and induction, we infer that condition~\ref{item-alpha>1-hit-G} in the statement of the lemma holds with conditional probability $1-o_{n_0}^\infty(n_0)$ given $\mcl F_{\tau_{n_0}}$ on $G_{n_0} $. 

The desired estimate for the probability of the event of condition~\ref{item-alpha>1-time-to-infty} follows from condition~\ref{item-alpha>1-hit-G} together with~\eqref{eqn-alpha>1-time-upper} of Lemma~\ref{prop-alpha>1-time} (the later is applied with $R^k  n $ for $k\in\BB N$ in place of $n$, and then summed over all $k\in\BB N$). We slightly shrink $\beta$ if necessary to allow us to drop an $R,\alpha,\beta$-dependent constant in front of $n^{1-\alpha-\beta}$. 

To estimate the probability of the event of condition~\ref{item-alpha>1-hit-no}, we first apply Lemma~\ref{prop-alpha>1-expand} and the union bound to find that except on an event of conditional probability $o_{n_0}^\infty(n_0)$ given $\mcl F_{\tau_{n_0}}$,  
\eqbn
 T(0,v) - \tau_n \geq    \frac{ n^{1-\alpha}}{\ol\kappa_s(\alpha-1)} + (2 - o_{  n}(1))  n^{1-\alpha- \beta}
\eqen
for each $ n \geq n_0$ and each vertex $v$ as in condition~\ref{item-alpha>1-hit-no} (here we use that $D\leq D_{(1+\zeta)\wt n}$). By combining this with the condition~\ref{item-alpha>1-time-to-infty}, we find that on $G_{n_0}$, the conditional probability given $\mcl F_{\tau_{n_0}}$ of the event in condition~\ref{item-alpha>1-hit-no} is at least $1-o_{n_0}^\infty(n_0)$. 
\end{proof}

In order to deduce Theorem~\ref{thm-cone-contain} from Lemma~\ref{prop-alpha>1-hit}, we need to know that $\BB P(G_{n_0})$ is large for large $n_0$ provided~\eqref{eqn-as-cond} holds. This is the purpose of the next lemma, which plays a role similar to that of Lemma~\ref{prop-alpha<1-small-ball} in the proof of Theorem~\ref{thm-alpha<1}.

\begin{lem} \label{prop-alpha>1-right-hit}
Suppose our parameters are such that~\eqref{eqn-as-cond} from Theorem~\ref{thm-cone-contain} holds.
Then for $n\in\BB N$,
\eqbn
\BB P\left(G_n \right) = 1-o_n^\infty(n) .
\eqen
\end{lem}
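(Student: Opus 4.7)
The plan is to show that with probability $1-o_n^\infty(n)$, the FPP cluster exits $B_n^{\nu_s}(0) = n\mcl Q_s$ through one of the two distinguished flat faces $n(\bdy\mcl Q_s\cap(P_{\BB x}\cup P_{-\BB x}))$ rather than through the cylindrical side. Equivalently, I will compare the first hitting time of a vertex on the flat face (an upper bound for $\tau_n$ in the favorable case) with the first hitting time of the cylindrical side (a lower bound for $\tau_n$ on the bad event $G_n^c$); when the former is strictly smaller, $G_n$ must occur.

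First, I would introduce a stopping time $\tau$ — e.g.\ the first time $A_t$ covers $\BB Z^d\cap r_0\mcl Q_s$ for a suitable $r_0 > 0$ — and show, by tail bounds on sums of exponentials along a short deterministic lattice path (as in the proof of Lemma~\ref{prop-alpha<1-small-ball}), that this happens with probability $1-o_n^\infty(n)$ and that $A_\tau$ is contained in a slightly larger cylinder. Conditionally on $\mcl F_\tau$, the estimates of Section~\ref{sec-fpp-estimates} become available, and I will apply them with $m = n$ and $\xi$ slightly larger than $1$.

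Second, I would apply Lemma~\ref{prop-hull-time-metric} to a specific lattice vertex $v^*\in\BB Z^d$ close to $n\BB x$, combined with the explicit upper bound of Lemma~\ref{prop-D-tube-dist}\eqref{item-D-tube-dist-upper} scaled by $r_0^{1-\alpha}$ through Lemma~\ref{prop-metric-scale}, to produce an upper bound on $T(0,v^*)-\tau$ of the form $\frac{r_0^{1-\alpha}-n^{1-\alpha}}{\ol\kappa_s(\alpha-1)} + \frac{r_0^{1-\alpha}}{\ul\kappa_s} + o_n(n^{1-\alpha})$, valid off an event of conditional probability $o_n^\infty(n)$. For the matching lower bound, I would apply the restricted-metric version of Lemma~\ref{prop-hull-expand-metric} stated in Remark~\ref{remark-truncated-expand} with $\mcl U$ a slight Euclidean enlargement of $B_n^{\nu_s}(0)$, combined with Lemma~\ref{prop-D-tube-dist}\eqref{item-D-tube-dist-ball}, to bound from below the passage time $T(0,w)-\tau$ for every lattice vertex $w$ on the side whose FPP geodesic stays within the enlarged cylinder; the complementary event that the geodesic wanders far outside $B_n^{\nu_s}(0)$ is handled by a union bound using Kesten's estimate~\eqref{eqn-hull-upper}.

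The main obstacle will be to make the comparison ``upper bound $<$ lower bound'' effective in the $n\to\infty$ limit. The direct comparison with a fixed $r_0\in(0,1)$ fails, since both sides contain the dominant term $\frac{r_0^{1-\alpha}}{\ol\kappa_s(\alpha-1)}$ in the limit while the correction $\frac{r_0^{1-\alpha}}{\ul\kappa_s}$ on the upper side is not offset by the lateral-travel contribution $\frac{(s-1)(n-1)-\zeta}{\ol\kappa_s(n+\zeta)^\alpha}$ from Lemma~\ref{prop-D-tube-dist}\eqref{item-D-tube-dist-ball}. The resolution is to allow $r_0 = r_0(n)$ to grow with $n$ and to optimize: after substituting, the requirement that the side be asymptotically farther than the flat face reduces to a one-parameter optimization of the form
\[\inf_{r>0}\Bigl[\tfrac{r^{1-\alpha}}{\ul\kappa_s}-\tfrac{(s-1)\,(\text{length})}{\ol\kappa_s\,r^{\alpha-1}}\Bigr]<0,\]
whose sharp threshold is precisely the condition $s > 1+\frac{\ol\kappa_s\alpha^\alpha}{\ul\kappa_s(\alpha-1)^{\alpha-1}}$ of~\eqref{eqn-as-cond} — the constant $\alpha^\alpha/(\alpha-1)^{\alpha-1}$ being the value of this minimum, attained at $r\sim\bigl(\frac{(\alpha-1)\ol\kappa_s}{\alpha\ul\kappa_s}\bigr)^{1/\alpha}\cdot n$. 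A careful bookkeeping of the error terms $o_n(n^{1-\alpha})$ from Section~\ref{sec-fpp-estimates}, together with a union bound over lattice vertices on the side, then upgrades the comparison to the desired probability estimate $\BB P(G_n)=1-o_n^\infty(n)$.
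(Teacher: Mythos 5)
Your overall strategy — compare the passage time to the flat face with the passage time to the cylindrical side, conditional on an inner-scale stopping time, and optimize the inner/outer radius ratio to obtain the threshold \eqref{eqn-as-cond} — is the correct skeleton and is essentially what the paper does. But there is a genuine gap in the choice of stopping time.

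You set $\tau$ to be the first time $A_t$ covers $\BB Z^d\cap r_0\mcl Q_s$ with $r_0=r_0(n)$ growing linearly in $n$, and claim this occurs (with $A_\tau$ contained in a slightly larger cylinder) with probability $1-o_n^\infty(n)$. In the regime of Theorem~\ref{thm-cone-contain} this is false. For $\alpha>1$ the cluster escapes to infinity in finite time, and the $D$-length of a lateral traversal of $r_0\mcl Q_s$ needed to cover its side is of order $s\,r_0^{1-\alpha}$, whereas the total remaining time to $\tau_\infty$ after exiting $B_{r_0}^{\nu_s}(0)$ is of order $r_0^{1-\alpha}/(\ol\kappa_s(\alpha-1))$. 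When $s$ is large — which is exactly the regime where \eqref{eqn-as-cond} can hold — lateral travel is an order of magnitude slower than escape, so with probability bounded away from zero (in fact with high probability) the cluster reaches infinity before covering $r_0\mcl Q_s$; the covering time is then $\geq\tau_\infty$, and conditioning on it says nothing about the exit position from $B_n^{\nu_s}(0)$. Indeed, the whole point of the cone-containment theorem is that the cluster does \emph{not} cover large inner regions when $\alpha>1$ and $s$ is large — only when $\alpha$ is close to $1$ (Theorem~\ref{thm-alpha-near-1}) does covering occur. The paper avoids this trap: it conditions on $\mcl F_{\tau_{n'}}$ where $\tau_{n'}$ is the \emph{exit} time from $B_{n'}^{\nu_s}(0)$ with $n'=\lfloor q_*^{-1}n\rfloor$, $q_*=\alpha/(\alpha-1)$, and then invokes the stochastic monotonicity of Lemma~\ref{prop-hitting-time-law} (packaged in Lemma~\ref{prop-alpha>1-expand}) to replace the actual $A_{\tau_{n'}}$ by the fully-filled-in worst-case realization when proving the lower bound for passage times to the side. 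No covering is required. If you swap your covering time for the exit time and add the monotonicity comparison, the rest of your plan goes through and coincides with the paper's proof.

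A secondary, smaller slip: your displayed infimum cannot be right as stated, since $r^{1-\alpha}$ and $1/r^{\alpha-1}$ are the same power of $r$. The correct optimization (with $q=n/r$) is to maximize $(q-1)/q^\alpha$, which is achieved at $q_*=\alpha/(\alpha-1)$ independently of $\ol\kappa_s$ and $\ul\kappa_s$, giving the constant $\alpha^\alpha/(\alpha-1)^{\alpha-1}$ you correctly identified. The optimal inner scale is therefore $r=n(\alpha-1)/\alpha$, not the $\kappa$-dependent quantity $\bigl(\tfrac{(\alpha-1)\ol\kappa_s}{\alpha\ul\kappa_s}\bigr)^{1/\alpha}n$ you wrote.
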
 
\begin{proof}
Let $q_* := \frac{\alpha}{\alpha-1}$. By~\eqref{eqn-as-cond},
\eqb \label{eqn-optimal-q}
\frac{(s-1)(q_*-1)}{\ol\kappa_s q_*^\alpha} > \frac{1}{\ul\kappa_s}  .
\eqe 
Hence assertion~\ref{item-D-tube-dist-ball} of Lemma~\ref{prop-D-tube-dist} implies that we can find a $\zeta >0$ and a $\delta>0$ such that with $D_{q_*+\zeta}$ as in~\eqref{eqn-truncated-D}, 
\eqb \label{eqn-tube-dist-compare}
D_{q_*(1+\zeta)}\left(z , q_*\left( \mcl Q_s\setminus (P_{\BB x} \cup P_{-\BB x}) \right) \right) \geq \frac{1-q_*^{1-\alpha}}{\ol\kappa_s(\alpha-1)} + \frac{1}{\ul\kappa_s} + \delta ,\quad \forall z \in \bdy \mcl Q_s .
\eqe  

Now suppose given $n\in\BB N$ and set $n' := \lfloor q_*^{-1} n \rfloor$.  
By~\eqref{eqn-alpha>1-time-no-G} of Lemma~\ref{prop-alpha>1-time}, it holds except on an event of conditional probability $1-o_n^\infty(n)$ given $\mcl F_{\tau_{n'}}$ that
\eqb \label{eqn-right-hit-time}
\tau_{\wt n} - \tau_n  \leq  \frac{q_*^{\alpha-1} n^{1-\alpha} - n^{1-\alpha} + n^{1-\alpha- \beta} }{\ol\kappa_s (\alpha-1)} + \frac{q_*^{\alpha-1} n^{1-\alpha}}{\ul\kappa} . 
\eqe 
By~\eqref{eqn-tube-dist-compare} and the scaling property of $D$, 
\begin{align} \label{eqn-right-hit-dist}
&D_{(1+\zeta)n}\left(  n \left(\bdy\mcl Q_s \setminus \left(P_{\BB x} \cup P_{-\BB x} \right)\right) ,\,  \mcl V( A_{\tau_{n'}} ) \cup B_{\zeta n'}^{\nu_s}(0)     \right) \notag \\
&\qquad \geq    \frac{q_*^{\alpha-1} n^{1-\alpha} -n^{1-\alpha} }{\ol\kappa_s(\alpha-1)} + \frac{q_*^{\alpha-1} n^{1-\alpha} }{\ul\kappa_s} + q_*^{\alpha-1} n^{1-\alpha} \delta + O_n(n^{-\alpha}) .
\end{align}
The right side of~\eqref{eqn-right-hit-dist} minus the right side of~\eqref{eqn-right-hit-time} is $\succeq n^{1-\alpha}$ for large enough $n$. 
By Lemma~\ref{prop-alpha>1-expand}, we infer that except on an event of conditional probability $1-o_n^\infty(n)$ given $\mcl F_{\tau_{n'}}$, the vertex $u_n$ lies at $D$-distance $\succeq n^{1-\alpha}$ (and hence Euclidean distance $\succeq n$) from $ n \left(\bdy\mcl Q_s \setminus \left(P_{\BB x} \cup P_{-\BB x} \right)\right)$. Therefore $u_n/\nu_s(u_n) \in P_{\BB x} \cup P_{-\BB x}$, i.e.\ $G_n$ occurs.
\end{proof}

\subsection{Proof of Theorem~\ref{thm-cone-contain}}
\label{sec-cone-contain-proof}

For $n_0 \in \BB N$, let $E_{n_0}$ be the event that the three conditions of Lemma~\ref{prop-alpha>1-hit} are satisfied so that with $G_{n_0}$ as in~\eqref{eqn-good-hit-event}, 
\eqbn
\BB P\left(E_{n_0} \,|\,\mcl F_{\tau_{n_0}} \right) \BB 1_{G_{n_0}} = (1-o_{n_0}^\infty(n_0))\BB 1_{G_{n_0}} .
\eqen
We always have $\BB P(G_{n_0}) >0$ for each $n_0\in\BB N$, so for large enough $n_0 \in\BB N$ we have $\BB P\left(E_{n_0} \right) > 0$.
Furthermore, by Lemma~\ref{prop-alpha>1-right-hit} and the Borel-Cantelli lemma, whenever~\eqref{eqn-as-cond} holds a.s.\ $G_{n_0}$ occurs for large enough $n_0 \in \BB N$. It therefore suffices to prove that if~\eqref{eqn-pos-prob-cond} and $E_{n_0}$ occurs, then a.s.~\eqref{eqn-as} holds (note that we use symmetry between $\mcl K$ and $-\mcl K$ to obtain~\eqref{eqn-pos-prob} in the case that~\eqref{eqn-as-cond} does not hold). 

To this end, we first observe that~\eqref{eqn-pos-prob-cond} implies that there exists $q_0 > 1$ such that for each $q\geq q_0$,
\eqb \label{eqn-no-hit-parameters}
 \frac{1 + q^{1-\alpha} -   2^{\alpha } ( q+1+s(q-1)       )^{1-\alpha}     }{\ol\kappa_s(\alpha-1)} > \frac{1}{\ol\kappa_s (\alpha-1) }.
\eqe 
By Lemma~\ref{prop-metric-scale} and assertion~\ref{item-D-tube-dist-lower} of Lemma~\ref{prop-D-tube-dist}, there is an $n_* \in\BB N$ such that whenever $E_{n_0}$ occurs, $n \geq n_0 \vee n_*$, $q \in [q_0 ,2q_0]$, and $v\in \BB Z^d$ with $\nu_s(v) = q n$ and $v/\nu_s(v) \in \bdy \mcl Q_s \setminus (P_{\BB x} \cup P_{-\BB x})$, 
\eqbn
D\left( v , A_{\tau_{  n}} \cup B_{q^{-1} n}^{\nu_s}(0) \right) \geq D\left(v , B_n^{\nu_s}(0)\right) + O_n(n^{-\alpha}) \geq \frac{ n^{1-\alpha}}{\ol\kappa_s (\alpha-1)} + 2 n^{1-\alpha- \beta} .
\eqen 
By condition~\ref{item-alpha>1-hit-no} of Lemma~\ref{prop-alpha>1-hit}, it follows that if $E_{n_0}$ occurs, then no such $v$ belongs to $\mcl V(A_{\tau_\infty})$. Hence if $E_{n_0}$ occurs, then a.s.\ $\#\left(\mcl V(A_{\tau_\infty}) \setminus (\mcl K\cup (-\mcl K)) \right)  < \infty$
so since $\mcl K$ is convex, a.s.\
\eqb \label{eqn-double-cone}
\# \left\{ e \in \mcl E(A_{\tau_\infty}) \,:\, e \not\subset \mcl K \cup (-\mcl K) \right\} < \infty .
\eqe 

We will now apply Proposition~\ref{prop-one-end-weak} to show that on the event~\eqref{eqn-double-cone}, a.s.\ either all but finitely many edges of $A_{\tau_\infty}$ are contained in $\mcl K$ or all but finitely many edges of $A_{\tau_\infty}$ are contained in $-\mcl K$. 
Let $C > 0$ be chosen so that the graph distance from $\BB Z^d\cap (\mcl K\setminus B_C^{|\cdot|}(0))$ to $\BB Z^d\cap (-\mcl K \cap B_C^{|\cdot|}(0))$ is at least $6$. Let $\Gamma_1$ (resp. $\Gamma_2$) be the largest subgraph of $\BB Z^d$ which is contained in $\mcl K \setminus B_C^{|\cdot|}(0)$ (resp. $-\mcl K \setminus B_C^{|\cdot|}(0)$). Note that the graph distance in $\BB Z^d$ between $\Gamma_1$ and $\Gamma_2$ is at least 3 and that~\eqref{eqn-double-cone} implies
\eqb \label{eqn-double-cone-graph}
\# \left( \mcl E(A_{\tau_\infty}) \setminus   \mcl E(\Gamma_1\cup \Gamma_2)  \right)< \infty .
\eqe
For $k\in\BB N$, let $t^k$ be the smallest $t > 0$ for which $\#\mcl E(A_t) = k$. Almost surely, there is a $k_*\in\BB N$ for which no edge of $\mcl E(A_{\tau_\infty} ) \setminus \mcl E( A_{ t^{k_*}} )$ intersects $B_C^{|\cdot|}(0)$. If $\mcl E(A_{\tau_\infty} ) \setminus \mcl E( A_{ t^{k_*}} )$ is disjoint from either $\mcl E(\Gamma_1)$ or $\mcl E(\Gamma_2)$, then we are done. Otherwise, Proposition~\ref{prop-one-end-weak} applied with $\tau = t^{k }$ for generic $k\in\BB N$ implies that whenever~\eqref{eqn-double-cone-graph} holds, a.s.\ either $\mcl E(A_{\tau_\infty}) \setminus \mcl E(\Gamma_1)$ or $\mcl E(A_{\tau_\infty}) \setminus \mcl E(\Gamma_2)$ is finite, whence~\eqref{eqn-as} holds. 
\qed

\section{Open problems}
\label{sec-open-problems}

Here we list some open problems related to the model considered in this paper. We expect that the solutions to some of these problems may require additional knowledge of the Eden model limiting shape $\BB A$.
 
\begin{enumerate}
\item Under what conditions on $f$ is the limit shape $\BB B = B_1^D(0)$ in Theorem~\ref{thm-alpha<1} convex? Simulations suggest that this is not always the case when $f$ is the $\alpha$th power of a norm; see Figure~\ref{fig-sim2}.
\item It is easy to see from Lemma~\ref{prop-metric-scale} that for any choice of $f$, the limit shape $\BB B$ is compact, contains a neighborhood of 0, and that $\bdy \BB B$ intersects each ray emanating from 0 exactly once. 
If $K\subset \BB R^d$ satisfies these three conditions and has Lipschitz boundary, does there exist an $\alpha<1$ and an $\alpha$-weight function $f$ for which $K = \BB B$? If not, what conditions on $K$ do ensure the existence of such an $f$?
\item Does there exist an $\alpha=1$-weight function $f$ such that the sets $A_t$ a.s.\ converge to a limit shape in the sense of Theorem~\ref{thm-alpha<1}? What if we instead consider convergence of the re-scaled clusters $A_t$ in the Hausdorff distance (which is a weaker mode of convergence than the one in Theorem~\ref{thm-alpha<1})? What if we allow a random limit shape and relax a.s.\ convergence to convergence in law? We refer to the right panel of Figure~\ref{fig-sim4} for a simulation in the case $\alpha=1$. 
\item Give a more general characterization than the one provided in Theorem~\ref{thm-cone-contain} of the set of $\alpha$-weight functions $f$ for which a.s.\ all but finitely many vertices of $A_{\tau_\infty}$ are contained in a cone of opening angle $<\pi$. Simulations suggest that this statement is true in much greater generality than the setting of Theorem~\ref{thm-cone-contain}; see, e.g., Figure~\ref{fig-sim3}. 
\item If $f$ is such that a.s.\ all but finitely many vertices of $A_{\tau_\infty}$ are contained in a cone, what can be said about the law of the opening angle of this cone (as a function of $f$)? 
\item What can be said about the model of Definition~\ref{def-fpp} if instead of exponential edge passage times $X_e$ with parameter $\op{wt}(e)$, we consider a fixed random variable $X$ and take the random variables $X_e$ to be independent each with the law of $\op{wt}(e)^{-1} X$? Note that the results of~\cite{kesten-fpp-speed} do not require exponential passage times, but the proofs in the present paper use the Markov property (Lemma~\ref{prop-fpp-law}) which only works for exponential passage times.  
\end{enumerate}

\bibliography{cibiblong,cibib}
\bibliographystyle{hmralphaabbrv}

\end{document}